\documentclass{amsart}
\usepackage[utf8]{inputenc}
\usepackage{amsmath}
\usepackage{amsfonts}
\usepackage{amsthm}
\usepackage{amssymb}
\usepackage{stmaryrd}
\usepackage{mathtools}
\usepackage{tikz}
\usepackage{tikz-cd}
\usepackage{float}
\usepackage{import}
\usepackage{longtable}
\usepackage{makecell}
\usepackage{geometry}
\geometry{a4paper,top=3.5cm,bottom=3.8cm,left=2.5cm,right=2.5cm}
\restylefloat{table}

\theoremstyle{plain}
 \newtheorem{theorem}{Theorem}[section]
 \newtheorem{lemma}[theorem]{Lemma}
\newtheorem{corollary}[theorem]{Corollary}
\newtheorem{proposition}[theorem]{Proposition}
\newtheorem{definition}[theorem]{Definition}

\theoremstyle{definition} 
 \newtheorem{remark}[theorem]{Remark}

\usepackage{mathtools}
\usepackage{tikz}
\usepackage{tikz-cd}
\usepackage{longtable}
\usepackage{makecell}
\usepackage{float}
\makeatletter
\newcommand*{\da@rightarrow}{\mathchar"0\hexnumber@\symAMSa 4B }
\newcommand*{\da@leftarrow}{\mathchar"0\hexnumber@\symAMSa 4C }
\newcommand*{\xdashrightarrow}[2][]{%
  \mathrel{%
    \mathpalette{\da@xarrow{#1}{#2}{}\da@rightarrow{\,}{}}{}%
  }%
}
\newcommand{\xdashleftarrow}[2][]{%
  \mathrel{%
    \mathpalette{\da@xarrow{#1}{#2}\da@leftarrow{}{}{\,}}{}%
  }%
}
\newcommand*{\da@xarrow}[7]{%
  % #1: below
  % #2: above
  % #3: arrow left
  % #4: arrow right
  % #5: space left 
  % #6: space right
  % #7: math style 
  \sbox0{$\ifx#7\scriptstyle\scriptscriptstyle\else\scriptstyle\fi#5#1#6\m@th$}%
  \sbox2{$\ifx#7\scriptstyle\scriptscriptstyle\else\scriptstyle\fi#5#2#6\m@th$}%
  \sbox4{$#7\dabar@\m@th$}%
  \dimen@=\wd0 %
  \ifdim\wd2 >\dimen@
    \dimen@=\wd2 %   
  \fi
  \count@=2 %
  \def\da@bars{\dabar@\dabar@}%
  \@whiledim\count@\wd4<\dimen@\do{%
    \advance\count@\@ne
    \expandafter\def\expandafter\da@bars\expandafter{%
      \da@bars
      \dabar@ 
    }%
  }%  
  \mathrel{#3}%
  \mathrel{%   
    \mathop{\da@bars}\limits
    \ifx\\#1\\%
    \else
      _{\copy0}%
    \fi
    \ifx\\#2\\%
    \else
      ^{\copy2}%
    \fi
  }%   
  \mathrel{#4}%
}
\makeatother

\usepackage{hyperref}
  \usepackage[backend=bibtex, style=alphabetic,url=false,doi=false,isbn=false,sorting=nyt,maxbibnames=9,giveninits=true]{biblatex}
 \AtEveryBibitem{\clearfield{month}}
\addbibresource{bibliography.bib}
\AtEveryBibitem{%
  \clearlist{language}%
}
\newcommand\blfootnote[1]{%
  \begingroup
  \renewcommand\thefootnote{}\footnote{#1}%
  \addtocounter{footnote}{-1}%
  \endgroup
}

\title{K-moduli of log Fano complete intersections}
\author{Theodoros Stylianos Papazachariou}
\address{Isaac Newton Institute, University of Cambridge, 20 Clarkson rd, Cambridge, CB3 0EH, United Kingdom}
\email{tsp35@cam.ac.uk}

%\date{July 2023}

% \fancyfoot{}
% \fancyfoot[C]{MSC2020 Primary: 14J10, 14J45, 14L24. Secondary: 14D23, 	14Qxx}

\begin{document}
% \fancyfoot{}
% \fancyfoot[L]{MSC2020 Primary: 14J10, 14J45, 14L24. Secondary: 14D23, 	14Qxx}

% \fancyfoot[L]{MSC2020 Primary: 14J10, 14J45, 14L24. Secondary: 14D23, 	14Qxx}

\begin{abstract}
We explicitly describe the K-moduli compactifications and wall crossings of log pairs formed by a Fano complete intersection of two quadric threefolds and a hyperplane, by constructing an isomorphism with the VGIT quotient of such complete intersections and a hyperplane. We further characterize all possible such GIT quotients based on singularities. 
We also explicitly describe the K-moduli of the deformation family of Fano $3$-folds $2.25$ in the Mori--Mukai classification, which can be viewed as blow ups of complete intersections of two quadrics in dimension three, by showing there exists an isomorphism to a GIT quotient which we also explicitly describe. 
Furthermore, we also construct computational algorithmic methods to study VGIT quotients of complete intersections and hyperplanes, which we use to obtain the explicit descriptions detailed above. We also introduce the reverse moduli continuity method, which allows us to relate canonical GIT compactifications to K-moduli of Fano varieties.

% Our work uses the moduli continuity method for log pairs by relating the K-moduli to certain GIT compactifications. In addition, we introduce the reverse moduli continuity method, which allows us to relate canonical GIT compactifications to K-moduli of Fano varieties. We also compute the CM line bundle for complete intersections of hypersurfaces of fixed degree with a hyperplane section, and we show it isomorphic to an ample line bundle in the Picard group of the canonical GIT quotient of complete intersections and a hyperplane. We use explicit GIT methods to classify in detail the GIT stability of a complete intersection of two quadrics in dimension four, i.e. a del Pezzo surface of degree $4$, together with a hyperplane section. We also classify the GIT stability of a complete intersection of two quadrics in dimension three. Furthermore, we explicitly compactify the moduli space of log Fano pairs of complete intersections and hyperplane sections, by establishing a direct link with GIT. 

%We use the above methods to prove our main result, namely the explicit description of the first wall crossing for the K-moduli of log Fano pairs, of complete intersections of two quadrics in dimension four, and a hyperplane section. The reverse moduli continuity method, alongside our computational GIT techniques, allow us to compactify the K-moduli of the deformation family of Fano $3$-folds $2.25$, which is one of the only three known explicit compactifications of K-moduli of Fano threefolds.

\end{abstract}
\maketitle
 \tableofcontents %% Just for papers exceeding 50 pages.

%TODO re-read and redo intro and abstract
\section{Introduction}\label{intro}

% Classification problems form a large part of the study of modern mathematics, and especially so in algebraic geometry. In the study of algebraic varieties, the construction of moduli spaces is a key step in classification problems. 

\blfootnote{MSC2020 Primary: 14J10, 14J45, 14L24. Secondary: 14D23, 	14Qxx} Moduli problems have been a key part of modern mathematics, with a number of different moduli constructions emerging in the past years, used to describe a wide variety of classification problems. For Fano varieties in particular, due to tremendous progress in the past decade (see e.g. \cite{odaka-moduli, Li-Wang-Xu, blum-xu, alper_reductivity,liu2021finite}), it has been shown that K-stability constructs a projective moduli space, named the K-moduli space. The K-moduli space parametrises K-polystable Fano varieties, and its construction has also been extended to the case of log Fano pairs \cite{ascher2019wall}. In this particular case, one obtains an array of K-moduli spaces and a wall-chamber decomposition, along with wall crossings, depending on the coefficient of the divisor. 

Although K-moduli are the right description of moduli spaces for Fano varieties, explicit descriptions are hard to obtain. As such, constructing explicit compactifications of moduli spaces of Fano varieties and determining when Fano varieties $S$ or log Fano pairs $(S,(1-\beta)D)$ are K-(poly)stable remain open questions. Up to now, explicit compactifications have been given for del Pezzo surfaces \cite{mabuchi_mukai_1990, odaka_spotti_sun_2016} and for few Fano threefolds \cite{liu-xu, abban2023onedimensional}, Fano fourfolds \cite{liu2020kstability} and Fano $n$-folds \cite{spotti_sun_2017} respectively. The case of log Fano pairs has seen extensive study recently; \cite{Gallardo_2020} studied the K-moduli spaces of log pairs consisting of cubic surfaces and anticanonical divisors in $\mathbb{P}^3$, while \cite{ascher2019wall, ascher2020kmoduli} studied the K-moduli of log pairs $(\mathbb{P}^n, cD)$ and $(\mathbb{P}^1\times\mathbb{P}^1, cD)$, where $D$ is an anticanonical divisor. Furthermore, \cite{fujita2017kstability} studied the K-stability of log Fano hyperplane arrangements, and more recently \cite{zhao_walls_2022} studied k-moduli wall crossings for quintic del Pezzo pairs $(X,cD)$, where $D\sim_{\mathbb{Q}}-2K_X$. The above papers rely on determining the walls and chambers manually, and in general cannot give a complete description of K-moduli walls and chambers.

In this paper, we study the K-moduli spaces $\overline{M}^K_{4,2,2}(\beta)$ parametrising K-polystable log Fano pairs $(S, (1-\beta)D)$, where $S$ is a complete intersection of two quadrics in $\mathbb{P}^4$ and $D = S\cap H$ is a hyperplane section, which are the good moduli space of the Artin stack $\mathcal{M}^K_{4,2,2}(\beta)$. We prove that for $\beta > \frac{3}{4}$ they are isomorphic to the VGIT quotients $\overline{M}^{GIT}_{4,2,2}(t(\beta))$ parametrising GIT-polystable pairs $(S,H)$. We show this result at the level of stacks.

\begin{theorem}[see Theorem \ref{iso of stacks}]\label{wall crossing thm}
Let $\beta > \frac{3}{4}$. There exists a natural isomorphism between the K-moduli stack $\mathcal{M}^K_{4,2,2}(\beta)$ of K-semistable log Fano pairs $(S, (1-\beta)D)$ and the GIT$_t(\beta)$-stack $\mathcal{M}^{GIT}_{4,2,2}(t(\beta))$ where 
$$t(\beta) = \frac{6(1-\beta)}{6-\beta}.$$
In particular, this isomorphism restricts to the good moduli spaces $\overline{M}^K_{4,2,2}(\beta)$ and
$\overline{M}^{GIT}_{4,2,2}(t(\beta))$.
\end{theorem}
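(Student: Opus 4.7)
The plan is to follow the moduli continuity / wall crossing method: produce a natural morphism $\mathcal{M}^{GIT}_{4,2,2}(t(\beta)) \to \mathcal{M}^K_{4,2,2}(\beta)$ of Artin stacks, and then upgrade it to an isomorphism using properness and a CM line bundle comparison. First I would set up the parameter space. Let $\mathbb{G} = \operatorname{Gr}(2, H^0(\mathbb{P}^4, \mathcal{O}(2)))$ parametrize pencils of quadrics and $\mathbb{H} = (\mathbb{P}^4)^{\vee}$ parametrize hyperplanes, so that pairs $(S,D)$ are classified by a closed subscheme $Z \subset \mathbb{G}\times\mathbb{H}$. The VGIT on $\mathbb{G}\times\mathbb{H}$ is taken with respect to $\mathcal{O}(1-t)\boxtimes\mathcal{O}(t)$, yielding a one-parameter family of quotients with explicit walls. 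Boundedness, openness of K-semistability, and properness of $\mathcal{M}^K_{4,2,2}(\beta)$ come from the general log K-moduli machinery, while the projectivity of the good moduli space follows from CM positivity.

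The central technical step is the CM comparison. Since $-K_S \sim H|_S$ for a smooth $(2,2)$ complete intersection in $\mathbb{P}^4$, we have $-(K_S+(1-\beta)D) \sim_{\mathbb{Q}} -\beta K_S$. On the universal family over $Z$, I would compute the CM line bundle attached to this log polarisation by the Paul--Tian / Fujita formula, and verify it agrees up to a positive rational multiple with the restriction of $\mathcal{O}(1-t(\beta))\boxtimes\mathcal{O}(t(\beta))$. The factor $6-\beta$ and numerator $6(1-\beta)$ in $t(\beta) = \frac{6(1-\beta)}{6-\beta}$ should emerge directly from this intersection calculation using $(-K_S)^3 = 4$ and $(-K_S)^2\cdot D = 4$. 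Granted this identification, GIT$_{t(\beta)}$-semistability and K-semistability of the corresponding log pair are both characterized by non-negativity of Hilbert--Mumford weights of the same polarisation along one-parameter degenerations, which produces the desired morphism of stacks (sending GIT-polystable orbits to K-polystable log pairs).

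To upgrade the morphism to an isomorphism, I would use that both stacks are of finite type with reductive stabilizers and projective good moduli spaces, so the induced map is proper. Injectivity on geometric points follows from the bijective correspondence between closed orbits and polystable objects. For surjectivity one must show that every K-polystable log pair $(S,(1-\beta)D)$ arises from a complete intersection of two quadrics with a hyperplane section: this is a standard deformation-theoretic argument, using that a K-semistable Q-Gorenstein degeneration of a smooth $(2,2)$ complete intersection in $\mathbb{P}^4$ has $-K$ of the correct degree and embedding dimension, pinning down the ambient projective model. Combining these gives a representable proper monomorphism of equidimensional stacks, hence an isomorphism; descent to good moduli spaces is automatic.

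The hardest part will be the CM comparison together with the singularity analysis guaranteeing that K-polystable degenerations remain inside the VGIT parameter space $Z$. The threshold $\beta > \tfrac{3}{4}$ is precisely the regime in which walls on the two sides match cleanly and the locus of $(2,2)$ complete-intersection pairs is open-and-closed inside the log K-moduli; crossing this bound would introduce new boundary strata (non-complete-intersection limits such as hyperplane sections tangent with high multiplicity, or quadric pencils degenerating to non-reduced schemes) whose presence would spoil the direct identification with the VGIT quotient.
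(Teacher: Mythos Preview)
Your overall architecture is close to the paper's, but there is a genuine gap in the direction in which you run the argument, and a mischaracterisation of where the threshold $\beta>\tfrac34$ enters.

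\textbf{Direction of the map.} You propose to construct $\phi\colon \mathcal{M}^{GIT}_{4,2,2}(t(\beta))\to \mathcal{M}^{K}_{4,2,2}(\beta)$ and justify it by saying that, via the CM identification, ``GIT$_{t(\beta)}$-semistability and K-semistability \dots are both characterized by non-negativity of Hilbert--Mumford weights of the same polarisation along one-parameter degenerations''. This is not correct. The CM comparison (Theorem~\ref{gmgs thm 2.6}) equates the Donaldson--Futaki invariant with the GIT weight \emph{only for test configurations induced by one-parameter subgroups of $\operatorname{SL}(5)$}. Since K-semistability must be checked against \emph{all} test configurations, what the CM computation buys you is the implication K-(semi/poly)stable $\Rightarrow$ GIT$_{t(\beta)}$-(semi/poly)stable (this is exactly Theorem~\ref{k-stab implies git stab}), not the converse. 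Consequently the well-defined open immersion goes the other way,
\[
\phi\colon \mathcal{M}^{K}_{4,2,2}(\beta)\longrightarrow \mathcal{M}^{GIT}_{4,2,2}(t(\beta)),
\]
and it is this map that one then upgrades to an isomorphism via Alper's \cite[Prop.~6.4]{Alper} and Zariski's Main Theorem. To send GIT-polystable orbits to K-polystable pairs as you suggest, you would need to verify K-polystability of each GIT-polystable pair directly (the ``reverse moduli continuity method'' used in Section~\ref{fano 3fold section} for family~2.25); you do not propose to do this, and it does not follow from the CM identity.

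\textbf{The role of $\beta>\tfrac34$.} The threshold is not a wall-matching phenomenon. It enters through Lemma~\ref{gh convergence}: for a $\mathbb{Q}$-Gorenstein smoothable K-semistable limit $(S_\infty,(1-\beta)D_\infty)$ one has $(-K_{S_\infty}-(1-\beta)D_\infty)^2=4\beta^2$, and Liu's local volume bound together with Li--Xu gives $|G|\le \tfrac{9}{4\beta^2}$ for any cyclic quotient singularity $\mathbb{C}^2/G$ on $S_\infty$. The inequality $\beta>\tfrac34$ forces $|G|\le 3$, hence at worst $\mathbf A_1$ or $\mathbf A_2$ singularities, so $S_\infty$ remains a complete intersection of two quadrics in $\mathbb{P}^4$ and $D_\infty$ a hyperplane section. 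This is the step that confines K-polystable degenerations to the VGIT parameter space, and it is not a ``standard deformation-theoretic argument'' but a volume/singularity estimate. (Incidentally, $S$ is a surface, so the relevant numbers are $(-K_S)^2=4$ and $(-K_S)\cdot D=4$, not $(-K_S)^3$.)
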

As a corollary, we also obtain an explicit description of the first wall crossing.

\begin{corollary}[See Corollary \ref{Wall crossing}]\label{main corollary_intro}
    The first wall crossing occurs at $\beta = \frac{6}{7}$, $t = \frac{1}{6}$. Both the varieties and the divisors are deformed before and after the first wall.
\end{corollary}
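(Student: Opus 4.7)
The plan is to invoke Theorem \ref{wall crossing thm} to transfer the K-moduli wall-crossing problem to a VGIT wall-crossing problem on pairs $(S,H)$, and then compute the first wall on the VGIT side. Under the change of variable $t(\beta) = 6(1-\beta)/(6-\beta)$, setting $t = 1/6$ gives $6 - \beta = 36(1-\beta)$, hence $\beta = 6/7$, so the two critical values do correspond under the isomorphism of stacks. Since this isomorphism identifies the stability conditions on either side, it suffices to exhibit the first genuine wall of the VGIT chamber decomposition at $t = 1/6$ and to show that both components of a strictly polystable pair at the wall deform.

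First I would enumerate the candidate walls of the VGIT problem for the $\mathrm{SL}(5)$-action on the parameter space of (pencil of quadrics, hyperplane) linearised by an appropriate $t$-dependent polarisation. By the Hilbert--Mumford criterion, walls occur at rational values $t \in (0,1)$ for which there exist a one-parameter subgroup $\lambda$ with weights $(a_0,\dots,a_4)$, $\sum a_i = 0$, and a pair $(S,H)$ such that the weight $\mu^{t}(\lambda,(S,H))$ is affine-linear in $t$ and changes sign across $t$. This reduces to a finite combinatorial problem on maximal weights of monomials of degree $2$ (for the quadrics) and degree $1$ (for the hyperplane); using the algorithmic methods developed in the paper, the minimum such critical value can be computed and equals $t = 1/6$.

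Next I would confirm that $t=1/6$ is a genuine wall by exhibiting a strictly polystable pair $(S_0,H_0)$ at the wall together with two isotrivial degenerations: one from a family of K-semistable pairs on the side $\beta > 6/7$ (equivalently $t < 1/6$), and one from the side $\beta < 6/7$ (equivalently $t > 1/6$), whose generic members have non-isomorphic underlying threefolds $S$ and non-isomorphic divisors $D$, both specializing to $(S_0,H_0)$. The assertion that \emph{both} the variety and the divisor deform amounts to verifying that the replacement pair on the other side of the wall is obtained by a non-trivial one-parameter subgroup that moves both $S$ and $H$ simultaneously; typically $S_0$ will be forced to be singular along the fixed locus of the destabilizing $\lambda$, with $H_0$ containing that locus.

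The main obstacle is proving that $t = 1/6$ is genuinely the \emph{first} wall, i.e., that no smaller rational critical value is missed. This requires an exhaustive enumeration of destabilising one-parameter subgroups up to $\mathrm{SL}(5)$-conjugation, together with a verification that for each candidate $t_0 \in (0, 1/6)$ the stability of every pair is unchanged across $t_0$. This is where the computational algorithm introduced in the paper plays the central role; the delicate points are handling pairs where $S$ acquires non-$\mathrm{ADE}$ singularities or where $H$ is in special position with respect to the base locus of the pencil, since these are the loci most likely to produce a spurious earlier wall.
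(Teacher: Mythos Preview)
Your proposal is correct and follows essentially the same route as the paper: transfer the K-moduli wall question to the VGIT side via Theorem~\ref{wall crossing thm}, then identify the first VGIT wall at $t=1/6$ and check the correspondence $t(\beta)=6(1-\beta)/(6-\beta)$ gives $\beta=6/7$. The paper's proof is literally the one-line citation of Theorems~\ref{main thm in p4 vgit}, \ref{main thm in vgit p4_polystable}, and~\ref{iso of stacks}, so your plan simply re-derives what those classification theorems already record (one slip: $S$ is a del Pezzo \emph{surface} of degree $4$, not a threefold).
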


\addtocontents{toc}{\protect\setcounter{tocdepth}{0}}
%\subsection*{Structure of the introduction}

In this paper, we further introduce a compactification $\overline{M}^{GIT}_{n,k,d}(t)$ depending on a parameter $t$ controlling the polarisation, for the moduli space of log Fano pairs formed by a complete intersection of hypersurfaces of the same degree and a hyperplane section.

% We will give some background on wall crossings for GIT and K-moduli. We will then proceed to describe how these compactifications are obtained via GIT, by detailing one of our results from our computational methods to study VGIT quotients. 

% We then detail the main results of this paper (Theorem \ref{wall crossing thm} and Corollary \ref{main corollary_intro}), where we explicitly describe of the wall crossing of the K-moduli space of a log Fano pair $(S, (1-\beta)D)$, where $S$ is a complete intersection of two quadrics in $\mathbb{P}^4$ and $D$ is an anticanonical divisor. This is the first example of K-moduli wall crossing, where both the variety and divisor admit deformations before and after the wall crossing. 

% We conclude by mentioning our compactification of the K-moduli of the deformation family of Fano $3$-folds $2.25$ in the Mori--Mukai classification, which are given as blow ups of $\mathbb{P}^3$ along a complete intersection of two quadrics, paralleling our GIT techniques. We note that this explicit description is achieved by relating GIT compactifications to K-moduli via the \emph{reverse moduli continuity method}, which we introduce in this paper. This is one of the three known explicit examples of K-moduli of Fano threefolds, after Spotti-Sun \cite{spotti_sun_2017} and Liu--Xu \cite{liu-xu}. 

\subsection*{Wall crossings in VGIT and K-moduli and computational VGIT}
%\textbf{Structure of the introduction}

% In our application, we study log Fano pairs $(S, (1-\beta)D)$ where $S$ is a del Pezzo surface of degree $4$ and $D$ is an anticanonical divisor. 

Of particular interest to us are GIT quotients parametrising pairs formed by complete intersections, $S$, of fixed degree and a hyperplane $H$. In this case, the GIT stability conditions tessellate into a wall-chamber decomposition \cite{thaddeus_1996, dolgachev_1994}. Hence, each GIT moduli space $\overline{M}^{GIT}_{n,d,k}(t)$ parametrising GIT$_t$-polystable pairs $(S,H)$ depends on a parameter $t$ controlling the polarisation. Two moduli spaces $\overline{M}^{GIT}_{n,d,k}(t)$ and $\overline{M}^{GIT}_{n,d,k}(t')$
%and each moduli space $\overline{M}^{GIT}(t)_{n,d,k}$ which parametrises GIT$_t$-polystable pairs $(S,H)$ is 
are isomorphic if and only if $t,t'$ belong to the same wall/chamber. This approach to GIT is known as Variations of GIT (VGIT). Furthermore, it is known that the number of walls is finite \cite{dolgachev_1994, thaddeus_1996}, and that there exist birational morphisms (Thaddeus flips) between different walls and chambers. Due to \cite{ascher2019wall}, similar wall crossing phenomena appear in the study of K-moduli of log Fano pairs $(S, (1-\beta)D)$. Hence, each K-moduli space $\overline{M}^{K}(\beta)$ parametrising K-polystable pairs depends on the parameter $\beta$ perturbing the divisor. 
% As before, two moduli spaces $\overline{M}^{K}(\beta)$ and $\overline{M}^{K}(\beta')$ are isomorphic if and only if $\beta$, $\beta'$ belong to the same wall/chamber.

% Explicit descriptions of wall crossing phenomena for K-moduli are rare. One of the few results on K-moduli wall crossing can be found in Ascher--DeVleming--Liu \cite{ascher2019wall}, where moduli stacks are used to study the K-moduli of log pairs of the form $(\mathbb{P}^n,cD)$, with a detailed analysis of plane curves. In \cite{ascher2020kmoduli}, the authors further study K-moduli of log Fano pairs $(\mathbb{P}^1\times\mathbb{P}^1,cC)$ where $C$ is a $(4,4)$-curve. Furthermore, in \cite{zhao_walls_2022}, the K-moduli for quintic del Pezzo pairs $(X,cD)$ for $0<c<1/2$ and $D\sim_{\mathbb{Q}}-2K_X$ are studied.

% In this paper, we consider $\overline{M}^K_{n,d,k}(\beta)$ which is the good moduli space of the Artin stack $\mathcal{M}^K_{n,d,k}(\beta)$ that parametrises K-polystable log Fano pairs $(S, (1-\beta)D)$ where $S$ is a complete intersection of $k$ hypersurfaces of degree $d$ in $\mathbb{P}^n$ and $D= S\cap H$ is a hyperplane section. As in Ascher--DeVleming--Liu \cite{ascher2019wall} a wall-chamber decomposition also occurs for the K-moduli stacks $\mathcal{M}^K_{n,d,k}(\beta)$. 

% \subsection*{Computational VGIT}

In studying these wall crossing phenomena in K-moduli, a natural method is to relate the K-moduli compactifications to specific VGIT quotients via the \emph{moduli continuity method for log Fano pairs} \cite{ascher2019wall, Gallardo_2020, ascher2020kmoduli}. In order to do so in our specific examples, our first step is to construct an  algorithmic approach to study the GIT of complete intersections of $k$ hypersurfaces in $\mathbb{P}^n$ of arbitrary same degree $d$, with $kd\leq n$, and a hyperplane section $D$. 

\begin{theorem}[See Theorem \ref{theorem-H not in supp S}]\label{parametrising GIT}
Let $S$ be a Fano complete intersection of $k$ hypersurfaces of degree $d$ in $\mathbb{P}^n$ and $H$ be a hyperplane in $\mathbb{P}^n$. Then $\overline{M}^{GIT}_{n,d,k}(t)$ parametrises closed orbits associated to pairs $(S, D = S\cap H)$, where $D$ is a divisor on $S$.
\end{theorem}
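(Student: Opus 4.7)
My approach is to apply the Hilbert--Mumford numerical criterion and rule out, among GIT-semistable points of the parameter space, any pair $(S,H)$ with $H\supset S$ (so that $S\cap H$ fails to be a proper divisor). I realise pairs $(S,H)$ inside $X := \mathrm{Gr}(k, H^0(\mathbb{P}^n, \mathcal{O}(d))) \times (\mathbb{P}^n)^\vee$, equipped with its diagonal $\mathrm{SL}_{n+1}$-action and with the one-parameter family of linearisations $L_t$ interpolating between the Pl\"ucker polarisation on the Grassmannian factor and the hyperplane class on $(\mathbb{P}^n)^\vee$; then $\overline{M}^{GIT}_{n,d,k}(t)$ is the corresponding GIT quotient.

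Suppose $(S,H) \in X$ satisfies $S\subset H$. After changing coordinates, I may assume $H = V(x_0)$, whence each defining form $f_1,\dots,f_k$ of $S$ lies in the ideal $(x_0)$. The natural destabilising test is the $1$-parameter subgroup $\lambda(s) = \mathrm{diag}(s^{-n}, s,\dots,s) \in \mathrm{SL}_{n+1}$. Under $\lambda$ the coordinate $x_0$ has weight $-n$, so $H$ is the most negatively weighted linear form; moreover, every monomial appearing in any $f_i$ is divisible by $x_0$, so the maximum weight of any monomial in $\langle f_1,\dots,f_k\rangle$ is bounded above by $d-(n+1)$. Combining the weight contributions from both factors against $L_t$, I would verify that the total Hilbert--Mumford weight $\mu^{L_t}((S,H),\lambda)$ is strictly negative for every $t$ in the effective VGIT range, hence $(S,H)$ is not semistable. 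This forces all GIT-semistable pairs to have $D = S\cap H$ a proper divisor on $S$, and in particular the closed orbits lie in this locus.

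The main obstacle is that GIT limits need not be smooth or scheme-theoretic complete intersections, so one must argue uniformly in closed families; the decisive condition ``every $f_i$ lies in the ideal of $H$'' is itself a closed condition, so the Hilbert--Mumford argument specialises well. What remains is the polytope computation confirming negativity of the HM-weight uniformly in $t$. I expect this combinatorial check to be the main technical step, but it should be a routine calculation in the spirit of the VGIT analyses already appearing in the literature on moduli of hypersurfaces and pairs.
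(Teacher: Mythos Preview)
Your identification of the degenerate locus is incorrect, and this is where the argument breaks. You claim that $D=S\cap H$ fails to be a proper divisor precisely when $S\subset H$, and you then assert that $S\subset H$ forces every $f_i$ to lie in the ideal $(x_0)$. Neither step is right. For the second: take $f_1=x_0^2$ and $f_2=x_1^2$ in $\mathbb P^2$; then $S=\{[0{:}0{:}1]\}\subset V(x_0)$, yet $f_2\notin(x_0)$. More importantly, the relevant bad locus is not $S\subset H$ at all but the locus where the linear form $h$ divides \emph{some} $f_i$. For example with $n=4$, $k=d=2$, $f_1=x_0x_1$, $f_2=x_2^2+x_3^2+x_4^2$, $H=V(x_0)$: here $S=V(x_0,f_2)\cup V(x_1,f_2)$ and $S\cap H=V(x_0,f_2)$ is an entire irreducible component of $S$, so $D$ is not a divisor --- yet $S\not\subset H$ and $f_2\notin(x_0)$.

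The paper's proof handles exactly this more general situation. After a coordinate change one has $H=\{x_n=0\}$ and $f_1=x_n\cdot g$ while $f_2,\dots,f_k$ remain arbitrary, so the Hilbert--Mumford bound must be asymmetric: the maximal monomial weight for $f_1$ is dominated by that of $x_0^{d-1}x_n$, whereas for $f_2,\dots,f_k$ one can only use $x_0^d$. This gives
\[
\mu_t(S,H,\lambda)\ \le\ (k-1)d\,r_0 + (d-1)r_0 + r_n + t\,r_n,
\]
and one must then exhibit a specific normalised $\lambda$ making this negative (the Fano condition $kd\le n$ enters here). Your estimate, which rests on \emph{all} $f_i$ being divisible by $h$, gives the much stronger bound $k(d-n-1)-tn$, but applies only to a strict subset of the locus that must be excluded. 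The missing ingredient is precisely this asymmetric weight bound under the weaker hypothesis $h\mid f_i$ for a single index~$i$.
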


The main novelty of the above Theorem is that for the case of Fano varieties, the VGIT quotient $\overline{M}^{GIT}_{n,d,k}(t)$ which parametrises closed orbits associated to pairs $(S,H)$, in fact parametrises closed orbits associated to pairs $(S,D=S\cap H)$, where $\operatorname{Supp}(H)\not\subset \operatorname{Supp}(S)$. Hence, $D$ is itself a complete intersection. In particular, Theorem \ref{parametrising GIT} shows that in the above setting, the VGIT quotient $\overline{M}^{GIT}_{n,d,k}(t)$ provides a compact space parametrising pairs $(S,D)$, where $D$ is a divisor of the complete intersection $S$. Hence, $\overline{M}^{GIT}_{n,d,k}(t)$ becomes a natural quotient for the study of log Fano pairs $(S,cD)$, which is of particular interest in this paper. We note, that in the case of Calabi-Yau varieties, or varieties of general type, this is not true in general.

Extending our setting to tuples $(S, H_1,\dots, H_m)$ of complete intersections and hyperplanes, we extend our computational setting by constructing explicit computational methods that determine unstable and non-semistable tuples. Fixing coordinates, a tuple $(S,H_1,\dots, H_m)$ can be determined by homogeneous polynomials $f_1,\dots, f_k$ and $h_1,\dots, h_m$ of degrees $d$ and $1$, respectively. These define tuples of monomials, namely those which appear with non-zero coefficients in $f_1,\dots, f_k$ and $h_1,\dots, h_m$. Suppose $(S,H_1,\dots, H_m)$ is not $\vec{t}$-stable. Here the vector $\vec{t} = (t_1,\dots, t_m)$ determines the polarisation (see e.g \cite{gallardo_martinez-garcia_2019}). In Section \ref{sec:semi-dist fams} we define sets of monomials $N_{\vec{t}}^{\ominus}(\lambda, x^{J_1},\dots, x^{J_{k-1}}, x_{j_1},\dots,x_{j_m})$ explicitly, such that in some coordinate system the equations of $f_i$ and $h_1,\dots, h_m$ are given by monomials in $N_{\vec{t}}^{\ominus}(\lambda, x^{J_1},\dots, x^{J_{k-1}}, x_{j_1},\dots,x_{j_m})$. A similar procedure follows for $t$-unstable tuples, where the relevant sets of monomials are $N_{\vec{t}}^{-}$.
% $(\lambda, x^{J_1},\dots, x^{J_{k-1}}, x_{j_1},\dots,x_{j_m})$. 

\begin{theorem}[See Theorem \ref{strictly_semistable_k-case-vgit}]\label{thm 1.4}
    A tuple $(S,H_1,\dots,H_m)$ is not $\vec{t}$-stable ($\vec{t}$-unstable, respectively) if and only if there exists $g \in SL(n+1)$ such that the set of monomials associated to $g \cdot \big((\operatorname{Supp}(f_1)\times\dots\times\operatorname{Supp}(f_k) ), \operatorname{Supp}(h_1),\dots, \operatorname{Supp}(h_m)\big)$ is contained in a tuple $N^{\ominus}$ ($N^-$, respectively) defined in Lemma \ref{nminus_k-case-vgit}. The sets $N^{\ominus}$ and $N^-$ are maximal with respect to the containment order of sets and define families of non-$\vec{t}$-stable pairs ($\vec{t}$-unstable pairs, respectively). Any not $\vec{t}$-stable (respectively $\vec{t}$-unstable) tuple belongs to one of these families for some group element $g$.
    \end{theorem}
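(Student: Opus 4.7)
The plan is to apply the Hilbert–Mumford numerical criterion in the VGIT setting, normalize via the $SL(n+1)$-action, and then translate the resulting weight inequality into the explicit monomial condition.

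First, I would invoke Hilbert–Mumford: the tuple $(S,H_1,\dots,H_m)$ fails to be $\vec{t}$-stable (respectively is $\vec{t}$-unstable) if and only if there exists a nontrivial one-parameter subgroup $\lambda \colon \mathbb{G}_m \to SL(n+1)$ whose total weight $\mu^{\vec{t}}(\lambda,(S,H_1,\dots,H_m))$ is $\geq 0$ (respectively $>0$). After acting by a suitable $g\in SL(n+1)$ we may diagonalize $\lambda=\mathrm{diag}(r_0,\dots,r_n)$ with $r_0\geq r_1\geq\cdots\geq r_n$ and $\sum r_i=0$. In these coordinates each monomial has a well-defined $\lambda$-weight, and the contribution of each factor $f_i$ (defining one hypersurface of the complete intersection) is the minimum weight of its supporting monomials; similarly for each linear form $h_j$. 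The total weight $\mu^{\vec{t}}(\lambda,\cdot)$ is then the corresponding $\vec{t}$-weighted sum of these minima, via the polarisation formula for products recalled in Section \ref{sec:semi-dist fams}.

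Second, I would rephrase the inequality $\mu^{\vec{t}}(\lambda,\cdot)\geq 0$ (respectively $>0$) as a system of monomial constraints. Pick, for each of the $k$ complete-intersection factors, a monomial of minimum $\lambda$-weight; after permuting the factors this amounts to fixing $x^{J_1},\dots,x^{J_{k-1}}$ (the last factor is then constrained) together with minimum-weight variables $x_{j_1},\dots,x_{j_m}$ from each hyperplane. The non-$\vec{t}$-stability condition says precisely that every monomial of $f_i$ has $\lambda$-weight no smaller than that of $x^{J_i}$, and every monomial appearing in $h_\ell$ has weight no smaller than that of $x_{j_\ell}$, while the overall $\vec{t}$-weighted sum of the fixed data is non-negative. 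Declaring $N^{\ominus}_{\vec{t}}(\lambda,x^{J_1},\dots,x^{J_{k-1}},x_{j_1},\dots,x_{j_m})$ to be the tuple of monomials that satisfy these inequalities realises the non-stability condition as containment in $N^{\ominus}$, and the strict version $N^-$ captures $\vec{t}$-instability. This is exactly the content of Lemma \ref{nminus_k-case-vgit}.

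Third, I would address finiteness and maximality. The set of normalized 1-PS $\lambda$ yielding nontrivial inequalities is rational polyhedral modulo the Weyl action, so only finitely many combinatorial types of the vectors $(\lambda,x^{J_\bullet},x_{j_\bullet})$ arise; among these I keep only those whose $N^{\ominus}$ is maximal under inclusion, since any smaller set is subsumed by a larger one (a 1-PS with a "tighter" system of inequalities witnesses non-stability for a superset of tuples). This gives the finite list of maximal families promised in the statement. The converse direction—that a tuple whose monomial support lies in some $N^{\ominus}$ (respectively $N^-$) is not $\vec{t}$-stable (respectively $\vec{t}$-unstable)—is immediate: the defining 1-PS of $N^{\ominus}$ has non-negative (respectively positive) Hilbert–Mumford weight on the tuple by construction, so Hilbert–Mumford again concludes.

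The main obstacle is the combinatorial bookkeeping of Step 2: because $S$ is a complete intersection of $k$ hypersurfaces, the weight contribution is a \emph{sum} of minima over the $k$ factors, so the enumeration of ``minimum monomials'' $x^{J_1},\dots,x^{J_{k-1}}$ is more subtle than in the hypersurface-plus-hyperplane case of \cite{gallardo_martinez-garcia_2019}. Keeping this bookkeeping consistent with the $\vec{t}$-polarisation and proving that the resulting maximal sets $N^{\ominus}$ exhaust all possible destabilising 1-PSs (up to the $SL(n+1)$-action we used to diagonalise) is the part requiring the most care; once that is set up, maximality and the equivalence statement follow formally from Hilbert–Mumford.
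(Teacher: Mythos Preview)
Your proposal is correct and follows essentially the same approach as the paper's proof of Theorem~\ref{unstable families vgit}: Hilbert--Mumford plus normalization by an element of $SL(n+1)$, then translation of the resulting weight inequality into the containment condition defining $N^{\ominus}$ (resp.\ $N^-$) from Lemma~\ref{nminus_k-case-vgit}. The only substantive differences are a sign convention (the paper works with the $\max$ formulation of Lemma~\ref{HM crit reformulated}, so not-$\vec t$-stable means $\max\leq 0$ and the reference monomials $x^{J_i}$ are the $\lambda$-\emph{maximal} elements of each $\operatorname{Supp}(f_i)$ rather than the minima) and that the paper replaces your polyhedrality gesture in Step~3 by the explicit finite set $P_{n,d,k,m}$ of one-parameter subgroups from Definition~\ref{finite_set_def_k_case} and Lemma~\ref{unstablelemma-vgit}, obtained as the vertices of a piecewise-linear subdivision of the simplex of normalized one-parameter subgroups.
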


Although the existence of the sets satisfying the conditions of Theorem \ref{thm 1.4} is known to experts and has been explored before in different examples (c.f. \cite[\S 7]{mukai_2003}, \cite{gallardo_martinez-garcia_2018}), the main novelty of Theorem \ref{thm 1.4} is the explicit definition of the sets $N^\ominus$ and $N^-$ in terms of monomials (see Proposition \ref{annihilator_k-case-vgit}), which in turn allow us to completely determine GIT$_{\vec{t}}$ unstable elements via software \cite{theodoros_stylianos_papazachariou_2022}.

The above ideas allow us to construct a computational approach and algorithm (see Section \ref{sec:how to study VGIT}) to study VGIT quotients of complete intersections of hypersurfaces of the same degree, and hyperplanes. For the case of pairs, this algorithm has been implemented in a computer software package \cite{theodoros_stylianos_papazachariou_2022}. This algorithm receives as input the numbers $n$, $k$, $d$ and computes the values of walls and chambers. It then proceeds to compute the sets $N^\ominus$ and $N^-$, using their explicit Definitions in Section \ref{sec:semi-dist fams}, and keep only the maximal ones (see Section \ref{sec:semi-dist fams} for more details) for each wall and chamber. By Theorem \ref{thm 1.4}, each maximal set $N^\ominus$ and $N^-$ corresponds to either a non $t$-stable or $t$-unstable pair $(S,H)$, whose monomials are given, up to projective equivalence by an action of $\operatorname{SL}(n+1)$, by the monomials of the sets $N^\ominus$ and $N^-$. Hence, the algorithm generates all maximal unstable and non-stable families for each VGIT quotient. The detailed explanation of this algorithm is provided in Section \ref{sec:how to study VGIT}. When $kd\leq n$, due to with Theorem \ref{parametrising GIT}, this algorithm generates all non $t$-stable and $t$-unstable pairs $(S,D)$ for each wall and chamber, up to projective equivalence. Hence, this computational approach allows the explicit GIT classification of specific log Fano pairs.

In this paper we employ the computational methods of Section \ref{VGITsection} to classify in detail the  GIT compactifications of log pairs, for all walls and chambers of pairs $(S,D)$, where $S$ is a complete intersection of two quadrics in $\mathbb{P}^4$ and $D$ is a hyperplane section, and for varieties $X$, where $X$ is a complete intersection of two quadrics in $\mathbb{P}^3$. This is integral in proving our main results (Theorem \ref{wall crossing thm} and Corollary \ref{main corollary_intro} and Theorem \ref{fano 3fold thm}).

\subsection*{Wall crossing of the K-moduli of log Fano pairs}

%, or the machinery presented in Ascher-DeVlemming-Liu \cite{ascher2019wall}. 

%Using GIT, we construct a projective variety $\overline{M}^{GIT}_{n,d,k}(t)$ which parametrizes equivalence classes of semistable complete intersections of $k$ hypersurfaces of degree $d$. 

The CM line bundle, originally introduced by Paul and Tian in the absolute case \cite{paultian2006}, 
% and extended for log Fano pairs by Gallardo--Martinez-Garcia--Spotti \cite{Gallardo_2020}, i
is an invariant which plays an important role in the link between K-stability and GIT-stability. We make such an explicit link between the K-stability of log Fano pairs $(S,(1-\beta)D)$, for $\beta \in (0,1)\cap \mathbb{Q}$ and the GIT$_{t}$ stability of pairs $(S,D = S\cap H)$ by computing the log CM line bundle in Section \ref{cm_linebundle section}. This allows us to get an explicit relation between $t$ and $\beta$, and is the first step to relating K-moduli and VGIT quotients. In particular, by explicitly constructing a family $f\colon \mathcal{X}\rightarrow \mathcal{T}$ parametrising complete intersections and hyperplane sections, we show that the CM line bundle is ample, and we prove that $t$ can be expressed as a function of $\beta$ (Lemma
\ref{calculating the line bundle}). We focus here on the case where $S$ is a complete intersection of two quadrics in $\mathbb{P}^4$ and $D = S\cap H$ is an anticanonical divisor.

\begin{corollary}[See Corollary \ref{cm of ci in p4}]\label{CM line bundle}
    For any rational number $0<\beta<1$, we have $$\Lambda_{\operatorname{CM},\beta}\simeq \mathcal{O}(2(6-\beta),12(1-\beta))$$ as $\mathbb{Q}$-line bundles, which is ample. In particular,  $\Lambda_{\operatorname{CM},\beta}$ is ample for all $c\in (0,1)\cap \mathbb Q$ and $t(\beta)=\frac{6(1-\beta)}{6-\beta}$.
\end{corollary}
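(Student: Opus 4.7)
The plan is to obtain this corollary as a direct specialization of Lemma \ref{calculating the line bundle} (which computes the log CM line bundle for the general family of complete intersections of $k$ hypersurfaces of degree $d$ in $\mathbb{P}^n$ with a hyperplane section) to the case $n=4$, $k=d=2$. The key geometric input making this case especially clean is that for a complete intersection of two quadrics in $\mathbb{P}^4$ one has $-K_S \sim H|_S$ by adjunction, so the hyperplane section $D$ is anticanonical. I would first recall the setup from Section \ref{cm_linebundle section}: the parameter space $\mathcal{T}$ sits as an open subset of $\operatorname{Gr}(2, H^0(\mathbb{P}^4, \mathcal{O}(2))) \times \mathbb{P}(H^0(\mathbb{P}^4, \mathcal{O}(1)))$, whose Picard group is generated by the Plücker class $L_1$ and the hyperplane class $L_2$, so that any $\mathbb{Q}$-line bundle on $\mathcal{T}$ may be written as $\mathcal{O}(a,b) := L_1^{\otimes a} \otimes L_2^{\otimes b}$; the universal family $f: \mathcal{X} \to \mathcal{T}$ carries the universal divisor $\mathcal{D} = \mathcal{X} \cap \mathcal{H}$, where $\mathcal{H}$ is pulled back from the universal hyperplane on $\mathbb{P}^4 \times \mathcal{T}$.

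Next I would invoke Lemma \ref{calculating the line bundle}, which expresses $\Lambda_{\operatorname{CM}, \beta}$ as a bidegree line bundle whose coefficients $(a(n,k,d,\beta), b(n,k,d,\beta))$ come from the relative intersection numbers $(-K_{\mathcal{X}/\mathcal{T}} - (1-\beta)\mathcal{D})^{n+1} \cdot f^*L_i$ pushed forward to $\mathcal{T}$. Substituting $n=4, k=2, d=2$ and simplifying, using the linear equivalence relating $\mathcal{D}$ to the relative anticanonical, collapses the general formula to the claimed $(a,b) = (2(6-\beta), 12(1-\beta))$.

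Ampleness then follows because both coefficients $2(6-\beta)$ and $12(1-\beta)$ are strictly positive for $\beta \in (0,1) \cap \mathbb{Q}$, combined with the ampleness of $L_1$ on the Grassmannian factor and $L_2$ on the projective space factor. To extract $t(\beta)$, I would recall from Section \ref{VGITsection} that the VGIT$_t$-polarization on $\mathcal{T}$ is proportional to $L_1 \otimes L_2^{\otimes t}$; matching with $\Lambda_{\operatorname{CM},\beta}$ gives $t(\beta) = b/a = \tfrac{12(1-\beta)}{2(6-\beta)} = \tfrac{6(1-\beta)}{6-\beta}$.

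The main technical work has already been absorbed into Lemma \ref{calculating the line bundle}, whose proof requires careful push-forward of intersection numbers on the universal complete-intersection family and in particular the handling of the tautological Plücker class on the Grassmannian factor. At the corollary level the only delicate point is bookkeeping: one must track the correct normalization between the VGIT polarization and the CM bidegree so that the ratio $b/a$ produces precisely $6(1-\beta)/(6-\beta)$ rather than a spurious rescaling, and one must keep careful track of the fact that the anticanonical equivalence $D \sim -K_S$ allows several terms in the general formula to be absorbed into the leading $(6-\beta)$ and $(1-\beta)$ factors.
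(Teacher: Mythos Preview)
Your proposal is correct and matches the paper's approach exactly: the corollary is obtained by directly substituting $n=4$, $k=d=2$ (so that $kd=n$) into the formulas of Lemma~\ref{calculating the line bundle}, which yields $a(\beta)=d^{k-1}\bigl(d(n-k+1)-\beta\bigr)=2(6-\beta)$ and $b(\beta)=d^k(n-k+1)(1-\beta)=12(1-\beta)$, and ampleness follows from positivity of both coefficients while $t(\beta)=b(\beta)/a(\beta)$. The paper in fact gives no separate proof for the corollary, so your write-up is more detailed than strictly necessary.
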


The description for the general case is more complicated so we direct the reader to Section \ref{cm_linebundle section} for a more detailed description.

Focusing further on the case where $S$ is a complete intersection of two quadrics in $\mathbb{P}^4$ and $D = S\cap H$ is an anticanonical divisor, the K-moduli parametrising the K-polystable log Fano pairs $(S, (1-\beta)D$ is $\overline{M}^K_{4,2,2}(\beta)$, and the GIT moduli space parametrising GIT$_t$ polystable pairs $(S,D)$ is $\overline{M}^{GIT}_{4,2,2}(t(\beta))$. We show the following theorem.

\begin{theorem}[See Theorem \ref{k-stab implies git stab}]
    Let $(S,D)$ be a log Fano pair, where $S$ is a complete intersection of two quadrics in $\mathbb{P}^4$ and $D$ an anticanonical section. Suppose $(S, (1-\beta)D)$ is log $K$- (semi/poly)stable. Then, $(S,D)$ is GIT$_{t(\beta)}$-(semi/poly)stable, with slope $t(\beta) = \frac{b(\beta)}{a(\beta)} = \frac{6(1-\beta)}{6-\beta}$.
\end{theorem}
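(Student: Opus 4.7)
My plan is to follow the moduli continuity method for log Fano pairs, in the spirit of \cite{Gallardo_2020, ascher2019wall, ascher2020kmoduli}, exploiting the explicit computation of the log CM line bundle carried out in Corollary \ref{CM line bundle}. The overall strategy is to transfer log K-(semi/poly)stability of $(S,(1-\beta)D)$ into Hilbert--Mumford (semi/poly)stability of the corresponding point on the parameter space, using the fact that $\Lambda_{\operatorname{CM},\beta}$ is ample and is proportional, as a $\mathbb{Q}$-line bundle, to the natural VGIT polarization with slope $t(\beta)$.

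First I would fix the parameter space. A pair $(S,D)$ as in the statement is cut out by a pencil of quadrics together with a linear form, so it corresponds to a point of
$$\mathcal{T}\;\subset\;\mathrm{Gr}\bigl(2,H^0(\mathbb{P}^4,\mathcal{O}(2))\bigr)\;\times\;\mathbb{P}\bigl(H^0(\mathbb{P}^4,\mathcal{O}(1))\bigr),$$
with the induced $\mathrm{SL}(5)$-action. The $\mathrm{SL}(5)$-linearized line bundle $\mathcal{O}(a,b)$ polarizes the VGIT problem at slope $t=b/a$, and by Theorem \ref{parametrising GIT} the VGIT quotient at slope $t(\beta)$ is exactly $\overline{M}^{\mathrm{GIT}}_{4,2,2}(t(\beta))$, whose points represent pairs $(S,D=S\cap H)$. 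Using the family $f\colon\mathcal{X}\to\mathcal{T}$ built in Section \ref{cm_linebundle section}, Corollary \ref{CM line bundle} identifies $\Lambda_{\operatorname{CM},\beta}$ on $\mathcal{T}$ with $\mathcal{O}(2(6-\beta),12(1-\beta))$, which is ample and has slope exactly $t(\beta)=6(1-\beta)/(6-\beta)$.

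Given this, the argument proceeds as follows. Assume $(S,(1-\beta)D)$ is log K-semistable, and let $\lambda\colon\mathbb{G}_m\to\mathrm{SL}(5)$ be an arbitrary one-parameter subgroup. The $\lambda$-action on $\mathcal{T}$ induces a product-type test configuration $(\mathcal{S},(1-\beta)\mathcal{D})\to\mathbb{A}^1$ for the log pair. The (log) Paul--Tian formula (see \cite{paultian2006} in the absolute case and its log version in \cite{ascher2019wall}) relates the Hilbert--Mumford weight of $\lambda$ at $[(S,D)]$ with respect to $\Lambda_{\operatorname{CM},\beta}$ to the generalized log Futaki invariant of this test configuration, up to a positive multiplicative constant:
$$\mu^{\Lambda_{\operatorname{CM},\beta}}\bigl([(S,D)],\lambda\bigr)\;=\;C\cdot\operatorname{Fut}\bigl(\mathcal{S},(1-\beta)\mathcal{D};\lambda\bigr).$$
Log K-semistability forces the right-hand side to be non-negative for every $\lambda$, and hence the Hilbert--Mumford criterion, applied to the ample line bundle $\Lambda_{\operatorname{CM},\beta}$, yields GIT-semistability of $[(S,D)]$ with respect to $\Lambda_{\operatorname{CM},\beta}$; proportionality of slopes then promotes this to GIT$_{t(\beta)}$-semistability. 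The polystable case follows from the same computation applied to $\mathbb{G}_m$-subgroups fixing $[(S,D)]$, combined with the reductivity of the automorphism group of a K-polystable log Fano pair (see \cite{alper_reductivity}), which guarantees that the orbit of $[(S,D)]$ is closed in the semistable locus.

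The main technical obstacle I anticipate is the usual one in the moduli continuity method: Hilbert--Mumford only sees one-parameter degenerations inside $\mathrm{SL}(5)$ acting on $\mathcal{T}$, whereas log K-stability tests against arbitrary test configurations. This is where Theorem \ref{parametrising GIT} does the heavy lifting: since $kd=4\le n=4$ and $D$ is cut on $S$ by a hyperplane not contained in $S$, any equivariant degeneration inside $\mathcal{T}$ yields a test configuration of the correct form $(\mathcal{S},(1-\beta)\mathcal{D})$ with $\mathcal{D}=\mathcal{S}\cap\mathcal{H}$, so product test configurations inside $\mathcal{T}$ suffice to detect failure of K-semistability. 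Ampleness of $\Lambda_{\operatorname{CM},\beta}$ from Corollary \ref{CM line bundle}, together with the identification of slopes, then closes the argument.
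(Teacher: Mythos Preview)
Your overall strategy---identify the CM line bundle with the VGIT polarization at slope $t(\beta)$ via Corollary~\ref{cm of ci in p4}, then use the Paul--Tian weight formula (Theorem~\ref{gmgs thm 2.6}) to translate K-semistability into non-negativity of Hilbert--Mumford weights---is correct and is exactly what the paper does. However, there are two problems with the execution.

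First, your ``main technical obstacle'' paragraph has the logic inverted. We are proving K-semistable $\Rightarrow$ GIT-semistable. K-semistability gives $\mathrm{DF}_\beta\geq 0$ for \emph{all} test configurations; Hilbert--Mumford only requires this for the subclass coming from one-parameter subgroups of $\mathrm{SL}(5)$. So there is no difficulty in the direction you flag, and the claim that ``product test configurations inside $\mathcal{T}$ suffice to detect failure of K-semistability'' is irrelevant here (it would matter for the converse). What \emph{is} needed is that every one-parameter subgroup acting on a point of the parameter space actually induces a genuine test configuration for the log pair, so that K-semistability can be applied to it.

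Second, and more seriously, you miss the actual obstruction: the universal family $\pi\colon\mathcal{X}\to\mathcal{T}$ is \emph{not flat}. One-parameter limits can land in the non-equidimensional locus $\Sigma$ (e.g.\ where the two quadrics acquire a common hyperplane), and over such points the fibre is not a complete intersection surface, so the Paul--Tian identity does not apply. Theorem~\ref{theorem-H not in supp S} only rules out $H\subset S$; it does not address this. The paper handles it by excising a further closed set $M_3$ of codimension $\geq 2$ (Lemma~\ref{all in M3 unstable}) to form $\mathcal{T}'$, and then proving Proposition~\ref{one-ps induces t.c.}, which shows that for any $t$-semistable (resp.\ $t$-unstable) point of $\mathcal{T}'$ the relevant one-parameter limit remains in $\mathcal{T}'$. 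The paper's own remark after Theorem~\ref{k-stab implies git stab} notes that omitting this step was an error in the thesis version. Your proposal needs an analogue of Proposition~\ref{one-ps induces t.c.} to be complete; invoking Theorem~\ref{parametrising GIT} alone is not enough. For polystability, the paper argues directly: if $p$ were GIT semistable but not polystable, Proposition~\ref{one-ps induces t.c.} supplies a $\lambda$ whose limit $\bar p$ is polystable in $\mathcal{T}'$ with Hilbert--Mumford weight zero, hence $\mathrm{DF}_\beta=0$, forcing a product test configuration and $\bar p=p$. Your reductivity sketch could be made to work but is less direct.
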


We should note that without explicit knowledge of GIT unstable, semistable and stable pairs given by the computational methods detailed above, described in more detail in Section \ref{p4 VGIT section}, the above Theorem would be unattainable in complete generality, and could only be proven for specific cases depending on singularities. This implies that, at least in the case of complete intersections, the interaction between GIT and K-stability has to be studied to some extent via explicit computational methods, which are particularly aided by the results of Section \ref{VGITsection}. Using our classification of $t$-(poly)/stable pairs via isolated singularities in Section \ref{p4 VGIT section}, we prove our main result, Theorem \ref{wall crossing thm}, as well as the explicit description of the first wall crossing in Corollary \ref{main corollary_intro}. We should also note, that after the first appearance of this paper, we were able to extend the results of Theorem \ref{iso of stacks} to all walls and chambers in \cite{martinezgarcia2024kmodulilogdelpezzo}, by making extensive use of the main results of this paper. In particular, Theorem \ref{k-stab implies git stab} and Corollary \ref{cm of ci in p4} are necessary for extending the results of Theorem \ref{iso of stacks} in \cite{martinezgarcia2024kmodulilogdelpezzo}.

\subsection*{Compactification of the K-moduli space of Family 2.25}
To conclude our analysis into K-moduli spaces of complete intersections, we explicitly compactify the K-moduli space of family $2.25$ in the Mori--Mukai classification. The general element of family $2.25$, $X$, is given as the blow-up of $\mathbb{P}^3$ along a complete intersection of quadrics, i.e. $X = \operatorname{Bl}_{C_1\cap C_2} \mathbb{P}^3$. We denote the K-moduli space parametrising K-semistable varieties in this family as $\mathcal{M}^{K}_{2.25}$, with corresponding moduli space $\overline{M}^K_{2.25}$. Using the computational methods of Section \ref{VGITsection} and the computational programme \cite{theodoros_stylianos_papazachariou_2022}, we also explicitly describe the GIT moduli stack $\mathcal{M}^{GIT}_{3,2,2}$ parametrising GIT-semistable complete intersections of quadrics in $\mathbb{P}^3$ algorithmically. We also describe the good moduli space $\overline{M}^{GIT}_{3,2,2}$ explicitly. Then, we prove the following.

\begin{theorem}[see Theorem \ref{2.25 compactification}]\label{fano 3fold thm}
There exists an isomorphism between $\mathcal{M}^{K}_{2.25}$  and $\mathcal{M}^{GIT}_{3,2,2}$. In particular, this restricts to an isomorphism of good moduli spaces $\overline{M}^{K}_{2.25}\cong \overline{M}^{GIT}_{3,2,2}\cong \mathbb{P}^1$.
\end{theorem}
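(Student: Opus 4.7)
The plan is to adapt the moduli continuity strategy to the twist that objects of $\mathcal{M}^K_{2.25}$ are blow-ups of $\mathbb{P}^3$ along complete intersections of two quadrics, not the complete intersections themselves. First I would use the computational machinery of Section \ref{VGITsection} and the software of \cite{theodoros_stylianos_papazachariou_2022} to give an explicit classification of GIT-(semi/poly)stable complete intersections $C=V(Q_1)\cap V(Q_2)\subset\mathbb{P}^3$, identifying the closed orbits with the classical pencil invariants (roots of $\det(\lambda Q_1+\mu Q_2)$, i.e.\ the cross ratio of the singular members of the pencil). This already exhibits $\overline{M}^{GIT}_{3,2,2}\cong \mathbb{P}^1$ and enumerates the GIT strictly polystable, strictly semistable and unstable orbits by their singularities, which will be crucial for matching with K-moduli.

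Next I would construct a blow-up family. Over the open subscheme $U\subset\operatorname{Gr}(2,H^0(\mathbb{P}^3,\mathcal{O}(2)))$ parametrising GIT-semistable pencils, form $\mathcal{X}=\operatorname{Bl}_{\mathcal{C}}(\mathbb{P}^3\times U)\to U$ where $\mathcal{C}$ is the universal complete intersection. I would check that $\pi\colon\mathcal{X}\to U$ is flat, that $-K_{\mathcal{X}/U}$ is $\pi$-ample, and that the fibres have the expected klt Fano singularities whenever $C$ has at worst the mild singularities permitted by GIT semistability. Applying $\operatorname{SL}(4)$-descent and the openness of K-semistability from \cite{blum-xu}, this produces a well-defined morphism of Artin stacks $\Phi\colon\mathcal{M}^{GIT}_{3,2,2}\longrightarrow \mathcal{M}^K_{2.25}$ whose induced map on good moduli spaces I denote $\phi\colon\overline{M}^{GIT}_{3,2,2}\to\overline{M}^K_{2.25}$.

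The core step is to show $\Phi$ is an isomorphism. For closed points, given a GIT-polystable pencil I would prove K-polystability of the blow-up $X=\operatorname{Bl}_C\mathbb{P}^3$. For the smooth general member this follows from the known K-polystability of a general Fano $3$-fold in family $2.25$; for the strictly polystable boundary points singled out by the GIT analysis, I would verify K-polystability via the CM line bundle and the reverse moduli continuity method announced in the introduction, together with explicit test configurations tailored to the $\mathbb{G}_m$-stabilisers produced by the GIT classification. Conversely, to see that every K-polystable object in family $2.25$ lies in the image, I would use properness of $\overline{M}^K_{2.25}$ (established in \cite{blum-xu,liu2021finite}) and an explicit local description of $|{-}K_X|$ showing that the contraction of the exceptional divisor of $X$ realises $X$ as a blow-up of $\mathbb{P}^3$ along some degeneration of a complete intersection of two quadrics, which is then forced to be GIT-polystable by the already-established matching of closed orbits.

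To upgrade from bijectivity on closed points to an isomorphism of stacks I would compare automorphism groups fibre by fibre: for a blow-up $X\to\mathbb{P}^3$, $\operatorname{Aut}(X)$ is identified with the subgroup of $\operatorname{PGL}(4)$ preserving the pencil $\langle Q_1,Q_2\rangle$, and this matches the GIT stabiliser. Together with smoothness/reducedness of both stacks along the general member and the bijection established above, this forces $\Phi$ to be an isomorphism of Deligne--Mumford stacks near every point; properness of both sides then promotes $\phi$ to an isomorphism of proper good moduli spaces. Combined with the explicit description $\overline{M}^{GIT}_{3,2,2}\cong\mathbb{P}^1$, this yields the theorem. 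The main obstacle I expect is the surjectivity/degeneration step: controlling all possible K-polystable degenerations of family $2.25$ and proving that none of them escapes the blow-up picture; here the explicit singularity classification coming from Theorem \ref{thm 1.4} applied to $\overline{M}^{GIT}_{3,2,2}$ is what makes the argument tractable.
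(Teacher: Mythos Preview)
Your overall architecture matches the paper's: build the blow-up morphism $\mathcal{M}^{GIT}_{3,2,2}\to\mathcal{M}^K_{2.25}$ (the ``reverse'' direction), show that GIT-(poly/semi)stability implies K-(poly/semi)stability, and then promote this to an isomorphism of stacks. There are, however, two points where your plan diverges from what the paper does, and the first of them is a genuine gap.

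The gap is in your treatment of the strictly GIT-polystable boundary point. You propose to ``verify K-polystability via the CM line bundle and the reverse moduli continuity method \dots\ together with explicit test configurations tailored to the $\mathbb{G}_m$-stabilisers''. But the CM line bundle comparison gives the implication K-semistable $\Rightarrow$ GIT-semistable (this is exactly how it is used later, in Theorem~\ref{k-stab implies git stab}), not the direction you need here; and ``the reverse moduli continuity method'' is the name for the strategy you are already executing, not an independent input. Checking only the test configurations coming from the $\mathbb{G}_m$-stabiliser is not sufficient for K-polystability. The paper instead observes that the unique strictly GIT-polystable curve is the quadrangle $\{x_0x_1=x_2x_3=0\}$, so its blow-up $\tilde X=\operatorname{Bl}_{\tilde C}\mathbb{P}^3$ is a \emph{toric} Fano threefold, and K-polystability follows by computing that the barycentre of the associated polytope is the origin (Theorem~\ref{toric batyrev}, Lemma~\ref{gh_limit}). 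From this, the strictly GIT-semistable curves give strictly K-semistable blow-ups by exhibiting the one-parameter degeneration to $\tilde X$ as a test configuration (Lemma~\ref{strictly k-ss for 2.25}). Without this concrete verification (or an equivalent one), Corollary~\ref{GIT implies K-stability p3} is not established and the morphism to $\mathcal{M}^K_{2.25}$ is not known to land in the K-semistable locus on the boundary.

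The second divergence concerns surjectivity and the upgrade to stacks. You plan to show directly that every K-polystable limit in family 2.25 is a blow-up of $\mathbb{P}^3$ along a complete intersection of two quadrics by analysing $|{-}K_X|$ and contracting the exceptional divisor, and then to match automorphism groups fibrewise. You correctly flag the degeneration control as the main obstacle. The paper bypasses this completely: once GIT-(semi)stable $\Rightarrow$ K-(semi)stable is in hand, $\phi$ is a representable, separated, quasi-finite open immersion of stacks; by \cite[Prop.~6.4]{Alper} the induced map on good moduli spaces is finite, and Zariski's Main Theorem (birational with finite fibres to a normal target) forces $\phi$ to be an isomorphism. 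No classification of K-polystable degenerations and no separate automorphism-matching argument is needed. Your route could in principle be made to work, but it is substantially harder and is not what the paper does.
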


The main novelty in proving the above Theorem is the introduction and use of the \emph{reverse moduli continuity method}, which adapts the moduli continuity method, which was introduced for del Pezzo surfaces in \cite{odaka_spotti_sun_2016}, and applied for quartic del Pezzo varieties in \cite{spotti_sun_2017} and for cubic threefolds in \cite{liu-xu}. The first instance of a similar method was presented by Mabuchi-Mukai \cite{mabuchi_mukai_1990}, which studied the link between GIT stability and the existence of K{\"a}hler-Einstein metrics on del Pezzo surfaces of degree $4$.

The novelty of the reverse moduli continuity method is that we obtain a map from the GIT quotient stack to the K-moduli stack, which is the reverse direction to the moduli continuity method. The benefits of this method are that if we can explicitly describe the GIT quotients and show that GIT polystability implies K-polystability, then we can immediately obtain isomorphisms on the level of stacks as in Theorem \ref{fano 3fold thm}. We note that although this method is applied to this specific K-moduli problem, it works in complete generality when one can show that all GIT polystable orbits are K-polystable, which is feasible due to recent computational methods in studying K-stability. We expect this technique to be useful in future work. Since one needs to have an explicit description of the GIT quotient in order to apply the reverse moduli continuity method, we expect that a combination of the above computational techniques and methods in K-moduli theory will have broad application, especially in obtaining explicit descriptions of low-dimensional K-moduli spaces, where the GIT quotients can be studied computationally.

\subsection*{Outline of the paper}

The outline of the paper is the following: In Section \ref{section_prelims} we make a brief introduction to the K-stability preliminaries used throughout the paper. In section \ref{VGITsection} we study the Variational GIT of tuples $(S,H_1,\dots, H_m)$ of complete intersections $S$ and hyperplanes $H_i$. We will do so algorithmically, expanding upon the theory presented in \cite{gallardo_martinez-garcia_2018} and \cite{zhang_2018}. This algorithm allows us to computationally find GIT walls and chambers (Theorem \ref{git walls}), and specific unstable families (Theorem \ref{unstable families vgit}). In this section, we also prove Theorem \ref{parametrising GIT}. 

In Section \ref{p3 VGIT section} we introduce some necessary preliminaries in the study of singularities of the complete intersection of two quadrics in $\mathbb{P}^n$ and we classify all complete intersections of two quadrics in $\mathbb{P}^3$ based on singularities, by giving explicit equations. We use this in conjunction with the computational methods of Section \ref{VGITsection} and the computational program \cite{theodoros_stylianos_papazachariou_2022} to classify in completeness the GIT (semi/poly-)stable points for the complete intersection of two quadrics in $\mathbb{P}^3$. Then, in Section \ref{fano 3fold section} we prove that GIT polystable points correspond to K-polystable points, and we introduce the reverse moduli continuity method. We then proceed to use the reverse moduli continuity method to prove Theorem \ref{fano 3fold thm}. In section \ref{p4 VGIT section} we classify all pairs $(S,D)$ where $S$ is a complete intersection of two quadrics in $\mathbb{P}^4$ and $D$ is a hyperplane section based on their singularities, and we provide explicit equations. We use our VGIT algorithm to classify all GIT$_t$ stable and polystable log pairs $(S,D)$ for each wall and chamber. These serve to obtain the necessary tools to prove Theorem \ref{wall crossing thm} and Corollary \ref{main corollary_intro}. In Section \ref{cm_linebundle section} we calculate the log CM line bundle of log Fano pairs $(S,(1-\beta)D)$ which gives us the explicit link between VGIT and K-moduli, and we show that for the above setting K-(semi/poly-)stable implies VGIT (semi/poly-)stable. Using the above, we prove our main Theorem, Theorem \ref{wall crossing thm} in Section \ref{main theorem section}. 

\renewcommand{\abstractname}{Acknowledgements}
\begin{abstract}
I would like to thank my PhD supervisor, Jesus Martinez-Garcia for his useful comments and suggestions in improving this draft. I would also like to thank Kenneth Ascher, Ivan Cheltsov, Ruadha{'i} Dervan, Patricio Gallardo, Yuji Odaka and Junyan Zhao for useful discussions and suggestions. This paper is part of my PhD thesis at the University of Essex, funded by a Mathematical Sciences Scholarship. Parts of this paper were finished during my postdoctoral position at the University of Glasgow, funded by Ruadha{\'i} Dervan's Royal Society University Research Fellowship. I would also like to thank my PhD defence examiners, Chenyang Xu and Gerald Williams, for many useful comments and suggestions in improving this draft.  
\end{abstract}

\addtocontents{toc}{\protect\setcounter{tocdepth}{2}}

\section{Preliminaries}\label{section_prelims}

We recall some definitions and theorems on K-stability that will be useful throughout the paper.
\begin{definition}
Let $X$ be a normal variety and $D$ an effective $\mathbb{Q}$-Weil divisor on $X$. The pair $(X,D)$ is called a \emph{log pair}. If, further, $X$ is projective and $(-K_X-D)$ is $\mathbb{Q}$-Cartier and ample the pair is called a \emph{log Fano pair}. If $(X,D)$ is klt, then we say that it is klt log Fano. 
$X$ is a \emph{$\mathbb{Q}$-Fano variety} if $X$ is a klt Fano variety.  
\end{definition}

%A pair $(X,L)$ where $X$ is a projective variety and $L$ is an ample line bundle is called a polarized pair. 
Below is the definition for the test configurations of polarised varieties due to Tian \cite{tian_test} and Donaldson \cite{Donaldson_test}. 

\begin{definition}
Let $(X,L)$ be a polarized variety of dimension $n$. A \emph{test configuration} $(\mathcal{X},\mathcal{L})/\mathbb{A}^1$ for $(X,L)$ consists of the following data:
\begin{enumerate}
    \item a normal variety $\mathcal{X}$ together with a surjective projective morphism $\pi\colon \mathcal{X}\rightarrow \mathbb{A}^1$;
    \item a $\pi$-ample line bundle $\mathcal{L}$;
    \item a $\mathbb{G}_m$-action on $(\mathcal{X},\mathcal{L})$ such that $\pi$ is $\mathbb{G}_m$-equivariant with respect to the natural action of $\mathbb{G}_m$ on $\mathbb{A}^1$;
    \item $(\mathcal{X}\setminus \mathcal{X}_0, \mathcal{L}|_{\mathcal{X}\setminus \mathcal{X}_0})$ is $\mathbb{G}_m$-equivariantly isomorphic to $(X,L)\times (\mathbb{A}^1\setminus \{0\})$.
\end{enumerate}

\end{definition}

We may write 
$$w(k) = b_0k^{n+1}+b_1k^n+\dots$$
for the weight of the determinant vector space $\det H^0(\mathcal{X}_0,\mathcal{L}^{\otimes m}_0)$, and  
$$N_m\vcentcolon=h^0(X,L^{\otimes m}) = a_0m^n+a_1m^{n-1}+\dots$$ 
for the Hilbert polynomial. We then can define the Donaldson Futaki invariant of the test configuration.

\begin{definition}\label{df invariant def}
The \emph{Donaldson-Futaki invariant} of a test configuration $(\mathcal{X};\mathcal{L})$ is 
$$\operatorname{DF}(\mathcal{X},\mathcal{L})\vcentcolon=2\frac{b_0a_1-a_0b_1}{a_0}.$$
\end{definition}

Using the Donaldson-Futaki invariant, we can define when a polarised variety is K-(semi)stable.
 
\begin{definition}\label{k-stability def}\cite{tian_test}, \cite{Donaldson_test}
We say that:  
\begin{enumerate}
    \item $X$ is \emph{K-semistable} if $\operatorname{DF}(\mathcal{X},\mathcal{L})\geq0$ for all non-trivial test configurations; 
    \item $X$ is \emph{K-stable} if it is K-semistable and  $\operatorname{DF}(\mathcal{X},\mathcal{L})=0$ only for product test configurations;
    \item $X$ is \emph{K-polystable} if it is K-semistable and  $\operatorname{DF}(\mathcal{X},\mathcal{L})=0$ only for trivial test configurations
%    \item If $X$ is Fano, the polarized pair $(X,-K_X)$ admits a K{\"a}hler-Einstein metric if and only if it is K-polystable. \cite{chen_donaldson_sun_2013}
\end{enumerate}

\end{definition}

Writing further $\tilde w(k) = \tilde b_0k^n+\dots$ for the weight of the determinant vector space  $\det H^0(\mathcal{D}_0,\mathcal{L}^{\otimes m}|_{\mathcal{D}_0})$,  we can define the $\beta$-Donaldson Futaki invariant of  $(\mathcal{X},\mathcal{D},\mathcal{L})$ \cite{donaldson_problem}.

\begin{definition}[{\cite[Definition 3.2]{odaka_sun_2015}}]\label{beta-df invariant def}
For $\beta \in (0,1]\cap \mathbb{Q}$ the  $\beta$-\emph{Donaldson-Futaki invariant} for the test configuration $(\mathcal{X},\mathcal{D},\mathcal{L})$ is  a numerical invariant 
$$\operatorname{DF}_{\beta}(\mathcal{X},\mathcal{L})\vcentcolon=2\frac{b_0a_1-a_0b_1}{a_0}+(1-\beta)\frac{\tilde b_0a_1-a_0\tilde b_1}{a_0}$$
\end{definition}

We can now define K-stability for pairs $(X,(1-\beta)D, L)$.

\begin{definition}\label{beta k-stability def}
An pair $(X,(1-\beta)D, L)$ is
\begin{enumerate}
    \item \emph{K-semistable} if $\operatorname{DF}_{\beta}(\mathcal{X},\mathcal{L})\geq0$ for all non-trivial test configurations; 
    \item \emph{K-stable} if it is K-semistable and  $\operatorname{DF}_{\beta}(\mathcal{X},\mathcal{L})=0$ only for trivial test configurations;
    \item $X$ is \emph{K-polystable} if it is K-semistable and  $\operatorname{DF}_{\beta}(\mathcal{X},\mathcal{L})=0$ only for test configurations equivariantly isomorphic to the trivial test configuration.
\end{enumerate}
\end{definition}

Detecting K-stability can be a challenging process, except in specific cases, such as toric Fano varieties. In particular, we will make use of the following Theorem in Section \ref{fano 3fold section}, when we prove that specific singular toric Fano varieties are K-polystable.

\begin{theorem}[\cite{batyrev}, {\cite[Theorem 1.2]{fujita_volume}}, {\cite[Corollary 1.2.]{Berman}}]\label{toric batyrev}
Let $X$ be a normal toric Fano variety, and let $P$ be its associated
anticanonical polytope in $M \otimes_{\mathbb{Z}} \mathbb{R}$, where $M$ be the character lattice of the torus. Then $X$ is K-polystable if and only if the barycentre of $P$ is the origin.
\end{theorem}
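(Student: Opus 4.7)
My plan is to reduce the statement to a purely convex-geometric problem on $P$ by exploiting the torus action. The first step is to restrict attention to $T$-equivariant test configurations of $(X,-K_X)$, where $T$ is the torus acting on $X$. For smooth toric Fanos this reduction is classical (Wang--Zhu, Donaldson). For klt toric Fanos one can invoke Berman's variational characterization of K-polystability, which is equivariant in nature, or the equivariant reduction to special test configurations due to Li--Xu and collaborators, which guarantees that any destabilizing test configuration produces a $T$-equivariant one of no greater Donaldson--Futaki invariant.

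Next I would invoke the combinatorial dictionary of Donaldson: the $T$-equivariant normal test configurations of $(X,-K_X)$ correspond to rational piecewise-linear convex functions $f\colon P\to\mathbb{R}$. Under this dictionary the trivial test configuration corresponds to the constants, the product test configurations induced by one-parameter subgroups of $T$ correspond to affine linear functions, and all other equivariant test configurations correspond to non-affine PL convex $f$.

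The third step is a computation via equivariant Riemann--Roch (as in Donaldson's toric framework), which expresses the Donaldson--Futaki invariant of the test configuration associated to $f$ as a positive multiple of
\[
\mathcal{L}(f) \;=\; \int_{\partial P} f\,d\sigma \;-\; \frac{\operatorname{vol}(\partial P)}{\operatorname{vol}(P)}\int_P f\,dx,
\]
where $d\sigma$ is the lattice-induced boundary measure. A short calculation, using the Fano condition that every facet of $P$ lies at lattice distance one from the origin, shows that on an affine linear function $f(x)=\langle \xi,x\rangle$ the quantity $\mathcal{L}(f)$ is a positive multiple of $-\langle \xi,\operatorname{bar}(P)\rangle$.

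The final step is to conclude. If $\operatorname{bar}(P)=0$, then $\mathcal{L}$ vanishes on all affine linear $f$, and a classical convexity argument shows that $\mathcal{L}(f)>0$ whenever $f$ is PL convex but not affine linear; this yields K-polystability. Conversely, if $\operatorname{bar}(P)\neq 0$, choosing $\xi$ with $\langle \xi,\operatorname{bar}(P)\rangle>0$ produces a one-parameter subgroup of $T$ whose induced product test configuration has strictly negative Donaldson--Futaki invariant, so $X$ is not even K-semistable. The main obstacle in this approach is the first step, the rigorous equivariant reduction in the singular klt setting; once that is in place, the remaining steps are well-known convex geometry.
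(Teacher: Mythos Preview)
The paper does not give its own proof of this theorem: it is stated in the preliminaries as a known result, with attribution to Batyrev, Fujita, and Berman, and is later applied as a black box to verify that the toric threefold $\tilde{X}$ in Lemma~\ref{gh_limit} is K-polystable. There is therefore no proof in the paper to compare your proposal against.

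That said, your outline follows the standard route through Donaldson's toric framework and is essentially correct as a sketch. The one place where your phrasing is a little too casual is the ``classical convexity argument'' in the final step: showing that $\mathcal{L}(f)>0$ for every PL convex $f$ that is not affine, given only that $\mathcal{L}$ vanishes on affine functions, requires an integration-by-parts identity on $P$ (rewriting $\mathcal{L}(f)$ as an integral against a nonnegative density coming from the Fano condition that each facet is at lattice distance one) together with the observation that this integral vanishes only when $f$ is affine. This is well known but is the substantive convex-geometric content, and in a full proof you would need to make it explicit rather than invoke it by name. The equivariant reduction in the klt setting, which you correctly flag as the delicate point, is indeed handled by Berman's variational approach or by the Li--Xu special test configuration machinery, and your citations there are appropriate.
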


Now let $\pi \colon \mathcal{X}\rightarrow T$ be a flat proper morphism of relative dimension $n$, $\mathcal{D}\subseteq \mathcal{X}$ an effective $\mathbb{Q}$-Weil divisor, with a restriction $\pi_{\mathcal{D}}$ also proper and of relative dimenion $n-1$. 

For sufficiently large $k>0$ the Knudsen-Mumford theorem \cite{knudsen} says that there exist functorially defined line bundles $\lambda_j \vcentcolon=\lambda_j(\mathcal{X},\mathcal{B},\mathcal{L})$, $\tilde\lambda_j \vcentcolon=\lambda_j(\mathcal{X},\mathcal{B},\mathcal{L}|_{\mathcal{D}})$ such that 

\begin{equation*}
    \begin{split}
     \det(\pi!_*(\mathcal{L}^k)) &= \bigotimes^{n+1}_{i=1}\lambda_i^{\binom{k}{i}}\\
     \det\big(\pi!_*((\mathcal{L}|_{\mathcal{D}})^k)\big) &= \bigotimes^{n}_{i=1}\tilde{\lambda}_i^{\binom{k}{i}}.
    \end{split}
\end{equation*}

\begin{definition}\label{log cm line bundle}\cite{paultian2006, Gallardo_2020}
Given a tuple $(\mathcal{X},\mathcal{D},T,\mathcal{L})$ its \emph{log CM line bundle} with angle $\beta \in \mathbb{Q}_{>0}$ on $T$ is 
$$\Lambda_{CM,\beta}(\mathcal{X},\mathcal{D},\mathcal{L}) = \lambda_{n+1}^{\otimes \Big(n(n+1) +\frac{2a_1-(1-\beta)\tilde a_0}{\alpha_0}\Big)}\otimes\lambda_n^{\otimes (-2(n+1))}\otimes \tilde \lambda_n^{\otimes (1-\beta)(n+1)} .$$

\end{definition}
We then have the following two results due to 
Phong--Ross--Sturm \cite{phong_ross_sturm_2008} and
Gallardo--Martinez-Garcia--Spotti \cite{Gallardo_2020} which extended to the log Fano pair setting:
\begin{theorem}[\cite{phong_ross_sturm_2008}, {\cite[Theorem 2.6]{Gallardo_2020}}]\label{gmgs thm 2.6}
Let $T= \mathbb{C}$ and suppose that $(\mathcal{X},\mathcal{D},\mathbb{C},\mathcal{L})$ is a test configuration for an $L$-polarised pair $(X,D)$. Then:
$$w(\Lambda_{CM,\beta}(\mathcal{X},\mathcal{D},\mathcal{L}^r)) = (n+1)!\operatorname{DF}_{\beta}(\mathcal{X},\mathcal{D},\mathcal{L})$$
\end{theorem}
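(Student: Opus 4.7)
The plan is to reduce the identity to a direct comparison of two polynomial expansions in $k$. The absolute case $\beta=1$ is precisely the result of Phong--Ross--Sturm \cite{phong_ross_sturm_2008}, so the only new content is the contribution of the $\tilde\lambda_n$ factor appearing in Definition \ref{log cm line bundle}, together with the modified exponent of $\lambda_{n+1}$.

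First I would use the Knudsen--Mumford expansion to write
\begin{equation*}
w(k)=\sum_{i=1}^{n+1}\binom{k}{i}\operatorname{wt}(\lambda_i),\qquad \tilde w(k)=\sum_{i=1}^{n}\binom{k}{i}\operatorname{wt}(\tilde\lambda_i),
\end{equation*}
and expand each $\binom{k}{i}$ as a polynomial in $k$. Using $\binom{k}{n+1}=\tfrac{1}{(n+1)!}k^{n+1}-\tfrac{n}{2\,n!}k^n+O(k^{n-1})$, $\binom{k}{n}=\tfrac{1}{n!}k^n+O(k^{n-1})$, and noting that $\binom{k}{i}$ for $i<n$ contributes only to $O(k^{n-1})$, I match the $k^{n+1}$ and $k^n$ coefficients against $w(k)=b_0k^{n+1}+b_1k^n+\dots$ to obtain
\begin{equation*}
\operatorname{wt}(\lambda_{n+1})=(n+1)!\,b_0,\qquad \operatorname{wt}(\lambda_n)=n!\,b_1+\tfrac{n\,(n+1)!}{2}b_0,
\end{equation*}
and similarly $\operatorname{wt}(\tilde\lambda_n)=n!\,\tilde b_0$ from the top coefficient of $\tilde w(k)$.

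Next, additivity of weights under tensor products applied to Definition \ref{log cm line bundle} expresses $w(\Lambda_{\operatorname{CM},\beta}(\mathcal{X},\mathcal{D},\mathcal{L}^r))$ as an explicit integer linear combination of these three weights. Substituting and simplifying, the two contributions proportional to $n(n+1)(n+1)!\,b_0$---one from the exponent $n(n+1)$ on $\lambda_{n+1}$, the other from the subleading term of $\operatorname{wt}(\lambda_n)$ multiplied by $-2(n+1)$---cancel, and what remains regroups as $(n+1)!\operatorname{DF}_\beta(\mathcal{X},\mathcal{D},\mathcal{L})$ in the form of Definition \ref{beta-df invariant def}. The auxiliary parameter $r$ enters only to guarantee that Knudsen--Mumford applies to $\pi_*\mathcal{L}^r$ and to $\pi_*((\mathcal{L}|_\mathcal{D})^r)$; both sides of the identity scale compatibly under $\mathcal{L}\mapsto\mathcal{L}^r$, so it suffices to verify it for any sufficiently large $r$.

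The main obstacle is pure bookkeeping: one must check that the $n(n+1)(n+1)!\,b_0$ terms cancel exactly and that the surviving combination of $a_0,a_1,\tilde a_0,b_0,b_1,\tilde b_0$ assembles into the precise form of $\operatorname{DF}_\beta$ used in Definition \ref{beta-df invariant def}. Once the coefficients align, the identity becomes a formal algebraic consequence of Knudsen--Mumford, with no further geometric input.
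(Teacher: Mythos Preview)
The paper does not supply its own proof of this statement: it is quoted as a result of Phong--Ross--Sturm (absolute case) and Gallardo--Martinez-Garcia--Spotti (log case), with no argument given. So there is nothing in the paper to compare against.

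Your approach is the standard one and is correct in outline: extract $\operatorname{wt}(\lambda_{n+1})$, $\operatorname{wt}(\lambda_n)$, $\operatorname{wt}(\tilde\lambda_n)$ from the top two coefficients of the Knudsen--Mumford expansion, then feed these into the tensor-product formula from Definition~\ref{log cm line bundle}. Your observation that the $n(n+1)(n+1)!\,b_0$ terms cancel is the heart of the computation and is right. One caution on the final bookkeeping step: carrying your substitution through to the end yields
\[
(n+1)!\left[\frac{2(a_1b_0-a_0b_1)}{a_0}+(1-\beta)\,\frac{a_0\tilde b_0-\tilde a_0 b_0}{a_0}\right],
\]
whereas Definition~\ref{beta-df invariant def} as printed has $(1-\beta)\frac{\tilde b_0 a_1-a_0\tilde b_1}{a_0}$ in the second term. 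These are not the same expression, so either the paper's Definition~\ref{beta-df invariant def} carries a typo (the computation naturally produces $\tilde a_0$ and $b_0$, not $a_1$ and $\tilde b_1$) or there is an additional identity relating the two that you would need to invoke. You should flag this discrepancy rather than assert that the match is automatic; the method is sound, but the claim that the output ``assembles into the precise form of $\operatorname{DF}_\beta$ used in Definition~\ref{beta-df invariant def}'' needs this point resolved.
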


\begin{theorem}[{\cite[Theorem 2.7]{Gallardo_2020}}]\label{gmgs thm 2.7}
Let $(X,D,L)$ be the restriction of a family  $(\mathcal{X},\mathcal{D},\mathcal{L})$ to a general $t \in T$. If $\mathcal{L} = -K_{\mathcal{X}/T}$ and $\mathcal{D}|_{\mathcal{X}_t}\in |-K_{\mathcal{X}_t}|$ for all $t \in T$ then 
\begin{equation*}
    \begin{split}
        \deg(\Lambda_{CM,\beta}) = &-(1+n(1-\beta))\pi_* \Big(c_1(-K_{\mathcal{X}/T})^{n+1} \Big)\\
        &+ (1-\beta)(n+1)\pi_* \Big(c_1(-K_{\mathcal{X}/T})^{n}\cdot \mathcal{D} \Big)
    \end{split}
\end{equation*}
%$$\deg(\Lambda_{CM,\beta}) = -(1+n(1-\beta))\pi_* \Big(c_1(-K_{\mathcal{S}/\mathbb{P}^1})^{n+1} \Big) + (1-\beta)(n+1)\pi_* \Big(c_1(-K_{\mathcal{S}/\mathbb{P}^1})^{n}\cdot \mathcal{D} \Big).$$
\end{theorem}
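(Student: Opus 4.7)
The plan is to compute the first Chern class of $\Lambda_{CM,\beta}$ directly from its definition by applying Grothendieck--Riemann--Roch to the flat morphism $\pi\colon \mathcal{X}\to T$ (and to $\pi|_{\mathcal{D}}$) in order to express each of $c_1(\lambda_{n+1})$, $c_1(\lambda_n)$ and $c_1(\tilde\lambda_n)$ as pushforwards of intersection products on $\mathcal{X}$, and then to plug in the Fano and anticanonical assumptions $\mathcal{L} = -K_{\mathcal{X}/T}$ and $\mathcal{D}|_{\mathcal{X}_t}\in |-K_{\mathcal{X}_t}|$.

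First I would choose $k \gg 0$ so that $R^i\pi_*\mathcal{L}^k = 0$ for $i > 0$ and apply GRR to write
$$\mathrm{ch}(\pi_*(\mathcal{L}^k)) \;=\; \pi_*\bigl(\mathrm{ch}(\mathcal{L}^k)\cdot \mathrm{td}(T_{\mathcal{X}/T})\bigr).$$
Taking the degree-one piece on $T$ and matching coefficients against the Knudsen--Mumford expansion $c_1(\det \pi_*\mathcal{L}^k) = \sum_i \binom{k}{i}\,c_1(\lambda_i)$, one reads off $c_1(\lambda_{n+1})$ from the $k^{n+1}$ coefficient and $c_1(\lambda_n)$ from the $k^n$ coefficient (the latter receives contributions from both $\binom{k}{n}$ and $\binom{k}{n+1}$, and involves $\mathrm{td}_1(T_{\mathcal{X}/T})=-\tfrac{1}{2}K_{\mathcal{X}/T}$). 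Imposing $\mathcal{L}=-K_{\mathcal{X}/T}$ collapses these two formulas into the clean identity $c_1(\lambda_n) = \tfrac{n+1}{2}\,c_1(\lambda_{n+1})$, which is precisely what drives the cancellation in the final expression. The same procedure applied to $\pi|_{\mathcal{D}}$, combined with the identity $\pi_*(\alpha\cdot [\mathcal{D}]) = (\pi|_{\mathcal{D}})_*(\alpha|_{\mathcal{D}})$, yields $c_1(\tilde\lambda_n) = \pi_*\bigl((-K_{\mathcal{X}/T})^n\cdot \mathcal{D}\bigr)$.

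Next I would compute the Hilbert polynomial coefficients $a_0,a_1,\tilde a_0$ on a general fiber by Hirzebruch--Riemann--Roch; the Fano hypothesis gives $a_1/a_0 = n/2$ and the anticanonical assumption on $\mathcal{D}$ gives $\tilde a_0/a_0 = n$, so the exponent of $\lambda_{n+1}$ in Definition \ref{log cm line bundle} simplifies to $n(n+1)+n\beta$. Substituting the three Chern class formulas into the definition of $\Lambda_{CM,\beta}$ and collecting the $\pi_*((-K_{\mathcal{X}/T})^{n+1})$ terms produces the coefficient $-(1+n(1-\beta))$, with the $\tilde\lambda_n$ contribution supplying the remaining $(1-\beta)(n+1)\,\pi_*((-K_{\mathcal{X}/T})^n\cdot \mathcal{D})$ term. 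I expect the main obstacle to be purely bookkeeping: tracking the coefficient of $k^n$ in $\binom{k}{n+1}$ alongside the Todd-class contributions so that the clean identity $c_1(\lambda_n)=\tfrac{n+1}{2}c_1(\lambda_{n+1})$ really does emerge, and verifying that the manipulations are valid in $N^1(T)_{\mathbb{Q}}$ up to a uniform integer scaling.
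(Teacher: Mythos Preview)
The paper does not give its own proof of this theorem: it is stated in the preliminaries as a citation of \cite[Theorem 2.7]{Gallardo_2020} and used as a black box in Section~\ref{cm_linebundle section}. Your Grothendieck--Riemann--Roch approach is the standard one and is essentially how such formulas are established in the cited literature, so there is nothing to compare against here.
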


\section{Computational Variation of GIT}\label{VGITsection}
In this section, we will study how to generalise results in \cite{laza-cubics,gallardo_martinez-garcia_2018, zhang_2018}, in order to obtain a computational toolkit that will allow us to study specific GIT problems algorithmically. In more detail, we will consider GIT quotients of tuples $(X,H_1,\dots,H_m)$ where $X$ is the complete intersection of $k$ hypersurfaces of degree $d$ in $\mathbb{P}^n$, and the $H_i$ are hyperplanes. We will explain what makes these types of quotients variational, and we will explain how one can define a Hilbert-Mumford numerical criterion in order to study these quotients. 

We will then proceed to construct the computational setting necessary for our analysis. We will introduce a finite \textit{fundamental set of one-parameter subgroups}, which, roughly, determines which of these tuples are not-stable/unstable with respect to a specific polarisation. Using this, we will demonstrate, with extra care in the case of pairs $(X,H)$ how one can computationally obtain the walls and chambers of this VGIT problem.

We will also use a polyhedral criterion, the \textit{Centroid Criterion}, following \cite[Theorem 9.2]{dolgachev_1994} and \cite{gallardo_martinez-garcia_2018}, which will allow us to distinguish between stable, and strictly semistable tuples. We will demonstrate how this, in addition with the extra condition that $X$ is Fano, shows that in the case of pairs, the GIT quotient parametrises log pairs $(X, D= S\cap H)$. We also compute the dimension of the VGIT quotient.

We conclude the computational side of VGIT by introducing the concept of  semi\--de\-stab\-ilising families, following \cite{gallardo_martinez-garcia_2018}, which can roughly be thought as sets of weights which in turn will define the polynomials in the support of the pair of complete intersection and hyperplane such that these are unstable/non-stable for a specific parameter $t$. We will show that these families are maximal, in the sense that a pair $(X,H)$ which is unstable/non-stable for some $t$ must have their weights in these families.

%TODO fill in this intro here

\subsection{Preliminaries}\label{VGIT_prelims}
Throughout this Section, we will work over an algebraically closed field $k$. 
Let $G\coloneqq \operatorname{SL}(n+1)$. Consider a variety $S$ which is the complete intersection of $k$ hypersurfaces of degree $d$ in $\mathbb{P}^n$, i.e, $S = \{f_1 = f_2 = \dots = f_k=0\}$, where each $f_i(x) = \sum f_{I_i}x^{I_i} $ with $I_i = \{d_{i,0}, \dots, d_{i,n} \}$, $\sum_{j=0}^n d_{i,j}=d$ for all $i$. Here, $x^{I_i} = x_0^{d_{i,0}} x_1^{d_{i,1}}\dots x_n^{d_{i,n}}$. Also consider $H_1,\dots, H_m$, $m$ distinct hyperplanes with defining polynomials $h_i(x) = \sum h_{i,j}x_j$.  Let $\Xi_d$ be the set of monomials of degree $d$ in variables $x_0, \dots, x_n$, written in the vector notation $I_i =(d_{i,0},\dots, d_{i,n})$. As in Gallardo --Martinez-Garcia \cite{gallardo_martinez-garcia_2018, zhang_2018} we define the associated set of monomials $$\operatorname{Supp}(f_i)= \{x^{I_i} \in \Xi_d|f_{I_i} \neq 0\},\quad \operatorname{Supp}(h_i)= \{x_j \in \Xi_1|h_{i,j} \neq 0\}.$$

Let $V \vcentcolon= \operatorname{H}^0(\mathbb{P}^n, \mathcal{O}_{\mathbb{P}^n}(1))$ and $W \vcentcolon= \operatorname{H}^0(\mathbb{P}^n, \mathcal{O}_{\mathbb{P}^n}(d))  \simeq \operatorname{Sym}^d V $ be the vector space of degree $d$ forms.  For an embedded variety $S = (f_1,\dots, f_k) \subseteq \mathbb{P}^n $ we associate its Hilbert point

$$[S] = [f_1 \wedge \dots \wedge f_k] \in \operatorname{Gr}(k, W) \subset \mathbb{P} \bigwedge^k W.$$
Note that $\operatorname{Gr}(k, W)$ is embedded in $\mathbb{P} \bigwedge^k W$ via the Pl{\"u}cker embedding $(w_1,\dots, w_r)\rightarrow$ \\ $[w_1\wedge\dots\wedge w_r]$, where the $w_r$ are the basis vectors of $W$.  We denote by $[\overline{S}] \vcentcolon= f_1 \wedge \dots \wedge f_k$ some lift in $\bigwedge^k W$. We will consider the natural $G$ action, given by $A\cdot f(x) = f(Ax)$ for $A\in G$.

For simplicity, we will denote $\operatorname{Gr}(k,W)$ by $\mathcal{R}_{n,d,k}$, and we let $\mathcal{R}_m \vcentcolon= \mathcal{R}_{n,d,k}\times \Big(\mathcal{R}_{n,1,1}\Big)^m$ be the parameter scheme of tuples $(f_1,\dots f_k, h_1, \dots, h_m)$,  where:
$$\mathcal{R}_{m} \cong \operatorname{Gr}\Big(k,{\binom{n+d}{d}}\Big) \times \Big(\mathbb{P}(H^0(\mathbb{P}^n, \mathcal{O}_{\mathbb{P}^n}(1)))\Big)^m \hookrightarrow \mathbb{P}\bigwedge^k W \times (\mathbb{P}^n)^m.$$
%$$\mathcal{R} \vcentcolon= \operatorname{Gr}\Big(k,{{n+d}\choose{d}}\Big) \times \mathbb{P}(H^0(\mathbb{P}^n, \mathcal{O}_{\mathbb{P}^n}(1))) \hookrightarrow \mathbb{P}\bigwedge^k W \times \mathbb{P}^n.$$
In the case where $m = 1$, we will just write $\mathcal{R} = \mathcal{R}_1$. There is a natural $G$ action on $V$ and $W$ given by the action of $G$ on $\mathbb{P}^n$. This action induces an action of $G$ on $\mathcal{R}_{n,d,k}$ via the natural maps, and by the inclusion map to the Pl{\"u}cker embedding $\mathbb{P}\bigwedge^k W$. By extension, we also obtain an induced action of $G$ to $\mathcal{R}$. We aim to study the GIT quotients $\mathcal{R}_m\sslash G$.

%In classical GIT, the variety $S$ is \textit{semi-stable}, under the $G$-action on $\mathcal{R}_{n,d,k}$, induced by $W$, if and only if $0 \not \in \operatorname{SL(n+1)}\cdot [\overline{S}] $ and $S$ is \textit{stable}, under the $G$-action, if and only if $\operatorname{SL(n+1)}\cdot [\overline{S}]$ is closed and the stabilizer $\operatorname{SL(n+1)_{[\overline{S}]}}$ is finite (see \cite[Chapter 7]{mukai_2003}). Furthermore, $S$ is called \textit{unstable} if it is not semi-stable \cite{fedorchuk}. 

Let $\mathcal{C}\coloneqq\mathbb{P}(W)$. We will begin our analysis by first studying the GIT quotients $\mathcal{C}\sslash G$. $\mathcal{C}$ parametrises hypersurfaces $X = \{f=0\}$ of degree $d$ in $\mathbb{P}^n$, where $f = \sum f_Ix^I$ is a polynomial of degree $d$.  We must study the Hilbert-Mumford numerical criterion in order to study this quotient. In order to do so, we fix a maximal torus $T\cong (\mathbb{G}_m)^n \subset G$ which in turn induces lattices of characters $M = \operatorname{Hom}_{\mathbb{Z}}(T, \mathbb{G}_m) \cong \mathbb{Z}^{n+1}$ and one-parameter subgroups $N = \operatorname{Hom}_{\mathbb{Z}}( \mathbb{G}_m, T) \cong \mathbb{Z}^{n+1}$ with natural pairing 
$$\langle - , -\rangle \colon M\times N \rightarrow \operatorname{Hom}_{\mathbb{Z}}( \mathbb{G}_m, \mathbb{G}_m) \cong \mathbb{Z}$$
%Working in an algebraically closed field $\mathbb{K}$, where $\operatorname{char}(\mathbb{K}) = 0$ (we will work over $\mathbb{C}$ unless stated otherwise), w
%We fix a maximal torus $T\cong \mathbb{G}_m^n \subset G$ which in turn induces lattices of characters $M = \operatorname{Hom}_{\mathbb{Z}}(T, \mathbb{G}_m) \cong \mathbb{Z}^{n+1}$ and one-parameter subgroups $N = \operatorname{Hom}_{\mathbb{Z}}( \mathbb{G}_m, T) \cong \mathbb{Z}^{n+1}$ with natural pairing 
%$$\langle - , -\rangle \colon M\times N \rightarrow \operatorname{Hom}_{\mathbb{Z}}( \mathbb{G}_m, \mathbb{G}_m) \cong \mathbb{Z}.$$
given by the composition $(\chi,\lambda) \mapsto \chi \circ \lambda$. We also choose projective coordinates $(x_0 \colon \dots \colon x_n)$ such that the maximal torus $T$ is diagonal in $G$. Given a one-parameter subgroup $\lambda \colon$\\ $ \mathbb{G}_m \rightarrow T \subset SL(n+1)$ we say $\lambda$ is \textit{normalised} if 
$$\lambda(s) = \operatorname{Diag} (s^{\mu_0}, \dots, s^{\mu_n})$$ where $\mu_0 \geq \dots \geq \mu_n$ with $\sum \mu_i = 0$ (implying $\mu_0 >0, \mu_n <0$ if $\lambda$ is not trivial).

From Lemma \ref{dimension of picard group} with $m=1$, we can choose an ample $G$-linearisation $\mathcal{L} = \mathcal{O}_{\mathcal{C}}(1)$. Hence, the set of characters, with respect to this $G$-linearisation corresponds to degree $d$ polynomials $f$, as sections $s\in H^0(\mathcal{C}, \mathcal{O}_{\mathcal{C}}(1))$ are polynomials of degree $d$. In particular, for a monomial $x^I$ of degree $d$ and a normalised one-parameter subgroup $\lambda$ as above, the natural pairing is given by $\langle x^I, \lambda \rangle = \sum_{i=0}^n d_i\mu_i$. Note, that in many cases we will abuse the notation and write $\langle I, \lambda \rangle$ instead of $\langle x^I, \lambda \rangle$. Then, the $G$-action induced by $\lambda$ on a monomial $x^I$ is given by $\lambda(s)\cdot x^I = s^{\langle I, \lambda \rangle}x^I$. Naturally, the $G$-action induced by $\lambda$ on the polynomial $f$ is given by 
$$\lambda(s)\cdot f = \sum_{I\in \operatorname{Supp}(f)} s^{\langle I, \lambda \rangle}f_I x^I.$$
In addition, notice that the action of $\lambda$ on a fiber is equivalent to the action of $\lambda$ on the polynomial $f$, and by the above discussion, we have that $\operatorname{weight}(f,\lambda) = \min_{x^I\in \operatorname{Supp}(f)}\{\langle I, \lambda \rangle\}$. Thus the Hilbert-Mumford function reads
$$\mu(f,\lambda) = - \min_{x^I\in \operatorname{Supp}(f)}\{\langle I, \lambda \rangle\},$$
and the Hilbert-Mumford numerical criterion is:
\begin{lemma}\label{HM criterion for hypersurfaces}
With respect to a maximal torus $T$:
\begin{enumerate}
    \item $X = \{f=0\}$ is \emph{semi-stable} if and only if $\mu(f,\lambda) \geq 0$, i.e. if $\min_{x^I\in \operatorname{Supp}(f)}\{\langle I, \lambda \rangle\}\leq 0$, for all non-trivial one-parameter subgroups  $\lambda$ of $T$.
    \item $X = \{f=0\}$ is \emph{stable} if and only if $\mu(f, \lambda) > 0$, i.e. if $\min_{x^I\in \operatorname{Supp}(f)}\{\langle I, \lambda \rangle\}< 0$, for all non-trivial one-parameter subgroups  $\lambda$ of $T$.
\end{enumerate}
\end{lemma}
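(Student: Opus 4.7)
The plan is to deduce this directly from the standard Hilbert--Mumford numerical criterion applied to the linearised action of $T$ on $\mathcal{C} = \mathbb{P}(W)$, once the weight function has been computed explicitly. The statement is essentially an unpacking of the HM criterion given the weight decomposition of $W = \mathrm{Sym}^d V$ into monomial eigenlines, so the proof will be short.

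First I would recall the numerical criterion: for a reductive group action with linearisation $\mathcal{L}$ on a projective variety, a point $p$ is semistable (respectively stable) with respect to a one-parameter subgroup $\lambda$ precisely when $\mu^{\mathcal{L}}(p,\lambda) \geq 0$ (respectively $>0$). Here $\mu^{\mathcal{L}}(p,\lambda)$ is minus the minimal weight appearing in the $\lambda$-weight decomposition of any lift of $p$ in the total space of $\mathcal{L}^{-1}$. In our setting the linearisation is the tautological $\mathcal{O}_{\mathcal{C}}(1)$ from the identification $\mathcal{C} = \mathbb{P}(W)$, and we are asked only for stability with respect to the fixed maximal torus $T$, so it suffices to test all non-trivial $\lambda \in N$.

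Next I would compute $\mu(f,\lambda)$ using the diagonal form of $T$. A normalised one-parameter subgroup $\lambda(s) = \mathrm{Diag}(s^{\mu_0},\dots,s^{\mu_n})$ acts on the monomial basis of $W$ by $\lambda(s)\cdot x^I = s^{\langle I,\lambda\rangle} x^I$, so that the weight decomposition of a polynomial $f = \sum_{I} f_I x^I$ is precisely indexed by $\operatorname{Supp}(f)$ with weights $\langle I,\lambda \rangle$. Taking a lift of $[f]$ to $W$ and applying the definition of $\mu$ then gives
\[
\mu(f,\lambda) = -\min_{x^I \in \operatorname{Supp}(f)} \langle I,\lambda\rangle,
\]
which is exactly the formula already recorded in the excerpt just above the statement.

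Finally I would translate the HM inequalities: $T$-semistability of $X = \{f=0\}$ is equivalent to $\mu(f,\lambda) \geq 0$ for every non-trivial $\lambda$, which by the formula above is equivalent to $\min_{I \in \operatorname{Supp}(f)} \langle I,\lambda\rangle \leq 0$; and similarly $\mu(f,\lambda) > 0$ corresponds to the strict inequality $\min_{I \in \operatorname{Supp}(f)} \langle I,\lambda\rangle < 0$, giving $T$-stability. The only subtle point in the write-up is keeping the sign convention for $\mu$ consistent with the one used in the definition of the weight on the fiber (so that the inequality flips when passing from $\mu$ to the minimum of $\langle I,\lambda\rangle$); this is really the only place one could slip up, and once it is fixed the lemma follows immediately.
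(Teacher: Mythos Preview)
Your proposal is correct and matches the paper's approach: the paper does not give a separate proof of this lemma, treating it instead as an immediate restatement of the Hilbert--Mumford criterion once the formula $\mu(f,\lambda) = -\min_{x^I\in\operatorname{Supp}(f)}\langle I,\lambda\rangle$ has been derived in the preceding paragraph. Your write-up simply makes explicit the weight computation and sign convention that the paper leaves implicit.
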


\begin{remark}\label{equivalence of min and max}
Fix a torus $T$, coordinates $(x_0\colon\dots\colon x_n)$ such that the torus is diagonal, and a normalised one-parameter subgroup $\lambda(s) = \operatorname{Diag} (s^{\mu_0}, \dots, s^{\mu_n})$. Let $f$ be a degree $d$ polynomial, and assume that for some $I = (d_0,\dots,d_n)$, for $x^I\in \operatorname{Supp}(f)$, $\langle I,\lambda \rangle$ is minimal, i.e.
$$\mu(f,\lambda) = - \langle I,\lambda \rangle = -\sum_{i=0}^nd_i\mu_i.$$
Notice that $\overline{\lambda(s)} = \operatorname{Diag} (s^{-\mu_n}, \dots, s^{-\mu_0})$ defines another normalised one-parameter subgroup, since \\ $\sum_{j=0}^n(-\mu_{n-j}) = -\sum_{i=0}^n\mu_i =0$, and $-\mu_{n-i}\geq -\mu_{n-i-1}$ by the fact that $\mu_{n-i-1}\geq \mu_{n-i}$. Let also $\overline{I} = ({d_n}, {d_{n-1}}, \dots, {d_0})$ be another monomial vector. Then, we have $\langle\overline{I}, \overline{\lambda} \rangle= -\langle I,\lambda \rangle$. We can think of obtaining this new vector $\overline{I}$ by making the change of coordinates $x_{n-i} \leftrightarrow x_i$; this is a projective change of coordinates, such that $f$ is projectively equivalent and isomorphic to $\overline{f}$, where $\overline{f} = \sum f_{\overline{I}}x^{\overline{I}}$. %Notice, that in order to be careful, we need to introduce a new torus, $\overline{T}$, which is diagonalisable under coordinates $(x_n\colon\dots\colon x_0)$, and is conjugate to $T$. 
We define $$\overline{\mu(\overline{f}, \overline{\lambda})}\coloneqq  \max_{x^{\overline{J}}\in \operatorname{Supp}(\overline{f})}\{\langle \overline{J}, \overline{\lambda} \rangle\}$$
and we will show that $\overline{\mu(\overline{f}, \overline{\lambda})} = \langle\overline{I}, \overline{\lambda} \rangle$, i.e. that $\langle\overline{I}, \overline{\lambda} \rangle$ is maximal for $x^{\overline{I}}\in \operatorname{Supp}(\overline{f})$. Suppose it is not. Then, there exists $\overline{I}' = ({d_n}', {d_{n-1}}', \dots, {d_0}')$, with $x^{\overline{I'}}\in \operatorname{Supp}(\overline{f})$, such that $\langle\overline{I}, \overline{\lambda} \rangle <\langle\overline{I}', \overline{\lambda} \rangle$. But, this in turn would imply that $\langle I',\lambda \rangle < \langle I,\lambda \rangle$, where $I' = (d_0',\dots,d_n')$, and $x^{I'} \in \operatorname{Supp}(f)$ since $f\cong \overline{f}$. This contradicts the original assumption that $\langle I,\lambda \rangle$ is minimal. Thus, we have shown that $$\mu(f,\lambda) = \overline{\mu(\overline{f}, \overline{\lambda})}.$$ 
The implication of the above is that we may reformulate Lemma \ref{HM criterion for hypersurfaces} as follows:
\end{remark}

\begin{lemma}\label{HM crit reformulated}
With respect to a maximal torus $T$:
\begin{enumerate}
    \item $X = \{f=0\}$ is \emph{semi-stable} if and only  if $\max_{x^I\in \operatorname{Supp}(f)}\{\langle I, \lambda \rangle\}\geq 0$, for all non-trivial one-parameter subgroups  $\lambda$ of $T$.
    \item $X = \{f=0\}$ is \emph{stable} if and only if $\max_{x^I\in \operatorname{Supp}(f)}\{\langle I, \lambda \rangle\}> 0$, for all non-trivial one-parameter subgroups  $\lambda$ of $T$.
\end{enumerate}

\end{lemma}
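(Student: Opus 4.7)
The plan is to piece together Lemma \ref{HM criterion for hypersurfaces} with the identity $\mu(f,\lambda)=\overline{\mu(\overline{f},\overline{\lambda})}$ established in Remark \ref{equivalence of min and max}, so no new torus computations are required. The reformulation is, at heart, just the statement that the ``reverse'' Hilbert--Mumford function defined in the remark detects exactly the same stability condition as the usual one.

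First I would record three structural facts from Remark \ref{equivalence of min and max}. (i) The assignment $\lambda\mapsto\overline{\lambda}$ defined by $\overline{\lambda}(s)=\operatorname{Diag}(s^{-\mu_n},\dots,s^{-\mu_0})$ is an involution on the set of normalised one-parameter subgroups of $T$, hence a bijection. (ii) The assignment $f\mapsto\overline{f}$ given by reversing the indexing of the coordinates is induced by the element of $\operatorname{SL}(n+1)$ (or its negative, if $n$ is even) that implements $x_i\leftrightarrow x_{n-i}$; in particular $\{\overline{f}=0\}$ is projectively equivalent to $\{f=0\}$, so they are $G$-equivalent and share all GIT stability properties. (iii) The identity
\[
\mu(f,\lambda)\;=\;\overline{\mu(\overline{f},\overline{\lambda})}\;=\;\max_{x^{\overline{J}}\in\operatorname{Supp}(\overline{f})}\langle \overline{J},\overline{\lambda}\rangle
\]
holds for every such pair $(f,\lambda)$.

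Next I would apply (iii) with $\overline{f}$ playing the role of $f$. Since $\overline{\overline{f}}=f$ and $\overline{\overline{\lambda}}=\lambda$, this yields
\[
\mu(\overline{f},\lambda)\;=\;\max_{x^{I}\in\operatorname{Supp}(f)}\langle I,\overline{\lambda}\rangle.
\]
By (ii), $X=\{f=0\}$ is semistable if and only if $\overline{X}=\{\overline{f}=0\}$ is, and by Lemma \ref{HM criterion for hypersurfaces} the latter happens exactly when $\mu(\overline{f},\lambda)\geq 0$ for every normalised $\lambda$. Substituting the displayed equality, this reads $\max_{x^{I}\in\operatorname{Supp}(f)}\langle I,\overline{\lambda}\rangle\geq 0$ for all $\lambda$. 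Invoking the bijection in (i) and relabelling $\overline{\lambda}$ as $\lambda$ gives precisely part (1) of the statement. The argument for part (2) is identical, replacing $\geq 0$ by $>0$ throughout.

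No step here is genuinely difficult; the only place that requires a moment's care is the bookkeeping around (ii), namely checking that the coordinate swap $x_i\leftrightarrow x_{n-i}$ really does lie in $G=\operatorname{SL}(n+1)$ (up to an overall sign, which does not affect the zero locus) so that the two Hilbert points are honestly $G$-equivalent. Once that is in place, the lemma falls out directly from the identity proved in the remark, and I would expect the written proof to be only a few lines long.
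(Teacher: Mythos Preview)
Your proposal is correct and follows essentially the same approach as the paper. The paper does not give a separate proof of Lemma~\ref{HM crit reformulated}; it simply states at the end of Remark~\ref{equivalence of min and max} that ``the implication of the above is that we may reformulate Lemma~\ref{HM criterion for hypersurfaces} as follows,'' treating the identity $\mu(f,\lambda)=\overline{\mu(\overline{f},\overline{\lambda})}$ as the proof. Your argument makes this implicit deduction explicit, correctly unpacking the roles of the involution $\lambda\mapsto\overline{\lambda}$ and the projective equivalence $f\mapsto\overline{f}$.
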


\begin{remark}
Throughout Sections \ref{VGIT_prelims}, \ref{section:stability conditions} and \ref{section: centroid crit}, as well as Sections \ref{p3 VGIT section} and \ref{p4 VGIT section} we will use the description of the Hilbert-Mumford numerical criterion in Lemma \ref{HM crit reformulated} (adapted for our VGIT problem of complete intersections and hyperplanes). Throughout these sections, to ease notation, we will denote the Hilbert-Mumford function by $\mu$ and not $\overline{\mu}$. %In Section \ref{thresholds section}, in order to aide the reader, we will use the convention of Lemma \ref{HM criterion for hypersurfaces} (adapted for our VGIT problem of complete intersections and hyperplanes), in keeping with the convention of the notation in \cite{zanardini2021stability}.

It is also noteworthy to point out that we would have obtained an identical Hilbert-Mumford function if we considered the conjugate action $A\cdot f(x) = f(A^{-1}x)$ for $A\in G$, as in \cite{fedorchuk, avritzer-lange}. This should not be a surprise, as the actions are conjugate to each other. We are choosing the natural $G$-action to be consistent with the moduli descriptions we are looking for.
\end{remark}

%Under the above conditions, all one-parameter subgroups are conjugate to a normalised one, so in most situations it suffices to consider only normalised one-parameter subgroups. %The Hilbert-Mumford numerical criterion for GIT- stability of $S$ can then be stated as if for all normalised or not one-parameter subgroups $\lambda$:
%$S$ is semi-stable (resp. stable) if and only if for all one-parameter subgroups $\lambda$, $\lim_{s \to 0} \lambda(s) \cdot [\overline{S}] \neq 0 $ (resp. $\lim_{s \to 0} \lambda(s) \cdot [\overline{S}]$ does not exist) \cite{mukai_2003}.

%It is useful to note that it suffices to take only normalised one-parameter subgroups as every one-parameter subgroup is conjugate to a normalised one.

%We let the embedded variety $S$ be defined by $k$ degree $d$ polynomials %$f_i$, where:
%$$f_i = \sum {(f_i)}_{I_i}x^{I_i}$$

%with $I_i = (d_{i,0}, \dots, d_{i,n})$, with $d_{i,j} \neq 0$, for some $j$, and $\sum_{j} d_{i,j} = d$. For simplicity we will denote by $\Xi_d$ the set of all monomials of degree $d$ in variables $x_0, \dots, x_n$. 

We will proceed to analyse the GIT quotient $\mathcal{R}_{n,d,k}\sslash G$. We will keep the notation as before, and we will fix a maximal torus $T$, with coordinates $(x_0\colon \dots\colon x_n)$, such that $T$ is diagonal. Recall that $I_i =(d_{i,0},\dots, d_{i,n})$ is a monomial vector such that $x^{I_i}\in \operatorname{Supp}(f_i)$. Let $\lambda$ be a normalised one-parameter subgroup. In this case, the natural pairing gives us $\langle I_i, \lambda \rangle = \sum_{j=0}^n d_{i,j}\mu_j$, and more specifically $\lambda(s) \cdot x^{I_i} = s^{\langle I_i, \lambda \rangle} x^{I_i}$. Similarly, the action of $\lambda$ on a Pl{\"u}cker coordinate $\bigwedge_{i=1}^k x^{I_i}$ is induced as:
$$\lambda(s) \cdot \bigwedge_{i=1}^k x^{I_i}= s^{\sum_{i=1}^k \langle I_i, \lambda \rangle} \bigwedge_{i=1}^k x^{I_i},$$
hence the $\lambda$-action on $[\overline{S}] \vcentcolon= \bigwedge_{i=1}^k f_{i} =  \bigwedge_{i=1}^k \sum f_{I_i}x^{I_i}$ is:

$$\lambda(s) \cdot [\overline{S}]=  \sum_{x^{I_i}\in \Xi_d} s^{\sum_{i=1}^k \langle I_i, \lambda \rangle} \bigwedge_{i=1}^k  f_{I_i}x^{I_i}$$
for $\lambda$ a normalised one-parameter subgroup.

%$$\mu(Q\wedge R, \lambda) \vcentcolon= \max \{\langle I, \lambda \rangle +\langle J, \lambda \rangle | q_I \neq 0 \quad \text{and}\quad r_J \neq 0\} .$$
%Given the above polynomials $Q, R$ we can define their associated sets of monomials $\operatorname{Supp}(Q) = \{x^I \in \Xi_2|q_I \neq 0 \}$ and $\operatorname{Supp}(R) = \{x^J \in \Xi_2|r_J \neq 0 \}$, and we can re-write the Hilbert- Mumford function for normalised $\lambda$ as
%$$\mu(Q\wedge R, \lambda) = \max \{\langle I, \lambda \rangle +\langle J, \lambda \rangle | x^I\in \operatorname{Supp}(Q) \quad \text{and}\quad x^J\in \operatorname{Supp}(R), I\neq J\} .$$
%Notice that the limit $\lim_{s \to 0} \lambda(s) \cdot [\overline{S}]$ is not zero if and only if $\mu(Q\wedge R, \lambda) \geq 0$ and does not exist if and only if $\mu(Q\wedge R, \lambda) > 0$.Hence, from the above discussion we have shown the lemma below.

%We can thus generalise the results of Section \ref{GIT} as follows:

From our discussion above, we have that 
%$$\operatorname{weight}(f_1\wedge\dots \wedge f_k,\lambda) = \min_{x^{I_i}\in \operatorname{Supp}(f_i)}\left\{\sum_{i=1}^k\langle I_i, \lambda \rangle\big| \text{ for all } i, I_i \neq I_j  \text{ for all } i,j \right\}.$$
%This implies that the Hilbert-Mumford function for a normalised one-parameter subgroup $\lambda$ is

%\begin{equation*}
%        \mu(f_1\wedge\dots \wedge f_k, \lambda)\vcentcolon= -\min_{x^{I_i}\in \operatorname{Supp}(f_i)} \left\{\sum_{i=1}^k\langle I_i, \lambda \rangle \text{ }|\ \text{ for all } i, I_i \neq I_j  \text{ for all } i,j\right\}.
%\end{equation*}

%Notice, that the discussion in Remark \ref{equivalence of min and max} still applies to this case, as for a minimum $\sum_{i=1}^k\langle I_i, \lambda \rangle$, with respect to $\lambda$, we have that $-\sum_{i=1}^k\langle I_i, \lambda \rangle $ is a maximum with respect to $\overline{\lambda}$. In addition, for the change of coordinates presented in Remark \ref{equivalence of min and max}, we have that $f_1\wedge \dots f_k \cong \overline{f}_1\wedge \dots \overline{f}_k$. Hence, abusing notation, we write the Hilbert-Mumford function for a normalised one-parameter subgroup $\lambda$ as:
\begin{equation*}
        \mu(f_1\wedge\dots \wedge f_k, \lambda)\vcentcolon= \max \left\{\sum_{i=1}^k\langle I_i,\lambda\rangle  \text{ }\Big| (I_1,\dots,I_k)\in (\Xi_d)^k, I_i \neq I_j  \text{ if } i\neq j \text{ and }x^{I_i}\in \operatorname{Supp}(f_i)\right\}.
\end{equation*}

%$$\mu(f_1\wedge\dots \wedge f_k, \lambda) \vcentcolon= \max \{\langle I_1, \lambda \rangle +\langle I_2, \lambda \rangle +\dots+\langle I_k, \lambda \rangle \text{ }|\text{ } (f_{i})_{I_i} \neq 0 \quad \\ \text{for all}\quad i, I_i \neq I_j \quad \text{for all}\quad i,j\} .$$
%Given the above polynomials $f_1, \dots, f_k$ we can %define their associated sets of monomials $\operatorname{Supp}({f}_i) = \{x^{I_i} \in \Xi_d|(f_i)_{I_i} \neq 0 \}$, and we can 
%re-write the Hilbert-Mumford function in terms of their associated sets of monomials: 
%$$\mu(f_1\wedge\dots \wedge f_k, \lambda) = \max \left\{\sum_{i=1}^k\langle I_i, \lambda \rangle\text{ } |\text{ } x^{I_i}\in \operatorname{Supp}({f}_i) \text{ for all } i, I_i \neq I_j \text{ for all } i,j\right\}.$$
%Notice that the limit $\lim_{s \to 0} \lambda(s) \cdot [\overline{S}]$ is not zero if and only if $\mu(Q\wedge R, \lambda) \geq 0$ and does not exist if and only if $\mu(Q\wedge R, \lambda) > 0$.Hence, from the above discussion we have shown the lemma below.

\begin{lemma}\label{mumford_map_GIT}
With respect to a maximal torus $T$:
\begin{enumerate}
%    \item $S$ is semi-stable if and only if $\mu(f_1\wedge\dots \wedge f_k, \lambda) \geq 0$ for all normalised one-parameter subgroups $\lambda$ of $T$; 
%    \item $S$ is stable if and only if $\mu(f_1\wedge\dots \wedge f_k, \lambda) > 0$ for all normalised one-parameter subgroups $\lambda$ of $T$;
    \item $[f_1\wedge\dots \wedge f_k]\in \operatorname{Gr}(k,W)$ is \emph{semi-stable} if and only if $\mu(f_1\wedge\dots \wedge f_k, \lambda) \geq 0$ for all non-trivial one-parameter subgroups  $\lambda$ of $T$. %(NOTE: MAYBE HERE CAN WORK FOR SEMISTABLE IF WE MENTION FOR ALL NON TRIVIAL 1-PS AND SAY IN THE PROOF WE ONLY NEED TO CONSIDER NORMAL ONES DUE TO THEIR PROPERTY)
    \item $[f_1\wedge\dots \wedge f_k]\in \operatorname{Gr}(k,W)$ is \emph{stable} if and only if $\mu(f_1\wedge\dots \wedge f_k, \lambda) > 0$ for all non-trivial one-parameter subgroups  $\lambda$ of $T$.
\end{enumerate}
\end{lemma}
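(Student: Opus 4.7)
The plan is to apply the classical Hilbert--Mumford numerical criterion to $[\overline{S}] = f_1 \wedge \dots \wedge f_k$ viewed as a point on the Plücker-embedded Grassmannian $\operatorname{Gr}(k, W) \hookrightarrow \mathbb{P} \bigwedge^k W$. The embedding is $G$-equivariant and the $G$-linearization on $\operatorname{Gr}(k,W)$ is the pullback of $\mathcal{O}_{\mathbb{P} \bigwedge^k W}(1)$, which by Lemma \ref{dimension of picard group} is (up to scaling) the unique ample $G$-linearization on the Grassmannian, so the criterion transfers directly from the ambient projective space. The task then reduces to computing the Hilbert--Mumford weight of $[\overline{S}]$ with respect to a normalized one-parameter subgroup $\lambda$ of $T$ and identifying it with the combinatorial quantity $\mu(f_1 \wedge \dots \wedge f_k, \lambda)$ defined before the statement.

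First I would fix $\lambda(s) = \operatorname{Diag}(s^{\mu_0}, \dots, s^{\mu_n})$ and write out the $\lambda$-eigendecomposition of $[\overline{S}]$ in $\bigwedge^k W$. The monomial basis $\{x^I\}_{I \in \Xi_d}$ diagonalizes the $\lambda$-action on $W$ with eigenvalue $s^{\langle I, \lambda \rangle}$ on $x^I$, so the wedge monomials $x^{I_1} \wedge \dots \wedge x^{I_k}$ span $\bigwedge^k W$ as $\lambda$-eigenvectors of weight $\sum_{i=1}^k \langle I_i, \lambda \rangle$. Expanding multilinearly,
\[
[\overline{S}] \;=\; \sum_{(I_1, \dots, I_k) \in (\Xi_d)^k} \Big(\prod_{i=1}^k f_{i, I_i}\Big)\, x^{I_1} \wedge \dots \wedge x^{I_k},
\]
and antisymmetry of the wedge kills every tuple with a repeated index, leaving only tuples of pairwise distinct $I_i$ with each $x^{I_i} \in \operatorname{Supp}(f_i)$ as potential contributors to a non-zero $\lambda$-eigencomponent. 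Using the max-convention of Lemma \ref{HM crit reformulated}, the Hilbert--Mumford weight of $[\overline{S}]$ with respect to $\lambda$ is the largest weight appearing in a non-zero eigencomponent of this expansion; grouping by $\lambda$-weight, this maximum is precisely $\mu(f_1 \wedge \dots \wedge f_k, \lambda)$. Items (1) and (2) then follow from the classical Hilbert--Mumford numerical criterion applied to $T$.

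The delicate step I expect to be the main obstacle is this last identification, because after further grouping the expansion into ordered Plücker basis elements $x^{J_1} \wedge \dots \wedge x^{J_k}$ with $J_1 < \dots < J_k$ the coefficient becomes $\det(f_{i, J_j})_{i,j}$, which could in principle vanish by determinantal cancellation even when each column lies in the corresponding support of $f_i$. I would argue carefully that the cancellation in question imposes a proper closed condition on the coefficient space restricted to the maximum-weight stratum, and that after possibly acting by an element of the normaliser $N_G(T)$ permuting the $f_i$ or the basis vectors, the combinatorial maximum over support-indexed tuples genuinely computes the Hilbert--Mumford weight entering the criterion, so the $\mu$ of the statement agrees with the true HM weight at every $\lambda$ that matters for (1) and (2).
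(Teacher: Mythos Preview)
Your approach is the paper's: invoke the Hilbert--Mumford criterion via the Pl\"ucker embedding and identify the weight with the combinatorial $\mu$. The paper's own argument is much terser --- it merely checks that scaling the lift $\bigwedge_i f_i$ by a constant $\alpha$ preserves the sign of $\mu$, and never confronts the determinantal cancellation you correctly flag.

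That cancellation is a genuine obstruction, and your proposed fix does not close it. Acting by $N_G(T)$ only permutes the variables $x_i$, not the basis $(f_1,\dots,f_k)$ of the $k$-plane, and a genericity argument cannot rescue a statement asserted for every representative. Concretely, with $k=2$, $W=H^0(\mathbb P^2,\mathcal O(1))$, $f_1=x_0+x_1$, $f_2=x_0+x_1+x_2$ one has $f_1\wedge f_2=(x_0+x_1)\wedge x_2$, so the Pl\"ucker coordinate along $x_0\wedge x_1$ vanishes; the true weight for $\lambda=\operatorname{Diag}(s,s,s^{-2})$ is $-1$, yet the support-based $\mu$ equals $2$ and in fact stays positive for every normalised one-parameter subgroup of $T$ --- the ``if'' direction of the lemma fails for this basis. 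What actually repairs the argument is the $\operatorname{GL}(k)$-freedom in the choice of $(f_1,\dots,f_k)$, which is not part of the $G$-action: passing to a basis in row-echelon form relative to the $\lambda$-order on $\Xi_d$ makes the leading Pl\"ucker minor a product of nonzero pivots, so the combinatorial $\mu$ coincides with the true weight. Neither the paper's proof nor your sketch supplies this step, though in practice the direction the paper needs most (combinatorial $\mu\leq 0$ implies not stable) is the one that holds unconditionally, since the combinatorial $\mu$ always dominates the true Hilbert--Mumford weight.
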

\begin{proof}

From the above discussion, and \cite[Theorem 2.1]{mumford_fogarty_kirwan_1994}, we are left to show that\\ $\operatorname{sign}(\mu(f_1\wedge\dots \wedge f_k, \lambda)) = \operatorname{sign}(\mu([f_1\wedge\dots \wedge f_k], \lambda))$. A general element of $[f_1\wedge\dots \wedge f_k]$ is of the form $$ \alpha\bigwedge_{i=1}^k f_i,$$ 
and hence we have:
\begin{equation*}
    \begin{split}
        \operatorname{sign} (\mu([f_1\wedge\dots \wedge f_k], \lambda))& = \\
        \operatorname{sign}\Bigg(\max \bigg\{\sum_{i=1}^k\langle I_i, \lambda \rangle \text{ }\Big| (I_1,\dots,I_k)&\in (\Xi_d)^k, \text{ } I_i \neq I_j  \text{ if } i\neq j \text{, }x^{I_i}\in \operatorname{Supp}(f_i)\bigg\}\Bigg)\\
        & = \operatorname{sign}(\mu(f_1\wedge\dots \wedge f_k, \lambda)).
    \end{split}
\end{equation*}
%Finally, for $S, S'$ where $S\cong S'$ induced by an action of $\operatorname{Aut}(\mathbb{P}^n) = \operatorname{PGL}(n+1)$ and not $G$, we have by Koll{\'a}r \cite{kollar_survey} that $\mu(S,\lambda) = \mu(S',\lambda)$. 
\end{proof}

\begin{remark}
If the choice of the $f_i$ is clear, we will write $\mu(S,\lambda)$ instead of $\mu(f_1\wedge\dots \wedge f_k, \lambda)$
\end{remark}

\begin{remark}
The complete intersection $S$ induces an element of the Grassmanian \\$\Phi\vcentcolon= \big\{\sum \alpha_if_i|(a_i:\dots:a_k) \in \mathbb{P}^{k-1}\big\}$, which is a linear system of dimension $k-1$, whose base locus is $S$. As in \cite{mabuchi_mukai_1990}, when $\Phi$ is (semi-)stable we will also say that $S$ is (semi-)stable.

%When we refer to the (semi-)stability of $S$ the Hilbert-Mumford numerical criterion shows that both $S$ and $\Phi$ are (semi-)stable. From here on out, for simplicity we say that $S$ is (semi-)stable, as in Mabuchi--Mukai \cite{mabuchi_mukai_1990}.  
\end{remark}

Extending to the VGIT case, i.e to the quotient $\mathcal{R}_m\sslash G$, as in \cite{gallardo_martinez-garcia_2018, zhang_2018} we have the following Lemmas.

\begin{lemma}\label{dimension of picard group}
Let $G$ be an algebraic group such that $\operatorname{Pic}(G) = \{1\}$ and let $X$ be a normal projective $k$-scheme such that $X = X_1\times \dots X_m$ with $\dim(\operatorname{Pic})(X_i) = 1$, such that the action of $G$ on $X$ extends to an action of $G$ on each $X_i$. Let also every line bundle of $X$ have at most one linearisation class. Then the set of $G$-linearisable line bundles $\operatorname{Pic}^G(X)$ is isomorphic to $\mathbb{Z}^m$. A line bundle $\mathcal{L}\in\operatorname{Pic}^G(X)$ is ample if and only if 
$$\mathcal{L} = \mathcal{O}(a_1,\dots, a_{m})\vcentcolon= \pi_1^* \mathcal{O}_{X_1}(a_1)\otimes  \pi_m^*\mathcal{O}_{X_m}(a_m),$$
where the $p_i$  are the natural projections on $X_i$,  and $a_i > 0$, and we denote by $\mathcal{O}_{X_i}(a_i)$ an ample generator (over $\mathbb{Z}$) of the Picard group.
\end{lemma}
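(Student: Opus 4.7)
The plan is to decompose the statement into three independent assertions: (i) the group structure of $\operatorname{Pic}(X)$, (ii) the bijection $\operatorname{Pic}^G(X) \xrightarrow{\sim} \operatorname{Pic}(X)$, and (iii) the ampleness criterion on a product. I would treat these in order.

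First, I would compute $\operatorname{Pic}(X)$ as an abstract abelian group. Because each $X_i$ is normal projective with $\dim \operatorname{Pic}(X_i) = 1$, fixing an ample generator $\mathcal{O}_{X_i}(1)$ identifies $\operatorname{Pic}(X_i) \cong \mathbb{Z}$. A standard seesaw/Künneth argument for products of normal projective varieties then gives that the pullback map
$$\bigoplus_{i=1}^m \pi_i^*\colon \bigoplus_{i=1}^m \operatorname{Pic}(X_i) \longrightarrow \operatorname{Pic}(X)$$
is an isomorphism, so $\operatorname{Pic}(X) \cong \mathbb{Z}^m$, with the isomorphism sending $(a_1,\dots,a_m)$ to $\mathcal{O}(a_1,\dots,a_m) = \bigotimes_i \pi_i^*\mathcal{O}_{X_i}(a_i)$.

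Next, I would show that the forgetful map $\operatorname{Pic}^G(X) \to \operatorname{Pic}(X)$ is a bijection. Injectivity is exactly the hypothesis that each line bundle admits at most one linearisation class. For surjectivity I would invoke the standard result (see Mumford--Fogarty--Kirwan \cite{mumford_fogarty_kirwan_1994}, or Dolgachev's lectures on invariant theory) that when $\operatorname{Pic}(G) = \{1\}$, every line bundle on a normal variety $X$ carrying a $G$-action admits a $G$-linearisation: one pulls back a line bundle $L$ along the action $\sigma$ and the second projection $p_2$ to obtain $\sigma^*L$ and $p_2^*L$ on $G\times X$, and the vanishing of $\operatorname{Pic}(G)$ combined with the seesaw principle produces an isomorphism $\sigma^*L \simeq p_2^*L$ satisfying the cocycle condition. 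Combining the two, $\operatorname{Pic}^G(X)\cong \operatorname{Pic}(X) \cong \mathbb{Z}^m$, and the assumption that the $G$-action extends to each factor ensures that each $\pi_i^*\mathcal{O}_{X_i}(a_i)$ carries a canonical $G$-linearisation, which is what identifies the basis of $\operatorname{Pic}^G(X)$ explicitly.

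Finally, I would deduce the ampleness characterisation. One direction is immediate: if each $a_i > 0$, then $\pi_i^*\mathcal{O}_{X_i}(a_i)$ is nef, and the tensor product $\mathcal{O}(a_1,\dots,a_m)$ separates points and tangent vectors on $X$ by combining the Segre embedding with the very ample powers of the $\mathcal{O}_{X_i}(a_i)$; equivalently, it pulls back a very ample bundle from $\prod_i \mathbb{P}^{N_i}$ via the product of the projective embeddings defined by each $\mathcal{O}_{X_i}(a_iN)$ for $N \gg 0$. Conversely, if $\mathcal{L} = \mathcal{O}(a_1,\dots,a_m)$ is ample, then restricting to a fibre of the projection to $\prod_{j\neq i} X_j$ (over any closed point) gives $\mathcal{O}_{X_i}(a_i)$, which must be ample on $X_i$, forcing $a_i > 0$ since $\mathcal{O}_{X_i}(1)$ is the chosen ample generator.

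The main obstacle is the surjectivity step in (ii): one must be careful that the classical linearisation theorems apply under the bare hypothesis $\operatorname{Pic}(G) = \{1\}$ (without separately assuming $G$ connected or with trivial character group). This is where I would need to cite the precise form of the linearisation existence theorem and verify that the compatibility between the given factorwise actions and the resulting linearisations matches the product description $\mathcal{O}(a_1,\dots,a_m)$; everything else reduces to standard computations on products of projective varieties.
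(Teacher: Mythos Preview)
Your approach is essentially the same as the paper's: the paper invokes the exact sequence $0 \to \mathcal{X}(G) \to \operatorname{Pic}^G(X) \to \operatorname{Pic}(X) \to \operatorname{Pic}(G)$ from \cite[Theorem 7.2]{dolgachev_2003} to obtain $\operatorname{Pic}^G(X)\cong\operatorname{Pic}(X)$ (your step (ii) just unpacks this into separate injectivity and surjectivity statements), and then identifies $\operatorname{Pic}(X)\cong\bigoplus_i\pi_i^*\operatorname{Pic}(X_i)\cong\mathbb{Z}^m$ as you do in (i). The paper omits the ampleness argument (iii) as standard, so your treatment is slightly more complete but otherwise identical in substance.
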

\begin{proof}
The proof follows \cite[Section 2.1]{zhang_2018} which comes as a generalisation of \cite[Lemma 2.1]{gallardo_martinez-garcia_2018}.

Let $p_i$ be the projections. Since the action of $G$ on $X$ extends to an action of $G$ on each $X_i$, the $p_i$ are morphisms of $G$-varieties. Recall there is an exact sequence (see \cite[Theorem 7.2]{dolgachev_2003})
\begin{center}
    \begin{tikzcd}
    0 \arrow[r] &\mathcal{X}(G) \arrow[r]& \operatorname{Pic}^G(X)\arrow[r]&\operatorname{Pic}(X) \arrow[r]&\operatorname{Pic}(G) 
    \end{tikzcd}
\end{center}
where $\mathcal{X}(G)$ is the kernel of the forgetful morphism $\operatorname{Pic}^G(X) \rightarrow \operatorname{Pic}(X)$. By assumption $\mathcal{X}(G)=\{1\}$, hence we have an isomorphism $\operatorname{Pic}^G(X)\cong \operatorname{Pic}(X)$. Moreover, given that $\operatorname{Pic}^G(X) \subseteq \operatorname{Pic}(X)^G\subseteq \operatorname{Pic}(X)$, where $\operatorname{Pic}(X)^G$ is the group of $G$-invariant line bundles, there result follows from 
$$\operatorname{Pic}^G(X)\cong\operatorname{Pic}(X)^G\cong p_1^*(\operatorname{Pic}(X_1))\otimes\dots\otimes p_m^*(\operatorname{Pic}(X_m))\cong \mathbb{Z}^m.$$
\end{proof}

\begin{remark}\label{remark on why pic of grp g is trivial}
The condition that the algebraic group $G$ has a trivial Picard group, i.e. $\operatorname{Pic}(G) = \{1\}$, in the above Lemma may seem restrictive, but for us, where $G = \operatorname{SL}(n+1)$, this condition will always be satisfied (see \cite[Chapter 7.2]{dolgachev_2003}).
\end{remark}

We are now in a position to explicitly define the Hilbert-Mumford function for VGIT.

\begin{lemma}\label{picard rank 2}
 The set of $G$-linearisable line bundles $\operatorname{Pic}^G(\mathcal{R}_m)$ is isomorphic to $\mathbb{Z}^{m+1}$. A line bundle $\mathcal{L} \in \operatorname{Pic}^G(\mathcal{R}_m)$, is ample if and only if
$$\mathcal{L} = \mathcal{O}(a,b_1,\dots, b_{m})\vcentcolon= \mathcal{O}(a,\vec{b})\vcentcolon= \pi_1^* \mathcal{O}_{\operatorname{Gr}}(a)\otimes \pi_{2_1}^*\mathcal{O}_{\mathbb{P}^n}(b) \otimes\dots\otimes \pi_{2_m}^*\mathcal{O}_{\mathbb{P}^n}(b_m)$$ 
where $\pi_1$ and $\pi_{2_i}$ are the natural projections and $ a, b_i > 0$.
\end{lemma}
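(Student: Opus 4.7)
The plan is to obtain the statement as a direct application of Lemma \ref{dimension of picard group} to the product decomposition $\mathcal{R}_m = \operatorname{Gr}(k, W) \times \mathbb{P}^n \times \dots \times \mathbb{P}^n$ (with $m$ copies of $\mathbb{P}^n$), so that the role played by the $X_i$ in that lemma is played here by $m+1$ factors: the Grassmannian, and $m$ projective spaces. Since the conclusion of Lemma \ref{dimension of picard group} gives $\operatorname{Pic}^G(X) \cong \mathbb{Z}^m$ when there are $m$ factors, feeding in $m+1$ factors here will produce $\mathbb{Z}^{m+1}$, which matches the target.

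First, I would check that the hypotheses of Lemma \ref{dimension of picard group} are satisfied. The group $G = \operatorname{SL}(n+1)$ has trivial Picard group (this is recorded in Remark \ref{remark on why pic of grp g is trivial}, citing \cite[Chapter 7.2]{dolgachev_2003}). Each factor is a normal projective $k$-scheme, so the product $\mathcal{R}_m$ is too. Moreover, $\operatorname{Pic}(\operatorname{Gr}(k,W)) \cong \mathbb{Z}$, generated by the Plücker line bundle $\mathcal{O}_{\operatorname{Gr}}(1)$, and $\operatorname{Pic}(\mathbb{P}^n) \cong \mathbb{Z}$, so every factor has rank one Picard group. The natural $G$-action on $\mathcal{R}_m$ was defined in the preliminaries by taking the action of $G$ on $V$ and $W$ induced by the $G$-action on $\mathbb{P}^n$; this action extends factor-by-factor to each $X_i$. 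Finally, every line bundle has at most one linearisation class because $\operatorname{SL}(n+1)$ has no non-trivial characters, so the kernel $\mathcal{X}(G)$ appearing in the exact sequence used in the proof of Lemma \ref{dimension of picard group} is trivial.

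Once these hypotheses are verified, Lemma \ref{dimension of picard group} yields
\[
\operatorname{Pic}^G(\mathcal{R}_m) \cong p_1^*\operatorname{Pic}(\operatorname{Gr}(k,W)) \otimes \bigotimes_{i=1}^m p_{2_i}^*\operatorname{Pic}(\mathbb{P}^n) \cong \mathbb{Z}^{m+1},
\]
where $\pi_1$ and $\pi_{2_i}$ denote the projections onto the Grassmannian factor and the $i$-th copy of $\mathbb{P}^n$, respectively. Any $G$-linearisable line bundle therefore has the form $\mathcal{L} = \pi_1^*\mathcal{O}_{\operatorname{Gr}}(a) \otimes \bigotimes_{i=1}^m \pi_{2_i}^*\mathcal{O}_{\mathbb{P}^n}(b_i)$ for a unique tuple $(a, b_1, \dots, b_m) \in \mathbb{Z}^{m+1}$.

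For the ampleness criterion, I would appeal to the standard fact that a line bundle on a product of projective varieties is ample if and only if its restriction to each factor is ample, combined with the ampleness of $\mathcal{O}_{\operatorname{Gr}}(a)$ on the Grassmannian for $a > 0$ and of $\mathcal{O}_{\mathbb{P}^n}(b_i)$ on $\mathbb{P}^n$ for $b_i > 0$. This gives the ampleness characterisation $\mathcal{L} = \mathcal{O}(a, b_1, \dots, b_m)$ with $a, b_i > 0$ exactly as stated. The main potential obstacle is verifying the uniqueness of linearisation class (so that $\operatorname{Pic}^G \hookrightarrow \operatorname{Pic}$ is actually an isomorphism), but since $\operatorname{SL}(n+1)$ is semisimple and has no non-trivial characters this is immediate, so the proof should be essentially a bookkeeping application of the preceding lemma.
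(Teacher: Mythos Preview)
Your proposal is correct and follows essentially the same approach as the paper: the paper's proof simply invokes Lemma~\ref{dimension of picard group} (together with \cite[Lemma 3.2]{gallardo_martinez-garcia_2018}) after noting that all its hypotheses are satisfied, which is exactly what you do, only with the verification spelled out in more detail.
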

\begin{proof}
The proof follows the argument of the proof of \cite[Lemma 3.2]{gallardo_martinez-garcia_2018}, and Lemma \ref{dimension of picard group}, noting that all the assumptions of Lemma \ref{dimension of picard group} are satisfied.
%Let $\pi_1\colon \mathcal{R} \rightarrow \mathbb{P}\bigwedge^k W$ , $pi_2\colon \mathcal{R} \rightarrow \mathbb{P}^n$ be the natural projections. The action of $G$ on $\Xi_d$ and
%$\Xi_1$ induces a natural action on $\mathcal{R}$ which preserves the fibers. Hence $G$ acts on both components of $\mathcal{R}$ and the $\pi_i$ are morphisms of $G$-varieties. Recall there is an exact sequence (ref Dolgachev):
%$$0 \rightarrow X(G) \rightarrow \operatorname{Pic}^G(\mathcal{R} \rightarrow \operatorname{Pic}(\mathcal{R}) \rightarrow \operatorname{G}$$
%where $X(G)$ is the kernel of the forgetful morphism $\operatorname{Pic}^G(\mathcal{R}) \rightarrow \operatorname{Pic}(\mathcal{R})$. Since $X (G) = {1}$ and $\operatorname{Pic}(G) \{1\}$ by Dolgachev then $\operatorname{Pic}^G(\mathcal{R}) \simeq \operatorname{Pic}(\mathcal{R})$. Moreover, given that $\operatorname{Pic}^G(\mathcal{R}) \subseteq \operatorname{Pic}(\mathcal{R})^G \subset \operatorname{Pic}(\mathcal{R})$, were $\operatorname{Pic}(\mathcal{R})^G$ is the group of $G$-invariant line bundles, there result follows from
%$$\operatorname{Pic}^G(\mathcal{R})\simeq \operatorname{Pic}(\mathcal{R})^G \simeq \operatorname{Pic}(\mathcal{R}) \simeq \pi_1^*(\operatorname{Pic}(\operatorname{Gr}))\otimes \pi_2^*(\operatorname{Pic}(\mathbb{P}^n)) \simeq \mathbb{Z}\times \mathbb{Z},$$
%as $\operatorname{Pic}(\operatorname{Gr}) = \mathbb{Z}.$
\end{proof}

\begin{lemma}\label{decomposition of H-M function}
Let $\mathcal{L}\cong \mathcal{O}(a_1,\dots,a_m)\in \operatorname{Pic}^G(X)$ be ample. Then 
$$\mu^{\mathcal{L}}(X, \lambda)= \sum_{i=1}^ma_i\mu(X_i, \lambda).$$
\end{lemma}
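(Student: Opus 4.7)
The plan is to derive this formula from three standard formal properties of the Hilbert--Mumford index (all of which can be found in \cite[Chapter 2]{mumford_fogarty_kirwan_1994} and \cite[Chapter 9]{dolgachev_2003}): (i) additivity under tensor product of $G$-linearised line bundles, $\mu^{\mathcal{L}_1 \otimes \mathcal{L}_2}(x, \lambda) = \mu^{\mathcal{L}_1}(x, \lambda) + \mu^{\mathcal{L}_2}(x, \lambda)$; (ii) homogeneity under tensor powers, $\mu^{\mathcal{L}^{\otimes a}}(x, \lambda) = a\, \mu^{\mathcal{L}}(x, \lambda)$; and (iii) functoriality under pullback by a $G$-equivariant morphism $f \colon Y \to X$, namely $\mu^{f^*\mathcal{L}}(y, \lambda) = \mu^{\mathcal{L}}(f(y), \lambda)$. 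All three follow directly from the defining recipe of $\mu$ via the weight of the $\lambda$-action on the fibre of the linearisation at the limit point $\lim_{s\to 0}\lambda(s)\cdot x$, together with the fact that weights add under tensor products and scale under powers.

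By Lemma \ref{dimension of picard group}, the ample $G$-linearised line bundle $\mathcal{L}$ decomposes as $\mathcal{L} \cong \bigotimes_{i=1}^m \pi_i^*\mathcal{O}_{X_i}(a_i)$ where the $\pi_i \colon X \to X_i$ are the natural projections. The hypothesis in Lemma \ref{dimension of picard group} that the $G$-action on $X$ extends to each $X_i$ is precisely what makes each $\pi_i$ a $G$-equivariant morphism, so property (iii) is applicable. My plan is then to apply (i) to decompose $\mu^{\mathcal{L}}(X, \lambda)$ into $m$ summands of the form $\mu^{\pi_i^*\mathcal{O}_{X_i}(a_i)}(X, \lambda)$; apply (iii) to each summand to push the computation from $X$ down to $X_i$; and finally apply (ii) to extract the scalar $a_i$. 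This yields
$$\mu^{\mathcal{L}}(X,\lambda) \;=\; \sum_{i=1}^m a_i\,\mu^{\mathcal{O}_{X_i}(1)}(\pi_i(X),\lambda) \;=\; \sum_{i=1}^m a_i\,\mu(X_i,\lambda),$$
where the last equality identifies $\mu(X_i, \lambda)$ with the Hilbert--Mumford function computed at the point $\pi_i(X) \in X_i$ with respect to the ample generator $\mathcal{O}_{X_i}(1)$ fixed in Lemma \ref{dimension of picard group}.

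No serious obstacle is expected: the lemma is essentially a formal consequence of how equivariant weights behave under the operations of tensoring, taking powers, and pulling back. The only delicate point to check is that the sign/max convention for $\mu$ adopted in Remark \ref{equivalence of min and max} is compatible with (i)--(iii); it is, because those three properties reflect only the linear dependence of the numerical invariant on the chosen linearisation and are stable under the change of sign implemented in that remark.
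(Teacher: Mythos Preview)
Your proposal is correct and follows essentially the same approach as the paper: the paper invokes the group-homomorphism property of $\mu^{\mathcal{L}}(-,\lambda)$ (which packages your (i) and (ii) together) and the functoriality under $G$-equivariant pullback (your (iii)), citing \cite[Definition 2.2]{mumford_fogarty_kirwan_1994}, and then carries out exactly the three-step computation you describe.
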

\begin{proof}
By the properties of the Hilbert-Mumford function \cite[Definition 2.2]{mumford_fogarty_kirwan_1994}, we have that $\mu^{\mathcal{L}}(-,\lambda)\colon M\rightarrow \mathbb{Z}$ is a group homomorphism, and that given any $G$-equivariant morphism of $G$-varieties $f\colon X \rightarrow Y$ and ample $\mathcal{M}\in \operatorname{Pic}^G(Y)$ , we have that $\mu ^{f^*\mathcal{M}}(X,\lambda) =\mu ^{\mathcal{M}}(f(X),\lambda)$. Hence, we have:
\begin{equation*}
    \begin{split}
      \mu^{\mathcal{O}(a_1,\dots,a_m)}(X,\lambda)&=\mu^{\pi_1^* \mathcal{O}_{X_1}(a_1)\otimes  \pi_m^*\mathcal{O}_{X_m}(a_m)}(X,\lambda)\\
      &= \sum_{i=1}^m \mu^{\pi_i^* \mathcal{O}_{X_i}(a_i)}(X_i,\lambda)\\
      &=\sum_{i=1}^ma_i\mu(X_i, \lambda).
    \end{split}
\end{equation*}
\end{proof}

For ample $\mathcal{L} = \mathcal{O}(a,\vec{b})$ the GIT quotient $\mathcal{R}\sslash G$ is 

$$\overline{M}^{GIT}_{n,d,k,m,\vec{t}}\vcentcolon= \overline{M}^{GIT}(\vec{t})_{n,d,k,m}\vcentcolon=\operatorname{Proj}\bigoplus_j H^0(\mathcal{R}, \mathcal{L}^{\otimes j})^G,$$
where $\vec{t} = (t_1,\dots,t_m)$ with $t_i= \frac{b_i}{a} \in \mathbb{Q}_{>0}$. By Lemma \ref{decomposition of H-M function}, for $\mathcal{L}$ as above, and by the functoriality of the Hilbert-Mumford function, we have $\mu^{\mathcal{L}}((S,H_1,\dots, H_m), \lambda) = a \mu_{\vec{t}}(S,H_1,\dots, H_m,\lambda)$  where 
\begin{equation*}
    \begin{split}
        \mu_{\vec{t}}(S, H_1,\dots, H_m, \lambda) &\vcentcolon= \mu_{\vec{t}}(f_1\wedge\dots \wedge f_k, h_1,\dots, h_m, \lambda) \\
        &\coloneqq  \mu(f_1\wedge\dots \wedge f_k, \lambda) + \sum_{p=1}^mt_p\mu(H_p, \lambda)      \\
        &= \max \left\{\sum_{i=1}^k\langle I_i, \lambda\rangle \Big| (I_1,\dots,I_k)\in (\Xi_d)^k \text{, } I_i \neq I_j  \text{ if } i\neq j \text{, }x^{I_i}\in \operatorname{Supp}(f_i) \right\}\\
        &+ \sum_{p=1}^mt_p\max\{r_j| h_{j,p}\neq0, h_{j,p} \in \operatorname{Supp}(h_p)\}.
    \end{split}
\end{equation*}

\begin{definition}\label{HM for VGIT}
Let $\vec{t}$ be such that all $t_i\in \mathbb{Q}_{>0}$. The tuple $(f_1,\dots, f_k, h_1,\dots, h_m)$ is \emph{$\vec{t}$-stable} (resp. \emph{$\vec{t}$-semistable}) if $\mu_{\vec{t}}( f_1\wedge\dots\wedge f_k, h_1,\dots, h_m, \lambda) > 0$
(respectively, $\mu_{\vec{t}}(f_1\wedge\dots\wedge f_k, h_1,\dots, h_m, \lambda)  \geq 0$) for all non-trivial normalised one-parameter subgroups $\lambda$ of $G$. A tuple $(f_1,\dots, f_k, h_1,\dots, h_m)$ is
\emph{$\vec{t}$-unstable} if it is not $\vec{t}$-semistable. A tuple $(f_1,\dots, f_k, h_1,\dots, h_m)$ is \emph{strictly $\vec{t}$-semistable} if it is $\vec{t}$-semistable but not $\vec{t}$-stable.
\end{definition}
\begin{remark}
We will often write $(S, H_1,\dots, H_m)$ instead of
the tuple $(f_1,\dots, f_k, h_1,\dots, h_m)$ for ease of notation.
\end{remark}

\subsection{Stability conditions}\label{section:stability conditions}
As before, we fix a maximal torus $T \subset G$ and coordinates such that $T$ is diagonal. We define a partial order on $\Xi_d$ (following Mukai \cite[\S 7]{mukai_2003}): given $v, v' \in \Xi_d $ we have
$$v \leq v' \quad \text{if and only if} \quad \langle v, \lambda  \rangle \leq \langle v', \lambda  \rangle$$
for all normalised one-parameter subgroups $\lambda \colon \mathbb{G}_m \rightarrow T \subset G$. It is also useful to note there exists a lexicographic order $<_{lex}$, e.g. $x_0x_1 <_{lex} x_0^2$ in $\Xi_d$, which is a total order \cite[Theorem 1.8]{harzheim}, i.e. it is  antisymmetric, transitive, and equipped with a connex relation, where if $x \neq y$ we have either $x<_{lex}y$ or $y<_{lex}x$. This gives rise to the order $\leq_{\lambda}$ for a normalised one-parameter subgroup $\lambda$ which we call the $\lambda$-order: $v \leq_{\lambda} v'$ if and only if:

\begin{enumerate}
    \item $\langle v, \lambda  \rangle < \langle v', \lambda  \rangle$ or
    \item $\langle v, \lambda  \rangle = \langle v', \lambda  \rangle$ and $v \leq_{lex} v'$.
\end{enumerate}

\begin{lemma}
The $\lambda$-order is a total order .
\end{lemma}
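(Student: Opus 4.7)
The plan is to verify the three defining axioms of a total order---antisymmetry, transitivity, and the connex relation---by reducing in each case to a property that we already know holds for either the usual strict order on $\mathbb{Z}$ or the lexicographic order $\leq_{lex}$ on $\Xi_d$. Reflexivity, although not listed among the three axioms of a (strict) total order but implicit in the use of the symbol $\leq_{\lambda}$, is immediate from clause (2) of the definition, since $\langle v,\lambda\rangle = \langle v,\lambda\rangle$ and $v \leq_{lex} v$.

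For the connex relation, given $v, v' \in \Xi_d$, I would split on whether $\langle v,\lambda\rangle$ and $\langle v',\lambda\rangle$ coincide. If they differ, one is strictly smaller than the other as integers, and clause (1) delivers either $v <_\lambda v'$ or $v' <_\lambda v$. If they coincide, I appeal to the fact that $\leq_{lex}$ is itself a total order (stated just before the lemma, following \cite[Theorem 1.8]{harzheim}), so clause (2) applies in one direction or the other. For antisymmetry, assume $v \leq_\lambda v'$ and $v' \leq_\lambda v$. Clause (1) cannot hold in either direction, because $\langle v,\lambda\rangle < \langle v',\lambda\rangle$ and $\langle v',\lambda\rangle < \langle v,\lambda\rangle$ cannot both occur, and a strict inequality in one direction rules out the reverse relation altogether. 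Hence clause (2) must apply in both directions, giving $\langle v,\lambda\rangle = \langle v',\lambda\rangle$ together with $v \leq_{lex} v'$ and $v' \leq_{lex} v$, whence $v = v'$ by the antisymmetry of $\leq_{lex}$.

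For transitivity, assume $v \leq_\lambda v'$ and $v' \leq_\lambda v''$. The natural approach is a short case analysis on which clause of the definition holds at each step. If clause (1) is invoked at either step, then combining a strict inequality with an equality or another strict inequality of $\langle -,\lambda \rangle$ values yields $\langle v,\lambda\rangle < \langle v'',\lambda\rangle$, so clause (1) gives $v \leq_\lambda v''$. If clause (2) is invoked at both steps, then $\langle v,\lambda\rangle = \langle v',\lambda\rangle = \langle v'',\lambda\rangle$ and $v \leq_{lex} v' \leq_{lex} v''$, so by transitivity of $\leq_{lex}$ we conclude $v \leq_\lambda v''$ via clause (2).

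None of the steps is a real obstacle; the lemma is essentially the statement that the lexicographic refinement of a preorder by a total order is again a total order, and the only subtlety is to keep the two clauses of the definition straight in the case analyses. I would present the proof compactly as the three verifications above, citing the totality of $\leq_{lex}$ and the standard order on $\mathbb{Z}$ without further elaboration.
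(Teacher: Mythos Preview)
Your proposal is correct and follows the same approach as the paper; the paper's proof is simply the one-line remark that the result follows from the lexicographic order being total (citing \cite[Theorem 1.8]{harzheim}), whereas you have written out in full the routine verification that this implies. There is no substantive difference in method.
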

\begin{proof}
This follows, since the lexicographic order is a total order, \cite[Theorem 1.8]{harzheim}.%, hence 
%\begin{enumerate}
%    \item it is reflexive since $v\leq_{\lambda} v$, as $v\leq_{lex} v$
%    \item it is transitive as 
%\end{enumerate}
\end{proof}

As in Gallardo--Martinez- Garcia \cite[Section 5]{gallardo_martinez-garcia_2018} we define a fundamental set of one-parameter subgroups $\lambda \in T$.

\begin{definition}\label{finite_set_def_k_case}
The \emph{fundamental set of one-parameter subgroups} $\lambda \in T$ for $k$ hypersurfaces of degree $d$ and $m$ hyperplanes, $P_{n,k,d,m}$  consists of all non-trivial elements $\lambda(s) = \operatorname{Diag} (s^{\mu_0}, \dots, s^{\mu_n})$ where 
$$({\mu_0}, \dots, {\mu_n}) = c({\gamma_0}, \dots, {\gamma_n}) \in \mathbb{Z}^{n+1}$$
satisfying the following:

\begin{enumerate}
    \item $\gamma_i = \frac{a_i}{b_i}\in \mathbb{Q}$ such that $\gcd(a_i, b_i) = 1$ for all $i = 0, \dots n$ and $c = \operatorname{lcm}(b_0, \dots, b_n)$,
    \item $1=\gamma_0 \geq \gamma_1 \geq \dots \geq \gamma_n = -1 - \sum^{n-1}_{i=1} \gamma_i$,
    \item $(\gamma_0, \dots, \gamma_n)$ is the unique solution of a consistent linear system given by $n-1$ equations chosen from the union of the following sets:
    $$\operatorname{Eq}(n-1,k,d) \vcentcolon= \{\gamma_i - \gamma_{i+1} = 0\} \cup$$ 
    $$ \bigg\{ \sum^{n}_{j=0}\big[\sum_{i=1}^k d_{i,j} -\sum_{i=1}^k \overline{d}_{i,j}\big]\gamma_j = 0\text{ }|\text{ } d_{i,j},\text{ }\overline{d}_{i,j}\in \mathbb{Z}_{\geq 0}, \forall i,j, \sum^{n}_{j=0} d_{i,j} = \sum^{n}_{j=0} \overline{d}_{i,j}= d \bigg\}.$$
\end{enumerate}
Since there is a finite number of monomials in $\Xi_d$ and equations in $\operatorname{Eq}(n-1,k,d)$, the set $P_{n,k,d,m}$ is finite.
\end{definition}

\begin{lemma}\label{unstablelemma-vgit}
A tuple $(S, H_1,\dots, H_m)$ is not $\vec{t}$-stable (respectively not $\vec{t}$-semistable) 
%A pair $(S,H)$ is not $t-$stable (respectively not $t$-semistable) 
if and only if there is $g \in G$ such that
$$\mu_{\vec{t}}(S, H_1,\dots,H_m) \coloneqq \max_{\lambda \in P_{n,d,k,m}}\{\mu_t(g\cdot S, g\cdot H_1, \dots, g\cdot H_m, \lambda)\}\leq 0 \quad (\text{respectively}<0)$$
%$$\mu_{t}(S, H) \vcentcolon= \max_{\lambda \in P_{n,d,k}}\{\mu_t(g\cdot S, g\cdot H, \lambda)\}\geq 0 \quad (\text{respectively}>0)$$
where $\mu_{\vec{t}}$ is the Hilbert-Mumford function defined above and $P_{n,d,k,m}$ is the fundamental set of Definition \ref{finite_set_def_k_case}.
\end{lemma}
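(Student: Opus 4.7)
The plan is a three-step polyhedral reduction. First, the Hilbert--Mumford numerical criterion (Lemma \ref{mumford_map_GIT} applied to Definition \ref{HM for VGIT}) gives that the tuple $x = (S, H_1, \ldots, H_m)$ is not $\vec{t}$-stable (respectively not $\vec{t}$-semistable) if and only if some non-trivial 1-PS $\lambda$ of $G$ has $\mu_{\vec{t}}(x, \lambda) \leq 0$ (respectively $<0$). Second, the functoriality $\mu_{\vec{t}}(g \cdot x, \lambda) = \mu_{\vec{t}}(x, g^{-1} \lambda g)$ combined with the conjugacy of maximal tori of $G$ lets us replace $x$ by some translate $g \cdot x$ and assume $\lambda$ lies in the fixed maximal torus $T$; absorbing a Weyl-group element of $N_G(T)/T$ into $g$ further allows us to assume $\lambda$ is normalised, i.e.\ its weight vector $(\mu_0, \ldots, \mu_n)$ satisfies $\mu_0 \geq \mu_1 \geq \cdots \geq \mu_n$ with $\sum_j \mu_j = 0$.

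The essential third step reduces the continuum of normalised 1-PSs in $T$ to the finite fundamental set $P_{n,k,d,m}$ via a polyhedral argument. Positive homogeneity of $\mu_{\vec{t}}(x, \cdot)$ in $\lambda$ lets us rescale so that $\mu_0 = 1$; writing $\gamma_j := \mu_j$, the parameter space becomes the rational polytope
\[
\Delta \;=\; \Big\{(\gamma_1, \ldots, \gamma_{n-1}) \in \mathbb{Q}^{n-1} \;:\; 1 \geq \gamma_1 \geq \cdots \geq \gamma_{n-1} \geq -1 - \textstyle\sum_{i=1}^{n-1}\gamma_i\Big\}
\]
of dimension $n-1$. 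The function
\[
\mu_{\vec{t}}(g \cdot x, \gamma) \;=\; \max_{(I_1, \ldots, I_k)}\sum_{i=1}^k \langle I_i, \gamma\rangle \;+\; \sum_{p=1}^m t_p \max_{\{j \,:\, h_{j,p} \neq 0\}} \gamma_j
\]
is a sum of maxima of linear functionals on $\Delta$, hence convex and piecewise linear. Its non-differentiability walls are hyperplanes of two types: the facet-type walls $\gamma_i - \gamma_{i+1} = 0$, and the difference-type walls $\sum_j\big[\sum_i d_{i,j} - \sum_i \bar d_{i,j}\big]\gamma_j = 0$ arising when two distinct admissible monomial tuples pair equally with $\gamma$. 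Together these constitute exactly the equation-set $\operatorname{Eq}(n-1, k, d)$ of Definition \ref{finite_set_def_k_case}, which also absorbs the hyperplane-contribution walls $\gamma_j = \gamma_{j'}$ for non-adjacent indices via admissible difference vectors such as $e_j + (d-1)e_0$ versus $e_{j'} + (d-1)e_0$ (valid whenever $d \geq 1$).

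Subdividing $\Delta$ by these hyperplanes yields polyhedral cells on each of which $\mu_{\vec{t}}(g \cdot x, \cdot)$ is linear, so its infimum over $\Delta$ is attained at a vertex of the subdivision. Each such vertex is the unique solution to a consistent linear system of $n-1$ independent equations drawn from $\operatorname{Eq}(n-1, k, d)$; after clearing denominators by the integer $c = \operatorname{lcm}(b_0, \ldots, b_n)$, this produces exactly an element of $P_{n,k,d,m}$, and conversely every element of $P_{n,k,d,m}$ arises from such a system. Consequently the infimum of $\mu_{\vec{t}}(g \cdot x, \cdot)$ over $\Delta$ is realised on $P_{n,k,d,m}$, so the Hilbert--Mumford test reduces to an evaluation against the finite set $P_{n,k,d,m}$ after choosing an appropriate $g$. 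The hard part will be the combinatorial bookkeeping: verifying that every wall of the piecewise linear function (in particular those coming from the hyperplane contributions involving non-adjacent coordinate pairs) is captured by $\operatorname{Eq}(n-1, k, d)$, and that the vertices of the induced refinement of $\Delta$, after the rescaling encoded by $c$, match $P_{n,k,d,m}$ faithfully.
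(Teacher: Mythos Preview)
Your approach is essentially the same as the paper's: conjugate into the fixed torus, normalise, and exploit the piecewise-linear structure of $\mu_{\vec t}(g\cdot x,\cdot)$ on the simplex $\Delta$ to reduce the test to the vertices of the induced subdivision, which are precisely the elements of $P_{n,k,d,m}$. The one place where the paper streamlines the argument: once $\lambda$ is normalised (so $\mu_0\ge\cdots\ge\mu_n$ on all of $\Delta$), the hyperplane contribution $\sum_p t_p\max_{j\in\operatorname{Supp}(h_p)}\mu_j$ equals $\sum_p t_p\,\mu_{j_p}$ with $j_p=\min\operatorname{Supp}(h_p)$ independent of $\lambda$, hence is already \emph{linear} on $\Delta$; so no walls of the form $\gamma_j=\gamma_{j'}$ arise from the $H_p$ at all, and your absorption argument for these (correct, via the monomials $x_0^{d-1}x_j$ versus $x_0^{d-1}x_{j'}$) is unnecessary. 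This is exactly why $\operatorname{Eq}(n-1,k,d)$ in Definition~\ref{finite_set_def_k_case} carries no separate contribution from the hyperplanes.
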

\begin{proof}
The proof follows the argument in the proof of \cite[Lemma 3.2]{gallardo_martinez-garcia_2018}.
Let $R^{n}_{m,T}$ be the non-stable loci of $\mathcal{R}_m$ with respect to a maximal torus $T<G$; and let $\mathcal{R}^n_m$ be
the non-stable loci of $\mathcal{R}_{m}$. By Mumford \cite[p.137]{mumford_fogarty_kirwan_1994}, the following holds 
$$\mathcal{R}^n_{m} = \bigcup_{T_i \subset G, \text{ maximal}} R^{n}_{m, T_i}.$$
Let $(f_1',\dots, f_k',h_1,\dots, h_m)$ be a tuple which is not $\vec{t}$-stable. Then, $\mu_{\vec{t}}(f_1' \wedge \dots \wedge f_k',h'_1,\dots, h'_m, \rho) \\\leq 0$ for some $\rho \in T'$ in a maximal torus $T'$. All the maximal tori are conjugate to each other in $G$, and by  \cite[Definition 2.2]{mumford_fogarty_kirwan_1994} the following holds:
$$\mu_{\vec{t}}(f_1' \wedge \dots \wedge f_k', h'_1,\dots, h'_m, \rho)  = \mu_{\vec{t}}(g \cdot (f_1' \wedge \dots \wedge f_k', h'_1,\dots, h'_m), g\rho g^{-1}).$$

Then, there is $g_0 \in  G$ such that $\lambda \vcentcolon= g_0 \rho {g_0}^{-1}$ is normalised with respect to a torus $T$ whose generators define the variables for the monomials of $f_i$ and $h_j$ and $(f_1,\dots, f_k, h_1,\dots, h_m)\\\vcentcolon= g_0 \cdot (f_1',\dots,  f_k',h'_1,\dots, h'_m)$ has coordinates in the coordinate system which diagonalises $\lambda$ such that $\mu_{\vec{t}}(f_1 \wedge \dots \wedge f_k,h_1,\dots,h_m, \lambda) \leq 0$. In this coordinate system normalised one-parameter subgroups \\$\lambda = \operatorname{Diag}(s^{\mu_0}, \dots, s^{\mu_n})$, with fixed $\mu_0 >0$ form a closed convex polyhedral subset $\Delta$ of dimension $n$ in $N \otimes \mathbb{Q} \cong \mathbb{Q}^{n+1}$ (in fact $\Delta$ is a standard simplex). Indeed, this is the case since for any normalised one-parameter subgroup, $\lambda(s) = \operatorname{Diag} (s^{\mu_0}, \dots, s^{\mu_n})$ where $\mu_0 \geq \dots \geq \mu_n$ with $\sum \mu_i = 0$ and we may assume without loss of generality that $\mu_0 = 1$.

For any fixed $f_1,\dots, f_k,h_1,\dots,h_m$, the function $\mu_{\vec{t}} (f_1 \wedge \dots \wedge f_k,h_1,\dots,h_m, -)\colon N \otimes\\ \mathbb{Q} \rightarrow \mathbb{Q} $ is piecewise linear, as for a fixed maximal torus $T$ and normalised one-parameter subgroup $\lambda$ the function is the maximum of a finite number of linear forms. The critical points of $\mu_{\vec{t}}$ (i.e. the points where $\mu_{\vec{t}}$ fails to be linear) correspond to those points in $N \otimes \mathbb{Q}$ where $\sum_{i=1}^k \langle I_i, \lambda \rangle +\sum_{p=1}^mt_p \langle x_{l_p},\lambda \rangle   = \sum_{i=1}^k\langle \overline{I}_i, \lambda \rangle +\sum_{p=1}^mt_p \langle \overline{x}_{l_p},\lambda \rangle$, for  $I_i = (d_{i,0}, \dots, d_{i,n})$, $\overline{I}_i = (\overline{d}_{i,0}, \dots, \overline{d}_{i,n})$ representing monomials of degree $d$, with $I_i\neq I_j$ for all $i\neq j$, %of the form $f_1\wedge \dots \wedge f_k = \bigwedge_{i=1}^k\sum \prod{(f_i)}_{I_i}   x^{I_i}$ defining $f_1 \wedge \dots \wedge f_k.$
where $f_i = \sum f_{I_i}x^{I_i}$, and $x_{l_p}$ are  monomials of degree $1$, such that $x_{l_p}\coloneqq \max \operatorname{Supp}(h_p)$. Notice that $x_{l_p}$
is unique for each of the $H_p = \{h_p=0\}$, hence $\sum_{p=1}^mt_p \langle x_{l_p},\lambda \rangle = \sum_{p=1}^mt_p \langle \overline{x}_{l_p},\lambda \rangle$ implies that $x_{l_p} = \overline{x}_{l_p}$ for each $p$. Hence the $\sum_{i=1}^mt_ix_{j_i}$ component is always linear, and as such the critical points of $\mu_{\vec{t}}$ correspond to those points in $N \otimes \mathbb{Q}$ where $\sum_{i=1}^k \langle I_i, \lambda \rangle  = \sum_{i=1}^k\langle \overline{I}_i, \lambda \rangle $. Since $\langle-,-\rangle$ is bilinear, that is equivalent to say that $\big\langle \sum_{i=1}^{k}\big(I_i -   \overline{I}_i\big), \lambda \big\rangle = 0$. These points define a hyperplane in $N \otimes \mathbb{Q}$ and the intersection of this hyperplane with $\Delta$ is a simplex $\Delta_{(I_i,\overline{I}_i)}$ of dimension $n-1$.

The function $\mu_{\vec{t}} (f_1 \wedge \dots \wedge f_k,h_1,\dots,h_m, -)$ is linear on the complement of the union of hyperplanes defined by $\big\langle \sum_{i=1}^{k}\big(I_i -   \overline{I}_i\big), \lambda \big\rangle = 0$. Hence, its maximum is achieved on the boundary, i.e. either on $\partial \Delta$ or on $\Delta_{(I_i,\overline{I}_i)}$ which are all convex polytopes of dimension $n-1$. We can repeat this reasoning by finite inverse induction on the dimension until we conclude that the maximum of $\mu_{\vec{t}} (f_1 \wedge \dots \wedge f_k,h_1,\dots,h_m, -)$ is achieved at one of the
vertices of $\Delta$ or $\Delta_{(I_i,\overline{I}_i)}$. Notice that these correspond precisely, up to multiplication by a constant, to the finite set of one-parameter subgroups in $P_{n,k,d,m}$.
\end{proof}

%The following result applies to the case of pairs $(S,H)$.

\begin{corollary}\label{intervals_of_stab}
Let $(S, H) \in \mathcal{R} $ and
\begin{equation*}
    \begin{split}
        a &= \min\{t \in \mathbb{Q}_{>0} |\mu_t(g\cdot S, g \cdot H, \lambda)\geq0, \text{ } \forall \lambda \in  P_{n,d,k,1}, g \in G\},\\
        b &= \max\{t \in \mathbb{Q}_{>0} |\mu_t(g\cdot S, g \cdot H, \lambda)\geq 0, \text{ }  \forall \lambda \in  P_{n,d,k,1}, g \in G\}.
    \end{split}
\end{equation*}
If $(S, H)$ is $t$-semistable for some $t$, then
\begin{enumerate}
    \item $(S, H)$ is $t$-semistable if and only if $t \in [a, b] \cap \mathbb{Q}_{>0}$,
    \item if $(S, H)$ is $t$-stable for some $t\in (a,b)$, then $(S, H)$ is $t$-stable for all $t \in(a, b) \cap \mathbb{Q}_{>0}$.
\end{enumerate}
The interval $[a,b]$ is called the interval of stability of the pair. If $[a, b] = \emptyset$ then the pair is $t$-unstable for all $t$.

\end{corollary}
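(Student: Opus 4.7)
The plan is to exploit the affine linearity in $t$ of the Hilbert-Mumford function
\[
\mu_t(g\cdot S, g\cdot H, \lambda) = \mu(g\cdot S, \lambda) + t\,\mu(g\cdot H, \lambda),
\]
which follows directly from the formula preceding Definition \ref{HM for VGIT}. Both parts reduce to elementary manipulations of this linear dependence combined with Lemma \ref{unstablelemma-vgit}.

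For part (1), assume $(S,H)$ is simultaneously $t_1$- and $t_2$-semistable with $t_1 \leq t_2$, and let $t = s t_1 + (1-s) t_2$ for some $s \in [0,1]$. For every $g \in G$ and $\lambda \in P_{n,d,k,1}$, affine linearity in $t$ gives
\[
\mu_t(g\cdot S, g\cdot H, \lambda) = s\,\mu_{t_1}(g\cdot S, g\cdot H, \lambda) + (1-s)\,\mu_{t_2}(g\cdot S, g\cdot H, \lambda) \geq 0,
\]
where each summand is non-negative by Lemma \ref{unstablelemma-vgit} applied at $t_1$ and $t_2$. Hence $(S,H)$ is $t$-semistable, so the set of $t \in \mathbb{Q}_{>0}$ for which $(S,H)$ is $t$-semistable is convex, and by the definitions of $a$ and $b$ it coincides with $[a,b] \cap \mathbb{Q}_{>0}$.

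For part (2), fix $t_0 \in (a,b)$ with $(S,H)$ being $t_0$-stable, and let $t \in (a,b) \cap \mathbb{Q}_{>0}$. Assume toward a contradiction that $(S,H)$ is not $t$-stable. Then by Lemma \ref{unstablelemma-vgit} there exist $g \in G$ and $\lambda \in P_{n,d,k,1}$ with $\mu_t(g\cdot S, g\cdot H, \lambda) \leq 0$; combined with the $t$-semistability established in part (1), this forces $\mu_t(g\cdot S, g\cdot H, \lambda) = 0$. Writing the affine function $\mu_{t'}(g\cdot S, g\cdot H, \lambda) = A + t'B$, the endpoint inequalities $A + aB \geq 0$ and $A + bB \geq 0$ coming from semistability at $a$ and $b$, together with $A + tB = 0$ and $a < t < b$, yield $(a-t)B \geq 0$ and $(b-t)B \geq 0$, which forces $B = 0$ and hence $A = 0$. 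Therefore $\mu_{t'}(g\cdot S, g\cdot H, \lambda) \equiv 0$, which contradicts $\mu_{t_0}(g\cdot S, g\cdot H, \lambda) > 0$ required by $t_0$-stability.

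The argument is essentially linear algebra once the affine dependence on $t$ is isolated; the only point to watch is that the inequalities obtained from semistability at several parameters $t_1, t_2, a, b$ must be applied to the \emph{same} pair $(g, \lambda)$ that witnesses destabilization at $t$. This is immediate because Lemma \ref{unstablelemma-vgit} characterises $t$-semistability by the non-negativity of $\mu_t(g\cdot S, g\cdot H, \lambda)$ holding uniformly in $(g,\lambda)$, so one is free to specialize to any desired choice.
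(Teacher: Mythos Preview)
Your proof is correct and follows the same approach that the paper implicitly invokes by citing \cite[Corollary 3.3]{gallardo_martinez-garcia_2018}: both arguments rest on the affine dependence of $\mu_t$ on $t$, convexity for semistability, and a linear-algebra trick to propagate stability across the open interval.
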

\begin{proof}
The proof follows the argument of the proof of \cite[Corollary 3.3]{gallardo_martinez-garcia_2018}.
%TODO add the proof here as well

%For tuples $(S,H_1,\dots,H_m)$ we define the \emph{space of stability conditions} $\operatorname{Stab}(n,d,k,m)\vcentcolon= \{\vec{t}\in (\mathbb{Q}_{\geq 0})^m|\text{ there exists a } \vec{t}-\text{semistable tuple } (S,H_1,\dots, H_m)\}$.
\end{proof}

We can also extend the above for the case of tuples.

\begin{definition}[see {\cite[Definition 2.4 ]{zhang_2018}}]\label{stab_conditions}
The \emph{space of stability conditions} is 
$$\operatorname{Stab}(n,d,k,m) \vcentcolon=\left\{\vec{t}\in (\mathbb{Q}_{\geq 0})^m|\text{ there exists a } \vec{t}\text{-semistable tuple }(S,H_1,\dots,H_m)\right\}. $$
\end{definition}

Lemma \ref{unstablelemma-vgit} determines the $\vec{t}$-stability for all tuples $(S,H_1,\dots,H_m)$ as the fundamental set does not depend on $t$. In more detail, in checking the stability conditions for such tuples, Lemma \ref{unstablelemma-vgit} shows that we may only consider a finite set of one-parameter subgroups $P_{n,d,k,m}$, to check which tuples are not $\vec{t}$-semistable, In particular, this Lemma allows us to determine the space of stability conditions, which is compact by \cite{dolgachev_1994}. This verifies the prediction in \cite{kraft} that there should be a finite set of one-parameter subgroups that determines the stability of tuples.

\subsection{The Centroid criterion}\label{section: centroid crit}
We will state a \emph{centroid criterion}, which will allow us to computationally deduce whether a point is stable or strictly semistable. This is known to be always possible by \cite[Theorem 9.2]{dolgachev_1994}, and is a well-known direct application of the Hilbert-Mumford numerical criterion. The contribution of this paper in this application, is the very explicit description of the proof in the notation we have introduced for Section \ref{VGIT_prelims}.

We define a map $A: (\Xi_d)^k\times (\Xi_1)^m \rightarrow \mathbb{Q}^{n+1}$ as follows:
Given distinct monomials $x^{I_1},\dots, x^{I_k} \in \Xi_d$, %i.e. monomials of the form $x^{I_1} = x_0^{d_{1,0}} \dots x_n^{d_{1,n}} $ and in general $x^{I_i} = x_0^{d_{i,0}} \dots x_n^{d_{i,n}}$ with $\sum d_{1,i} =\dots =\sum d_{k,i} = d$, 
and distinct monomials $x_{l_1},\dots, x_{l_m}\in \Xi_1$, let 
$$A\Bigg((x^{I_1}, \dots, x^{I_k}), x_{l_1},\dots, x_{l_m}\bigg) = \Big(\sum_{i=0}^k d_{i,0}, \dots, \sum_{i=0}^k d_{i,l_1}+ t_{1},\dots, \sum_{i=0}^k d_{i,l_m}+ t_{m},\dots, \sum_{i=0}^k d_{i,n}\Bigg)$$
where we add the $t_{p}$ at the position of monomial $x_{l_p}$, i.e. at position $l_p$, e.g. if we have monomials $x_{l_1} = x_0$, $x_{l_2} = x_3$ and $m=2$ we have 

$$A\bigg((x^{I_1}, \dots, x^{I_k}), x_{0}, x_{3}\bigg) = \Bigg(\sum_{i=0}^k d_{i,0}+t_1, \sum_{i=0}^k d_{i,1}, \sum_{i=0}^k d_{i,2}, \sum_{i=0}^k d_{i,3}+ t_{2}, \sum_{i=0}^k d_{i,4}, \dots, \sum_{i=0}^k d_{i,n}\Bigg).$$

Since:

$$\sum_{j=0}^{n}\big(\sum_{i=1}^{k}(d_{i,j})\big)+ \sum_{p=1}^mt_p= kd +\sum_{p=1}^mt_p$$
the image of $A$ is contained on the first quadrant of the hyperplane 

$$H_{n,k,d,m,\vec{t}} \vcentcolon= \bigg\{(z_0, \dots, z_n) \in \mathbb{Q}^{n+1}\Big| \sum_{i=0}^n z_i = kd+\sum_{p=1}^mt_p\bigg\}. $$

Then, for $S$, a complete intersection of $k$ hypersurfaces of degree $d$, $S=\{f_1=\dots= f_k=0\}$, as before and $H_1,\dots, H_m$ distinct hyperplanes embedded in $\mathbb{P}^n$, where $H_i = \{h_i =0\}$, we define their convex hull $\operatorname{Conv}(S,H_1,\dots,H_m)$ as the convex hull of 
$$\left\{A((x^{I_1},\dots, x^{I_k}), x_{l_1},\dots,x_{l_m})\big|\text{ } x^{I_i} \in \operatorname{Supp}(f_i) \text{ for all } i,\text{ }I_i\neq I_j, x_{l_p} =\max(\operatorname{Supp}(h_p))\right\},$$
where $\max$ is given with respect to the $\lambda$ order and is unique since the $\lambda$-order is a total order. Then $\overline{\operatorname{Conv}(S,H_1,\dots,H_m)}$ is a convex polytope in $H_{n,k,d,m,\vec{t}}$. Note that in particular, for 
$$A\big((f_1,\dots, f_k),h_1,\dots,h_m\big) \coloneqq A\Bigg(\big(\sum f_{I_1}x^{I_1},\dots, \sum f_{I_k}x^{I_k}\big), x_{l_1},\dots,x_{l_m}\Bigg)$$
we have 
$$A\big((f_1,\dots, f_k),h_1,\dots,h_m\big) \in \overline{\operatorname{Conv}(S,H_1,\dots,H_m)}.$$
We also define the $\vec{t}$-\textit{centroid} of $S,H_1,\dots,H_m$ as 

$$\mathcal{C}_{n,d,k,m,\vec{t}} = \Bigg(\frac{kd+\sum_{p=1}^mt_p}{n+1}, \dots, \frac{kd+\sum_{p=1}^mt_p}{n+1}\Bigg) \in H_{n,k,d,m,\vec{t} } \subset \mathbb{Q}^{n+1}.$$

\begin{theorem}\label{t-centroid_criterion}
A tuple $(S,H_1,\dots,H_m)$ is $\vec{t}$-semistable (respectively, $\vec{t}$-stable) if and only if $\mathcal{C}_{n,d,k,m,\vec{t}} \in \overline{\operatorname{Conv}(S,H_1,\dots,H_m)}$ (respectively, $\mathcal{C}_{n,d,k,m,\vec{t}} \in \operatorname{Int}(S,H_1,\dots, H_m)$, where $\operatorname{Int}(S,H_1,\dots, H_m)$ is the interior of $\overline{\operatorname{Conv}(S,H_1,\dots,H_m)}$).
\end{theorem}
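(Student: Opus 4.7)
The proof follows the standard separating-hyperplane identification of the Hilbert--Mumford numerical criterion (Definition \ref{HM for VGIT}) with a convex-geometric condition on the centroid. The first observation is that for any normalised one-parameter subgroup $\lambda = \operatorname{Diag}(s^{\mu_0},\ldots,s^{\mu_n})$,
$$\langle \mathcal{C}_{n,d,k,m,\vec{t}}, \lambda\rangle = \frac{kd + \sum_p t_p}{n+1}\sum_{i=0}^n \mu_i = 0,$$
so the centroid pairs trivially with every such $\lambda$. The second observation, direct from the definition of the map $A$, is that each vertex $P = A((x^{I_1},\ldots,x^{I_k}),x_{l_1},\ldots,x_{l_m})$ of $\overline{\operatorname{Conv}(S,H_1,\ldots,H_m)}$ satisfies
$$\langle P, \lambda\rangle = \sum_{i=1}^k \langle I_i,\lambda\rangle + \sum_{p=1}^m t_p \langle x_{l_p},\lambda\rangle,$$
which is exactly the quantity maximised in $\mu_{\vec{t}}(S,H_1,\ldots,H_m,\lambda)$. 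Since a linear functional achieves its maximum on a polytope at a vertex, this identifies $\mu_{\vec{t}}(\lambda) = \max_{P \in \overline{\operatorname{Conv}}}\langle P,\lambda\rangle = \max_{P \in \overline{\operatorname{Conv}}}\langle P - \mathcal{C}, \lambda\rangle$.

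The forward direction is then immediate. If $\mathcal{C} \in \overline{\operatorname{Conv}}$, writing $\mathcal{C} = \sum_P \alpha_P P$ as a convex combination of vertices yields $0 = \langle \mathcal{C},\lambda\rangle = \sum_P \alpha_P \langle P,\lambda\rangle$, forcing $\max_P\langle P,\lambda\rangle \geq 0$; hence $\mu_{\vec{t}}(\lambda) \geq 0$ for every non-trivial $\lambda$ and the tuple is $\vec{t}$-semistable. For the converse, suppose $\mathcal{C} \notin \overline{\operatorname{Conv}}$. Since both $\mathcal{C}$ and $\overline{\operatorname{Conv}}$ lie in the affine hyperplane $H_{n,k,d,m,\vec{t}}$, the supporting hyperplane theorem supplies a rational linear functional $\phi(z) = \sum_i \nu_i z_i$ with $\phi(\mathcal{C}) > \phi(P)$ for every $P \in \overline{\operatorname{Conv}}$. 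Subtracting a multiple of the all-ones functional (constant on $H_{n,k,d,m,\vec{t}}$) one may assume $\sum_i \nu_i = 0$; clearing denominators then gives a non-trivial OPS $\lambda$ with $\mu_{\vec{t}}(\lambda) < 0$, contradicting semistability. The $\vec{t}$-stability assertion follows by replacing $\overline{\operatorname{Conv}}$ with $\operatorname{Int}(S,H_1,\ldots,H_m)$ and all inequalities with strict ones throughout the above argument.

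The main technical point I expect in the converse is the compatibility of the sorted-weight convention of Definition \ref{HM for VGIT} with the a priori unsorted $\lambda$ produced by the separating-hyperplane argument. This is reconciled via the Weyl action: the centroid $\mathcal{C}_{n,d,k,m,\vec{t}}$ is invariant under the full symmetric group $S_{n+1}$ acting by coordinate permutations, and any such permutation lies in $G = \operatorname{SL}(n+1)$, so sorting the weights of $\lambda$ into normalised form amounts to replacing $(S,H_1,\ldots,H_m)$ by a projectively equivalent tuple whose convex hull is related to the original by the same permutation, preserving the centroid--convex-hull relation. Combined with Lemma \ref{unstablelemma-vgit}, which ensures that testing all normalised OPSs in the fixed torus together with the $G$-action exhausts the GIT stability check, this shows that the centroid criterion faithfully detects $\vec{t}$-(semi)stability in the sense of Definition \ref{HM for VGIT}.
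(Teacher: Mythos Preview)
Your proof is correct and follows essentially the same separating-hyperplane approach as the paper: both identify the Hilbert--Mumford weight with the maximum of a linear functional on the polytope $\overline{\operatorname{Conv}}$, observe that the centroid pairs to zero with every one-parameter subgroup of the torus, and then invoke convex separation. The only cosmetic differences are that you prove the implication ``centroid in hull $\Rightarrow$ semistable'' directly via a convex-combination argument (the paper does both directions by contrapositive), and that your final paragraph on the Weyl action explicitly addresses the normalisation issue for the separating one-parameter subgroup, a point the paper leaves implicit.
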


\begin{proof}
The proof follows the argument of the proof of \cite[Lemma 1.5]{gallardo_martinez-garcia_2018}. Let $x_{l_p} = \max\operatorname{Supp}(h_p)$, for $p=1,\dots,m$. First, note that since
\begin{equation*}
    \begin{split}
       \mu_{\vec{t}}(S,H_1,\dots,H_m,\lambda)& = \mu(S,\lambda)+\sum_{p=1}^mt_p\max_{x_{z} \in \operatorname{Supp}(H_p)}\langle x_z,\lambda\rangle \\
       &= \mu(S,\lambda)+\sum_{p=1}^mt_p\langle x_{l_p},\lambda\rangle \\
       &= \mu_{\vec{t}}(S,S\cap\{x_{l_1} = 0\}\cap\dots\cap \{x_{l_m} = 0\},\lambda).
    \end{split}
\end{equation*}
%$\mu_{\vec{t}}(S,H_1,\dots,H_m,\lambda) = \mu(S,\lambda)+\sum_{p=1}^mt_p\max_{x_{z} \in \operatorname{Supp}(H_p)}\langle x_z,\lambda\rangle = \mu(S,\lambda)+\sum_{p=1}^mt_p\langle x_{l_p},\lambda\rangle = \mu_{\vec{t}}(S,S\cap\{x_{l_1} = 0\}\cap\dots\cap \{x_{l_m} = 0\},\lambda)$ we take $D = S\cap\{x_{l_1}=0\}\cap\dots\cap \{x_{l_m} = 0\}$.

We first show that if $(S,H_1,\dots,H_m)=((f_1,\dots, f_k),H_1,\dots,H_m)$ is $\vec{t}$-semistable then the condition holds. Suppose $\mathcal{C}_{n,d,k,\vec{t}} \notin \overline{\operatorname{Conv}(S,H_1,\dots,H_m)}$, then, there exists an affine map $\psi: \mathbb{R}^{n+1} \rightarrow \mathbb{R}$ with $\psi(\mathcal{C}_{n,d,k,\vec{t}}) = 0$ and $\psi|_{\overline{\operatorname{Conv}(S,H_1,\dots,H_m)}} >0$. We can write 

$$\psi(z_0, \dots, z_n) = \sum_{i=0}^n \alpha_j z_j + q$$
where $\alpha_i$ are integers, since the convex hull has vertices with rational coefficients. Thus, for $x^{I_i}\in \operatorname{Supp}(f_i)$:

$$\psi(A(x^{I_1}, \dots, x^{I_k}, x_{l_1}, \dots, x_{l_m})) = \sum_{j=0}^n \alpha_j \Big(\sum_{i=1}^{k}d_{i,j}\Big) +\sum_{p=1}^mt_p\alpha_{l_p}+q >0. $$
$$0 = \psi(\mathcal{C}_{n,d,k,\vec{t}}) = \frac{kd+\sum_{p=1}^m t_p}{n+1}\sum_{i=0}^{n}\alpha_i +q.$$
Let $\delta \vcentcolon= -\frac{q}{kd+\sum_{p=1}^mt_p} \in \mathbb{Q}$. Then $(n+1)\delta = \sum_{i=0}^n \alpha_i$, and we can choose some $r \in \mathbb{Z}_{\leq0}$ such that $r\delta \in \mathbb{Z}$. Then we can define one-parameter subgroup: 
$$ \lambda(s) \vcentcolon= \operatorname{Diag}(s^{r(\alpha_0 - \delta)}, \dots, s^{r(\alpha_n - \delta)}).$$
Using this one-parameter subgroup we have:
\begin{equation*}
    \begin{split}
        \mu_{\vec{t}}(S,H_1,\dots,H_m, \lambda)         &=\max_{x^{I_i}\in \operatorname{Supp}(f_{i})}\left\{\sum_{i=1}^k\langle I_i, \lambda \rangle|\text{ for all } i, I_i \neq I_j  \text{ for all } i,j \right\} + \sum_{p=1}^mt_p\langle x_{l_p},\lambda \rangle\\
        &= \max_{x^{I_i}\in \operatorname{Supp}(f_{i})} \Bigg\{ \sum_{j=0}^{n}r(\alpha_j-\delta)\sum_{i=1}^{k}d_{i,j}\Bigg\} +\sum_{p=1}^m t_pr(\alpha_{l_p}-\delta) \\
       &=r\Bigg(\max_{x^{I_i}\in \operatorname{Supp}(f_i)}\Bigg\{\sum_{j=0}^{n}\alpha_j\sum_{i=1}^{k}d_{i,j} +  \sum_{p=1}^mt_p\alpha_{l_p}\Bigg\}-(kd+\sum_{p=1}^mt_p)\delta\Bigg)\\
        &= r\max_{x^{I_i}\in \operatorname{Supp}(f_i)}\Big\{\psi (A((x^{I_1}, \dots, x^{I_k}),x_{l_1},\dots,x_{l_m})\Big\}\\
        & \leq r \max_{v\in \overline{\operatorname{Conv}(S,H_1,\dots,H_m)}} \psi(v) <0
    \end{split}
\end{equation*}
hence $(S,H_1,\dots,H_m)$ is not $\vec{t}$-semistable. This shows that if $(S,H_1,\dots,H_m)$ is $\vec{t}$-semistable then $\mathcal{C}_{n,d,k,m,\vec{t}} \in \overline{\operatorname{Conv}(S,H_1,\dots,H_m)}$. 

Conversely, let $(S,H_1,\dots,H_m)$ be not $\vec{t}$-semistable. As such, there exists a normalised one-parameter subgroup $ \lambda(s) \vcentcolon= \operatorname{Diag}(s^{\alpha_0}, \dots, s^{\alpha_n})$ with $\sum_{j=0}^n \alpha_j = 0$ and 

$$0> \mu_{\vec{t}}(S,H_1,\dots,H_m, \lambda) = \max_{I_i\in \operatorname{Supp}(f_i)} \Bigg\{\sum_{j=0}^n \alpha_i\sum_{i=1}^{k}d_{i,j}\Bigg\}+\sum_{p=1}^mt_p a_{l_p}. $$
We define the affine transformation $\psi: \mathbb{R}^{n+1} \rightarrow \mathbb{R}$ as $\psi(z_0, \dots, z_n) = \sum_{i=0}^n \alpha_i z_i$, such that 
$$\psi|_{\overline{\operatorname{Conv}(S,H_1,\dots,H_m)}} <0$$
(by convexity), and $$\psi(\mathcal{C}_{n,d,k,m,\vec{t}}) = \frac{kd+\sum_{p=1}^mt_p}{n+1}\sum_{j=0}^n \alpha_j = 0.$$
Hence $\mathcal{C}_{n,d,k,m,\vec{t}} \notin \overline{\operatorname{Conv}(S,H_1,\dots,H_m)}$. 

A similar argument shows the result for stable orbits, where the  $>$ and $<$ are replaced accordingly by $\geq$ and $\leq$ respectively.

\end{proof}

We can also find the dimension of the moduli space. 

\begin{theorem}\label{dimension_of_mvgit}
Assume that the ground field is algebraically closed with characteristic $0$ and that the locus of stable points is not empty and $d > 2$. Then
$$\dim \overline{M}^{GIT}_{n,d,k,m}(\vec{t})= k(n+1)\Bigg( \frac{k(n+2)\dots (n+d)}{d!} - (n+1)\Bigg)-n(n+m-2)-k^2.$$
%$$\dim \overline{M}^{GIT}_{n,d,k,t}= (n+1)\Big( \frac{k(n+2)\dots (n+d)}{d!} - (n+1)\Big) +(1-k)(1+k) + n.$$
\end{theorem}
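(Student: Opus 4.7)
The plan is to compute $\dim \overline{M}^{GIT}_{n,d,k,m}(\vec{t})$ as $\dim \mathcal{R}_m - \dim G$, valid because by hypothesis the $\vec{t}$-stable locus is nonempty and properly stable points have finite stabilizer. First I would observe that $\mathcal{R}_m = \operatorname{Gr}(k, W) \times (\mathbb{P}^n)^m$, where $W = H^0(\mathbb{P}^n, \mathcal{O}_{\mathbb{P}^n}(d))$ has $\dim W = \binom{n+d}{d}$, is irreducible as a product of irreducible varieties. Hence the open $\vec{t}$-stable locus $\mathcal{R}_m^{s}$ is dense in $\mathcal{R}_m$, and the geometric quotient $\mathcal{R}_m^{s}/G$ sits as a dense open subvariety of the good moduli space $\overline{M}^{GIT}_{n,d,k,m}(\vec{t})$, so the two share a common dimension.

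Next, $\vec{t}$-stability is defined in Definition \ref{HM for VGIT} by the strict Hilbert--Mumford inequality $\mu_{\vec{t}}(\cdot,\lambda) > 0$, so stable tuples are properly stable in the sense of Mumford--Fogarty--Kirwan and hence have finite stabilizer in $G$. The hypothesis $d > 2$ rules out the low-degree pathologies --- hyperplanes ($d=1$), whose complete intersections are linear subspaces with parabolic stabilizer, and smooth quadrics ($d=2$), which carry a positive-dimensional orthogonal stabilizer --- so that the generic stabilizer of the $G$-action on $\mathcal{R}_m$ is finite. In particular, $\dim(\mathcal{R}_m^{s}/G) = \dim \mathcal{R}_m - \dim G$.

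The dimension count then uses
\begin{equation*}
\dim \operatorname{Gr}(k, W) = k\!\left(\binom{n+d}{d} - k\right), \quad \dim (\mathbb{P}^n)^m = mn, \quad \dim \operatorname{SL}(n+1) = (n+1)^2 - 1.
\end{equation*}
Substituting and expanding $\binom{n+d}{d} = (n+1) \cdot \frac{(n+2)(n+3)\cdots(n+d)}{d!}$, then collecting the polynomial expressions in $n$, $m$, and $k$, yields the claimed formula. The main obstacle is the verification that the generic stabilizer of the $G$-action is finite, i.e.\ that no positive-dimensional subgroup of $G$ preserves a generic tuple $(S, H_1, \dots, H_m)$; this is where the hypothesis $d > 2$ plays its essential role, via a deformation-theoretic argument on automorphisms of generic complete intersections of degree $\geq 3$. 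Once this is established, the remainder is routine algebra.
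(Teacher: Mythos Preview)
Your approach is essentially the paper's: compute $\dim \mathcal{R}_m - \dim G$ after showing the generic stabilizer is finite, then expand $\binom{n+d}{d}$. The only substantive difference is in how finite stabilizer is established. The paper bounds $\dim G_p \leq \dim G_S \leq \dim \operatorname{Aut}(S)$ and then invokes a result of Loughran that smooth complete intersections of degree $d>2$ have finite automorphism group; this is where $d>2$ enters in the paper. You instead use the cleaner observation that $\vec{t}$-stable (strict Hilbert--Mumford) means properly stable, hence finite stabilizer, so the nonempty-stable-locus hypothesis alone already gives what you need.

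Because of this, your second paragraph is internally redundant: once you have noted that stable points are properly stable, the generic stabilizer is already finite and there is no remaining ``main obstacle.'' You do not need any deformation-theoretic argument about automorphisms of complete intersections, and $d>2$ plays no logical role in \emph{your} proof (its role in the paper is precisely to invoke the automorphism result you are bypassing). So drop the sentences about $d>2$ ruling out pathologies and about the ``main obstacle''; what remains is a correct and slightly more self-contained version of the paper's argument.
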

\begin{proof}
Let $p = (S,H_1,\dots, H_m)\in \mathcal{R}_m$ be a tuple. Then, we have 
%Note that by \cite{wang-moduli}, for any such complete intersection $S$, $G = G$, the stabiliser $G_S$ satisfies
$$0\leq \dim(G_p)\leq \dim(G_S\cap G_{H_1}\cap \dots \cap G_{H_m})\leq \dim(G_S) \leq \dim(\operatorname{Aut}(S))=0,$$
where the last inequality follows from \cite[Lemma 2.13]{loughran}. We obtain the result using the following identity from \cite[Corollary 6.2]{dolgachev_2003}:
\begin{equation*}
    \begin{split}
        \dim \overline{M}^{GIT}_{n,d,k,\vec{t}} &= \dim(\mathcal{R}_m) - \dim(G) +\min_{S\in \mathcal{R}_m}G_S = \\
        &= k \Big(\binom{n+d}{d} -k\Big)+mn - \big((n+1)^2-1\big)\\
        &= k(n+1)\Bigg( \frac{k(n+2)\dots (n+d)}{d!} - (n+1)\Bigg)-n(n+m-2)-k^2.
    \end{split}
\end{equation*}
\end{proof}

For the rest of this section, we will restrict ourselves to the case $m=1$. The following Lemma is a generalization of \cite[Lemma 4.1]{gallardo_martinez-garcia_2018}.

\begin{lemma}\label{maxb}
Let $(S,H)\in \mathcal{R}$, with non-empty interval of stability $[a,b]$. Then
\begin{enumerate}
    \item $a=0$ if and only if $S$ is a GIT semistable variety which is the complete intersection of $k$ degree $d$ hypersurfaces.
    \item $b \leq t_{n,d,k}= \frac{kd}{n}$. %, i.e. $\operatorname{Stab}(n,d,k,1) = \{t\in \mathbb{Q}_{\geq 0}|0\leq t\leq \frac{kd}{n}\}$.
    \item $(S,H)$ is $t_{n,k,d}$ semistable if and only if $D = S \cap H$ is a GIT semistable complete intersection of $k$ hypersurfaces of degree $d$ in $H \cong \mathbb{P}^{n-1}$.
\end{enumerate} 
\end{lemma}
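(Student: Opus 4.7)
For part (1), I would observe that $\mu_0(S, H, \lambda) = \mu(S, \lambda)$, since the $H$-contribution vanishes at $t=0$; by Lemma~\ref{mumford_map_GIT}, this expresses GIT semistability of $S$ as a complete intersection. Since $\mu_t$ is piecewise linear in $t$ for fixed $(g, \lambda)$, the $t$-semistable locus is closed in $t$, and $a = 0$ is equivalent to this locus extending down to $t = 0$, which by continuity is equivalent to $S$ being GIT semistable. For part (2), the plan is to exhibit an explicit destabilizer: take $\lambda_0 = \operatorname{Diag}(s, \ldots, s, s^{-n})$ and choose $g \in G$ with $gH = \{x_n = 0\}$ (by transitivity of $G$ on hyperplanes), giving $\mu(gH, \lambda_0) = -n$. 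The computation $\langle I, \lambda_0\rangle = d - (n+1)d_n \leq d$ for any degree-$d$ monomial $I = (d_0, \ldots, d_n)$ yields $\mu(gS, \lambda_0) \leq kd$, so $\mu_t(gS, gH, \lambda_0) \leq kd - tn < 0$ for $t > kd/n$; by Lemma~\ref{unstablelemma-vgit}, $(S, H)$ is then $t$-unstable, forcing $b \leq kd/n$.

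For the forward direction of part (3) at $t = kd/n$, the $t$-semistability of $(S, H)$ will sharpen part (2) to the equality $\mu_t(gS, gH, \lambda_0) = 0$, hence $\mu(gS, \lambda_0) = kd$. Equality requires $k$ distinct monomials in the supports of the defining polynomials with $d_{i,n} = 0$, making $D = S \cap H$ a complete intersection of $k$ hypersurfaces of degree $d$ in $\mathbb{P}^{n-1}$. To prove $D$ semistable, given any $g' \in \operatorname{SL}(n)$ and normalized 1-PS $\lambda' = \operatorname{Diag}(s^{\mu'_0}, \ldots, s^{\mu'_{n-1}})$, I would lift to $g = \operatorname{diag}(g', 1) \in \operatorname{SL}(n+1)$ and $\lambda_\alpha = \operatorname{Diag}(s^{\mu'_0 + \alpha}, \ldots, s^{\mu'_{n-1} + \alpha}, s^{-n\alpha})$ with $\alpha$ sufficiently large for $\lambda_\alpha$ to be normalized. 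For large $\alpha$, the max in $\mu(gS, \lambda_\alpha)$ is achieved at the $d_n = 0$ monomials (those defining $g'D \subset \mathbb{P}^{n-1}$), giving $\mu(gS, \lambda_\alpha) = \mu(g'D, \lambda') + k\alpha d$ and $\mu(gH, \lambda_\alpha) = -n\alpha$, hence $\mu_t(gS, gH, \lambda_\alpha) = \mu(g'D, \lambda')$; the $t$-semistability of $(S, H)$ then forces $\mu(g'D, \lambda') \geq 0$, giving $D$'s semistability.

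The reverse direction of part (3) will be the hard part. Assuming $D$ is a semistable complete intersection, for any $g \in G$ and normalized $\lambda = \operatorname{Diag}(s^{r_0}, \ldots, s^{r_n})$, the plan is to reduce to the case where $gH = \{x_{l'} = 0\}$ is a coordinate hyperplane. In that case I would define the induced normalized 1-PS $\lambda'$ on $\operatorname{SL}(n)$ acting on $\{x_{l'} = 0\}$ with weights $s_j = r_j + r_{l'}/n$ for $j \neq l'$, and use the identity $\langle I, \lambda\rangle = \langle \bar I, \lambda'\rangle - r_{l'}d/n$ (for $I$ with $d_{l'} = 0$ and $\bar I$ its projection) to derive
\[
\mu_t(gS, gH, \lambda) \;\geq\; \mu(gD, \lambda') + r_{l'}\!\left(t - \tfrac{kd}{n}\right) \;=\; \mu(gD, \lambda') \;\geq\; 0,
\]
where the first inequality uses that $gD$ is a complete intersection and the last uses semistability of $gD$ as a $G$-translate of $D$. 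The main obstacle will be extending to general $gH$: my strategy is to pass to the 1-PS limit $\lim_{s\to 0}\lambda(s)\cdot(gS, gH)$, which preserves $\mu_t$ since only the $\lambda$-leading support matters, and in which $\lim \lambda(s)\,gH$ lies in the $\lambda$-maximal weight subspace. After applying a further element of the stabilizer of this subspace to bring $\lim gH$ into coordinate-hyperplane form, the above identity applies, with semistability of the degenerated $\lim gD$ inherited from $gD$ by $\operatorname{SL}(n+1)$-equivariance of the limit operation.
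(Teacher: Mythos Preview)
Your arguments for (1) and (2) are correct and coincide with the paper's.

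For (3), the paper takes a different route: it argues both implications via the centroid criterion (Theorem~\ref{t-centroid_criterion}), translating (in)stability of $(S,H)$ at $t=t_{n,d,k}$ and of $D$ in $\mathbb P^{n-1}$ into membership of a fixed point in two convex polytopes, and then compares these polytopes by projecting onto the face $\{y_i=t_{n,d,k}\}$ (resp.\ $\{y_i=0\}$). Your forward direction, by contrast, lifts each pair $(g',\lambda')$ in $\operatorname{SL}(n)$ to $(g,\lambda_\alpha)$ in $\operatorname{SL}(n+1)$ with $g=\operatorname{diag}(g',1)$; this is a valid and pleasant alternative, and your computation $\mu_{t_{n,d,k}}(gS,gH,\lambda_\alpha)=\mu(g'D,\lambda')$ is correct.

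Your reverse direction, however, has a genuine gap in the reduction to $gH=\{x_{l'}=0\}$. Passing to the $\lambda$-limit does preserve $\mu_t(\,\cdot\,,\,\cdot\,,\lambda)$, but the assertion that ``semistability of the degenerated $\lim gD$ [is] inherited from $gD$ by $\operatorname{SL}(n+1)$-equivariance of the limit operation'' is false: semistability is \emph{not} closed under $1$-PS degeneration. Concretely, with $n=2$, $d=2$, $k=1$, take $S=\{x_0x_2-x_1^2=0\}$, $H=\{x_0-x_2=0\}$, $\lambda=\operatorname{Diag}(s,1,s^{-1})$: then $D$ is two distinct points in $H\cong\mathbb P^1$ (semistable), yet $S$ is $\lambda$-invariant while $H$ degenerates to a coordinate line tangent to $S$, so the limit of $D$ is a double point (unstable). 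More structurally, $\lim gS\cap\lim gH$ need not even be a complete intersection of the required type (for instance when all $\lambda$-leading monomials of the $gf_i$ lie in the ideal of $\lim gH$), in which case your lower bound $\mu_t\geq\mu(\lim gD,\lambda')$ becomes vacuous.

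The repair is not to degenerate but to act by the parabolic $P(\lambda)=\{p:\lim_{s\to0}\lambda(s)p\lambda(s)^{-1}\text{ exists}\}$: for any $p\in P(\lambda)$ one has $\mu_t(p\cdot gS,\,p\cdot gH,\,\lambda)=\mu_t(gS,gH,\lambda)$, and $P(\lambda)$ acts transitively on hyperplanes with a prescribed $\lambda$-leading weight, so one may choose $p$ with $p\cdot gH=\{x_{l'}=0\}$. Since $p$ is an honest isomorphism (not a degeneration), $p\cdot gD\cong D$ remains a semistable complete intersection in $\mathbb P^{n-1}$, and your coordinate-hyperplane identity then yields $\mu_t(gS,gH,\lambda)\geq\mu(p\cdot gD,\lambda')\geq 0$ as desired. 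The paper's centroid-criterion argument sidesteps this reduction altogether.
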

\begin{proof}
For $(i)$, notice that if $t=\frac{a}{b}= 0$ if and only if $a=0$, then $\mu_t(S,H,\lambda) = \mu(S,\lambda) \geq 0$ and thus this reduces to a GIT problem for complete intersections of $k$ degree $d$ hypersurfaces, since the natural projection $\mathcal{R}\rightarrow \operatorname{Gr}(k,W)$ is $G$-invariant.

For $(ii)$, assume that $t > t_{n,d,k}$. Without loss of generality, we may assume that \\$S = f_1(x_0,\dots,x_n)\cap\dots \cap f_k(x_0,\dots,x_n)$ and that $H = \{x_n=0\}$. Then, for the normalised one-parameter subgroup $\lambda(s) = \operatorname{Diag}(s,s\dots,s, s^{-n})$ we have
$$\mu_t(S,H,\lambda)\leq kd -tn < kd -\frac{kdn}{n}=0,$$
hence $(S,H)$ is unstable, and thus $b \leq t_{n,d,k}$.

For $(iii)$, first assume that $D = S \cap H$ is unstable, where we can find a coordinate system $(x_0\colon\dots\colon x_n)$ such that $H = \{x_n=0\}$, $D = \{f'_1(x_0, \dots, x_{n-1})= \dots = f'_k(x_0, \dots, x_{n-1})\}$. Notice that under the $\lambda$-order, the monomial $x_0^{d-1}x_n$ is the maximal monomial of degree $d$ containing $x_n$ as for any normalised one-parameter subgroup $\lambda$, where $\lambda(t) = \operatorname{Diag}(t^{\lambda_0}, \dots, t^{\lambda_n})$, we have:
\begin{equation*}
    \begin{split}
        \langle x_0^{a_0}\dots x_n^{a_n},\lambda \rangle & = \sum_{i=0}^{n-1}\lambda_i a_i+\lambda_n a_n\\
        &\leq \lambda_0\sum_{i=0}^{n-1}a_i + \lambda_n a_n \quad (\text{since } \lambda_i\geq \lambda_j \text{ for } i\leq j)\\
        &\leq \lambda_0 (d-1) +\lambda_n a_n \quad (\text{since }\sum_{i=1}^n a_i = d) \\
        &\leq \lambda_0 (d-1) +\lambda_n\\
        & = \langle x_0^{d-1}x_n,\lambda \rangle. 
    \end{split}
\end{equation*}
Note that we do not have to consider the case $a_n = 0$, as then the monomial will not contain $x_n$. Hence, $\overline{S} \vcentcolon= \{f'_1(x_0, \dots, x_{n-1})+x_0^{d-1}x_n =\dots = f'_k(x_0, \dots, x_{n-1})+x_0^{d-1}x_n  =0 \}$ is the complete intersection of $k$ hypersurfaces of degree $d$ which is maximal with respect to the Hilbert-Mumford function such that $\overline{S}\cap H = D$, i.e. $\mu(\overline{S},\lambda) = \max \{\mu(D,\lambda)| S\cap H = D\}$. This further implies that if $\mu_{t_{n,d,k}} (\overline{S},H,\lambda)<0$ then for all pairs $(S,H)$ such that $S\cap H=D$, the pair is $t_{n,k,d}$-unstable. Returning to the original assumption that $D$ is unstable, we have by the centroid criterion (Theorem \ref{t-centroid_criterion}), that $$\Big\{\frac{kd}{n}, \dots, \frac{kd}{n}, 0\Big\}\notin \operatorname{Conv}(f'_1(x_0,\dots, x_{n-1})+x_0^{d-1}x_n,\dots,f'_k(x_0,\dots, x_{n-1})+x_0^{d-1}x_n).$$

Notice that,
$$A \coloneqq \{A((x^{I_1},\dots, x^{I_k}), x_n)| x^{I_i} \in \operatorname{Supp}(f'_i+x_0^{d-1}x_n) \text{ for all } i,I_i\neq I_j\} \subset M \vcentcolon=\{y_n =t_{n,d,k}\}$$
and $p = \Big\{kd-1,0\dots, 0,1+t_{n,d,k}\Big\}\notin M$. Then, since $\overline{\operatorname{Conv}({S},H)} = \operatorname{ConvexHull}(A)$, it is a pyramid with base $M$ and vertex $p$. Thus 
$\mathcal{C}_{n,d,k,t_{n,d,k}}\notin \overline{\operatorname{Conv}({S},H)}$ and thus $(S,H)$ is $t_{n,k,d}$-unstable by the centroid criterion (Theorem \ref{t-centroid_criterion}).

Now, suppose $(S,H)$ is $t_{n,k,d}$-unstable, i.e. $\mathcal{C}_{n,d,k, \frac{kd}{n}}\notin \operatorname{Conv}_{t_{n,d,k}}(S,H)$. Without loss of generality, we can assume that $H = \{x_i = 0\}$, and let 
$$p = \Big\{\frac{kd}{n}, \dots,\frac{kd}{n},0,\frac{kd}{n},\dots, \frac{kd}{n} \Big\}$$
with $0$ in the $i$-th position. In particular $p \notin \overline{\operatorname{Conv}(f_1,\dots,f_k,x_i)}$. The monomials of $D = S\cap H$ are monomials of the form $I_j = (d_{0,j},\dots, d_{i-1,j},0,d_{i+1,j}, \dots,d_{n,j})$, which correspond to faces $F \in \overline{\operatorname{Conv}_{t_{n,d,k}}(S,H)}$. The projection $F_M$ of each face $F$ to hyperplane $M = \{y_i = 0\}$ shows $p \notin F_M$. Notice that $ p = \mathcal{C}_{n,d,k}$ and $F_M \subseteq \overline{\operatorname{Conv}(f_1,\dots,f_k,h)}$, and hence $D$ is unstable by the centroid criterion (Theorem \ref{t-centroid_criterion}).

\end{proof}
%\begin{corollary}\label{space of stability conditions}
%The space of GIT stability conditions is 

%$$\operatorname{Stab}(n,d,k,m) = \Big\{\vec{t}\in (\mathbb{Q}_{\geq 0})^m|0 \leq t_i\leq \frac{kd}{n}\Big\}. $$
%\end{corollary}
%\begin{proof}
%By Lemma \ref{maxb} it is clear that 
%$$\operatorname{Stab}(n,d,k,m) \subseteq \Delta\vcentcolon=\Big\{\vec{t}\in (\mathbb{Q}_{\geq 0})^m|0 \leq t_i\leq \frac{kd}{n}\Big\}.$$
%Since $\Delta$ is convex it suffices to prove that there exists a $\vec{t}$ semistable tuple $(S,H_1,\dots,H_m)$ at each of the vertices of $\Delta$. It suffices to show this at the vertex $\vec{t} =(t_{n,d,k},\dots, t_{n,d,k})$. Let $\vec{t} = (t_1.\dots,t_m)$ where $t_i = t_{n,d,k}$ for all $i$ except for some $j$, where $t_j >t_{n,d,k}$. Without loss of generality we take $H_1 = \{x_n = 0\}$, $H_m =\{x_{n-m+1} =0\}$ and in general $H_i = \{x_{n-i+1} = 0 \}$ and one parameter subgroup 
%$$\lambda(s) = \operatorname{Diag}(s^m,s^{\frac{m(n-1)}{n-m}}\dots,s^{\frac{m(n-1)}{n-m}}, s^{-n},\dots, s^{-n})$$
%where there are $n-m$ ${\frac{m(n-1)}{n-m}}$ exponents and  $m$ $-n$ exponents in the expression. We then have

%\begin{equation*}
%    \begin{split}
%        \mu_{\vec{t}}(S,H_1,\dots,H_m,\lambda) &\leq kdm - \sum_{i=1}^m t_in\\
%        &< kdm - n\frac{kdm}{n} = 0
%    \end{split}
%\end{equation*}
%and hence the tuple is $\vec{t}$-unstable.

%The existence of semistable tuples over the rest of the vertices follow from this proof, noting that the same one parameter subgroup destabilises similar cases for different $m$, and  from Lemma \ref{maxb}, $2$.

%\end{proof}

The following theorem generalises  \cite[Theorem 1.1]{gallardo_martinez-garcia_2018}.

\begin{theorem}\label{git walls}
All GIT walls $\{t_0, \dots, t_{n,d,k}\}$ correspond to a subset of the finite set
$$\Big\{- \frac{\sum_{i=1}^{k}\langle I_i, \lambda\rangle}{\langle x_i, \lambda\rangle}\Big|I_i\neq I_i,\text{ for all  }i\neq j \text{, }  I_i\in \Xi_d \text{, } 0 \leq i \leq n, \lambda \in P_{n,d,k} \Big\}$$
and they are contained in the interval $[0, t_{n,d,k}]$. Every pair $(S, H)$ has an interval of stability $[a, b]$ with $a, b \in \{t_0, \dots, t_{n,d,k}\}$. $(S, H)$ is $t$-semistable if and only if $t\in  [t_i, t_j]$ for some walls $t_i, t_j$ . If $(S, H)$ is $t$-stable for some $t$ then $(S, H)$ is $t$-stable if and only if $t \in (t_i, t_j )$.

\end{theorem}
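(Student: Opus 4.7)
The plan is to reduce the problem to the linear-in-$t$ structure of the Hilbert--Mumford function on the finite fundamental set $P_{n,d,k} := P_{n,d,k,1}$ provided by Lemma \ref{unstablelemma-vgit}, and then combine this with the interval-of-stability result (Corollary \ref{intervals_of_stab}) and with Lemma \ref{maxb} for the endpoint bounds. For a fixed pair $(S,H)=(f_1,\dots,f_k,h)$, a fixed normalised $\lambda \in P_{n,d,k}$, and a fixed $g\in G$, the function
\begin{equation*}
t \longmapsto \mu_t(g\cdot S, g\cdot H, \lambda) \;=\; \mu(g\cdot S,\lambda) + t\,\mu(g\cdot H,\lambda)
\end{equation*}
is affine linear in $t$. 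Since $\mu(g\cdot S,\lambda)$ is the maximum of $\sum_{i=1}^k \langle I_i,\lambda\rangle$ over admissible tuples of monomials appearing in the $g\cdot f_i$, and $\mu(g\cdot H,\lambda) = \langle x_j,\lambda\rangle$ for $x_j = \max \operatorname{Supp}(g\cdot h)$ under the $\lambda$-order, the unique zero of this affine function (when $\mu(g\cdot H,\lambda)\neq 0$) has the form
\begin{equation*}
t \;=\; -\frac{\sum_{i=1}^{k}\langle I_i,\lambda\rangle}{\langle x_j,\lambda\rangle}.
\end{equation*}

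First, I would define a wall as any $t^{\ast}\in \mathbb{Q}_{>0}$ at which the (semi)stability status of some pair $(S,H)$ changes. By Lemma \ref{unstablelemma-vgit}, non-stability and non-semistability of $(g\cdot S, g\cdot H)$ are detected on the finite set $P_{n,d,k}$, so a change of status at $t^{\ast}$ forces $\mu_{t^{\ast}}(g\cdot S, g\cdot H, \lambda)=0$ for some $g\in G$ and some $\lambda \in P_{n,d,k}$ that realises the maximum in the Hilbert--Mumford function. By the linearity computation above, $t^{\ast}$ is therefore of the form claimed, and belongs to the finite set in the statement.

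Second, I would establish the interval $[0, t_{n,d,k}]$. The lower bound is immediate since $t\in\mathbb{Q}_{>0}$ is required for $\mathcal{L}=\mathcal{O}(a,b)$ to be ample (Lemma \ref{picard rank 2}). For the upper bound, Lemma \ref{maxb}(ii) shows that every interval of stability $[a,b]$ satisfies $b\leq t_{n,d,k}=\frac{kd}{n}$, so no semistable pair exists for $t>t_{n,d,k}$ and no wall can lie outside $[0, t_{n,d,k}]$.

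Finally, the statements about intervals of stability follow from Corollary \ref{intervals_of_stab}: each pair $(S,H)$ with non-empty stability locus has a well-defined interval $[a,b]$ such that $(S,H)$ is $t$-semistable if and only if $t\in [a,b]\cap \mathbb{Q}_{>0}$, and if it is $t$-stable for some $t\in(a,b)$ then it is $t$-stable throughout $(a,b)\cap \mathbb{Q}_{>0}$. The endpoints $a$ and $b$ are by definition walls (the function $\mu_t(g\cdot S,g\cdot H,\lambda)$ vanishes there for some $g,\lambda$), hence belong to the finite set $\{t_0,\dots,t_{n,d,k}\}$ described above. The only real subtlety is the bookkeeping needed to ensure that every sign-change value is captured by \emph{some} $\lambda\in P_{n,d,k}$ (rather than a $\lambda$ outside this set); this is exactly what Lemma \ref{unstablelemma-vgit} guarantees, so the main obstacle is already resolved by that lemma, and the remainder of the argument is essentially formal.
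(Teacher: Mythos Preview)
Your proposal is correct and follows essentially the same approach as the paper: both arguments use Lemma~\ref{unstablelemma-vgit} to reduce to the finite set $P_{n,d,k}$, invoke Corollary~\ref{intervals_of_stab} for the interval of stability, bound the right endpoint by Lemma~\ref{maxb}, and then observe that at a wall the function $\mu_t(g\cdot S,g\cdot H,\lambda)$ vanishes for some $\lambda\in P_{n,d,k}$, yielding the stated formula. Your explicit use of the affine linearity in $t$ is slightly more direct than the paper's appeal to continuity, but the logical content is the same.
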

\begin{proof}
Since $\Xi_d$ is finite for each $d$, $\mathcal{P}(\Xi_d)^k\times\mathcal{P}(\Xi_1)$, where $\mathcal{P}$ here denotes the power set, is finite, as by the Hilbert-Mumford numerical criterion stability only depends on the support of the polynomials involved and the combination of possible supports is thus finite. In addition, by Corollary \ref{intervals_of_stab} there is a finite number of intervals of stability $[a_i,b_i]$. Hence, $t_j \in \cup_i \{a_i,b_i\}$ with $b_i\leq  t_{n,d,k}$ by Lemma \ref{maxb}. For any wall $t_i$ there is at least a pair $(S, H)$ such that
$$\mu_t(S,H)\coloneqq \max_{\lambda
\in P_{n,d,k}}\{\mu_t(S,H,\lambda)\}\geq0$$ 
for all $t\leq t_i$ and $\mu_t(S,H)<0$ for all $t > t_i$. By the continuity of $\mu_t(S, H)$, $ \mu_{t_{i}}(S, H)= 0$ and hence 
$$t_i = -\frac{\sum_{i=1}^{k}\langle I_i, \lambda\rangle}{\langle x_i, \lambda\rangle}.$$

\end{proof}

\begin{remark}
In the general case, one can find a superset of the stability walls $\vec{t}$ by solving the simultaneous equations

$$\sum_{i=1}^k \langle I_i,\lambda\rangle+\sum_{j=1}^mt_j\langle x_{i_j},\lambda\rangle=0$$
for $I_i\in \Xi_d $, $I_i\neq I_k$ for all $i \neq k$, $x_{i_j}\in \Xi_1$ and $\lambda \in P_{n,d,k,m}$. The complexity of computations increases as $m$ increases, and in addition the walls are not $0$-dimensional, which implies that we need to treat them in a stratified way. In addition, we don't have a simple one-directional way of exploring the set of stability conditions to find all walls.
\end{remark}

\begin{theorem}\label{theorem-H not in supp S}
Every point in the GIT quotient $\overline{M}^{GIT}_{n,d,k,t} \vcentcolon=\overline{M}^{GIT}_{n,d,k}(t) $ parametrises a closed orbit associated
to a pair $(S, D)$ with $D \vcentcolon= S \cap H$ in the cases where $S$ is a Calabi-Yau or a Fano complete intersection of $k$ hypersurfaces
of degree $d > 1$. Furthermore, if $S$ is Fano, $t \leq t_{n,d,k}$ and $(S, D)$ is $t$-semistable, then $S$ does not
contain a hyperplane in the support of at least one of the hypersurfaces in the complete intersection.
%, unless t = tn,d, in which case (X, D) is strictly tn,d-semistable.
\end{theorem}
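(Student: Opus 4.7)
The plan is to prove both parts of the theorem simultaneously by contradiction, using a single destabilising one-parameter subgroup drawn from the fundamental set $P_{n,d,k,1}$ of Definition \ref{finite_set_def_k_case}. The key observation is that, parsing the statement, ``$H$ is not in the support of at least one of the hypersurfaces $\{f_i=0\}$'' is the same as ``$h$ does not divide at least one $f_i$'', which in turn is equivalent to $H\not\subset S$. The latter is exactly the condition that $D\vcentcolon= S\cap H$ is a proper intersection and hence a divisor on $S$. Therefore both parts reduce to the single implication: if $h$ divides every $f_i$, then $(S,H)$ is not $t$-semistable.

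To prove this implication, I would first use the transitivity of the $G=\operatorname{SL}(n+1)$-action on hyperplanes to change coordinates so that $H=\{x_n=0\}$; then under the standing assumption every monomial $x^{I_i}\in\operatorname{Supp}(f_i)$ satisfies $d_{i,n}\geq 1$. The test one-parameter subgroup I would use is
\[
\lambda(s)=\operatorname{Diag}(s,s,\ldots,s,s^{-n})\in P_{n,d,k,1},
\]
which is normalised. A direct pairing computation gives $\langle I_i,\lambda\rangle=(d-d_{i,n})-n\,d_{i,n}=d-(n+1)d_{i,n}\leq d-n-1$ for every monomial of $f_i$. Summing over any $k$ distinct monomials yields $\mu(S,\lambda)\leq k(d-n-1)$, and since $\operatorname{Supp}(h)=\{x_n\}$ one has $\max_{x_z\in\operatorname{Supp}(h)}\langle x_z,\lambda\rangle=\langle x_n,\lambda\rangle=-n$, whence
\[
\mu_t(S,H,\lambda)\leq k(d-n-1)-tn.
\]

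Under either the Calabi--Yau hypothesis $kd=n+1$ or the Fano hypothesis $kd\leq n$, the coefficient $k(d-n-1)=kd-k(n+1)\leq(n+1)(1-k)\leq 0$; combined with $t>0$ (forced by ampleness of the VGIT polarisation via Lemma \ref{picard rank 2}), the right-hand side is strictly negative, so Lemma \ref{unstablelemma-vgit} produces $t$-instability of $(S,H)$, contradicting the hypothesis that it represents a point in the GIT quotient. This forces $H\not\subset S$, giving the first claim. The Fano statement with $t\leq t_{n,d,k}$ then follows from the same estimate, which is strict for every $t\geq 0$ once $k(d-n-1)\leq -1$.

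I expect the only subtle point to be the borderline Calabi--Yau case $k=1$, $d=n+1$ (e.g.\ elliptic plane cubics or quartic K3s in $\mathbb{P}^3$), where $k(d-n-1)=0$, so that strict negativity of $\mu_t(S,H,\lambda)$ genuinely uses $t>0$; outside this case the bound $k(d-n-1)\leq -1$ already gives unambiguous instability even at $t=0$. The deliberate use of just one one-parameter subgroup, rather than the full centroid criterion of Theorem \ref{t-centroid_criterion}, avoids a delicate case analysis at boundary values of $t$ since the chosen $\lambda$ is independent of $(S,H)$ once the coordinate normalisation $H=\{x_n=0\}$ is made.
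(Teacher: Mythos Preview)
Your reduction of the first claim is where the argument breaks. You equate ``$D=S\cap H$ is a divisor on $S$'' with ``$H\not\subset S$'', but for $k\ge 2$ the latter is vacuous: $\dim H=n-1>n-k=\dim S$, so $H$ never sits inside $S$. The genuine obstruction is that some irreducible component of $S$ lies in $H$, and this occurs as soon as $h\mid f_i$ for a \emph{single} index $i$: writing $f_i=h\cdot g$, the locus $V(h,f_1,\ldots,\widehat{f_i},\ldots,f_k)$ sits inside $S\cap H$ and has dimension $(n-1)-(k-1)=n-k=\dim S$, so $D$ is not a divisor. Hence what must be shown (and what the paper shows) is that $h\mid f_i$ for \emph{some} $i$ already forces $t$-instability; your standing hypothesis that $h$ divides \emph{every} $f_i$ is strictly stronger and leaves the mixed case open. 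Fortunately your one-parameter subgroup still works under the correct hypothesis: assuming only $x_n\mid f_1$, one gets
\[
\mu_t(S,H,\lambda)\;\le\;(d-1-n)+(k-1)d-tn\;=\;kd-1-n-tn,
\]
which is $-tn<0$ in the Calabi--Yau case $kd=n+1$ and $\le -1-tn<0$ in the Fano case $kd\le n$.

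You have also misread the ``Furthermore'' clause. It is not a restatement of the first part for Fano $S$; it asserts that no $f_i$ has \emph{any} linear factor, not merely that $h\nmid f_i$. In the paper's proof the offending hyperplane is $\{x_n=0\}\neq H$ (the case $H=\{x_n=0\}$ having been handled in the first step), so now $\mu(H,\lambda)=+1$ rather than $-n$, and a separate estimate---where the bound $t\le t_{n,d,k}=kd/n\le 1$ is genuinely used---is required. Your ``same estimate'' does not cover this situation.
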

\begin{proof}
Suppose that $n+1 \geq kd$ (i.e. $S$ is Fano or Calabi-Yau) and  $(S, H)$ is a pair such that $\operatorname{Supp}(H) \subset \operatorname{Supp}(f_i)$ for some $i$.
Then, it suffices to show $(S, H)$ is $t$-unstable for all $t \geq 0$. Without loss of generality, we take $H = \{x_n =0\}$ and \\$S= \{ x_nf_1(x_0,\dots,x_n) =0 \}\cap \{f_2(x_0,\dots,x_n)=0\} \dots\cap \{f_k(x_0,\dots,x_n)=0\}$. Then for the $\lambda$-order, the monomial $x_0^{d-1}x_n$ is maximal in $\operatorname{Supp}(f_1) \cup \{x_0^{d-1}x_n\}$ and the monomial $x_0^{d}$ is maximal in $\operatorname{Supp}(f_i) \cup \{x_0^{d}\}$ for all $i>1$. For any normalised one-parameter subgroup $\lambda = \operatorname{Diag}(s^{r_0}\,\dots, s^{r_n})$ we have:

$$\mu_t(S,H,\lambda) \leq (k-1)dr_0 + (d-1)r_0 +r_n +tr_n.$$
Specifically, for the normalised 1-ps $\lambda = \operatorname{Diag}(s^{\boldsymbol{r}})$ with $ \boldsymbol{r}= (n, -\frac{nkd}{n-1}, \dots, -\frac{nkd}{n-1}, -n(kd-1))$ we have
$$\mu_t(S,H,\lambda) \leq (k-1)dn +(d-1)n -n(kd-1) -tn(kd-1) =-tn(kd-1)< 0,$$
and hence $(S,H)$ is unstable for all $t>0$.

Let $S$ be a Fano complete intersection, i.e. $kd \leq n$ and we assume that $S= \\ \{x_nf^{d-1}_1(x_0,\dots,x_n)=0\} \cap\dots\cap \{x_nf_k(x_0,\dots,x_n)=0\}$ and $H \neq \{x_n = 0\}$ by the previous step. Further, assume that $\operatorname{Supp}(f_1)$ contains a hyperplane in its support. 

Then for the normalised one-parameter subgroup $\lambda = \operatorname{Diag}(s, s,\dots,s, s^{-n})$ we have, noting that $t\leq t_{n,d,k}\leq 1$, since $kd \leq n$:
\begin{equation*}
    \begin{split}
        \mu_t(S,D,\lambda) &\leq k((d-1)-n)+t\\
         &= kd-k-kn+t\\
         &\leq n-k-kn+1\\
         &= (n+1)(1-k)\\
         &<0.
    \end{split}
\end{equation*}
Hence, the pair $(S,D)$ is $t$-unstable, so $S$ cannot contain a a hyperplane in the support of at least one of the hypersurfaces in the complete intersection.
\end{proof}

\subsection{Semi-destabilizing Families}\label{sec:semi-dist fams}

\begin{definition}\label{destabilised sets def-vgit}
We fix $\vec{t} \in \operatorname{Stab}(n,d,k,m)$ and let $\lambda$ be a normalised one-parameter subgroup. A non-empty $k+m$-tuple of sets $A_1\times \dots \times A_k \times B_1\times \dots\times B_m \subseteq (\Xi_d)^k \times (\Xi_1)^m$ is \emph{maximal $\vec{t}$-(semi-) destabilised with respect to $\lambda$}, if:

\begin{enumerate}
    \item Each $k+m$-tuple $(v_1, \dots, v_k,a_1,\dots,a_m) \in A_1\times\dots\times A_k \times B_1\times \dots\times B_m$ satisfies $\sum_{i=1}^k\langle v_i, \lambda  \rangle + \sum_{j=1}^m t_j\langle a_j, \lambda  \rangle < 0$ ($\leq 0$, respectively).
    \item If there is another $k+m$-tuple of sets $\overline{A}_1\times \dots \times \overline{A}_k \times \overline{B}_1\times \dots\times \overline{B}_m \subseteq (\Xi_d)^k \times (\Xi_1)^m$ such that $A_i \subseteq \overline{A}_i$, $B_i \subseteq \overline{B}_i$ for all $i$, and for all $(v_1, \dots, v_k,a_1,\dots,a_m) \in \bar{A}_1\times \dots \times \overline{A}_k \times \overline{B}_1\times \dots\times \overline{B}_m$ the inequality $\sum_{i=1}^k\langle v_i, \lambda  \rangle + \sum_{j=1}^mt_j\langle a_j, \lambda  \rangle < 0$ ($\leq 0$, respectively) holds, then $A_i = \overline{A}_i$ and $B_j = \overline{B}_j$ for all $i, j$.
\end{enumerate}
\end{definition}

We can characterise the semi-destabilizing sets as follows, generalizing \cite[\S 5]{gallardo_martinez-garcia_2018}.

\begin{lemma}\label{nminus_k-case-vgit}
Given one-parameter subgroup $\lambda$, any maximal  $\vec{t}$-destabilised Cartesian product of sets and $\vec{t}$-semi-destabili\-sed Cartesian product of sets as in Definition \ref{destabilised sets def-vgit} with respect to $\lambda$ can be written as:

\begin{equation*}
    \begin{split}
        N^{-}_{\vec{t}}(\lambda, x^{J_1},\dots, x^{J_{k-1}}, x_{j_1},\dots,x_{j_m}) &\vcentcolon= \\
    V_{\vec{t}}^-(\lambda, x^{J_1},&\dots, x^{J_{k-1}}, x_{j_1},\dots,x_{j_m})\times \prod_{i=1}^{k-1}B^-(\lambda, x^{J_i})%, \dots, %B^-(\lambda, x^{J_{k-1}}),
        \times \prod_{p=1}^mB^{-}(\lambda, x_{j_p}),\\ %\dots, B^{-}(\lambda, x_{j_m})\big)\\
       N_{\vec{t}}^{\ominus}(\lambda, x^{J_1},\dots, x^{J_{k-1}}, x_{j_1},\dots,x_{j_m})&\vcentcolon=\\
       V_{\vec{t}}^{\ominus}(\lambda, x^{J_1},&\dots, x^{J_{k-1}},x_{j_1},\dots,x_{j_m})\times \prod_{i=1}^{k-1} B^{\ominus}(\lambda, x^{J_i})%,\dots, B^{\ominus}(\lambda, x^{J_{k-1}}),
       \times\prod_{p=1}^mB^{\ominus}(\lambda,x_{j_p}),% \dots,B^{\ominus}(\lambda,x_{j_m})\big)
    \end{split}
\end{equation*}
where $x^{J_i}\in \Xi_d$ are support monomials with $J_r \neq J_s$ for all $r, s$, $x_{j_p} \in \Xi_1$ are arbitrary support monomials and
\begin{equation*}
    \begin{split}
        V^-_{\vec{t}}(\lambda, x^{J_1},\dots, x^{J_{k-1}},x_{j_1},\dots,x_{j_m}) &\vcentcolon= \{x^I \in \Xi_d| \langle I, \lambda \rangle + \sum_{i=1}^{k-1}\langle J_i , \lambda \rangle +\sum_{p=1}^m t_p\langle x_{j_p}, \lambda \rangle <0\},\\
        B^-(\lambda, x^{J_i})&\vcentcolon= \{x^J \in \Xi_d| x^J \leq_{\lambda} x^{J_i} \},\\
        V^{\ominus}_{\vec{t}}(\lambda, x^{J_1},\dots, x^{J_{k-1}},x_{j_1},\dots,x_{j_m}) &\vcentcolon= \{x^I \in \Xi_d| \langle I, \lambda \rangle + \sum_{i=1}^{k-1}\langle J_i , \lambda \rangle +\sum_{p=1}^m t_p\langle x_{j_p},\lambda\rangle \leq 0\},\\
  B^{\ominus}(\lambda, x^{J_i})&\vcentcolon= \{x^J \in \Xi_d| x^J \leq_{\lambda} x^{J_i}\},\\
  B^-(\lambda, x_{j_p}) \vcentcolon= \{x_i \in \Xi_1 | x_i \leq x_{j_p}\}, &\quad B^{\ominus}(\lambda, x_{j_p}) \vcentcolon= \{x_i \in \Xi_1 | x_i \leq x_{j_p}\}.
    \end{split}
\end{equation*}

%$$V^-_{\vec{t}}(\lambda, x^{J_1},\dots, x^{J_{k-1}},x_{j_1},\dots,x_{j_m}) \vcentcolon= \{x^I \in \Xi_d| \langle I, \lambda \rangle + \sum_{i=1}^{k-1}\langle J_i , \lambda \rangle +\sum_{p=1}^m t_p\langle x_{j_p}, \lambda \rangle <0\},$$  
%$$ B^-(\lambda, x^{J_i})\vcentcolon= \{x^J \in \Xi_d| x^J \leq_{\lambda} x^{J_i} \},$$
%$$V^{\ominus}_{\vec{t}}(\lambda, x^{J_1},\dots, x^{J_{k-1}},x_{j_1},\dots,x_{j_m}) \vcentcolon= \{x^I \in \Xi_d| \langle I, \lambda \rangle + \sum_{i=1}^{k-1}\langle J_i , \lambda \rangle +\sum_{p=1}^m t_p\langle x_{j_p} \leq 0\}, $$ 
%$$B^{\ominus}(\lambda, x^{J_i})\vcentcolon= \{x^J \in \Xi_d| x^J \leq_{\lambda} x^{J_i}\}. $$
%$$ B^-(x_{j_i}, \lambda) \vcentcolon= \{x_i \in \Xi_1 | x_i \leq x_{j_i}\}, \quad B^{\ominus}(x_{j_i}, \lambda) \vcentcolon= \{x_i \in \Xi_1 | x_i \leq x_{j_i}\}.$$
\end{lemma}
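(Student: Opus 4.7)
The plan is to prove both claims ($N^-_{\vec{t}}$ and $N^\ominus_{\vec{t}}$) by the same three-step argument; I write the destabilising case in detail and note the semi-destabilising case follows verbatim by replacing $<$ with $\leq$ throughout and using $V^\ominus_{\vec{t}}$, $B^\ominus$ in place of their strict counterparts.

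\textbf{Step 1 (destabilising property).} I would take a generic tuple $(v_1, \dots, v_k, a_1, \dots, a_m)$ in $N^-_{\vec{t}}(\lambda, x^{J_1}, \dots, x^{J_{k-1}}, x_{j_1}, \dots, x_{j_m})$. The memberships $v_{i+1} \in B^-(\lambda, x^{J_i})$ and $a_p \in B^-(\lambda, x_{j_p})$ translate, via the $\lambda$-order, into the pairing inequalities $\langle v_{i+1}, \lambda \rangle \leq \langle J_i, \lambda \rangle$ and $\langle a_p, \lambda \rangle \leq \langle x_{j_p}, \lambda \rangle$. Summing and using the defining inequality for $v_1 \in V^-_{\vec{t}}$, together with $t_p \geq 0$, gives
\[
\sum_{i=1}^k \langle v_i, \lambda \rangle + \sum_{p=1}^m t_p \langle a_p, \lambda \rangle \leq \langle v_1, \lambda \rangle + \sum_{i=1}^{k-1} \langle J_i, \lambda \rangle + \sum_{p=1}^m t_p \langle x_{j_p}, \lambda \rangle < 0,
\]
so each $N^-_{\vec{t}}$ satisfies Definition \ref{destabilised sets def-vgit}(1).

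\textbf{Step 2 (reducing maximality to bounds on pairings).} For any Cartesian product $A_1 \times \cdots \times A_k \times B_1 \times \cdots \times B_m$, since the supremum of a sum over a product equals the sum of suprema and $t_p \geq 0$, the destabilising condition is equivalent to
\[
\sum_{i=1}^k \max_{v \in A_i} \langle v, \lambda \rangle + \sum_{p=1}^m t_p \max_{a \in B_p} \langle a, \lambda \rangle < 0,
\]
where maxes are taken in the $\lambda$-order. Consequently, a monomial $v$ can be adjoined to $A_i$ (respectively $B_p$) without breaking destabilising exactly when $\langle v, \lambda \rangle$ is strictly less than the bound imposed by the $\lambda$-maxima of the other factors.

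\textbf{Step 3 (converse characterization).} For a maximal destabilising product $\mathcal{A} = A_1 \times \cdots \times A_k \times B_1 \times \cdots \times B_m$, let $K_i$ denote the $\lambda$-order maximum of $A_i$ and $x_{l_p}$ that of $B_p$. By Step 2 and maximality, each $A_i$ consists of all monomials with pairing strictly below the bound imposed by the other maxes; since $K_i$ is both the pairing-maximum and the lexicographic maximum among tied monomials in $A_i$, this pairing-based description coincides with the $\lambda$-order downset $B^-(\lambda, K_i)$. Setting $x^{J_{i-1}} := K_i$ for $i = 2, \dots, k$ and $x_{j_p} := x_{l_p}$ yields $A_i = B^-(\lambda, x^{J_{i-1}})$ for $i \geq 2$, $B_p = B^-(\lambda, x_{j_p})$, and $A_1 = V^-_{\vec{t}}(\lambda, x^{J_1}, \dots, x^{J_{k-1}}, x_{j_1}, \dots, x_{j_m})$ by the same Step 2 characterization. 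The $x^{J_i}$ are pairwise distinct because wedges with a repeated entry vanish in $\bigwedge^k W$ and so only distinct-entry tuples contribute to $\mu$ by Lemma \ref{mumford_map_GIT}.

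\textbf{Main obstacle.} The key technical point is verifying that the pairing-based description of each $A_i$ arising in Step 2 coincides with the $\lambda$-order downset $B^-(\lambda, K_i)$; this relies on the lexicographic tiebreaker in the $\lambda$-order, which forces $K_i$ being the $\lambda$-order maximum of $A_i$ to be equivalent to $K_i$ being the lex-maximum among all monomials of $\Xi_d$ sharing its pairing value. A secondary subtlety is the Plücker distinctness constraint $I_i \neq I_j$: this applies tuple-wise rather than factor-wise, and since repeated-monomial tuples contribute nothing to $\mu$, they may be safely excluded when passing between the Cartesian product structure and the explicit form of $N^-_{\vec{t}}$.
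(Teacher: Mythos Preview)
Your proof is essentially correct and follows the same underlying idea as the paper's, but the paper's argument is more economical. The paper simply sets $x^{J_{i-1}} = \max_{\lambda}(A_i)$ for $i\geq 2$ and $x_{j_p}=\max(B_p)$, then observes directly that for any $(x^{I_1},\dots,x^{I_k},x_{l_1},\dots,x_{l_m})\in\Delta$ one has
\[
\sum_{i=1}^k\langle I_i,\lambda\rangle+\sum_{p=1}^m t_p\langle x_{l_p},\lambda\rangle\ \leq\ \langle I_1,\lambda\rangle+\sum_{i=1}^{k-1}\langle J_i,\lambda\rangle+\sum_{p=1}^m t_p\langle x_{j_p},\lambda\rangle\ <\ 0,
\]
which gives the global containment $\Delta\subseteq N^-_{\vec{t}}(\lambda,x^{J_1},\dots,x_{j_m})$ in one line; maximality of $\Delta$ together with your Step~1 (which the paper leaves implicit) then forces equality.

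Your route instead identifies each factor $A_i$ individually via the reformulation of Step~2, which is why you are forced to confront the lexicographic tiebreaker in your ``Main obstacle''. This concern is real but is entirely bypassed by the paper's approach: once one has the \emph{product-level} containment $\Delta\subseteq N^-_{\vec{t}}$ and knows $N^-_{\vec{t}}$ is itself destabilising, maximality of $\Delta$ yields equality without ever needing to argue that $K_i$ is the lex-maximum at its pairing level. So your Steps~2--3 work, but the paper's argument shows the factor-by-factor analysis and the lex-tiebreaker discussion are unnecessary.
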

\begin{proof}
Let $ \Delta\vcentcolon=(A_1,\dots, A_k, B_1,\dots,B_m)$ be a maximal $\vec{t}$-(semi-)destabilised $k+m$-tuple with respect to $\lambda$. Let $x^{J_{i-1}} = \max{(A_{i})}$, for $2\leq i\leq k$ be the maximal element of $A_i$ with respect to the $\lambda$-order and $x_{j_p} = \max(B_p)$, for $1\leq p\leq m$. By the $\lambda$-order we have
$$\sum_{i=1}^{k-1}\langle I_1, \lambda  \rangle +\sum_{p=1}^m t_p\langle x_{l_p},\lambda\rangle  \leq  \langle I_1, \lambda  \rangle + \sum_{i=1}^{k-1}\langle J_i, \lambda  \rangle +\sum_{p=1}^m t_p\langle x_{j_p},\lambda \rangle < 0 \quad (\leq\text{ respectively)}$$
for all $(x^{I_0}, \dots, \dots x^{I_{k}}, x_{l_1}\dots,x_{l_m})\in \Delta$.
This implies that 
$$\Delta \subseteq N^{-}_{\vec{t}}(\lambda, x^{J_1},\dots, x^{J_{k-1}}, x_{j_1},\dots,x_{j_m})$$
$$(\Delta \subseteq N^{\ominus}_{\vec{t}}(\lambda, x^{J_1},\dots, x^{J_{k-1}}, x_{j_1},\dots,x_{j_m}), \text{ respectively})$$
and the maximality condition of Definition \ref{destabilised sets def-vgit} implies the equality.

%$(A_1,\dots, A_k,B) = N^{-}_t(\lambda, x^{J_1},\dots, x^{J_{k-1}},x_j)$ , which shows that $N^{-}_t(\lambda, x^{J_1},\dots, x^{J_{k-1}})$ is a  maximal $t$-semi-destabilised $k$-tuple with respect to $\lambda$.

%Similarly, let $(A_1,\dots, A_k,B)$ be a maximal $t$-destabilised $k$-tuple with respect to $\lambda$. Let $x^{J_{i-1}} = \max{(A_{i})}$, for $2\leq i\leq k$ be the maximal element of $A_i$ with respect to the $\lambda$-order, and $x_j = \max(B)$. Then
%$$\sum_{i=0}^{k-1}\langle I_i, \lambda  \rangle +t\langle x_l,\lambda\rangle  \leq  \langle I_0, \lambda  \rangle + \sum_{i=1}^{k-1}\langle J_i, \lambda  \rangle +t\langle x_j,\lambda\rangle < 0, \quad$$
%for all $(x^{I_0}, \dots, \dots x^{I_{k}}, x_l)\in A_1\times\dots \times A_k\times B$.
%This implies that $(A_1,\dots, A_k,B) \subseteq N^{\ominus}_t(\lambda, x^{J_1},\dots, x^{J_{k-1}},x_j)$ and the maximality condition shows that  $(A_1,\dots, A_k,B) = N^{\ominus}_t(\lambda, x^{J_1},\dots, x^{J_{k-1}},x_j)$, which shows that $N^{\ominus}_t(\lambda, x^{J_1},\dots, x^{J_{k-1}},x_j)$ is a  maximal $t$-destabilised $k$-tuple with respect to $\lambda$.
\end{proof}

\begin{theorem}\label{unstable families vgit}
Let $\vec{t} \in \operatorname{Stab}(n,d,k,m)$. A tuple $(S, H_1,\dots,H_m)$ is not $\vec{t}$-stable ($\vec{t}$-unstable, respectively), if and only if there exists $g \in G$, $\lambda \in P_{n,d,k,m}$, such that the set of monomials associated to $(g \cdot S, g \cdot H_1,\dots,g \cdot H_m)$ is contained in a pair of sets $N_{\vec{t}}^{\ominus}(\lambda, x^{J_1},\dots, x^{J_{k-1}}, x_{j_1}\dots,x_{j_m})$ \\($N_{\vec{t}}^{-}(\lambda, x^{J_1},\dots, x^{J_{k-1}}, x_{j_1}\dots,x_{j_m})$, respectively) defined in Lemma \ref{unstablelemma-vgit}.
Furthermore, the sets\\ $N_{\vec{t}}^{\ominus}(\lambda, x^{J_1},\dots, x^{J_{k-1}}, x_{j_1}\dots,x_{j_m})$ and $N_{\vec{t}}^{-}(\lambda, x^{J_1},\dots, x^{J_{k-1}}, x_{j_1}\dots,x_{j_m})$ which are maximal with respect to the containment order of sets define families of non-$\vec{t}$-stable tuples ($\vec{t}$-unstable tuples, respectively) in $\mathcal{R}_{n,d,k,m}$.
Any not $\vec{t}$-stable (respectively $\vec{t}$-unstable) tuple $(g \cdot S, g \cdot H_1,\dots, g\cdot H_m)$ belongs to one of these families for
some group element $g$.
\end{theorem}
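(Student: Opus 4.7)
The plan is to package together two facts already in hand: Lemma \ref{unstablelemma-vgit}, which says that $\vec{t}$-(semi)stability can be tested after an $SL(n+1)$-action on the finite fundamental set $P_{n,d,k,m}$, and Lemma \ref{nminus_k-case-vgit}, which classifies the maximal $\vec{t}$-(semi-)destabilised $(k+m)$-tuples of subsets with respect to a fixed $\lambda$. First I would fix a normalised one-parameter subgroup $\lambda\in P_{n,d,k,m}$ and a tuple $(S,H_1,\dots,H_m)$, and translate the Hilbert-Mumford function. Since $t_p>0$ and $x_{l_p}=\max\operatorname{Supp}(h_p)$ with respect to the $\lambda$-order, the inequality
\[
\sum_{i=1}^{k}\langle I_i,\lambda\rangle+\sum_{p=1}^{m}t_p\langle x_{l_p},\lambda\rangle \leq 0
\]
for every choice of distinct $I_i\in\operatorname{Supp}(f_i)$ is equivalent to the same inequality holding over every tuple in the Cartesian product $\operatorname{Supp}(f_1)\times\cdots\times\operatorname{Supp}(f_k)\times\operatorname{Supp}(h_1)\times\cdots\times\operatorname{Supp}(h_m)$. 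In other words, $\mu_{\vec{t}}(S,H_1,\dots,H_m,\lambda)\leq 0$ (resp.\ $<0$) if and only if $(\operatorname{Supp}(f_1),\dots,\operatorname{Supp}(f_k),\operatorname{Supp}(h_1),\dots,\operatorname{Supp}(h_m))$ is $\vec{t}$-semi-destabilised (resp.\ $\vec{t}$-destabilised) with respect to $\lambda$ in the sense of Definition \ref{destabilised sets def-vgit}.

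For the ``only if'' direction, I would apply Lemma \ref{unstablelemma-vgit} to produce $g\in G$ and $\lambda\in P_{n,d,k,m}$ realising $\mu_{\vec{t}}(g\cdot S,g\cdot H_1,\dots,g\cdot H_m,\lambda)\leq 0$ (resp.\ $<0$). By the translation above the associated support tuple lies in some $\vec{t}$-(semi-)destabilised product, which is necessarily contained in a \emph{maximal} one by finiteness of $\Xi_d$ and $\Xi_1$. Lemma \ref{nminus_k-case-vgit} then identifies the maximal sets explicitly as $N_{\vec{t}}^{\ominus}$ (resp.\ $N_{\vec{t}}^{-}$): one chooses $x^{J_{i-1}}=\max\operatorname{Supp}(g\cdot f_i)$ for $i=2,\dots,k$ and $x_{j_p}=\max\operatorname{Supp}(g\cdot h_p)$, and then the containment reduces to $\operatorname{Supp}(g\cdot f_1)\subseteq V_{\vec{t}}^{\ominus}$ (resp.\ $V_{\vec{t}}^{-}$) together with the obvious $B^{\ominus}$ (resp.\ $B^{-}$) containments for the remaining factors.

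For the converse, I would reverse the computation: assume $(\operatorname{Supp}(g\cdot f_1),\dots,\operatorname{Supp}(g\cdot h_m))\subseteq N_{\vec{t}}^{\ominus}(\lambda,x^{J_1},\dots,x^{J_{k-1}},x_{j_1},\dots,x_{j_m})$ for some $\lambda\in P_{n,d,k,m}$ and suitable maxima. For an arbitrary allowed tuple $(I_1,\dots,I_k,x_{l_1},\dots,x_{l_m})$ appearing in the definition of $\mu_{\vec{t}}$, the $B^{\ominus}$ containments give $\langle I_i,\lambda\rangle\leq\langle J_{i-1},\lambda\rangle$ for $i\geq 2$ and $\langle x_{l_p},\lambda\rangle\leq\langle x_{j_p},\lambda\rangle$, while $I_1\in V_{\vec{t}}^{\ominus}$ yields $\langle I_1,\lambda\rangle+\sum_{i=1}^{k-1}\langle J_i,\lambda\rangle+\sum_{p=1}^{m}t_p\langle x_{j_p},\lambda\rangle\leq 0$. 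Summing and using $t_p>0$ produces $\mu_{\vec{t}}(g\cdot S,g\cdot H_1,\dots,g\cdot H_m,\lambda)\leq 0$, so $(S,H_1,\dots,H_m)$ is not $\vec{t}$-stable; the strict version gives $\vec{t}$-instability analogously.

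The main obstacle I anticipate is bookkeeping in the first step: the $f_i$ enter symmetrically via the wedge product $f_1\wedge\cdots\wedge f_k$, but the sets $V_{\vec{t}}^{\ominus}$ and $B^{\ominus}(\lambda,x^{J_i})$ break this symmetry by singling out one factor. This is handled by fixing, after the $G$-action, an ordering of the $f_i$ so that the $\lambda$-maxima of $\operatorname{Supp}(g\cdot f_2),\dots,\operatorname{Supp}(g\cdot f_k)$ are the $x^{J_i}$, with the remaining factor playing the role of $V_{\vec{t}}^{\ominus}$; since any permutation of the $f_i$ is realised by a reparameterisation of the same complete intersection $S$, this involves no loss of generality. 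The distinctness condition $I_i\neq I_j$ in $\mu_{\vec{t}}$ is automatic from the wedge product convention and does not interfere, since the maximal destabilised sets of Lemma \ref{nminus_k-case-vgit} are built from genuinely distinct monomial choices once $k$ of the chosen $J_i$ are pairwise distinct, which is forced by the complete-intersection condition on $S$.
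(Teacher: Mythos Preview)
Your proof is correct and follows essentially the same approach as the paper: apply Lemma \ref{unstablelemma-vgit} to obtain $g\in G$ and $\lambda\in P_{n,d,k,m}$ witnessing non-stability (instability), observe that the resulting support tuple is $\vec{t}$-(semi-)destabilised in the sense of Definition \ref{destabilised sets def-vgit}, and then invoke Lemma \ref{nminus_k-case-vgit} to contain it in a maximal set of the form $N_{\vec{t}}^{\ominus}$ (resp.\ $N_{\vec{t}}^{-}$), with the converse a direct computation. Your treatment of the converse direction and the symmetry-breaking bookkeeping is in fact more explicit than the paper's, but the underlying argument is identical.
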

\begin{proof}
Let $(S,H_1,\dots,H_m)$ be $\vec{t}$-unstable ($\vec{t}$-non stable respectively). Then by Lemma \ref{unstablelemma-vgit} there is $g \in G$ and $\lambda \in P_{n,d,k,m}$ such that 

$$\mu_{\vec{t}}(g \cdot(S,H_1,\dots,H_m), \lambda) <0 \quad (\leq 0, \quad \text{respectively.)}$$
Then, every $(x^{I_1},\dots,  x^{I_{k}}, x_{j_1},\dots,x_{j_m}) \in g\cdot (\operatorname{Supp}(f_1),\dots, \operatorname{Supp}(f_k), \operatorname{Supp}(h_1),\dots, \operatorname{Supp}(h_m))$ satisfies 
$$\sum_{i=1}^{k}\langle I_i, \lambda  \rangle +\sum_{p=1}^mt_p\langle x_{j_p},\lambda\rangle < 0\quad  (\leq 0,\quad \text{respectively)}.$$ 
By Definition \ref{destabilised sets def-vgit} and Lemma \ref{unstablelemma-vgit}, $g\cdot \operatorname{Supp}(f_1) \subseteq V^-_{\vec{t}}(\lambda, x^{J_1}, \dots, x^{J_{i-1}})$ and $g\cdot \operatorname{Supp}(f_i) \subseteq B^-(\lambda, x^{J_{i-1}})$, $g\cdot \operatorname{Supp}(H_i) \subseteq B^-(\lambda, x_{j_i})$ hold for some $\lambda \in P_{n,d,k,m}$, $x^{J_i}\in \Xi_d$, and $x_{j_i} \in \Xi_1$ ($g\cdot \operatorname{Supp}(f_1) \subseteq V^{\ominus}_{\vec{t}}(\lambda, x^{J_1}, \dots, x^{J_{i-1}})$ and $g\cdot \operatorname{Supp}(f_i) \subseteq B^{\ominus}(\lambda, x^{J_{i-1}})$, $g\cdot \operatorname{Supp}(H_i) \subseteq B^{\ominus}(\lambda, x_{j_i})$, respectively). Choosing the maximal Cartesian products of sets $ N^{-}_t(\lambda, x^{J_1},\dots, x^{J_{k-1}},x_{j_1}\dots,x_{j_m})$, ($ N^{\ominus}_t$ respectively) under the containment order where $\lambda \in P_{n,d,k,m}$, $x^{J_i} \in \Xi_d$, and $x_{j_p} \in \Xi_1$ we obtain families of Cartesian products of sets whose coefficients belong to maximal $\vec{t}$-(semi-)destabilised $k+m$-tuples. For the opposite direction, note that if the monomials associated to $(g \cdot S, g \cdot H_1,\dots,g \cdot H_m)$ are contained in $N_{\vec{t}}^{-}$ ($N_{\vec{t}}^{\ominus}$, respectively), then 
$$\mu_{\vec{t}}(g \cdot(S,H_1,\dots,H_m), \lambda) <0 \quad (\leq 0, \quad \text{respectively)}$$
and $(S,H_1,\dots,H_m)$ is $\vec{t}$-unstable ($\vec{t}$-non stable respectively).

\end{proof}

We can also define the annihilator as in \cite[\S 5]{gallardo_martinez-garcia_2018}:

\begin{proposition}\label{annihilator_k-case-vgit}
For $\vec{t} \in \operatorname{Int}\big(\operatorname{Stab}(n,d,k,m) \big)$ and normalised one-parameter subgroup $\lambda$ the annihilator of $\lambda$ with respect to $x^{J_1}, \dots, x^{J_{k-1}}, x_{j_1},\dots,x_{j_m}$ is the set
\begin{equation*}
    \begin{split}
       \operatorname{Ann}_{\vec{t}}(\lambda, x^{J_1}, \dots, x^{J_{k-1}}, x_{j_1},\dots,x_{j_m}) &\vcentcolon=  \Big\{(x^I, x^{I_1}, \dots x^{I_{k-1}}, x_{i_1},\dots,x_{i_m}) \in \\ N_{\vec{t}}^{\ominus}(\lambda, x^{J_1}, \dots, x^{J_{k-1}},x_{j_1},\dots, x_{j_m})
       &\Big| \langle I, \lambda  \rangle + \sum_{i=1}^{k-1}\langle I_i, \lambda  \rangle +\sum_{p=1}^m t_p\langle x_{i_p}, \lambda \rangle= 0\Big\}.
    \end{split}
\end{equation*}

%$$\operatorname{Ann}_{\vec{t}}(\lambda, x^{J_1}, \dots, x^{J_{k-1}}, x_{j_1},\dots,x_{j_m}) \vcentcolon=$$ 
%$$  \{(x^I, x^{I_1}, \dots x^{I_{k-1}}, x_{i_1},\dots,x_{i_m}) \in N_{\vec{t}}^{\ominus}(\lambda, x^{J_1}, \dots, x^{J_{k-1}},x_{j_1},\dots, x_{j_m})\Big| \langle I, \lambda  \rangle \\+ \sum_{i=1}^{k-1}\langle I_i, \lambda  \rangle +\sum_{p=1}^m t_p\langle x_{i_p}, \lambda \rangle= 0\}.$$
If this is not empty, it is equal to the Cartesian product $$V^{0}_{\vec{t}}(\lambda,x^{J_1}, \dots, x^{J_{k-1}},x_{j_1},\dots,x_{j_m}) \times \prod_{i=1}^{k-1}B^0(\lambda, x^{J_{i}}) \times \prod_{p=1}^m B^0(\lambda, x_{j_p}),$$
where 
$$V^0_{\vec{t}}(\lambda, x^{J_1}, \dots, x^{J_{k-1}},x_{j_1},\dots,x_{j_m}) \vcentcolon=$$
$$=\{x^I \in V^{\ominus}_{\vec{t}}(\lambda, x^{J_1}, \dots, x^{J_{k-1}},x_{j_1},\dots,x_{j_m})|\exists x^{J'_i} \in  B^{\ominus}(x^{J_i}), x_{i_p} \in B^{\ominus}(\lambda, x_{j_p}), $$
$\text{ such that } \langle I, \lambda \rangle + \sum_{i=1}^{k-1}\langle J'_i , \lambda \rangle +\sum_{p=1}^m t_p\langle x_{i_p}, \lambda \rangle= 0\}$, 
\begin{equation*}
    \begin{split}
        B^0(\lambda, x^{J_i})&\vcentcolon= \{x^J \in B^{\ominus}(\lambda, x^{J_i})| x^{\overline{J}} \leq_{\lambda} x^{J} \text{ for all } x^{\overline{J}} \in  B^{\ominus}(\lambda, x^{J_i})\},\\
        B^0(\lambda, x_{j_p})&\vcentcolon= \{x_{i} \in B^{\ominus}(\lambda, x_{j_p})|\langle x_{k}, \lambda \rangle \leq \langle x_{i}, \lambda \rangle \text{ for all } x_{k} \in  B^{\ominus}(\lambda, x_{j_p})\}.
    \end{split}
\end{equation*}

%$$B^0(\lambda, x^{J_i})\vcentcolon= \{x^J \in B^{\ominus}(\lambda, x^{J_i})| x^{\overline{J}} \leq_{\lambda} x^{J} \text{ for all } x^{\overline{J}} \in  B^{\ominus}(\lambda, x^{J_i})\}.$$
%$$B^0(\lambda, x_{j_p})\vcentcolon= \{x_{i} \in B^{\ominus}(\lambda, x_{j_p})|\langle x_{k}, \lambda \rangle \leq \langle x_{i}, \lambda \rangle \text{ for all } x_{k} \in  B^{\ominus}(\lambda, x_{j_p})\}$$

\end{proposition}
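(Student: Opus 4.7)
The plan is to establish the claimed equality by proving the two inclusions between $\operatorname{Ann}_{\vec{t}}$ and the Cartesian product on the right-hand side. The guiding principle is that membership in the annihilator forces the nonstrict inequalities defining $V_{\vec{t}}^{\ominus}$ and $B^{\ominus}$ to become equalities, pinning each coordinate to the maximum possible $\lambda$-pairing within its constraint; this is precisely what qualifies it as a member of $V^0$ or $B^0$. Throughout, I will exploit the hypothesis $\vec{t}\in\operatorname{Int}(\operatorname{Stab}(n,d,k,m))$, which guarantees $t_p>0$ for each $p$ and thus allows separation of the hyperplane contributions from those of the hypersurfaces.

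For the forward inclusion, take a tuple $(x^I, x^{I_1}, \dots, x^{I_{k-1}}, x_{i_1}, \dots, x_{i_m}) \in \operatorname{Ann}_{\vec{t}}$. From $x^{I_i}\in B^{\ominus}(\lambda,x^{J_i})$ and $x_{i_p}\in B^{\ominus}(\lambda,x_{j_p})$ one extracts the coordinate-wise inequalities $\langle I_i,\lambda\rangle\leq\langle J_i,\lambda\rangle$ and $\langle x_{i_p},\lambda\rangle\leq\langle x_{j_p},\lambda\rangle$, while $x^I\in V_{\vec{t}}^{\ominus}$ supplies the global bound. Combining these with the annihilator equation produces the chain
\begin{align*}
0 &= \langle I,\lambda\rangle+\sum_{i=1}^{k-1}\langle I_i,\lambda\rangle+\sum_{p=1}^{m}t_p\langle x_{i_p},\lambda\rangle \\
&\leq \langle I,\lambda\rangle+\sum_{i=1}^{k-1}\langle J_i,\lambda\rangle+\sum_{p=1}^{m}t_p\langle x_{j_p},\lambda\rangle \leq 0,
\end{align*}
forcing every intermediate inequality to be an equality. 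Since the nonnegative gaps $\langle J_i,\lambda\rangle-\langle I_i,\lambda\rangle$ and $t_p(\langle x_{j_p},\lambda\rangle-\langle x_{i_p},\lambda\rangle)$ sum to zero, each vanishes individually; strict positivity of the $t_p$ is what delivers $\langle x_{i_p},\lambda\rangle=\langle x_{j_p},\lambda\rangle$. Thus $x^{I_i}$ and $x_{i_p}$ attain the maximum $\lambda$-pairing inside $B^{\ominus}(\lambda,x^{J_i})$ and $B^{\ominus}(\lambda,x_{j_p})$ respectively, placing them in $B^0(\lambda,x^{J_i})$ and $B^0(\lambda,x_{j_p})$, while the choice of witnesses $x^{J'_i}:=x^{I_i}$ alongside the existing $x_{i_p}$ certifies $x^I\in V^0$.

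For the reverse inclusion, suppose the tuple lies in $V^0\times\prod_i B^0(\lambda,x^{J_i})\times\prod_p B^0(\lambda,x_{j_p})$. The definition of $V^0$ supplies witnesses $x^{J'_i}\in B^{\ominus}(\lambda,x^{J_i})$ and $x_{i'_p}\in B^{\ominus}(\lambda,x_{j_p})$ realizing $\langle I,\lambda\rangle+\sum\langle J'_i,\lambda\rangle+\sum t_p\langle x_{i'_p},\lambda\rangle=0$; rerunning the same chain of inequalities with these replacement witnesses forces $\langle J'_i,\lambda\rangle=\langle J_i,\lambda\rangle$ and $\langle x_{i'_p},\lambda\rangle=\langle x_{j_p},\lambda\rangle$, from which $\langle I,\lambda\rangle+\sum\langle J_i,\lambda\rangle+\sum t_p\langle x_{j_p},\lambda\rangle=0$ follows. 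Since every $x^{I_i}\in B^0(\lambda,x^{J_i})$ has $\langle I_i,\lambda\rangle=\langle J_i,\lambda\rangle$ and every $x_{i_p}\in B^0(\lambda,x_{j_p})$ has $\langle x_{i_p},\lambda\rangle=\langle x_{j_p},\lambda\rangle$, direct substitution recovers the annihilator equation. The main point to watch is a careful bookkeeping of the $B^0$ sets, interpreting them uniformly in $\Xi_d$ and $\Xi_1$ as elements of $B^{\ominus}$ attaining the maximum $\lambda$-pairing, so that the single key observation—that a sum of nonpositive terms equalling zero forces each summand to vanish—can be invoked in both directions.
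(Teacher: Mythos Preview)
Your proof is correct and follows essentially the same strategy as the paper's: both directions hinge on the sandwich
\[
0 \;=\; \langle I,\lambda\rangle+\sum_{i}\langle I_i,\lambda\rangle+\sum_{p}t_p\langle x_{i_p},\lambda\rangle \;\leq\; \langle I,\lambda\rangle+\sum_{i}\langle J_i,\lambda\rangle+\sum_{p}t_p\langle x_{j_p},\lambda\rangle \;\leq\; 0,
\]
from which all intermediate gaps vanish. The paper phrases the forward inclusion by contradiction (assume some $x^{I_i}$ is not $\lambda$-maximal in $B^{\ominus}$ and derive a strictly positive sum, contradicting membership in $N^{\ominus}$), whereas you argue it directly, but the content is identical. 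Your explicit remark that $B^0(\lambda,x^{J_i})$ should be read as ``elements of $B^{\ominus}$ attaining the maximum $\lambda$-pairing'' is a fair point: as literally defined via the total $\lambda$-order, $B^0(\lambda,x^{J_i})$ would be the singleton $\{x^{J_i}\}$, and the paper's own contradiction argument implicitly uses the pairing interpretation as well (since $x^{I_i}<_\lambda x^{\overline{I}_i}$ does not by itself force $\langle I_i,\lambda\rangle<\langle \overline{I}_i,\lambda\rangle$). So your reading is the one that makes the proposition true, and is consistent with the hyperplane factor $B^0(\lambda,x_{j_p})$, which is already defined via the pairing.
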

\begin{proof}
For one direction, let $(x^{I_1}, \dots, x^{I_k},x_{i_1},\dots,x_{i_m}) \in \operatorname{Ann}_{\vec{t}}(\lambda, x^{J_1}, \dots, x^{J_{k-1}},x_{j_1},\dots,x_{j_m})$. This implies that $x^{I_1} \in V^{0}_{\vec{t}}(\lambda,x^{J_1}, \dots, x^{J_{k-1}},x_{j_1},\dots,x_{j_m}) $. Suppose that there exist $x^{\overline{I}_i}$, for $2\leq i\leq k$ such that $x^{I_i} <_{\lambda} x^{\overline{I}_i}$ and $x_{l_p}$ such that $x_{i_p} <_{\lambda}x_{l_p} $. Without loss of generality we can take $i=2$ such that there exists $x^{\overline{I}_2}\in B^{\ominus}(\lambda, x^{J_2})$ such that $x^{I_2} <_{\lambda} x^{\overline{I}_2}$. Then either
$$0 = \sum_{i=1}^{k}\langle I_i, \lambda  \rangle +\sum_{p=1}^mt_p\langle x_{i_p},\lambda \rangle  < \langle I_1, \lambda  \rangle +\langle\overline{I}_2, \lambda  \rangle + \sum_{i=2}^{k} \langle I_i, \lambda  \rangle
 +\sum_{p=1}^mt_p\langle x_{i_p},\lambda \rangle,$$
or 
$$0 = \sum_{i=1}^{k}\langle I_i, \lambda  \rangle +\sum_{p=1}^mt_p\langle x_{i_p},\lambda \rangle  < \sum_{i=1}^{k}\langle I_i, \lambda  \rangle + \sum_{p=1}^mt_p\langle x_{l_p},\lambda \rangle.$$
%$$0 = \sum_{i=1}^{k}\langle I_i, \lambda  \rangle +\sum_{p=1}^m t_p\langle x_{i_p},\lambda \rangle  < \langle I_i, \lambda  \rangle + \sum_{i=2}^{k}\langle \overline{I}_i, \lambda  \rangle +\sum_{p=1}^m t_p\langle x_{l_p},\lambda \rangle.$$
In both cases this would imply that 
$$(x^{I_1}, x^{\overline{I}_2}, \dots, x^{\overline{I}_k}, x_{i_1},\dots,x_{i_m}) \notin  N^{\ominus}_{\vec{t}}(\lambda, x^{J_1}, \dots, x^{J_{k-1}},x_{j_1},\dots,x_{j_m}),$$
which is a contradiction.

%hen either

%; hence $x^{I_i} \in B^0(\lambda, x^{J_{i-1}})$, $x_i \in B^0(\lambda,x_j)$ i.e. $\operatorname{Ann}_t(\lambda, x^{J_1}, \dots, x^{J_{k-1}},x_j)\subseteq V^{0}_t(\lambda, x^{J_1}, \dots, x^{J_{k-1}},x_j) \times B^{0}(\lambda, x^{J_1})\times \dots \times B^{0}(\lambda, x^{J_{k-1}}) \times B^0(\lambda,x_j)$.

Now let $$(x^{I_1}, \dots, x^{I_k},x_{i_1},\dots,x_{i_m}) \in V^{0}_{\vec{t}}(\lambda,x^{J_1}, \dots, x^{J_{k-1}},x_{j_1},\dots,x_{j_m}) \times \prod_{i=1}^{k-1}B^0(\lambda, x^{J_{i-1}}) \times \prod_{p=1}^m B^0(\lambda, x_{j_p}).$$

Then, there exist $x^{I'_{i}}\in B^{\ominus}(\lambda, x^{J_{i-1}})$ for $2\leq i \leq k$ and $x_{i'_p}\in B^{\ominus}(\lambda,x_{j_p})$ such that 
$$\langle I_1, \lambda \rangle + \sum_{i=2}^{k}\langle I'_{i} , \lambda \rangle +\sum_{p=1}^m t_p\langle x_{i'_p},\lambda\rangle= 0.$$ 
Also, because $x^{I_i} \in B^{0}(\lambda, x^{J_i})$, $x^{I'_i} \leq x^{I_i}$, and $x_{i_p} \in B^0(\lambda,x_{j_p})$, $x_{i'_p}\leq x_{i_p}$ we obtain:
$$0 = \langle I_1, \lambda \rangle + \sum_{i=2}^{k}\langle I'_{i} , \lambda \rangle +\sum_{p=1}^m t_p\langle x_{i'_p},\lambda\rangle \leq \sum_{i=1}^{k}\langle I_i, \lambda \rangle +\sum_{p=1}^m t_p\langle x_{i_p},\lambda\rangle\leq 0 $$
since  $(x^{I_1}, \dots, x^{I_k},x_{i_1},\dots,x_{i_m}) \in N^{\ominus}_{\vec{t}}(\lambda, x^{J_1},\dots, x^{J_{k-1}},x_{j_1},\dots,x_{j_m})$. Hence 
$$(x^{I_1}, \dots, x^{I_k},x_{i_1},\dots,x_{i_m}) \in \operatorname{Ann}_{\vec{t}}(\lambda, x^{J_1}, \dots, x^{J_{k-1}},x_{j_1},\dots,x_{j_m})$$ 
and the result follows.

%, i.e. $V^{0}_t(\lambda,x^{J_1}, \dots, x^{J_{k-1}}) \times B^{0}(\lambda, x^{J_1})\times \dots \times B^{0}(\lambda, x^{J_{k-1}})  \times B^0(\lambda,x_j)\subseteq \operatorname{Ann}_t(\lambda, x^{J_1}, \dots, x^{J_{k-1}},x_j).$

\end{proof}

\begin{theorem}\label{strictly_semistable_k-case-vgit}
If the tuple $(S,H_1,\dots,H_m)$ belongs to a closed strictly $\vec{t}$-semistable orbit, there is $g \in SL(n+1)$, $\lambda \in P_{n,k,d,m}$ and support monomials $x^{J_1},\dots, x^{J_{k-1}} \in \Xi_d$, $x_{j_1},\dots,x_{j_m} \in \Xi_1$ such that the set of monomials associated to\\ $g \cdot \big((\operatorname{Supp}(f_1)\times\dots\times\operatorname{Supp}(f_k) ), \operatorname{Supp}(h_1),\dots, \operatorname{Supp}(h_m)\big)$ corresponds to those in the $k$-product of sets 
$$V^{0}_{\vec{t}}(\lambda,x^{J_1}, \dots, x^{J_{k-1}},x_{j_1},\dots,x_{j_m}) \times \prod_{i=1}^{i}B^0(\lambda, x^{J_{k-1}}) \times \prod_{p=1}^m B^0(\lambda, x_{j_p}).$$

\end{theorem}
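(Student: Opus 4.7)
The plan is to combine the non-stability criterion of Lemma \ref{unstablelemma-vgit}, the closedness of the orbit, and the explicit structure of the annihilator given by Proposition \ref{annihilator_k-case-vgit}. First, since $(S,H_1,\dots,H_m)$ is strictly $\vec{t}$-semistable, in particular it is not $\vec{t}$-stable, so by Lemma \ref{unstablelemma-vgit} there exist $g_0 \in G$ and $\lambda \in P_{n,k,d,m}$ with $\mu_{\vec{t}}(g_0 \cdot (S,H_1,\dots,H_m),\lambda) \leq 0$. But $\vec{t}$-semistability forces $\mu_{\vec{t}}(g_0 \cdot (S,H_1,\dots,H_m),\lambda) \geq 0$, so in fact the Hilbert--Mumford function attains the critical value $0$. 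Replace $(S,H_1,\dots,H_m)$ by $g_0 \cdot (S,H_1,\dots,H_m)$ throughout.

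Next, I use the closedness of the orbit to move to a $\lambda$-fixed representative. Working in the Pl\"ucker embedding of $\mathcal{R}_m$, write the point as a sum of weight vectors under the $\lambda$-action: each basis element $x^{I_1}\wedge\dots\wedge x^{I_k}\otimes x_{l_1}\otimes \dots\otimes x_{l_m}$ has weight $\sum_{i=1}^k\langle I_i,\lambda\rangle + \sum_{p=1}^m t_p\langle x_{l_p},\lambda\rangle$ with respect to the linearisation $\mathcal{L} = \mathcal{O}(a,\vec{b})$ of Lemma \ref{picard rank 2}. Since $\mu_{\vec{t}}(\cdot,\lambda)=0$, the maximum such weight appearing in the expansion of $(S,H_1,\dots,H_m)$ is exactly $0$, so the limit $p_\infty \vcentcolon= \lim_{s\to \infty}\lambda(s)\cdot (S,H_1,\dots,H_m)$ exists in projective coordinates and is the subsum of those terms whose weight equals $0$. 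Because the orbit is closed, $p_\infty$ lies in it, and hence there is some $g_1 \in G$ with $g_1\cdot (S,H_1,\dots,H_m) = p_\infty$; replace the tuple once more by $p_\infty$.

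After this reduction, every monomial tuple $(x^{I_1},\dots,x^{I_k},x_{l_1},\dots,x_{l_m})$ appearing in the support of the Pl\"ucker coordinate of $(S,H_1,\dots,H_m)$ satisfies $\sum_{i=1}^k \langle I_i,\lambda\rangle + \sum_{p=1}^m t_p \langle x_{l_p},\lambda\rangle = 0$. On the other hand, because $\mu_{\vec{t}}(\cdot,\lambda)=0 \leq 0$, Theorem \ref{unstable families vgit} (in the non-stable form) supplies monomials $x^{J_1},\dots,x^{J_{k-1}}\in\Xi_d$ and $x_{j_1},\dots,x_{j_m}\in\Xi_1$ such that the support of $(S,H_1,\dots,H_m)$ is contained in $N^{\ominus}_{\vec{t}}(\lambda, x^{J_1},\dots,x^{J_{k-1}}, x_{j_1},\dots,x_{j_m})$. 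Combining the two conditions, the support is contained in $\operatorname{Ann}_{\vec{t}}(\lambda, x^{J_1},\dots,x^{J_{k-1}}, x_{j_1},\dots,x_{j_m})$, and Proposition \ref{annihilator_k-case-vgit} identifies this annihilator with the required product $V^{0}_{\vec{t}} \times \prod_{i=1}^{k-1} B^{0}(\lambda,x^{J_i})\times \prod_{p=1}^{m} B^{0}(\lambda,x_{j_p})$.

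The main obstacle I expect is the passage to a $\lambda$-fixed representative: one must check carefully that the limit under $\lambda$ really exists in $\mathcal{R}_m$ (rather than merely in its Pl\"ucker-ambient projective space) and that closedness of the orbit genuinely places $p_\infty$ back in the orbit. This requires a small verification that the limit is compatible with the product structure of $\operatorname{Gr}(k,W)\times(\mathbb{P}^n)^m$, using that each $h_p$ already maps to its maximal monomial $x_{l_p}$ under the $\lambda$-action in projective coordinates. Once this is settled, the rest is a direct combination of results already proven, which is why I would not expect further serious difficulty.
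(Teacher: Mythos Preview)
Your argument is correct, and it reaches the same endpoint as the paper's proof (a $\lambda$-fixed representative whose support lands in the annihilator, then Proposition~\ref{annihilator_k-case-vgit}), but the route to the fixed representative is genuinely different. The paper argues directly that a closed strictly $\vec t$-semistable orbit has infinite stabiliser (citing \cite[Remark 8.1(5)]{dolgachev_2003}), hence contains a one-parameter subgroup $\lambda\subset G_M$ which already fixes $M$; it then asserts that after a coordinate change one may take $\lambda\in P_{n,k,d,m}$. You instead start from Lemma~\ref{unstablelemma-vgit} to produce $\lambda\in P_{n,k,d,m}$ with $\mu_{\vec t}=0$ from the outset, and then use closedness of the orbit to absorb the limit $p_\infty=\lim_{s\to\infty}\lambda(s)\cdot M$ back into the orbit. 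What your approach buys is that $\lambda\in P_{n,k,d,m}$ comes for free, without the somewhat elliptic reduction step in the paper; what the paper's approach buys is brevity, since the existence of a destabilising $\lambda$ in the stabiliser is immediate once one quotes Dolgachev. Your flagged obstacle (that the limit lands in $\mathcal{R}_m$ rather than merely in the ambient projective space) is harmless: $\operatorname{Gr}(k,W)$ and each $\mathbb{P}^n$ factor are closed in their Pl\"ucker ambients, so the factorwise limits stay in $\mathcal{R}_m$, and closedness of the orbit then places $p_\infty$ in $G\cdot M$ as you claim.
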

\begin{proof}
Let $M = (S,H_1,\dots,H_m)$. By \cite[Remark 8.1 (5)]{dolgachev_2003}, since $M$ is strictly $\vec{t}$-semistable and represents a closed orbit, the stabilizer subgroup $G_M \subset G $ is infinite. Hence, there exists one parameter subgroup $\lambda \in G_M$ where $\lim_{s \to 0}\lambda(s)\cdot M = M$, i.e. $\mu_{\vec{t}}(S,H_1,\dots,H_m, \lambda) = 0$. By choosing an appropriate coordinate system and applying Lemma \ref{unstablelemma-vgit}, we may assume that $\lambda \in P_{n,d,k,m}$ and 
\begin{equation*}
    \begin{split}
        g\cdot& \big((\operatorname{Supp}(f_1),\dots, \operatorname{Supp}(f_k)),\operatorname{Supp}(h_1),\dots,\operatorname{Supp}(h_m)\big) = \operatorname{Ann}_{\vec{t}}(\lambda, x^{J_1}, \dots, x^{J_{k-1}},x_{j_1},\dots,x_{j_m})\\
        &=V^{0}_{\vec{t}}(\lambda,x^{J_1}, \dots, x^{J_{k-1}},x_{j_1},\dots,x_{j_m}) \times \prod_{i=1}^{k-1}B^0(\lambda, x^{J_{i}}) \times \prod_{p=1}^m B^0(\lambda, x_{j_p})
    \end{split}
\end{equation*}
by Proposition \ref{annihilator_k-case-vgit}. 
%$g \cdot \big((\operatorname{Supp}(f_1),\dots, \operatorname{Supp}(f_k)),\operatorname{Supp}(h_1),\dots,\operatorname{Supp}(H_m)\big) = \operatorname{Ann}_{\vec{t}}(\lambda, x^{J_1}, \dots, x^{J_{k-1}},x_{j_1},\dots,x_{j_m}) =  V^{0}(\lambda,x^{J_1}, \dots, x^{J_{k-1}},x_j) \times B^{0}(\lambda, x^{J_1})\times \dots \times B^{0}(\lambda, x^{J_{k-1}})\times B^0 (\lambda,x_j) $ 

\end{proof}

\begin{remark}
Note, that one should not expect the converse to be true.
\end{remark}

\begin{remark}
Both characterisations of the annihilator are necessary. The original definition given in Proposition \ref{annihilator_k-case-vgit} is mostly beneficial for proofs, while the `product' definition is mostly beneficial for algorithms and their implementations in generating all unstable sets.
\end{remark}

\subsection{How to study VGIT quotients computationally}\label{sec:how to study VGIT}
We will describe how the above sections give us a toolkit to study VGIT quotients (in the case $m=1$) computationally. This follows the methodology in Gallardo--Martinez-Garcia and Laza \cite{gallardo_martinez-garcia_2018, zhang_2018, laza-cubics}.

\begin{itemize}
    \item By Theorem \ref{git walls}, we know that the stability conditions of $(S,H)$ are determined by a finite set of one-parameter subgroups $P_{n,d,k}$ which can be determined using Definition \ref{finite_set_def_k_case} computationally in computer programs such as Python and Sage.
    \item Using this set one can find a superset of the GIT stability walls through solving the equations 
    $$\sum_{i=1}^k \langle I_i,\lambda\rangle+t\langle x_{i},\lambda\rangle=0$$
    for $I_i\in \Xi_d $, $I_i\neq I_k$ for all $i \neq k$, $x_{i}\in \Xi_1$ and $\lambda \in P_{n,d,k}$.
    \item Knowing the above, for each $t$, either a potential stability interval or equal to a potential stability wall, as determined by the superset previously introduced, one can compute the $N^{\ominus}_{t}(\lambda)$ and $N^{-}_{t}(\lambda)$ for each $\lambda \in P_{n,d,k}$ from their definitions in Lemma \ref{nminus_k-case-vgit} and find the maximal ones among them. For elements of $N^{\ominus}_{t}$ which parametrise non-$t$-stable pairs by Theorem \ref{unstable families vgit} the centroid criterion distinguishes which families are strictly $t-$semistable.
    \item For each of these of the strictly $t$-semistable families one can compute the annihilator $N^{0}_{t}(\lambda)\coloneqq \operatorname{Ann}(\lambda)$, using Proposition \ref{annihilator_k-case-vgit},  which correspond to potentially strictly $t$-polystable orbits.
    \item For each wall $t$, we compare the different sets $N^{\ominus}_{t}(\lambda)$ and $N^{-}_{t}(\lambda)$ obtained between the wall and the previous chamber, and in the cases where these are identical or the maximal families of the chamber are contained in the wall, we remove the wall as a false wall. Although this does reduce the steps to follow, it does not give a complete characterisation of the VGIT quotient.
    \item The above have been implemented in a computational package written in Sage (and in Python) which can be used for arbitrary initial parameters $n,k,d$. \cite{theodoros_stylianos_papazachariou_2022}. This in turn generalises a computational package \cite{bata458}, which deals with VGIT quotients of hypersurfaces, based on \cite{gallardo_martinez-garcia_2018}.
    \item For a geometric characterisation of VGIT quotients, one needs to study the singularities of pairs $(S,H)$ for each stability condition, which are dependent only on $n,d,k$. In the following sections, we will do so for the case of a complete intersection of two quadrics in $\mathbb{P}^2, \mathbb{P}^3$ and $\mathbb{P}^4$ and a single hyperplane section, which we obtained using our computer program.
    \item A precise characterisation of each VGIT quotient allows us to remove excess walls, but the algorithm alone cannot guarantee this at the moment. This is due to the fact that we don't have a method to prove that two non-isomorphic maximally destabilised families, which are not isomorphic via transformations by the specific maximal torus used which normalises the one-parameter subgroups, are not isomorphic via a different element of $G$.
\end{itemize}

The above computational methodology could potentially be extended to the general case of tuples $(S,H_1,\dots,H_m)$ as well. This has, however, not yet been implemented in Sage. The reason for this, is the added algorithmical complexity needed in calculating the stability walls $\vec{t}$, and thereafter the maximal (semi-)destabilised sets.

\section{GIT for complete intersection of two quadrics in \texorpdfstring{$\mathbb{P}^3$}{TEXT}}\label{p3 VGIT section}
%For two quadratic polynomials $f,g \in \mathbb{P}^3$, let $\Phi(f,g)$ be their pencil, with general element $\lambda f+\mu g$, and let $S$ be its base locus, such that $S = \{f = g =0\}$. 

\subsection{Some Results on the Singularities of Pencils of Quadrics}
For a quadric $q$ in $\mathbb{P}^n$ we can write $q(x) = xQx^T$, for $Q$ a $(n+1)\times (n+1)$ symmetric matrix with entries in $\mathbb{C}$. We denote by $\Phi(f,g) \in \operatorname{Gr}(2, \frac{(n+2)(n+1)}{2})$ the element of the Grassmanian naturally representing two quadrics $f$, $g$, in $\mathbb{P}^n$, i.e. $\Phi(f,g) \vcentcolon= \{\lambda f+\mu g |(\lambda, \mu)\in \mathbb{P}^1\}$. This pencil can also be written in terms of the symmetric matrices $F$ and $G$ of $f$ and $g$, respectively, i.e. $\Phi(f,g) = \{\lambda F+\mu G |(\lambda, \mu)\in \mathbb{P}^1\}$ (see \cite[\S 1]{reid}). The notion of the stability of pencils is defined in Section \ref{VGIT_prelims}, where, following from our previous discussion, we now define $k=d=2$. Note that for a complete intersection of quadrics, $S = \{f=0\}\cap \{g=0\}$, $S = \operatorname{Bs}(\Phi(f,g))$  is the base-locus of a pencil with no fixed part (see \cite[\S XIII]{sommerville}, or  \cite[\S XIII]{hodge_pedoe_1994}).

Note here, that a quadric $q$ is smooth (nondegenerate) if and only if the corresponding symmetric matrix $Q$ is non-degenerate, i.e. if $\det(Q)\neq 0$ (see for example Reid \cite[\S 1]{reid}). For a pencil $\Phi(f,g)$ of two quadrics in $\mathbb{P}^n$ with $f$ smooth, (i.e. the rank of the corresponding symmetric matrix $F$ is $n+1$) we consider the polynomial $\det(\lambda F+G)$ of degree $n+1$ with distinct roots $\alpha_1, \dots, \alpha_r$.

The Lemma below proves quite useful for detecting singular complete intersections of quadrics in $\mathbb{P}^n$. 

\begin{lemma}\label{singular intersection in pn}
Let $f$, $q$ be two quadrics in $\mathbb{P}^n$. Their complete intersection $S = f \cap q$ is singular if and only if up to an action of $\operatorname{SL}(n+1)$ the quadrics can be written either as 
\begin{equation*}
    \begin{split}
        f(x_0, \dots, x_n) &= q_1(x_1, \dots, x_n)\\
        q(x_0, \dots, x_n) &= x_0(b_1x_1+\dots+b_nx_n) +q_2(x_1, \dots, x_n)
    \end{split}
\end{equation*}
or 
\begin{equation*}
    \begin{split}
        f(x_0, \dots, x_n) &= a_0x_0x_n +q_1(x_1, \dots, x_n)\\
        q(x_0, \dots, x_n) &= b_nx_0x_n +q_2(x_1, \dots, x_n)
    \end{split}
\end{equation*}
or a degeneration of the above.
\end{lemma}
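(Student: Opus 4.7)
The plan is to invoke the Jacobian criterion at a chosen singular point and normalize by the $\operatorname{SL}(n+1)$-action. Assume $S = V(f,q) \subset \mathbb{P}^n$ has a singular point $p$. Using the transitive action of $\operatorname{SL}(n+1)$ on $\mathbb{P}^n$, we may assume $p = (1 : 0 : \dots : 0)$. Since $p \in S$ the coefficient of $x_0^2$ in each of $f$ and $q$ vanishes, so after this normalization we can write
\begin{equation*}
    \begin{split}
        f(x_0,\dots,x_n) &= x_0 L(x_1,\dots,x_n) + q_1(x_1,\dots,x_n),\\
        q(x_0,\dots,x_n) &= x_0 L'(x_1,\dots,x_n) + q_2(x_1,\dots,x_n),
    \end{split}
\end{equation*}
where $L, L'$ are linear forms and $q_1, q_2$ are quadratic forms in $x_1,\dots,x_n$.

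Next I would compute the Jacobian of $(f,q)$ at $p$: the first column is zero, and the remaining columns give the row vectors formed by the coefficients of $L$ and $L'$. Thus $p$ is singular on $S$ precisely when $L$ and $L'$ are linearly dependent as vectors in the space spanned by $x_1,\dots,x_n$. The proof then splits into cases. If $L \equiv 0$, then $f = q_1(x_1,\dots,x_n)$ and we are already in the first form. If $L \not\equiv 0$ but $L' \equiv 0$, the same argument after interchanging $f$ and $q$ (which does not affect $S$) again yields the first form. The remaining case is that both $L$ and $L'$ are nonzero and proportional. Then, applying an $\operatorname{SL}(n)$ change of coordinates on $x_1,\dots,x_n$ (embedded in $\operatorname{SL}(n+1)$ fixing $x_0$) we may assume $L = a_0 x_n$ for some scalar $a_0$, and by proportionality $L' = b_n x_n$ for some $b_n$. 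This produces the second form. The phrase \emph{degeneration of the above} covers subcases where some of the coefficients $a_0, b_n, b_i$ vanish, so that $q_1$ or $q_2$ may have smaller rank.

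For the converse, I would directly verify via the Jacobian criterion that both normal forms have $p = (1 : 0 : \dots : 0)$ as a point of $S$ whose Jacobian matrix has rank at most one: in the first form the entire first row of the Jacobian vanishes at $p$, and in the second form both rows of the Jacobian at $p$ are supported only in the last column and are therefore proportional. Hence $p$ is singular and $S$ fails to be smooth.

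The main subtlety to watch for is distinguishing the case $L \equiv 0$ (which absorbs into form one) from the case where $L, L'$ are both nonzero and proportional — here we cannot freely change the basis of the pencil since we only allow the $\operatorname{SL}(n+1)$ action on coordinates, which is precisely why the second normal form is needed. The rest is a straightforward application of the Jacobian criterion and linear algebra on the space of linear forms in $x_1,\dots,x_n$.
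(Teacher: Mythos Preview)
Your proposal is correct and follows essentially the same approach as the paper: move the singular point to $(1:0:\dots:0)$, write $f$ and $q$ with linear parts $x_0L$ and $x_0L'$, and use the Jacobian criterion to reduce to the dichotomy ``$L$ vanishes'' versus ``$L,L'$ proportional and nonzero''. The only organizational difference is that the paper first normalizes $l_1$ to $x_n$ and then reads off the rank condition $a_0=0$ or $b_1=\dots=b_{n-1}=0$, whereas you first read the rank condition as linear dependence of $L,L'$ and then normalize; your version is arguably cleaner in that it handles the case $L\equiv 0$ explicitly rather than folding it into ``$a_0=0$''.
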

\begin{proof}
Without loss of generality, we assume that the singular point is $P=(1:0:\dots:0)$. Then since $P \in f \cap q$ we have:

\begin{equation*}
    \begin{split}
        f(x_0, \dots, x_n) &= x_0l_1(x_1,\dots, x_n) +q_1(x_1, \dots, x_n)\\
        q(x_0, \dots, x_n) &= x_0l_2(x_1,\dots, x_n) +q_2(x_1, \dots, x_n),
    \end{split}
\end{equation*}
where the $l_i$ are linear and the $q_i$ are quadratic forms.

We can choose a coordinate transformation fixing $P$ such that $x_n = l_1(x_1,\dots,x_n)$ and then 
\begin{equation*}
    \begin{split}
        f(x_0, \dots, x_n) &= a_0x_0x_n +q_1(x_1, \dots, x_n)\\
        q(x_0, \dots, x_n) &= x_0(b_1x_1+\dots+b_nx_n) +q_2(x_1, \dots, x_n).
    \end{split}
\end{equation*}
Recall that a point $P$ is singular on the intersection if and only if the matrix 
$$J = \begin{pmatrix}
\frac{\partial f}{\partial x_i}\\
\frac{\partial q}{\partial x_i}
\end{pmatrix}$$
at $P$ has rank $<2$. Then since 
$$J_p = \begin{pmatrix}
0&0&\dots&0&a_0\\
0&b_1&\dots&b_{n-1}&b_n
\end{pmatrix}$$
we see that $\operatorname{rank}J_p <2$ if either $a_0 = 0$ or $b_1 = b_2=\dots=b_{n-1}=0$, and the result follows.
\end{proof}

\begin{corollary}\label{smooth_pn}
Let $f,q$ be two quadrics in $\mathbb{P}^n$. Their complete intersection $S = f\cap q$ is smooth if and only if the determinant polynomial $\det(\lambda f+q)$ has only simple roots.
\end{corollary}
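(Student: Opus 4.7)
The plan is to characterise the singular points of $S$ directly in terms of kernels of members of the pencil $\Phi(f,q)$, and then use Jacobi's formula to link these kernels to multiple roots of $p(\lambda) \vcentcolon= \det(\lambda F + G)$. Recall that $P \in S$ is a singular point of $S$ precisely when the Jacobian rows $2FP$ and $2GP$ are linearly dependent, equivalently when $(\alpha F + G)P = 0$ for some $\alpha$. I first reduce to the situation where $f$ is smooth by replacing $f$ with a smooth member $\lambda_0 f + \mu_0 q$ of the pencil; this is a linear change of coordinates in $(\lambda,\mu)$ on the homogenised binary form $\det(\lambda F + \mu G)$ and therefore preserves root multiplicities. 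Such a smooth combination exists whenever $\det(\lambda F + \mu G) \not\equiv 0$.

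With $f$ smooth, $p(\lambda)$ has degree exactly $n+1$. The crucial computation uses Jacobi's formula,
$$p'(\alpha) \;=\; \operatorname{tr}\!\bigl(\operatorname{adj}(\alpha F + G)\cdot F\bigr),$$
together with the observation that the adjugate of a corank-one symmetric matrix $M$ satisfies $\operatorname{adj}(M) = c\,PP^{T}$ for some $c \neq 0$, where $P$ spans $\ker M$. Indeed $M\,\operatorname{adj}(M) = 0$ forces each column of $\operatorname{adj}(M)$ to lie in $\ker M$, and symmetry together with nontriviality pins down the form $cPP^{T}$. Substituting $M = \alpha F + G$ yields $p'(\alpha) = c\,P^{T} F P$. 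On the other hand $MP = 0$ gives $P^{T} G P = -\alpha\,P^{T} F P$, so $P \in \{f = 0\}$ is equivalent to $P \in \{q = 0\}$, and either equals $P \in S$. We therefore obtain the following chain of equivalences for a corank-one root $\alpha$ of $p$: $\alpha$ is a multiple root of $p$ $\iff$ $P^{T} F P = 0$ $\iff$ $P \in S$ $\iff$ $P$ is a singular point of $S$. This delivers both directions of the corollary simultaneously.

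The main obstacle will be to handle the non-generic situations. If $\alpha F + G$ has corank $\geq 2$ at some root $\alpha$, then $\alpha$ is automatically a multiple root of $p$, and the restriction of $f$ to the (complex) kernel $\ker(\alpha F + G)$, a quadratic form on a space of dimension $\geq 2$, necessarily admits a nonzero isotropic vector, furnishing the required singular point of $S$. In the completely degenerate case $\det(\lambda F + \mu G) \equiv 0$, every pencil member is singular; one may invoke Lemma~\ref{singular intersection in pn} directly to place $(f,q)$ in a normal form exhibiting a singular point of $S$, while $p$ vacuously fails to have only simple roots.
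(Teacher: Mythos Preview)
Your argument is correct and substantially more complete than the paper's. The paper's proof invokes the coordinate normal form of Lemma~\ref{singular intersection in pn}: it places a point $P=(1:0:\cdots:0)$ on $S$, writes $f,q$ in the shape $a_0x_0x_n+q_1$, $x_0(b_1x_1+\cdots+b_nx_n)+q_2$, and asserts that smoothness forces enough nonvanishing coefficients to make $\det(\lambda f+q)$ have $n+1$ distinct roots. Your route is coordinate-free: you identify singular points of $S$ with kernel vectors of pencil members that happen to be isotropic for $F$, and then use Jacobi's formula together with the rank-one structure of the adjugate to show that isotropy of the kernel vector is exactly the vanishing of $p'(\alpha)$. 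This buys you both implications at once and handles the corank-$\geq 2$ case cleanly via the existence of an isotropic vector in a $\geq 2$-dimensional complex subspace. The paper's normal-form approach, by contrast, is terser but only explicitly argues one direction and leaves the determinant computation implicit.

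One small point: your handling of the fully degenerate case $\det(\lambda F+\mu G)\equiv 0$ is not quite self-contained. You write that one may invoke Lemma~\ref{singular intersection in pn} to place $(f,q)$ in a normal form exhibiting a singular point, but that lemma characterises singularity of $S$ via normal forms rather than deriving singularity from the hypothesis $\det\equiv 0$; you would still need to produce the singular point. A direct fix: when $\det\equiv 0$, every pencil member has nontrivial kernel, so the incidence variety $\{((\lambda:\mu),P)\in\mathbb{P}^1\times\mathbb{P}^n:(\lambda F+\mu G)P=0\}$ dominates $\mathbb{P}^1$ and hence has positive-dimensional fibres or a positive-dimensional image in $\mathbb{P}^n$; in either situation one finds (as in your corank-$\geq 2$ argument) a kernel vector isotropic for $F$, giving the required singular point. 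Since the paper's ambient discussion already assumes some pencil member is smooth, this case is in any event peripheral.
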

\begin{proof}
We assume without loss of generality that $P = (1:0:\dots:0) \in q\cap r$, hence we can write 
\begin{equation*}
    \begin{split}
        q(x_0, \dots, x_n) &= a_0x_0x_n +q_1(x_1, \dots, x_n)\\
        r(x_0, \dots, x_n) &= x_0(b_1x_1+\dots+b_nx_n) +q_2(x_1, \dots, x_n)
    \end{split}
\end{equation*}
and since $S$ is smooth,  $\alpha_0 \neq 0\neq b_i$, for $i=1, \dots,n$. The determinant polynomial $\det(\lambda q+r)$ thus has $n+1$ distinct roots. 
\end{proof}

%\begin{lemma}\label{Projective equivalnce of smooth quadrics}
%Let $q_1,q_2$ be two smooth quadrics in $\mathbb{P}^n$. Then they are projectively equivalent. 
%\end{lemma}
%\begin{proof}
%Let $Q_1$, $Q_2$ be the corresponding symmetric matrices. We can find general element $G \in \operatorname{PGL}(n+1)$ such that $\det(G\cdot Q_1) = \det(Q_2)\neq 0 $, and hence $G\cdot Q_1 = Q_2$.
%\end{proof}

\subsubsection{Segre Symbols}\label{sec: segre symb def}
%The below information is taken from an unpublished note of Pieter Belmans \cite{Belamns_segre} on Segre symbols. 
Let $\alpha_i$ be a root of the determinant polynomial of multiplicity $e_i$. Assume further that $\alpha_i$ is not only a zero
of $\det(G+\lambda F)$, but also of all its subdeterminants of size $n-h_i +2$, where  $n+1\geq h_i \geq 2$, and $h_i$ is the maximal number such that for each root $\alpha_i$, $\alpha_i$ is a solution of all the non-trivial subdeterminants of size $n-h_i-2$ of $G+\lambda F$. If $h_i = 1$, this implies that the solution is not a solution of any of the subdeterminants.
%, then the dimension of the vertex of the cone is precisely $h_i -1$.

We then define $l^i_j$ to be the minimum multiplicity of the root $\alpha_i$ for the set of subdeterminants of size $n+1-j$, for $j =0,1,\dots ,h_i-1$. We have $l^i_j \geq l^{i}_{j+1}$, and we define $e^i_j = l^i_j - l^{i}_{j+1}$. Thus, we obtain a factorisation
$$\det(G+\lambda F)=\prod_{j = 0}^{h_i-1} (\lambda - \alpha_i)^{e^i_j} f_i(\lambda)$$
where $f_i(\alpha_i)\neq 0$ (see \cite[\S XIII]{hodge_pedoe_1994} or \cite[\S 13.86]{sommerville}).

\begin{definition}\label{segre symbol}
The \emph{Segre symbol} of the pencil is $[(e^0_0,\dots e^0_{h_0-1}),\dots,((e^r_0,\dots e^r_{h_r-1})) ].$
\end{definition}
Note, that if we only have a $1$-tuple, we omit the brackets around the tuple.

\begin{theorem}[Weierstrass, Segre, \cite{weierstrass_2013}]\label{segre thm}
Consider two pencils $\Phi_1$ and $\Phi_2$ of quadric hypersurfaces 
in $\mathbb{P}^n$. Then their base loci are projectively equivalent if and only if they have the
same Segre symbol and there exists an automorphism of $\mathbb{P}^1$ taking each root $(1: \alpha_i)$ to the corresponding root $(1: \beta_i)$, where $\alpha_i$ and $\beta_i$ are the roots of the determinant polynomials of $\Phi_1$ and $\Phi_2$ respectively.
\end{theorem}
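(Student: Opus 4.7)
The plan is to prove this classical result via the Kronecker--Weierstrass canonical form for a regular pencil of symmetric matrices. The first step is to rephrase the question of projective equivalence of the base loci in terms of the underlying symmetric matrices. Two pencils $\Phi_1 = \langle F_1, G_1 \rangle$ and $\Phi_2 = \langle F_2, G_2 \rangle$ have projectively equivalent base loci if and only if there exist $P \in \operatorname{GL}(n+1)$ and scalars reorganising the pencils (i.e.\ an element of $\operatorname{GL}(2)$ acting on the $(\lambda,\mu)$-parameters) such that $P^T F_1 P$ and $P^T G_1 P$ span the same pencil as a relabelled version of $F_2, G_2$. This is because a projective change of coordinates on $\mathbb{P}^n$ carries a quadric form to its congruence, and carries $\operatorname{Bs}(\Phi_1)$ to $\operatorname{Bs}(\Phi_2)$ precisely when the two pencils of quadrics coincide as linear systems.

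Next I would establish the easy direction: if such a $P$ exists, then the elementary divisors of $\lambda F + \mu G$ are preserved under the simultaneous congruence action $(F,G) \mapsto (P^T F P, P^T G P)$, since for each $(\lambda_0,\mu_0)$ the matrix $\lambda_0 F + \mu_0 G$ is sent to $P^T(\lambda_0 F + \mu_0 G)P$, which has the same rank. Consequently the minors of all sizes vanish to the same order at each root, giving the numbers $l^i_j$ and hence the Segre symbol. The $\operatorname{GL}(2)$-reparametrisation of the pencil then corresponds precisely to the automorphism of $\mathbb{P}^1$ permuting the roots $(1:\alpha_i) \leftrightarrow (1:\beta_i)$.

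For the converse, which is the main obstacle, I would invoke the Kronecker--Weierstrass classification. Since at least one member of each pencil is nondegenerate (otherwise the pencil is singular and the base locus fails to be a complete intersection in the expected sense), the pencil is regular. For a regular pencil of symmetric matrices over $\mathbb{C}$, the classical theorem of Weierstrass (see \cite[\S XIII]{hodge_pedoe_1994}) asserts that there is a simultaneous congruence normal form consisting of a block-diagonal sum of standard blocks, one for each pair $(\alpha_i, e^i_j)$ appearing in the Segre symbol; explicitly, each block is built from a Jordan block of size $e^i_j$ with eigenvalue $\alpha_i$, paired symmetrically with an anti-diagonal identity. This normal form depends only on the Segre symbol and on the values of the $\alpha_i$'s, and hence is determined up to the action of $\operatorname{Aut}(\mathbb{P}^1)$ on the roots. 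Therefore two pencils with the same Segre symbol whose roots are related by an automorphism of $\mathbb{P}^1$ admit the same Weierstrass normal form after reparametrising the pencil, and the congruences bringing each into normal form compose to give the required $P \in \operatorname{GL}(n+1)$. Projectivising $P$ yields the projective equivalence of the base loci.

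The hard part is establishing the canonical form itself, which is a nontrivial structural theorem about symmetric matrix pencils; I would cite \cite[\S XIII]{hodge_pedoe_1994} or \cite{weierstrass_2013} rather than reproduce the block-by-block construction. A secondary subtlety is ensuring that the $\operatorname{GL}(2)$-action on pencil parameters and the $\operatorname{PGL}(n+1)$-action on $\mathbb{P}^n$ interact cleanly, so that the ``automorphism of $\mathbb{P}^1$'' in the statement is exactly what is encoded by the choice of basis of the pencil; this is handled by noting that replacing $(F,G)$ by $(aF+bG, cF+dG)$ sends $\det(\lambda F + \mu G)$ to $\det((a\lambda+c\mu)F + (b\lambda+d\mu)G)$, inducing the corresponding M\"obius transformation on the roots.
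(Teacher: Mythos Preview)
The paper does not give its own proof of this theorem; it merely states the result and attributes it to Weierstrass and Segre with a citation to \cite{weierstrass_2013} (see also \cite[\S XIII]{hodge_pedoe_1994}). Your outline via the Kronecker--Weierstrass canonical form for regular pencils of symmetric matrices is exactly the classical argument those references contain, so your approach matches what the paper defers to.
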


\subsubsection{Preliminaries in singularity theory}

We will give some brief preliminaries in singularity theory, which will be used implicitly throughout this paper.

\begin{definition}[{\cite[p.88]{Arnold1976}}]
A class of singularities $T_2$ is \emph{adjacent} to a class $T_1$, and one writes $T_1 \leftarrow T_2$ if every germ of $f \in T_2$ can be locally deformed into a germ in $T_1$ by an arbitrary small deformation. We say that the singularity $T_2$ is \emph{worse} than $T_1$, or that $T_2$ is a degeneration of $T_1$.
\end{definition}

The degenerations of the isolated singularities that appear in a complete intersection of two quadrics  in $\mathbb{P}^3$ and $\mathbb{P}^4$ are described in Figure \ref{fig:dynkin} (for details, see \cite[p.88]{Arnold1976} and \cite[\S 13]{Arnol_d_1975}). The above theory considers only local deformations of singularities. When we study degenerations in the GIT quotient, we are interested in global deformations. In the particular cases complete intersection of two quadrics  in $\mathbb{P}^3$ and $\mathbb{P}^4$, since by \cite{hodge_pedoe_1994,dolgachev_2012} the sum of the Milnor numbers of all ADE singularities satisfies $\sum_{i=1}^r\mu(T_r)\leq 7$, by \cite[Proposition 3.1]{hacking_prokhorov_2010} any local deformation of isolated singularities is induced by a global deformation. This will allow us to classify all stable, polystable and semistable orbits for each GIT and VGIT problem later on. 

\begin{figure}[h!]
    \centering
    \begin{tikzcd}
    \mathbf{A}_1 & \mathbf{A}_2\arrow[l] & \mathbf{A}_3\arrow[l]& \mathbf{A}_4\arrow[l]& \mathbf{A}_5\arrow[l] \\
    & &\mathbf{D}_4 \arrow[u] &\mathbf{D}_5 \arrow[l]\arrow[u]&   & 
    \end{tikzcd}
    \caption{Degeneration of germs of isolated singularities appearing in a complete intersection of two quadrics in $\mathbb{P}^3$ and $\mathbb{P}^4$}
    \label{fig:dynkin}
\end{figure}
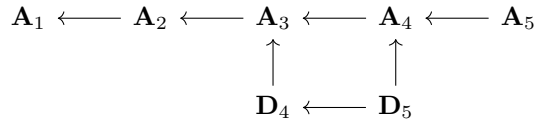

\subsection{General results}\label{general_results for P3}

Throughout this section we will make use of the classification of the base loci of such pencils found in Sommerville \cite[\S XIII]{sommerville}. We summarise these results in the following table.

\begin{table}[H]
\centering
\begin{tabular}{ c|c } 
 Segre symbol of pencil & Base locus  \\
 \hline
 $[4]$& Twisted cubic and tangent line\\
 $[(3,1)]$& A conic and two lines intersecting in one point\\
  $[(2,2)]$ & A double line and two lines in general position\\
 $[(2,1,1)]$&Two tangent lines\\
 $[3,1]$ & Cuspidal curve  \\
 $[(2,1),1]$ & Two tangent conics\\
 $[(1,1,1),1]$& Double conic\\
 $[2,2]$& Twisted cubic and bisecant\\
 $[2,(1,1)]$&A conic and two lines in a triangle\\
 $[(1,1),(1,1)]$&A quadrangle\\
 $[2,1,1]$& Nodal curve\\
 $[(1,1),1,1]$&Two conics in general position\\
 $[1,1,1,1]$ & Elliptic curve (smooth)  \\ 
\end{tabular}
\caption{Segre symbols of pencils of quadrics in $\mathbb{P}^3$ and classification of the base loci.}
    \label{tab:p3segtable}
\end{table}

%The lemma below gives a geometric classification of the above, following Mukai.  

The Segre symbols can be computed for any pencil by a simple Python script, that solves the determinant equation and finds the multiplicities of the solutions. In the case where the multiplicity is greater than $1$ one can compute the determinant minors to distinguish the different cases with similar multiplicity. In many cases, we will refer to a complete intersection $S$ having a particular Segre symbol (and not the pencil whose base locus $S$ is) to avoid confusion. Notice that by the above table and by the definition of the base locus description, if $S$ has Segre symbol $[2,1,1]$ it has $1$ $\mathbf{A}_1$ singularity, if it has Segre symbol $[2,2]$ it has $2$ $\mathbf{A}_1$ singularities while if it has Segre symbol $[(1,1),(1,1)]$ it has $4$ $\mathbf{A}_1$ singularities. If $S$ has Segre symbol $[3,1]$ it has $1$ $\mathbf{A}_2$ singularity, and if it has Segre symbol $[4]$ it has an $\mathbf{A}_3$ singularity. Finally, if $S$ has Segre symbol $[(2,1,1)]$ or $[(1,1,1),1]$ it has non-isolated singularities.

The Lemmas below give us a geometric description of the singularities of a pencil with respect to its determinant polynomial.

\begin{lemma}\label{Segre symbol [(2,1),1]}
Let $S$ be complete intersection with Segre symbol $[(2,1),1]$. Then $S$ has $1$ $\mathbf{A}_3$ singularity.
\end{lemma}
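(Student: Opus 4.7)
The plan is to apply the Weierstrass--Segre Theorem (Theorem \ref{segre thm}) to reduce to an explicit canonical form for the pencil and then to compute the singularity directly. First, after an automorphism of $\mathbb{P}^1$ I will take the two distinct roots of the determinant polynomial to be $\alpha_1 = 0$ and $\alpha_2 = -1$. Using the Jordan--Kronecker normal form for symmetric pencils with elementary divisors $\lambda^2,\ \lambda,\ \lambda+1$, a canonical representative is
$$f = 2x_0x_1 + x_2^2 + x_3^2,\qquad g = x_1^2 + x_3^2,$$
for which a direct computation gives $\det(\lambda F + G) = -\lambda^3(\lambda+1)$; a check of the $3\times 3$ minors shows that their minimum vanishing order at $\lambda = 0$ is exactly $1$, confirming the Segre structure $[(2,1),1]$ via the formulas of Section \ref{sec: segre symb def}. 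By Theorem \ref{segre thm}, any other pencil with this Segre symbol is projectively equivalent to this one, so it suffices to analyse $S = V(f)\cap V(g)$.

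Next, I will locate the singular points of $S$ using the Jacobian criterion. Since $\nabla f = 2(x_1,x_0,x_2,x_3)$ and $\nabla g = 2(0,x_1,0,x_3)$, the Jacobian has rank $<2$ precisely when $x_1 = x_3 = 0$, which, combined with $f = g = 0$, forces $x_2 = 0$. Hence $S$ has a single singular point $P = (1:0:0:0)$, which is consistent with the geometric description in Table \ref{tab:p3segtable} of $S$ as two tangent conics meeting at their point of tangency.

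Finally, I will read off the singularity type at $P$ by passing to the affine chart $x_0 = 1$ with coordinates $(u,v,w) = (x_1,x_2,x_3)$. Using $f = 0$ to eliminate $u = -(v^2+w^2)/2$ and substituting into $g = 0$ gives the local plane-curve equation
$$w^2\bigl(4 + 2v^2 + w^2\bigr) + v^4 = 0.$$
Since $4 + 2v^2 + w^2$ is a unit at the origin, the analytic change of coordinate $w' = w\sqrt{4 + 2v^2 + w^2}$ transforms this into the normal form $w'^2 + v^4 = 0$, which is the defining equation of an $\mathbf{A}_3$ singularity. The main obstacle is producing and verifying the correct Jordan--Kronecker model realising the prescribed Segre symbol, and in particular checking the subdeterminant vanishing orders for the $(2,1)$ block at $\alpha_1$; once that explicit representative is in hand the remaining Jacobian and local-equation computations are routine.
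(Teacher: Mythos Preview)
Your proof is correct. The normal form you write down does have Segre symbol $[(2,1),1]$ (the determinant vanishes to order $3$ at $\lambda=0$ while the $3\times 3$ minor obtained by deleting the first row and column vanishes to order exactly $1$, and some $2\times 2$ minor is a unit there), the Jacobian argument correctly pins down the unique singular point, and the local elimination followed by the unit-absorbing coordinate change $w' = w\sqrt{4+2v^2+w^2}$ is a legitimate analytic isomorphism yielding the $\mathbf{A}_3$ normal form $w'^2+v^4$.

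Your route is genuinely different from the paper's. The paper argues geometrically: it uses the description of $S$ as two tangent conics lying on a smooth quadric $f\cong\mathbb{P}^1\times\mathbb{P}^1$, passes via the standard birational map $\mathbb{P}^1\times\mathbb{P}^1\dashrightarrow\mathbb{P}^2$ (blow up a point, blow down two rulings), computes intersection numbers under this transfer to see that the images are two plane conics tangent at one point, and then reads off the $\mathbf{A}_3$ singularity from an explicit pair of tangent plane conics. Your approach bypasses all of this by invoking Theorem~\ref{segre thm} to justify working with a single explicit representative and then doing a direct affine-chart computation. Your argument is shorter and more self-contained; the paper's argument, by contrast, sets up a birational template that it then reuses (with proofs omitted) for Lemma~\ref{Sings of all remaining varieties P3}, so its payoff is uniformity across several Segre symbols rather than efficiency for this one case.
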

\begin{proof}
Since the Segre symbol of $S = \{f=0\}\cap \{g=0\}$ is  $[(2,1),1]$, $S$ is two conics tangent at a point $P$. We may assume, without loss of generality, that $f$ is smooth and $g$ is singular. Notice that $f = \mathbb{P}^1\times \mathbb{P}^1$ and each conic $C_1,$ $C_2$ is a $(1,1)$ divisor of $f$ as they are obtained via $H\cap f$, where $H$ is a hyperplane. The $C_i$ meet only at $P$, hence $C_1\cdot C_2 = 2$. We blow up $f$ at a point $Q$ such that $Q\notin C_i$, and $Q$ lies on the intersection of two lines $G_1,G_2$. We obtain $\pi \colon \operatorname{Bl}_Q f\rightarrow f$ and we then blow down twice at the proper transforms of $G_i$, $G'_i$ to points $Q_1$, $Q_2$, to obtain $\epsilon\colon \operatorname{Bl}_{Q_1,Q_2}\mathbb{P}^2 \rightarrow \mathbb{P}^2$. This allows us to obtain the usual birational map $f \xrightarrow{\phi} \mathbb{P}^2$. We have $Y\vcentcolon= \operatorname{Bl}_Q f = \operatorname{Bl}_{Q_1,Q_2}\mathbb{P}^2$. The ramification formula reads 

$$K_Y = \pi^*K_f +E = \epsilon^*K_{\mathbb{P}^2} +G'_1+G'_2$$
where the $E$, $G'_i$ are the corresponding exceptional divisors.

Notice that for the anticanonical divisor, $-K_f = -K_{\mathbb{P}^3}+f|_f = \mathcal{O}_f (2)$, by the adjunction formula, which in turn implies that $-K_f\cdot C_i = 4$. Notice also that for the proper transforms of $\pi$ of the $C_i$, denoted by $C'_i$ we have $C'_i =\pi^*(C_i) $, $G'_i = \pi^*(G_i)-E$, and for the proper transforms of the $C'_i$, denoted by $\overline{C}_i$, of $\epsilon$ we have $\epsilon^*(\overline{C}_i) = C'_i+G'_1+G'_2$, and $\overline{C_1}\cdot\overline{C_2} = 4$. Hence,

\begin{equation*}
    \begin{split}
        -K_{\mathbb{P}^2}\cdot \overline{C}_i =& \epsilon^*(-K_{\mathbb{P}^2})\cdot \epsilon^* (\overline{C}_i)\\
         =& -K_Y\cdot C'_i+2\\
         =&\pi^*(K_f)\cdot\pi^*(C_i)+2\\
         =& -K_f\cdot C_i+2\\
         =& 6
    \end{split}
\end{equation*}

This implies that the proper transforms $\overline{C}_i$ are also conics which intersect tangentially at $P$ and normally at $Q_1$, $Q_2$. We take $f,g $ conics in $\mathbb{P}^2$ such that $q\cap r = 2p+r+s$. Two such conics can be given by $q = x_0lx_1+x^2_1+x^2_2$, $r = x_0x_2+x^2_1+x^2_2$  with double point of intersection at $p = (1:0:0)$. Localising at $p$, $f|_{\operatorname{loc}} = x_1+x^2_1+x^2_2$, $g|_{\operatorname{loc}} = x_2+x^2_1+x^2_2$, and hence $p$ is an $\mathbf{A}_3$ singularity. Since the morphism $\phi$ is a locally analytic homomorphism around $p$, the singularity is the same in $P$ in $f$. 

\end{proof}

The proofs of the following Lemma uses the same techniques as in the proof of Lemma \ref{Segre symbol [(2,1),1]} and are thus omitted.

\begin{lemma}\label{Sings of all remaining varieties P3}
We have the following:
\begin{enumerate}
    \item Let $S$ be complete intersection with Segre symbol $[(3,1)]$. Then $S$ has $1$ $\mathbf{D}_4$ singularity.
    \item Let $S$ be complete intersection with Segre symbol $[(2,2)]$. Then $S$ has non-isolated singularities.
    \item Let $S$ be complete intersection with Segre symbol $[(1,1),1,1]$. Then $S$ has $2$ $\mathbf{A}_1$ singularities.
    \item Let $S$ be complete intersection with Segre symbol $[2,(1,1)]$. Then $S$ has $2$ $\mathbf{A}_1$ singularities.
\end{enumerate}
\end{lemma}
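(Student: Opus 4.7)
The plan is to treat each of the four cases by the same two-step procedure used in Lemma \ref{Segre symbol [(2,1),1]}: first, exploit Theorem \ref{segre thm} to replace the pencil by a convenient normal form whose base locus realises the stated Segre symbol; second, analyse the singularities either by a direct local computation in affine coordinates or, when useful, by transferring the base locus via a birational map $f\xrightarrow{\phi}\mathbb{P}^2$ as in the proof of Lemma \ref{Segre symbol [(2,1),1]}, where the smooth member $f\cong\mathbb{P}^1\times\mathbb{P}^1$ of the pencil is blown up at a point off the base locus and then contracted along the two rulings through that point. Since $\phi$ is a local isomorphism away from the blow-up/blow-down locus, the analytic type of every singularity of $S$ can be read off from the corresponding plane curve.

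Case (2) is immediate from Table \ref{tab:p3segtable}: the base locus contains a double line, so $S$ is non-reduced along a curve and the singular locus of $S$ is one-dimensional; hence the singularities are non-isolated. For cases (3) and (4) I would pick explicit pencils realising the Segre symbols $[(1,1),1,1]$ and $[2,(1,1)]$ respectively; in each case Theorem \ref{segre thm} reduces the problem to a single numerical verification. For instance for $[(1,1),1,1]$ one may take $F=\operatorname{diag}(1,1,0,0)$ and $G=\operatorname{diag}(0,0,1,\alpha)$ with $\alpha\neq 0,1$, so that the base locus is the union of two smooth conics lying in distinct planes meeting along a line in two transverse points. At each of these two points, one of the two quadrics can be taken smooth and the pencil reduces to the local normal form of Lemma \ref{singular intersection in pn}, yielding Jacobian rank one and Hessian of rank two in the transverse directions; this identifies each singular point analytically as an $\mathbf{A}_1$-node. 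A similar normal form for $[2,(1,1)]$ (a rank-three quadric in the pencil containing the two coplanar lines, together with a smooth conic) yields exactly two transverse intersection points of the components, and the same Hessian computation gives two $\mathbf{A}_1$ singularities.

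The main obstacle is case (1), where I must show the unique singular point of the base locus of a pencil with Segre symbol $[(3,1)]$ is of type $\mathbf{D}_4$ rather than $\mathbf{A}_k$. Here I would fix a normal form in which the only nonreduced block of the pencil has size three, so that the base locus is a conic meeting two lines at a common point $P$ (as in Table \ref{tab:p3segtable}). Transferring via the birational map $\phi\colon f\dashrightarrow\mathbb{P}^2$ as in Lemma \ref{Segre symbol [(2,1),1]} (choosing the blow-up point $Q$ off the base locus), the image is three plane curves through $P$ meeting with a prescribed tangency pattern dictated by the Segre block $(3,1)$. After choosing local analytic coordinates $(u,v)$ at $P$, I would verify that the local equation of $S$ has the form $u(u^2-v^2)+\text{higher order}$ (or an equivalent normal form listed in \cite{Arnold1976}), which is precisely the Milnor normal form for $\mathbf{D}_4$. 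Uniqueness of the singular point follows because the determinantal polynomial has a single multiple root, so by the argument in Lemma \ref{singular intersection in pn} there is a unique point where the Jacobian of the pencil drops rank.

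Finally, I would check global consistency via the Milnor-number bound $\sum \mu(T_i)\leq 7$ valid for complete intersections of two quadrics in $\mathbb{P}^3$ (invoked in Section \ref{p3 VGIT section}): in cases (3) and (4) the two $\mathbf{A}_1$'s contribute total Milnor number $2$, and in case (1) the single $\mathbf{D}_4$ contributes $4$, both of which are consistent with this bound. This consistency, together with the fact that by \cite[Proposition 3.1]{hacking_prokhorov_2010} local deformations are induced by global ones, confirms that no further hidden singularities can occur, completing the classification.
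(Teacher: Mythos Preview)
Your approach matches the paper's: the paper simply states that the proof uses the same techniques as in Lemma \ref{Segre symbol [(2,1),1]} and omits the details, so your elaboration via normal forms plus the birational transfer $f\dashrightarrow\mathbb{P}^2$ is exactly what is intended.

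One concrete slip: in case (3) your explicit choice $F=\operatorname{diag}(1,1,0,0)$, $G=\operatorname{diag}(0,0,1,\alpha)$ does \emph{not} realise the Segre symbol $[(1,1),1,1]$. Both quadrics are singular (rank $2$), so the Segre symbol as defined in Section \ref{sec: segre symb def} (which requires one member of the pencil to be smooth) is not even well-defined for this pencil; and the base locus $\{x_0^2+x_1^2=0\}\cap\{x_2^2+\alpha x_3^2=0\}$ is four lines (the quadrangle), i.e.\ the configuration with Segre symbol $[(1,1),(1,1)]$, not two conics in general position. A correct normal form, consistent with Table \ref{tab:p3segtable_with_equations}, is to take $f$ a smooth quadric and $g$ a rank-$2$ quadric such as $F=\operatorname{diag}(1,1,1,1)$, $G=\operatorname{diag}(a,a,b,c)$ with $a,b,c$ pairwise distinct; then $\det(\lambda F+G)=(\lambda+a)^2(\lambda+b)(\lambda+c)$ and at $\lambda=-a$ the pencil member has rank $2$, giving the $(1,1)$ block. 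With this corrected, your local Jacobian/Hessian computation goes through as you describe. The remaining cases are fine.
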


\begin{remark}\label{magma code to check sings}
In order to check the type of (isolated) hypersurface singularities, one can employ the following MAGMA script, adjusted accordingly for each different case.

\begin{verbatim}
Q:=RationalField();
PP<x0,x1,x2,x3>:=ProjectiveSpace(Q,3);
f1:=x0*x1;
f2:=x2*x3;
X:=Scheme(PP,[f1,f2]);
IsNonsingular(X);
p := X![1,0,0,0];
_,f,_,fdat := IsHypersurfaceSingularity(p,2);
R<a,b> := Parent(f);
f;
NormalFormOfHypersurfaceSingularity(f);
boo,f0,typ := NormalFormOfHypersurfaceSingularity(f : fData := [*fdat,3*]);
boo; f0; typ;
\end{verbatim}
Here, $f1$ and $f2$ are the generating polynomials, and \begin{verbatim}
    IsNonsingular(X);
\end{verbatim} 
verifies that $X = f_1\cap f_2$ is singular. The point $p = [1,0,0,0]$ refers to a specific singular point, whose type of singularity we want to check, which is given by the last command. Note, that this code also works for higher dimensional complete intersections.
\end{remark}

\begin{lemma}\label{mukai_P3}
Let $\Phi(f,g)$ be a pencil of two quadrics $f,g \in \mathbb{P}^3$, where we assume that $f$ is smooth. The base locus of the pencil has only $\mathbf{A}_1$ singularities if and only if the determinant polynomial $\det(\lambda f+g)$ has roots of multiplicity $2$.
%\begin{enumerate}
%    \item The pencil $\Phi(q,r)$ is stable if and only if its base locus $\operatorname{Bs}(q,r)$ is smooth;
%    \item The pencil $\Phi(q,r)$ is semi-stable if and only if its base locus $\operatorname{Bs}(q,r)$ has at worst \mathbf{A}_1 singularities;
%    \item The pencil $\Phi(q,r)$ is unstable if and only if its base locus $\operatorname{Bs}(q,r)$ has singularities worse than \mathbf{A}_1.
%    \item The pencil $\Phi(q,r)$ is polystable if and only if $q$ and $r$ are simultaneously diagonalizable, i.e. $q$, $r$ caan be written as:
%    \begin{equation*}
%        \begin{split}
%            q(x) =& x_0^2+x_1^2+x_2^2+x_3^2\\
%            r(x) =& \lambda_0 x_0^2+\lambda_1 x_1^2+\lambda_2x_2^2+\lambda_3x_3^2
%        \end{split}
%    \end{equation*}
%    where not all $2$ of the $\lambda_i$s are equal.
%\end{enumerate}
\end{lemma}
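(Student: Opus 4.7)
The plan is to pass through the Weierstrass--Segre classification (Theorem \ref{segre thm}), which asserts that the projective equivalence class of the base locus of $\Phi(f,g)$ is determined by the Segre symbol of the pencil together with the projective class of the roots of $\det(\lambda F+G)$. Since both the singularity type of $S$ and the multiplicities of the roots of the determinant polynomial are projective invariants of the pencil, it suffices to check the equivalence case by case on the finite list of Segre symbols in $\mathbb{P}^3$ recorded in Table \ref{tab:p3segtable}. From the definition recalled in Section \ref{sec: segre symb def}, the multiplicity $e_i$ of a root $\alpha_i$ of $\det(\lambda F+G)$ equals $\sum_{j}e^{i}_{j}$, the sum of entries of the corresponding tuple of the Segre symbol; hence the hypothesis that the roots of $\det(\lambda F+G)$ have multiplicity at most $2$, with at least one of multiplicity $2$ so that $S$ is actually singular, is equivalent to requiring the Segre symbol to be one of $[2,1,1]$, $[(1,1),1,1]$, $[2,2]$, $[2,(1,1)]$ or $[(1,1),(1,1)]$.

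For the backward direction I would go through these five symbols in turn. The cases $[2,(1,1)]$ and $[(1,1),1,1]$ are exactly the content of Lemma \ref{Sings of all remaining varieties P3} and each produces two $\mathbf{A}_1$ singularities. For the nodal curve $[2,1,1]$, the twisted cubic plus bisecant $[2,2]$, and the quadrangle $[(1,1),(1,1)]$, I would select an explicit normal-form pencil permitted by Theorem \ref{segre thm} (for instance $\{x_0x_1=0\}\cap\{x_2x_3=0\}$ for the quadrangle) and read off from the local equations, as in the proof of Lemma \ref{Segre symbol [(2,1),1]} or via the \textsc{MAGMA} procedure of Remark \ref{magma code to check sings}, that the singular locus consists of one, two and four nodes respectively, each of type $\mathbf{A}_1$.

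For the forward direction I would exhaust the remaining Segre symbols of Table \ref{tab:p3segtable}. The case $[1,1,1,1]$ yields a smooth $S$ by Corollary \ref{smooth_pn}, so $S$ has no $\mathbf{A}_1$ singularity at all and is excluded. Every other remaining symbol contains a tuple whose entries sum to at least $3$, and the previously proved statements identify in each case a singularity worse than $\mathbf{A}_1$: type $\mathbf{A}_2$ for $[3,1]$ (cuspidal curve), type $\mathbf{A}_3$ for $[4]$ and $[(2,1),1]$ (the latter by Lemma \ref{Segre symbol [(2,1),1]}), type $\mathbf{D}_4$ for $[(3,1)]$ by Lemma \ref{Sings of all remaining varieties P3}, and non-isolated singularities for $[(2,2)]$, $[(2,1,1)]$ and $[(1,1,1),1]$. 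None of these base loci has only $\mathbf{A}_1$ singularities, which proves the contrapositive.

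The main obstacle is the verification of the three cases $[2,1,1]$, $[2,2]$ and $[(1,1),(1,1)]$ that are not directly covered by Lemmas \ref{Segre symbol [(2,1),1]}--\ref{Sings of all remaining varieties P3}. However, each admits a transparent normal form on which the singularity check is a routine local computation in the spirit of Lemma \ref{singular intersection in pn}, so the difficulty is bookkeeping rather than conceptual; the key technical input is really the Weierstrass--Segre theorem, which reduces an a priori infinite classification problem to the finite case analysis described above.
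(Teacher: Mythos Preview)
Your proposal is correct and follows essentially the same route as the paper: reduce via the Weierstrass--Segre classification (Theorem \ref{segre thm} and Table \ref{tab:p3segtable}) to a finite case check on Segre symbols, then read off the singularity types from the earlier lemmas and the classification table. The paper's written proof only spells out the forward direction (listing the five symbols with only $\mathbf{A}_1$ singularities and noting their determinant roots have multiplicity $2$), implicitly relying on Table \ref{tab:p3segtable_with_equations} for the converse, whereas you treat both directions explicitly and flag that the cases $[2,1,1]$, $[2,2]$, $[(1,1),(1,1)]$ require a direct normal-form check; this is the same argument with slightly more bookkeeping made visible.
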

\begin{proof}
%$1$ follows from Avritzer, and the above discussion.

% We assume without loss of generality that $\det(\lambda f +g)$ has a root of multiplicity $2$ at $\lambda = 0$. Then $\operatorname{det}(g) = 0$, and $g$ is singular. We may thus write $g$ as $g(x) = x_0^2+ \dots x_k^2$ where $k \leq 2$. Notice that $\det(\lambda f +g)$ is divisible by $\lambda$, and hence we either have $k<2$ (i.e. $k=1$) or $k=2$, $f_{33}=0$ and $f_{03}^2+f_{13}^2+f_{23}^2 \neq 0$. 

% If $k=1$, then $f(x)$ has two simple zeroes on the line $x_0 = x_1 = 0$ in $\mathbb{P}^3$, and hence we can find a coordinate system such that $f(0,0,1,0) = f(0,0,0,1)=0$, which in turn implies that we can write 
% $$f(x) = q_1(x_0,x_1)+l_1(x_0,x_1)x_2+l_2(x_0,x_1)x_3+x_2x_3,$$
% where $q_1$ is a quadratic form and the $l_i$ are linear forms. Using a suitable change of basis we may further write $f(x)$ as $q_2(x_0,x_1)+x_2x_3$, and thus the base locus $\operatorname{Bs}(f,q)$ carries a $\mathbb{G}_m$ action fixing two $\mathbf{A}_1$ singularities $(0:0:1:0)$ and $(0:0:0:1).$

% Analysing the case where $k=2$, using a suitable coordinate change, we can write $f(x)$ as $f(x) = q_3(x_0,x_1,x_2)+x_2x_3$. The Segre Symbol for this pencil is $[2,1,1]$ and by the classification of pencils of quadrics in $\mathbb{P}^3$ the base locus is a nodal curve, with $\mathbf{A}_1$ singularity at $(0:0:0:1)$. 

%For the converse, 
Assume that the base locus $S$ has only $\mathbf{A}_1$ singularities. Then, by Table \ref{tab:p3segtable}, Theorem \ref{segre thm} and Lemmas \ref{Segre symbol [(2,1),1]} and \ref{Sings of all remaining varieties P3} that can only occur if $S$ is either
\begin{enumerate}
    \item a nodal curve, with Segre symbol $[2,1,1]$, with one $\mathbf{A}_1$ singularity,
    \item or a twisted cubic and a bisecant, with two $\mathbf{A}_1$ singularities and Segre symbol $[2,2]$,
    \item or two conics in general position with Segre symbol $[(1,1),1,1]$, with two $\mathbf{A}_1$ singularities, 
    \item or a conic and two lines in a triangle, with Segre symbol $[2,(1,1)]$, with two $\mathbf{A}_1$ singularities,
    \item or a quadrangle, with Segre symbol $[(1,1),(1,1)]$ with four $\mathbf{A}_1$ singularities.
\end{enumerate}
Notice that all of the above cases have determinant polynomial with a root of multiplicity $2$.
\end{proof}

%\begin{remark}
%From Table \ref{tab:p3segtable} we see that singularities worse than $\mathbf{A}_1$ occur when $\det(\lambda f+g)$ has roots of multiplicity greater than $2$. A similar argument as in Lemma \ref{mukai_P3} can be used to show, up to projective equivalence, which pencils correspond to these, emulating \cite{mabuchi_mukai_1990}.
%\end{remark}
Given the analysis in this section, we present the following table, which lists the possible complete intersections of two quadrics in $\mathbb{P}^3$, up to projective equivalence, and their singularities. The list of polynomials can be also found in \cite[\S XIII]{sommerville} (up to projective equivalence). One can also generate the polynomials $f$, $g$ via the Segre symbols, using the computational package \cite{pap_segre}.

% \newpage

%\newpage
%\begin{table}[H]
%\centering
%\begin{tabular}{ c|c|c } 
%\begin{longtable}{| p{.16\textwidth} | p{.50\textwidth} |p{.13\textwidth} |} 
\begin{center}
\begin{longtable}{| p{2.4cm} | p{8.1cm} |p{2cm} |}
\hline
 Segre symbol & Generating polynomials  & Singularities \\
 \hline
 $[4]$& \begin{center}
     $\begin{aligned}[c]
 f&= q_1(x_2,x_3)+x_3l_1(x_0,x_1)+x_2l_2(x_0,x_1)\\
g&= q_2(x_2,x_3)+x_3l_3(x_0,x_1)+x_2l_4(x_0,x_1)
 \end{aligned}$
  \end{center} & $\mathbf{A}_3$\\
 \hline
 $[(3,1)]$&  \begin{center}
     $\begin{aligned}[c]
         f&= q_1(x_1,x_2,x_3)+x_0x_3\\
         g&= x_3l_1(x_1,x_2,x_3)
     \end{aligned}$
  \end{center}& $\mathbf{D}_4$\\
 \hline
  $[(2,2)]$ &\begin{center}
     $\begin{aligned}[c]
         f&= q_1(x_2,x_3)+x_0x_3+ x_1l_1(x_2,x_3)\\
         g&= q_2(x_2,x_3)
     \end{aligned}$
  \end{center} & non-isolated \\
 \hline
 $[3,1]$ & \begin{center}
     $\begin{aligned}[c]
         f&= q_1(x_1,x_2,x_3)+x_0x_3\\
         g&= q_2(x_2,x_3)+x_1x_3
     \end{aligned}$
  \end{center}  & $\mathbf{A}_2$ \\
 \hline
 $[(2,1),1]$ & \begin{center}
     $\begin{aligned}[c]
         f&= q_1(x_1,x_2,x_3)+x_0l_1(x_1,x_2,x_3)+ x_1l_1(x_2,x_3)\\
         g&= q_2(x_2,x_3)+x_0x_3+ x_1l_2(x_2,x_3)
     \end{aligned}$
  \end{center} & $\mathbf{A}_3$ \\
 \hline
 $[(1,1,1),1]$& \begin{center}
     $\begin{aligned}[c]
         f&= q_1(x_0,x_1,x_2,x_3)\\
         g&= x_3^2
     \end{aligned}$
  \end{center} & non-isolated\\
 \hline
 $[2,2]$&
  \begin{center}
     $\begin{aligned}[c]
         f&= q_1(x_2,x_3)+x_0l_1(x_2,x_3)+ x_1l_2(x_2,x_3)\\
         g&= q_2(x_2,x_3)+x_0l_3(x_2,x_3)+ x_1l_4(x_2,x_3)
    \end{aligned}$
  \end{center} & $2\mathbf{A}_1$ \\
 \hline
 $[2,(1,1)]$& \begin{center}
     $\begin{aligned}[c]
         f&= q_1(x_2,x_3)+x_0x_3\\
         g&= x_3l_3(x_0,x_1,x_2,x_3)
    \end{aligned}$
  \end{center} & $2\mathbf{A}_1$ \\
 \hline
 $[(1,1),(1,1)]$& \begin{center}
     $\begin{aligned}[c]
         f&= q_1(x_2,x_3)\\
         g&= q_2(x_0,x_1)
     \end{aligned}$
  \end{center} & $4\mathbf{A}_1$ \\
 \hline
 $[2,1,1]$&  \begin{center}
     $\begin{aligned}[c]
         f&= q_1(x_1,x_2,x_3)+x_0x_3\\
         g&= q_2(x_1,x_2,x_3)
     \end{aligned}$
  \end{center} & $\mathbf{A}_1$ \\
 \hline
 $[(1,1),1,1]$& \begin{center}
     $\begin{aligned}[c]
         f&= q_1(x_0,x_1,x_2,x_3)\\
         g&= q_2(x_2,x_3)
     \end{aligned}$
  \end{center} & $2\mathbf{A}_1$ \\
 \hline
 $[1,1,1,1]$ &  \begin{center}
     $\begin{aligned}[c]
         f&= q_1(x_0,x_1,x_2,x_3)\\
         g&= q_2(x_0,x_1,x_2,x_3)
     \end{aligned}$
  \end{center} & None\\ 
 \hline
%\end{tabular}
\caption{Segre symbols, equations up to projective equivalence, and singularities of complete intersections of quadrics in $\mathbb{P}^3$.}
    \label{tab:p3segtable_with_equations}
%\end{table}
\end{longtable}
\end{center}

\begin{remark}
Note, that for all singular complete intersections, by Theorem \ref{segre thm}, and since any two sets of $3$ points in $\mathbb{P}^1$ are isomorphic, the above description is unique, since any two singular complete intersections of quadrics in $\mathbb{P}^3$ with the same Segre symbol are progressively equivalent.
\end{remark}

\subsection{GIT classification}\label{sec: GIT classification P3}
 In this section we will study the GIT quotient $\mathcal{R}_{3,2,2}\sslash \operatorname{SL}(4)$. The following families have been generated using the computational package \cite{theodoros_stylianos_papazachariou_2022}, based on the discussion in Section \ref{VGITsection}. In particular, they are maximal (semi-)destabilising families, in the sense of Definition \ref{destabilised sets def-vgit}. 

\begin{theorem}\label{p3 theorem 1- nonstable- nominus}
The following are equivalent:
\begin{enumerate}
    \item A pencil of two quadrics $\Phi(f,g)$ in $\mathbb{P}^3$ is non-stable;
    \item the base locus of the pencil $\operatorname{Bs}(f,g)$, is singular;
    \item the pencil is generated by one of the following families, or a degeneration:
    
Family $1$:
\begin{equation*}
    \begin{split}
        f(x_0,x_1,x_2,x_3) =& x_3 l_1(x_0,x_1,x_2,x_3) + x_2l_2(x_0,x_1,x_2) +x_1l_3(x_0,x_1)\\
        g(x_0,x_1,x_2,x_3) =& q_1(x_1,x_2,x_3)
    \end{split}
\end{equation*}
an irreducible smooth quadric $f$ and an irreducible singular quadric $g$ intersecting at a nodal curve, with $\mathbf{A}_1$ singularity at $(1:0:0,0)$; 

Family $2$:
\begin{equation*}
    \begin{split}
        f(x_0,x_1,x_2,x_3) =& q_2(x_0,x_1,x_2,x_3)\\
        g(x_0,x_1,x_2,x_3) =& q_3(x_2,x_3)
    \end{split}
\end{equation*}
an irreducible smooth quadric $f$ and a pair of intersecting planes $g$ such that  $\operatorname{Bs}(f,g)$ is a pair of conics in general position, with $\mathbf{A}_1$ singularities (up to a change of coordinates) at $(1:i:0:0)$, $(1:-i:0:0)$;

Family $3$:
\begin{equation*}
    \begin{split}
        f(x_0,x_1,x_2,x_3) =& x_3 l_4(x_0,x_1,x_2,x_3)+ x_2l_5(x_0,x_1,x_2)\\
        g(x_0,x_1,x_2,x_3) =&  x_3 l_6(x_0,x_1,x_2,x_3)+ x_2l_7(x_0,x_1,x_2)
    \end{split}
\end{equation*}
two irreducible non-singular quadrics $f,g$ intersecting at a twisted cubic and a bisecant, with $\mathbf{A}_1$-singularities at $(1:0:0:0)$ and $(0:1:0:0)$; 

Family $4$:
\begin{equation*}
    \begin{split}
        f(x_0,x_1,x_2,x_3) =& x_3 l_8(x_0,x_1,x_2,x_3)+ q_4(x_1,x_2)\\
        g(x_0,x_1,x_2,x_3) =& x_3 l_{9}(x_0,x_1,x_2,x_3)+ q_5(x_1,x_2)
    \end{split}
\end{equation*}
two irreducible smooth quadrics $f, g$ intersecting at a nodal curve, with an $\mathbf{A}_1$-singularity at $(1:0:0:0)$,

or a degeneration of the above. Here, the $l_i$ are linear forms  in $\mathbb{P}^3$ and the $q_i$ are quadratic forms. 

In particular, the above $4$ families are strictly semistable, and families $1$ and $4$ are projectively equivalent.
\end{enumerate}
\end{theorem}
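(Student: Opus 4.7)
The plan has three parts, mirroring the logical structure of the theorem. For (1) $\Leftrightarrow$ (3) I will apply the computational VGIT machinery of Section \ref{VGITsection} in the specialisation $n=3$, $d=k=2$, $m=0$. By Theorem \ref{unstable families vgit}, a pencil $\Phi(f,g)$ fails to be stable if and only if, after an $\operatorname{SL}(4)$-translate, its combined support lies in a maximal set $N^{\ominus}(\lambda, x^{J})$ for some $\lambda$ in the fundamental set $P_{3,2,2}$ and some $x^{J}\in \Xi_{2}$. The plan is first to enumerate $P_{3,2,2}$ via Definition \ref{finite_set_def_k_case}, then to iterate over all $(\lambda, x^{J})$ and compute $N^{\ominus}$ through the explicit description of Lemma \ref{nminus_k-case-vgit}, keeping only those maximal under inclusion. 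This is carried out by the computational package \cite{theodoros_stylianos_papazachariou_2022}, and I expect the output, after diagonal torus normalisation, to consist precisely of the four families listed.

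For (2) $\Leftrightarrow$ (3) I will match the four computed families against the Segre classification of Table \ref{tab:p3segtable_with_equations}. The singular locus and type claimed in each family can be certified either by the Jacobian computation underlying Lemma \ref{singular intersection in pn} or by the MAGMA procedure of Remark \ref{magma code to check sings}. Conversely, Table \ref{tab:p3segtable_with_equations} enumerates every projective equivalence class of singular pencils, and for each entry I will exhibit a specialisation of the forms $l_{i}, q_{j}$ that degenerates one of the four listed families to the given equations; Theorem \ref{segre thm} guarantees that the resulting Segre symbol determines the pencil up to the $\operatorname{SL}(4)$-action. Lemma \ref{mukai_P3} provides a complementary check through multiplicities of the roots of $\det(\lambda F + G)$.

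For strict semistability I will apply the centroid criterion (Theorem \ref{t-centroid_criterion}). In the present setting the centroid is $\mathcal{C} = (1,1,1,1)$, and for each of the four families I will verify that this point lies on the boundary of the convex hull of the associated monomial vectors: in the closure but not the interior. Equivalently, for each family I will exhibit a nontrivial normalised one-parameter subgroup $\lambda$ with $\mu(\Phi(f,g),\lambda) = 0$, which provides the required semistable-but-not-stable certificate. Families $1$ and $4$ each produce a nodal base locus with one $\mathbf{A}_{1}$ singularity, hence both realise the Segre symbol $[2,1,1]$; since $\operatorname{Aut}(\mathbb{P}^{1})$ is triply transitive, Theorem \ref{segre thm} (Weierstrass--Segre) yields a projective equivalence between any two such pencils, so Families $1$ and $4$ define the same $\operatorname{SL}(4)$-orbit closure.

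The main obstacle I anticipate is certifying that the algorithmic list of maximal $N^{\ominus}$ sets, which a priori classifies non-stable orbits only up to a fixed maximal torus, collapses after the full $\operatorname{SL}(4)$-action to exactly the four families claimed (with the identification of $1$ and $4$). As noted in Section \ref{sec:how to study VGIT}, no intrinsic algorithmic certificate is currently available for this collapse; my strategy is to detect it geometrically by reading off Segre symbols from the explicit monomial supports and invoking Theorem \ref{segre thm}, thereby linking the combinatorial output of the computational package to the geometric classification.
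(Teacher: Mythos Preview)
Your proposal is correct and follows essentially the same approach as the paper: the equivalence (1)$\Leftrightarrow$(3) comes from running the computational package on the maximal $N^{\ominus}(\lambda,x^{J})$ sets over $P_{3,2,2}$, the equivalence (2)$\Leftrightarrow$(3) is established by matching Segre symbols via Table \ref{tab:p3segtable_with_equations} and checking that the remaining singular Segre types arise as degenerations, and strict semistability is verified through the centroid criterion. Your explicit invocation of Theorem \ref{segre thm} together with triple transitivity of $\operatorname{Aut}(\mathbb{P}^{1})$ to conclude that Families 1 and 4 are projectively equivalent is a slight sharpening of what the paper records, where the equivalence is only asserted after noting both have Segre symbol $[2,1,1]$.
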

\begin{proof}
The equivalence of $1$ and $3$ follows from the computational program \cite{theodoros_stylianos_papazachariou_2022} we detailed in Section \ref{VGITsection} and the centroid criterion (Theorem \ref{t-centroid_criterion}), where the above families are maximal destabilising families as in the sense of Definition \ref{destabilised sets def-vgit}, i.e. each family equals $N^{-}(\lambda, x^J, x_p)$ for some $\lambda \in P_{3,2,2}$ and $x^J$, $x_p$ support monomials.%, from Table \ref{tab:code outputs p3}. 
The description of the above families with respect to singularities follows from Section \ref{general_results for P3} and Table \ref{tab:p3segtable_with_equations}.
% From the Centroid criterion (Lemma \ref{t-centroid_criterion}) the above cases are non-stable, and they are semistable, hence they are strictly semistable. Furthermore, from lemma \ref{singular intersection in pn} the above $4$ pencils have singular base locus.

For the equivalence of $2$ and $3$ we have the following analysis.  Families $1$, $2$, $3$ and $4$ have Segre symbols $[2,1,1]$, $[(1,1),1,1]$, $[2,2]$ and $[2,1,1]$ respectively.

% For the first Family, notice that the base locus is singular, with singularity $P=(1:0:0:0)$. Furthermore, the Segre symbol of the pencil is $[2,1,1]$, and by the classification, the base locus is a nodal curve, hence $P$ is an $\mathbf{A}_1$ singularity. 

% For Family $2$, notice that the Segre symbol of the pencil is $[(1,1),1,1]$ and hence that the base locus is two conics $C_1$, $C_2$ in general position, intersecting at points $P, Q$,  with $\mathbf{A}_1$ singularities by Table \ref{tab:p3segtable}.

% For Family $3$, the Segre symbol of the pencil is $[2,2]$. By the classification, the base locus of the pencil is a twisted cubic and a bisecant intersecting the cubic at $P = (1:0:0:0)$ and $Q=(0:1:0:0))$ which are $\mathbf{A}_1$ singularities.

% For Family $4$, the base locus is singular, with singularity $P=(1:0:0:0)$. Furthermore, the Segre symbol of the pencil is $[2,1,1]$, and by the classification, the base locus is a nodal curve, hence $P$ is an $\mathbf{A}_1$ singularity. In particular, since families $1$ and $4$ have the same Segre symbol, they are projectively equivalent by Theorem \ref{segre thm}.
In addition, notice that a degeneration of family $3$ is 
\begin{equation*}
    \begin{split}
        f(x_0,x_1,x_2,x_3) &= x_0x_3\\
        g(x_0,x_1,x_2,x_3) &= x_1x_2
    \end{split}
\end{equation*}
which is the quadrangle with Segre symbol $[(1,1), (1,1)]$ and $4$ $\mathbf{A}_1$ singularities.

Also, notice that a degeneration of family $4$ is 
\begin{equation*}
    \begin{split}
        f(x_0,x_1,x_2,x_3) &= q(x_2,x_3)+x_0x_3\\
        g(x_0,x_1,x_2,x_3) &= x_3l(x_0,x_1,x_2,x_3)
    \end{split}
\end{equation*}
which is the complete intersection with Segre symbol $[2, (1,1)]$ and $2$ $\mathbf{A}_1$ singularities.

Hence, from the above discussion, and from Lemma \ref{singular intersection in pn} we notice that if a pencil has singular base locus, then it belongs to one of the above Families (or its degenerations). %In particular, we can see the above for Family $3$, by making the change of coordinates $x_0 = l_6$, $x_1 = l_7$. This shows that a pencil is singular if and only if belongs to one of these equations, or its degenerations. Further, since both 
% \begin{equation*}
%     \begin{split}
%         f(x_0,x_1, x_2, x_3,) &= x_0l(x_1, x_2, x_3) +q_1(x_1, x_2, x_3)\\
%         q(x_0, x_1, x_2, x_3,) &= q_2(x_1, x_2, x_3)
%     \end{split}
% \end{equation*}
% and
% \begin{equation*}
%     \begin{split}
%         f(x_0, x_1, x_2, x_3,) &= x_0x_3 +q_1(x_1, x_2, x_3)\\
%         q(x_0, x_1, x_2, x_3,) &= x_0x_3 +q_2(x_1, x_2, x_3)
%     \end{split}
% \end{equation*}
% have Segre symbol $[2,1,1]$ and are projectively equivalent to each other by Theorem \ref{segre thm}, and correspond to Family $4$.

%Similar to the proof of Theorem \ref{p3 theorem 1- unstable- n-}, for some $f,g$, $\Delta(\lambda, \mu)$ has double roots, for others $\Delta(\lambda, \mu)$ has triple roots while for others $\Delta(\lambda, \mu)$ has roots of multiplicity $4$. These correspond to all the possible configurations of Segre symbols for roots of multiplicity $3$ and $4$. By a theorem of Weirestrass and Segre, all pencils with the same Segre symbols have projectively equivalent base loci. Hence, the base locus of a pencil $\Phi(f',g')$, $\operatorname{Bs}(f',g')$ with roots of  $\operatorname{det}(\lambda f'+\mu g')$ of multiplicity $3$ will be projectively equivalent to the base locus of one of the Cases $1$, $3$ or $5$, and similarly the base locus of a pencil $\Phi(f',g')$, $\operatorname{Bs}(f',g')$ with roots of  $\operatorname{det}(\lambda f'+\mu g')$ of multiplicity $4$ will be projectively equivalent to the base locus of one of the Cases $2$, $4$, $6$ or $7$. Applying Lemma \ref{projective equivalence of pencils} completes the proof. (Needs some work!)
\end{proof}

\begin{corollary}\label{p3_stable}
A pencil $\Phi(f,g)$ of quadrics in $\mathbb{P}^3$ is stable if and only if it is smooth. 
\end{corollary}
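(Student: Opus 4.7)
The plan is to observe that this corollary is essentially the contrapositive of the equivalence $(1) \Leftrightarrow (2)$ in Theorem \ref{p3 theorem 1- nonstable- nominus}. There it was established that a pencil $\Phi(f,g)$ of two quadrics in $\mathbb{P}^3$ is non-stable (i.e. not GIT-stable) if and only if its base locus $\operatorname{Bs}(f,g)$ is singular. Interpreting ``smooth pencil'' in the standard sense that the base locus, viewed as a complete intersection, is smooth, the corollary follows immediately by contrapositive: $\Phi(f,g)$ is stable if and only if $\operatorname{Bs}(f,g)$ is smooth.

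For clarity, I would spell this out in one direction at a time. If $\Phi(f,g)$ is stable, then in particular it is not among the four maximal strictly semistable families (nor any of their degenerations) identified in Theorem \ref{p3 theorem 1- nonstable- nominus}; by condition $(2)$ of that theorem the base locus must therefore be smooth. Conversely, if $\operatorname{Bs}(f,g)$ is smooth then by Corollary \ref{smooth_pn} the determinant polynomial $\det(\lambda F + G)$ has only simple roots, so the Segre symbol is $[1,1,1,1]$, and this case is disjoint from the Segre symbols $[2,1,1]$, $[(1,1),1,1]$, $[2,2]$ listed for the maximal non-stable families (and their degenerations in Table \ref{tab:p3segtable_with_equations}), so $\Phi(f,g)$ cannot be non-stable by Theorem \ref{p3 theorem 1- nonstable- nominus}, hence it is stable.

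There is essentially no obstacle here, since all of the work has been done in Theorem \ref{p3 theorem 1- nonstable- nominus}; the only point to verify is the identification between ``smooth pencil'' and ``smooth base locus'', which is unambiguous for a pencil of quadrics in $\mathbb{P}^3$ with no fixed part. The proof will therefore be a one-line invocation of the previous theorem.
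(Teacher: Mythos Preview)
Your proposal is correct and takes essentially the same approach as the paper: the paper's proof is a one-line invocation of Theorem~\ref{p3 theorem 1- nonstable- nominus}, observing that a pencil with smooth base locus is not non-stable and hence stable. Your additional remarks about Segre symbols and Corollary~\ref{smooth_pn} are harmless elaboration but not needed, since the equivalence $(1)\Leftrightarrow(2)$ in Theorem~\ref{p3 theorem 1- nonstable- nominus} already gives both directions directly by contrapositive.
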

\begin{proof}
Let $\Phi$ be a pencil which has smooth base locus. By  Theorem \ref{p3 theorem 1- nonstable- nominus} it is not non-stable, i.e. it is stable.
\end{proof}

\begin{theorem}\label{polystable_P3}
    \item A pencil of two quadrics $\Phi(f,g)$ in $\mathbb{P}^3$ is strictly polystable if and only if it is generated by:

\begin{equation*}
    \begin{split}
        f(x_0,x_1,x_2,x_3) =& q_2(x_0,x_1)\\
        g(x_0,x_1,x_2,x_3) =&   q_3(x_2,x_3)
    \end{split}
\end{equation*}
two reducible singular quadrics $f,g$ intersecting at a quadrangle, with $\mathbf{A}_1$-singularities at (up to change of basis) $(1:0:0:0)$, $(0:1:0:0)$, $(0:0:1:0)$, $(0:0:0:1)$,

% or equivalently

% \begin{equation*}
%     \begin{split}
%         f(x_0,x_1,x_2,x_3) =& x_2 l_4(x_0,x_1)+ x_3l_5(x_0,x_1)\\
%         g(x_0,x_1,x_2,x_3) =& x_2 l_{6}(x_0,x_1)+ x_3l_7(x_0,x_1)
%     \end{split}
% \end{equation*}
% two irreducible smooth quadrics $f, g$ intersecting at a quadrangle, with an $4$ $\mathbf{A}_1$-singularities at $(1:0:0:0)$, $(0:1:0:0)$, $(0:0:1:0)$, $(0:0:0:1)$. 

Here, the $l_i$ are linear forms in $\mathbb{P}^3$ and the $q_i$ are quadratic forms, which are maximal in their support.
\end{theorem}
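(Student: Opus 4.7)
The plan is to specialise Theorem \ref{strictly_semistable_k-case-vgit} to the case $m=0$ (no hyperplane sections) and combine it with the explicit classification of strictly semistable pencils from Theorem \ref{p3 theorem 1- nonstable- nominus}. First I would verify that the quadrangle pencil $\Phi_0 \vcentcolon= \langle q_2(x_0,x_1),\, q_3(x_2,x_3)\rangle$ is polystable. By Theorem \ref{p3 theorem 1- nonstable- nominus}, $\Phi_0$ is strictly semistable, being a toric degeneration of Family~3 with four $\mathbf{A}_1$ singularities. To upgrade this to polystability, I exhibit a positive-dimensional stabiliser: the one-parameter subgroup $\lambda(s) = \operatorname{Diag}(s, s, s^{-1}, s^{-1})$ acts by $\lambda(s) \cdot q_2(x_0,x_1) = s^{2} q_2(x_0,x_1)$ and $\lambda(s) \cdot q_3(x_2,x_3) = s^{-2} q_3(x_2,x_3)$, so it fixes the Pl\"ucker point $[q_2 \wedge q_3] \in \operatorname{Gr}(2, 10)$. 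Hence $\dim G_{\Phi_0} > 0$, and by \cite[Remark 8.1(5)]{dolgachev_2003} the orbit of $\Phi_0$ is closed in the semistable locus, so $\Phi_0$ is polystable.

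For the converse, by the $m=0$ specialisation of Theorem \ref{strictly_semistable_k-case-vgit}, every strictly polystable pencil is, up to $\operatorname{SL}(4)$, supported in a maximal annihilator set $V^0(\lambda, x^J) \times B^0(\lambda, x^J)$ for some $\lambda \in P_{3,2,2}$ and some $x^J \in \Xi_2$. I would enumerate these maximal annihilators via the algorithm of Section \ref{sec:how to study VGIT} implemented in \cite{theodoros_stylianos_papazachariou_2022}. Since the closure of every strictly semistable orbit contains a unique closed orbit, it then suffices to show that each of Families $1$, $2$, $3$, $4$ of Theorem \ref{p3 theorem 1- nonstable- nominus}, when not already in the $\operatorname{SL}(4)$-orbit of $\Phi_0$, admits a further one-parameter subgroup $\lambda'$ whose limit is projectively equivalent to $\Phi_0$. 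For Family~3, the one-parameter subgroup $\lambda'(s) = \operatorname{Diag}(s, s^{-1}, s, s^{-1})$ (suitably normalised) annihilates the mixed monomials $x_0 x_2$, $x_0 x_3$, $x_1 x_2$, $x_1 x_3$ in both $f$ and $g$ and produces $\Phi_0$ in the limit; Families 1, 2 and 4 admit analogous toric degenerations after choosing coordinates in which $q_2(x_0, x_1)$ and $q_3(x_2, x_3)$ are the $\lambda$-fixed parts of $f$ and $g$.

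The main obstacle will be matching the annihilator sets $V^0 \times B^0$ produced by the algorithm with the quadrangle configuration: for each candidate $\lambda \in P_{3,2,2}$ and each maximal support monomial $x^J$, one must check that the only $\operatorname{SL}(4)$-orbit of a pencil whose support is contained in the annihilator and whose Pl\"ucker point is genuinely fixed (not merely semi-invariant) by $\lambda$ is the orbit of $\Phi_0$. This is a finite combinatorial verification encoded in \cite{theodoros_stylianos_papazachariou_2022}, whose output identifies the unique maximal annihilator yielding a polystable orbit, namely the quadrangle $\Phi_0$.
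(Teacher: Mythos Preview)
Your forward direction has a genuine gap. You exhibit a single one-parameter subgroup fixing $[q_2 \wedge q_3]$ and then invoke \cite[Remark 8.1(5)]{dolgachev_2003} to conclude that the orbit is closed. That reference asserts the \emph{converse}: a strictly semistable point whose orbit is closed has infinite stabiliser. The direction you use is false in general; for instance, the general member of Family~3 in Theorem \ref{p3 theorem 1- nonstable- nominus} (a twisted cubic together with a bisecant) is stabilised by the $\mathbb{G}_m\subset\operatorname{PGL}(2)$ fixing the two marked points on the cubic, yet its orbit is not closed, since it degenerates further to $\Phi_0$. The paper instead verifies that \emph{every} diagonal one-parameter subgroup fixes $x_0x_1\wedge x_2x_3$ (the Pl\"ucker weight is $\sum a_i=0$), so the full maximal torus lies in $G_{\Phi_0}$; this is the stronger input from which closedness of the orbit is deduced.

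Your converse, via the annihilator sets $N^0(\lambda,x^J)$ of Theorem \ref{strictly_semistable_k-case-vgit}, is essentially the paper's argument and in fact makes a direct polystability check unnecessary: once the enumeration produces a unique maximal annihilator up to projective equivalence and the strictly semistable locus is non-empty (Theorem \ref{p3 theorem 1- nonstable- nominus}), the unique closed strictly semistable orbit is forced to be $\Phi_0$. The additional degenerations you sketch from Families 1--4 are therefore redundant, and the specific one-parameter subgroup $\operatorname{Diag}(s,s^{-1},s,s^{-1})$ you propose for Family~3 does not in any case limit to $\Phi_0$: the minimal-weight Pl\"ucker coordinate in the support is $x_1x_3\wedge x_3^2$, giving a pencil with $x_3$ in its base locus rather than a quadrangle.
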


\begin{proof}
First notice that by Lemma \ref{singular intersection in pn} the above family is singular, and by Theorem \ref{t-centroid_criterion} it is strictly semistable, i.e. in particular non-stable. The above family has determinant polynomial with roots of multiplicity $2$, hence by Lemma \ref{mukai_P3} it has $\mathbf{A}_1$ singularities.

In more detail, the above family has 
%a quadric $f'$ of the pencil $\Phi(f,g)$ is non-singular, hence we can derive the Segre symbols for these pencils. Using $f' = \lambda f+\mu g$, both of these families have 
Segre symbol $[(1,1),(1,1)]$ and hence its base locus is a quadrangle, with $4$ $\mathbf{A}_1$ singularities. %Hence, we have two divisors $C_1 = 2L$, $C_2 = 2G$, $C_1\cdot C_2 = 4$, $C_1 \cap C_2 = \{P,Q,R,S\}$. By blowing up, and blowing down twice to $\mathbb{P}^2$ as before, the strict transforms $\overline{C}_1$, $\overline{C}_2$ are each a pair of two lines, intersecting at the four points $\{P,Q,R,S\}$. More specifically, each of the four points of intersection are $\mathbf{A}_1$ singularities. 

% Notice that we can write these families (up to change of coordinates) as $f = x_0x_1$, $g = x_2x_3$.%, or $f = x_2x_0$, $g=x_1x_3$. 
% The reason for this, especially for the second family, is that $f = l'_1(x_2,x_3)l'_2(x_0,x_1)$, $g = l'_3(x_2,x_3)l'_4(x_0,x_1)$ so the transformation $x_i\rightarrow l'_{i+1}$ gives $f = x_0x_1$, $g = x_2x_3$

Note that $f\wedge g= x_0x_1\wedge x_2x_3$; %$f\wedge g= x_0x_2\wedge x_1x_3$;
for any one parameter subgroup $\lambda \colon \mathbb{G}_m \rightarrow \operatorname{SL}(4)$, with $\lambda(s) = \operatorname{Diag}(s^{a_0}, \dots, s^{a_3})$, with $\sum a_i =0$, we have 
$$\lim_{s \to 0}\lambda(s)\cdot f\wedge g = s^{\sum a_i}x_0x_1\wedge x_2x_3 = x_0x_1\wedge x_2x_3 = f\wedge g.$$ % or $\lim_{s \to 0}\lambda(s)\cdot f\wedge g = s^{-\sum a_i}x_0x_2\wedge x_1x_3 = x_0x_2\wedge x_1x_3= f\wedge g$. 
Hence, $\dim \operatorname{Stab}(f\wedge g) = \dim(\operatorname{SL}(4))$ is maximal and thus the orbit of $f \wedge g$ is closed, i.e. the pencil is polystable.

By Theorem \ref{strictly_semistable_k-case-vgit} a complete intersection $S$, defined by $S = \{f=g=0\}$, that belongs to a closed strictly semistable orbit is generated by monomials in the set $N^0(\lambda, x^{J_1})$, for some $(\lambda, x^{J_1})$. The above family corresponds to the only such $N^0(\lambda, x^{J_1})$ (up to projective equivalence). In particular, these are obtained by verifying which $N^{-}(\lambda, x^J)$ give strictly semistable families,  for various support monomials $x^J$, and then computing $N^0(\lambda, x^{J_1})$ by the description in Lemma \ref{annihilator_k-case-vgit}.
\end{proof}

%\begin{document}

\section{K-moduli compactification of family 2.25}\label{fano 3fold section}
Consider a smooth intersection of two quadrics $C_1$ and $C_2$ in $\mathbb{P}^3$. The resulting complete intersection $C = C_1\cap C_2$ is an elliptic curve; blowing up $\mathbb{P}^3$ along $C$ gives a smooth Fano threefold $X = \operatorname{Bl}_C \mathbb{P}^3$, with $(-K_X)^3 = 32$. It is known (see, e.g. \cite[Corollary 4.3.16.]{fano_book}), that all such smooth Fano threefolds which correspond to family $2.25$ in the Mori-Mukai classification \cite{mori_mukai_2003}, are K-stable.

%Let $X_{\infty}$ be the Gromov-Hausdorff limit of smooth K{\"a}hler-Einstein Fano threefolds in family $2-25$ as described above. There exists a Hitchin-Kobayashi homeomorphism $ \overline{M}^{GH}_{\operatorname{Bl}_{C_1\cap C_2} \mathbb{P}^3} = M^{KE}\cup X_{\infty}\rightarrow\overline{M}^{KE}_{\operatorname{Bl}_{C_1\cap C_2} \mathbb{P}^3}$. 

Let $C_1 = \{ x_0x_1 = 0\}$, $C_2 = \{ x_2x_3 = 0\}$ be two quadrics in $\mathbb{P}^3$. Then $\tilde{C} = C_1\cap C_2$ is GIT-polystable by Theorem \ref{polystable_P3}. Notice, that $C_1$ and $C_2$ are toric surfaces which intersect on a toric curve, which is the $4$ lines $\{x_0 = 0\}$, $\{x_1 = 0\}$, $\{x_2 = 0\}$ and $\{x_3 = 0\}$, hence $\tilde{C}$ is toric. As such, as $\mathbb{P}^3$ is toric, the blow up of $\mathbb{P}^3$ along $\tilde{C}$, $\tilde{X} = \operatorname{Bl}_{\tilde{C}} \mathbb{P}^3$ is a toric blow up and hence $\tilde{X}$ is a toric variety. The polytope of $\tilde{X}$, $P_{\tilde{X}}$, is created by `cutting' the corresponding polytope for $\mathbb{P}^3$, $P_{\mathbb{P}^3}$, along the $4$ edges corresponding to each $x_i$. The corresponding polytope $P_{\tilde{X}}$, is a polytope generated by vertices $(1,0,0)$, $(0,1,0)$, $(0,0,1)$, $(1,1,0)$, $(0,1,1)$, $(-1,-1,0)$, $(0,-1,-1)$, $(-1,-1,-1)$. This is the terminal toric Fano threefold with Reflexive ID %$\#255743$ 
$\#199$ in the Graded Ring Database (GRDB) ($3$-fold $\#255743$) \cite{Kas08}. Notice that the sum of the vertices is $(0,0,0)$, and hence, the barycenter of $P_X$ is $(0,0,0)$. Hence, by Theorem \ref{toric batyrev}, $\tilde{X}$ is K-polystable. In particular, we have proved:
\begin{lemma}\label{gh_limit}
Let $\tilde{X} \vcentcolon= \operatorname{Bl}_{\tilde{C}} \mathbb{P}^3$ where $\tilde{C} = C_1\cap C_2 $ for $C_1 = \{ x_0x_1 = 0\}$, $C_2 = \{ x_2x_3 = 0\}$. Then $\tilde{X}$ is K-polystable.%/K{\"a}hler-Einstein.
\end{lemma}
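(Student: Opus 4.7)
The plan is to realise $\tilde X$ as a terminal toric Fano threefold and then invoke Batyrev's combinatorial criterion (Theorem \ref{toric batyrev}) for K-polystability. The starting point is the observation that the defining equations $x_0x_1=0$ and $x_2x_3=0$ are monomial, so $C_1$ and $C_2$ are torus-invariant subschemes of $\mathbb{P}^3$. Their scheme-theoretic intersection $\tilde C$ is therefore torus-invariant as well: set-theoretically it is the union of the four torus-invariant lines $\{x_0=x_2=0\}$, $\{x_0=x_3=0\}$, $\{x_1=x_2=0\}$, $\{x_1=x_3=0\}$. Since $\mathbb{P}^3$ is toric and the blow-up centre $\tilde C$ is torus-invariant, $\tilde X = \operatorname{Bl}_{\tilde C}\mathbb{P}^3$ is again toric, with torus action extending that of $\mathbb{P}^3$.

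The next step is to describe the fan of $\tilde X$ explicitly. Take the rays of $\mathbb{P}^3$ associated to the toric divisors $\{x_i=0\}$ to be $\rho_0=(-1,-1,-1)$, $\rho_1=(0,1,0)$, $\rho_2=(1,0,0)$, $\rho_3=(0,0,1)$. Blowing up a toric line $\{x_i=x_j=0\}$ corresponds to the star subdivision of the two-dimensional cone $\langle\rho_i,\rho_j\rangle$, introducing the primitive ray $\rho_{ij}=\rho_i+\rho_j$. Performing the four relevant star subdivisions produces the additional rays $(0,-1,-1)$, $(-1,-1,0)$, $(1,1,0)$, $(0,1,1)$. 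These eight primitive generators are exactly the vertices of the reflexive anticanonical polytope $P_{\tilde X}$ listed in the paragraph preceding the lemma, thereby identifying $\tilde X$ with the terminal toric Fano threefold \#199 of the GRDB.

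To finish, I would simply sum the coordinates of the eight vertices and observe that the result is $(0,0,0)$, so the barycentre of $P_{\tilde X}$ is the origin. Theorem \ref{toric batyrev} then immediately yields that $\tilde X$ is K-polystable. I do not anticipate a substantive obstacle: the only nontrivial input is the torus-invariance of the blow-up centre, after which everything reduces to a combinatorial bookkeeping exercise in toric geometry followed by a direct application of Batyrev's criterion.
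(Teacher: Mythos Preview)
Your proposal is correct and follows essentially the same route as the paper: show $\tilde X$ is toric because the blow-up centre is torus-invariant, identify the eight primitive ray generators (yielding GRDB \#199), observe that they sum to the origin, and conclude via Theorem~\ref{toric batyrev}. The only additional detail you supply is the explicit star-subdivision description of the fan, which the paper summarises more informally as ``cutting'' the $\mathbb{P}^3$ polytope along the four edges.
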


The Magma code below checks that this threefold is toric and singular, and generates the vertices of the polyhedron $P_X$.
\begin{verbatim}
Q4:=Polytope([[0,-1,0],[-1,0,1],[2,-1,0],[1,0,-1],[-1,0,-1],[0,-1,2],
[0,1,0],[-1,2,-1]]);
> ViewWithJmol(Q4: point_labels:=true, open_in_background:=true);   
> P:=Dual(Q4);
> P;
> IsCanonical(P);
> Volume(Q4);
> #Points(Q4);
> #Vertices(Q4);
> Faces(P);
> IsTerminal(P);
> IsSmooth(P);
\end{verbatim}

\begin{lemma}\label{strictly k-ss for 2.25}
Let $X \vcentcolon= \operatorname{Bl}_C \mathbb{P}^3$, where $C$ is a strictly GIT semistable complete intersection of two quadrics. Then $X$ is strictly K-semistable.
\end{lemma}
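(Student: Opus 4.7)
The plan is to construct a test configuration of $X$ whose central fibre is the K-polystable threefold $\tilde{X}$ of Lemma \ref{gh_limit}, and then combine openness of K-semistability with uniqueness of the K-polystable degeneration. Throughout, I read ``strictly GIT semistable'' as GIT semistable but not GIT polystable, so that in particular $X \not\cong \tilde{X}$.

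By Theorem \ref{polystable_P3} the only strictly GIT polystable orbit of pencils of quadrics in $\mathbb{P}^3$ is represented by the quadrangle $\tilde{C} = \{x_0x_1 = 0\} \cap \{x_2x_3 = 0\}$, so the closure of the $\operatorname{SL}(4)$-orbit of $C$ necessarily contains $\tilde{C}$. The Hilbert--Mumford criterion then supplies a one-parameter subgroup $\lambda \colon \mathbb{G}_m \to \operatorname{SL}(4)$ with $\lim_{t \to 0}\lambda(t)\cdot C = \tilde{C}$, and sweeping $C$ by $\lambda$ produces a $\mathbb{G}_m$-equivariant flat family of curves $\mathcal{C} \subset \mathbb{P}^3 \times \mathbb{A}^1$ with general fibre $C$ and central fibre $\tilde{C}$. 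Blowing up $\mathbb{P}^3 \times \mathbb{A}^1$ along $\mathcal{C}$ yields a $\mathbb{G}_m$-equivariant $\mathbb{Q}$-Gorenstein family $\mathcal{X} \to \mathbb{A}^1$ of $\mathbb{Q}$-Fano threefolds with $\mathcal{X}_t \cong X$ for $t \neq 0$ and $\mathcal{X}_0 \cong \tilde{X}$; this is the desired test configuration.

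K-semistability of $X$ then follows from openness of the K-semistable locus in $\mathbb{Q}$-Gorenstein families of $\mathbb{Q}$-Fano varieties \cite{blum-xu}: since $\tilde{X}$ is K-semistable by Lemma \ref{gh_limit}, an open neighbourhood of $0 \in \mathbb{A}^1$ has K-semistable fibres, and the $\mathbb{G}_m$-action forces every $\mathcal{X}_t$ with $t \neq 0$ to be isomorphic to $X$. To rule out K-polystability of $X$, I would invoke uniqueness of the K-polystable degeneration inside the S-equivalence class of a K-semistable $\mathbb{Q}$-Fano variety: the test configuration $\mathcal{X}$ exhibits $\tilde{X}$ as such a degeneration of $X$, so K-polystability of $X$ would force $X \cong \tilde{X}$.

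Finally, $X \not\cong \tilde{X}$ can be checked by comparing singularities: the curve $\tilde{C}$ carries four $A_1$ singularities at the vertices of the quadrangle, whereas by Theorem \ref{p3 theorem 1- nonstable- nominus} a strictly GIT semistable non-polystable $C$ lies in one of Families $1$--$4$ and either carries at most two $A_1$ singularities or has singularities of strictly worse type (or non-isolated), and these distinctions transfer to the singular loci of the blow-ups. The main obstacle is verifying that $\mathcal{X} \to \mathbb{A}^1$ is a bona fide $\mathbb{Q}$-Gorenstein family of $\mathbb{Q}$-Fano varieties so that the openness machinery applies; this reduces to a local check that the relative anticanonical divisor of the blow-up along $\mathcal{C}$ is $\mathbb{Q}$-Cartier and relatively ample, which is ensured by the $\mathbb{G}_m$-equivariance of the construction and the toric nature of the central fibre $\tilde{X}$.
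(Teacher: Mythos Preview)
Your proposal is correct and takes essentially the same approach as the paper: construct the test configuration by blowing up $\mathbb{P}^3\times\mathbb{A}^1$ along the $\mathbb{G}_m$-orbit closure $\mathcal{C}$ of $C$, with central fibre $\tilde{X}$, and conclude strict K-semistability of $X$ from K-polystability of $\tilde{X}$ together with $X\not\cong\tilde{X}$. The only difference is packaging: the paper cites a single corollary from \cite{fano_book} (which bundles openness of K-semistability and uniqueness of the K-polystable degeneration), whereas you unpack this into the two separate inputs from \cite{blum-xu} and the S-equivalence theory. Your explicit singularity count to distinguish $X$ from $\tilde{X}$ is a point the paper leaves implicit.
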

\begin{proof}

% Just take P to be P^3\times B where B=P^1 (more on this below). Take C to be the curve C in P^3 at the fibre t=1 in P. Apply lambda(t) for t\in B within P and let curly C be the closure of \lambda(t)\cdot C in P. Now blow-up Curly C in P. This is your curly X and by definition you have a morphism to B by taking the composition of X-> P given by the blow-up and P-> B given by the projection. This is naturally a test configuration of X with central fibre X_0, etc.

Let $C$ be a strictly GIT semistable complete intersection of two quadrics. Since $C$ is strictly GIT semistable, there exists a one-parameter subgroup $\lambda$ such that the limit $\lim_{t\to 0} \lambda(t)\cdot C = \tilde{C}$, where $\tilde{C}$ is the unique strictly GIT polystable complete intersection i.e. the quadrangle with $4$ $\mathbf{A}_1$ singularities as in Theorem \ref{polystable_P3}. This one-parameter subgroup induces a family $f\colon \mathcal{C}\rightarrow B$, over a curve $B$, such that the fibers $\mathcal{C}_t$ are isomorphic to $C$ for all $t\neq 0$, and $\mathcal{C}_0\cong \tilde{C}$.

Let $\mathcal{P} \coloneqq \mathbb{P}^3\times B = \mathbb{P}^3\times \mathbb{P}^1$. Then, notice that 
$\tilde{C} = \overline{\lambda(t)\cdot C}$ in $\mathcal{P}$. We define $ \mathcal{X} = \operatorname{Bl}_C \mathcal{P}$, and hence we have that $\mathcal{X}_0 \cong \tilde{X} \vcentcolon= \operatorname{Bl}_{\tilde{C}} \mathcal{P}$. By taking the composition of $\mathcal{X}\rightarrow \mathcal{P}$, with the projection $\mathcal{P}\rightarrow B$, we thus have a map $\mathcal{X}\rightarrow B$ which is naturally a test configuration of $\mathcal{X}$ with central fibre $\mathcal{X}_0$.
% We will construct a family $g\colon \mathcal{X} \rightarrow B$ as follows. Let $\mathcal{X}_t \cong X = \operatorname{Bl}_C \mathbb{P}^3$ for all $t\neq 0$ and $\mathcal{X}_0 \cong \tilde{C} \vcentcolon= \operatorname{Bl}_{\tilde{C}} \mathbb{P}^3$. Then we have a diagram 
% \begin{center}
%   \begin{tikzcd}
%   \mathcal{C}\arrow[r, "f"]\arrow[d] & B\\
%   \mathcal{X} \arrow[ur, "g"]& 
%   \end{tikzcd}
% \end{center}
% which implies that $g$ is also a flat and proper morphism, as $f$ is a family. 
Hence, we have constructed a test configuration $g\colon \mathcal{X} \rightarrow B$ where the central fiber $\mathcal{X}_0\cong \tilde{X}$ is a klt Fano threefold, which is K-polystable by Lemma \ref{gh_limit}, and the general fiber $\mathcal{X}_t\cong X$ is not isomorphic to $\mathcal{X}_0$. By \cite[Corollary 1.1.14]{fano_book} the central fiber $\mathcal{X}_t \cong X$ is strictly K-semistable. 
\end{proof}

\begin{remark}
In some cases, we can construct explicit descriptions of the above degeneration. For example, for Family $2$ in Theorem \ref{p3 theorem 1- nonstable- nominus} we can make a change of coordinates such that $C = \{f =0\}\cap \{g =0\}$ is given by
\begin{equation*}
  \begin{split}
    f(x_0,x_1,x_2,x_2) &= x_0x_1 + q(x_2,x_3)+l(x_0,x_1)\tilde{l}(x_2,x_3)\\
    g(x_0,x_1,x_2,x_2) &= x_2x_3.
  \end{split}
\end{equation*}
Then, defining for some parameter $t$, $C_t$ as follows $C_t = \{f_t=0\}\cap\{ g_t=0\}$, where
\begin{equation*}
  \begin{split}
    f_t(x_0,x_1,x_2,x_2) &= x_0x_1 + t\big(q(x_2,x_3)+l(x_0,x_1)\tilde{l}(x_2,x_3)\big)\\
    g_t(x_0,x_1,x_2,x_2) &= x_2x_3.
  \end{split}
\end{equation*}
we have $C_t \cong C$ for all $t\neq0$, and $C_0$ to be the strictly GIT polystable curve of Theorem \ref{polystable_P3}.

\end{remark}

In essence, we have shown:

\begin{corollary}\label{GIT implies K-stability p3}
Let $C = C_1\cap C_2$ be a complete intersection of quadrics. If $C$ is GIT (poly/semi-)stable then the threefold $X\coloneqq \operatorname{Bl}_C \mathbb{P}^3$ is K-(poly/semi-)stable.
\end{corollary}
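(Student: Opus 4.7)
The plan is to split the statement according to the three flavours of GIT stability and verify each by appealing directly to results already established in the paper (and the known K-stability of the generic member of family 2.25). The classification Corollary \ref{p3_stable} and Theorem \ref{polystable_P3} partition the GIT-semistable locus into three pieces: the GIT-stable (i.e.\ smooth) pencils; a single strictly GIT-polystable orbit, namely the quadrangle $\{q(x_0,x_1)=0\}\cap\{q'(x_2,x_3)=0\}$; and the remaining strictly GIT-semistable pencils.

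First I would handle the GIT-stable case. By Corollary \ref{p3_stable}, $C$ is GIT stable if and only if $C$ is smooth, in which case $X=\operatorname{Bl}_C\mathbb{P}^3$ is a smooth Fano threefold in family $2.25$ of the Mori--Mukai list, and hence K-stable by the result recalled at the start of Section \ref{fano 3fold section} (see e.g.\ \cite[Corollary 4.3.16]{fano_book}). Next I would treat the strictly GIT-polystable case: by Theorem \ref{polystable_P3} the only strictly polystable orbit is represented by $\tilde C=\{x_0x_1=0\}\cap\{x_2x_3=0\}$, and Lemma \ref{gh_limit} shows that the corresponding blow-up $\tilde X=\operatorname{Bl}_{\tilde C}\mathbb{P}^3$ is a toric Fano threefold whose anticanonical polytope has barycentre at the origin, so Theorem \ref{toric batyrev} gives K-polystability. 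Finally, in the strictly GIT-semistable case Lemma \ref{strictly k-ss for 2.25} immediately furnishes strict K-semistability by degenerating $C$ to $\tilde C$ via the one-parameter subgroup provided by GIT semistability and pulling the degeneration back through the blow-up.

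The only subtlety worth noting is that one must ensure the correspondence $C\mapsto\operatorname{Bl}_C\mathbb{P}^3$ respects projective equivalence, but this is immediate since any element of $\operatorname{SL}(4)$ lifts to an isomorphism of the blow-ups, and K-stability is preserved under isomorphism. There is no substantive obstacle beyond carefully citing the three preceding results; the real work has been done in Corollary \ref{p3_stable}, Theorem \ref{polystable_P3}, Lemma \ref{gh_limit}, and Lemma \ref{strictly k-ss for 2.25}, so the corollary is essentially a tabulation of these. The combined statement then sets up the reverse moduli continuity method, since it shows that the map from GIT-polystable loci to K-polystable loci is well defined on the level of closed points in all strata.
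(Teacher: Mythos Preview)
Your proposal is correct and matches the paper's approach exactly: the paper does not give a separate proof but simply writes ``In essence, we have shown:'' before the corollary, indicating it is a direct summary of the preceding results (the K-stability of smooth members of family $2.25$ from \cite{fano_book}, Lemma~\ref{gh_limit} for the strictly polystable orbit, and Lemma~\ref{strictly k-ss for 2.25} for the strictly semistable case). Your explicit case-split via Corollary~\ref{p3_stable} and Theorem~\ref{polystable_P3} is precisely the intended tabulation.
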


Let $\mathcal{M}^K_{2.25}$ be the connected component of the K-moduli stack $\mathcal{M}^K_{3,32}$ parametrising K-semistable members in the Fano threefold family $2.25$ and let $\mathcal{M}^{GIT}_{3,2,2}$ be the GIT quotient stack parametrising GIT semistable complete intersections of two quadrics in $\mathbb{P}^3$. By \cite[Corollary 1.2]{alper_reductivity}, these admit good moduli spaces $M^{K}_{2.25}$ of K-polystable members and a GIT quotient $M^{GIT}_{3,2,2}$ respectively.

\begin{theorem}\label{2.25 compactification}
There exists an isomorphism $\mathcal{M}^K_{2.25} \cong \mathcal{M}^{GIT}_{3,2,2}$. In particular, there exists an isomorphism ${M}^K_{2.25} \cong {M}^{GIT}_{3,2,2}$.
%$\overline{M}^{K}_{\operatorname{Bl}_{Q_1\cap Q_2} \mathbb{P}^3}$ of and $\overline{M}^{GIT}_{3,2,2}$.
\end{theorem}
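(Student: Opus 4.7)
The plan is to prove this via the \emph{reverse moduli continuity method} announced in the introduction: construct a morphism of stacks $\Psi\colon \mathcal{M}^{GIT}_{3,2,2}\to \mathcal{M}^K_{2.25}$ from the blow-up construction, and then show it is an isomorphism.

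First I would construct $\Psi$. Starting from the universal family of GIT-semistable pencils $\mathcal{C}\to \mathcal{M}^{GIT}_{3,2,2}$ sitting inside $\mathbb{P}^3_{\mathcal{M}^{GIT}_{3,2,2}}$, its relative blow-up produces a flat family $\mathcal{X}\to \mathcal{M}^{GIT}_{3,2,2}$ of Fano threefolds. By Corollary~\ref{GIT implies K-stability p3}, every geometric fiber is a K-semistable klt Fano threefold, and by Corollary~\ref{p3_stable} the generic fiber is a smooth member of family $2.25$. The universal property of the K-moduli stack then yields $\Psi$, which descends to a morphism $\psi\colon M^{GIT}_{3,2,2}\to M^K_{2.25}$ on good moduli spaces.

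Next I would verify $\Psi$ is bijective on closed points. On the stable locus, the Mori--Mukai description of family $2.25$ members as $\operatorname{Bl}_C\mathbb{P}^3$ for a smooth intersection of two quadrics $C$, combined with Corollary~\ref{p3_stable} (all smooth pencils are GIT-stable), gives a bijection; injectivity follows because $C$ is recovered as the image of the exceptional divisor of $X\to\mathbb{P}^3$. On the strictly polystable boundary, Theorem~\ref{polystable_P3} exhibits a unique GIT-polystable orbit $[\tilde C]$; its image $[\tilde X]=[\operatorname{Bl}_{\tilde C}\mathbb{P}^3]$ is K-polystable by Lemma~\ref{gh_limit}, and by Lemma~\ref{strictly k-ss for 2.25} every strictly GIT-semistable orbit S-degenerates to $[\tilde X]$ in $\mathcal{M}^K_{2.25}$, so separatedness of the K-moduli forces $[\tilde X]$ to be the unique strictly K-polystable point in the image of $\Psi$. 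Since Theorem~\ref{dimension_of_mvgit} gives $\dim M^{GIT}_{3,2,2}=1$ (matching the moduli dimension of family $2.25$), $\psi$ is a bijective morphism between proper normal $1$-dimensional varieties and is therefore an isomorphism.

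To upgrade $\psi$ to an isomorphism of stacks, I would check that the induced map on automorphism groups is an isomorphism at every closed point: a generic stable $C$ has trivial stabilizer on both sides, while at $[\tilde C]$ the stabilizer in $\operatorname{SL}(4)$ is the diagonal $2$-torus, which coincides with the maximal torus of the toric threefold $\tilde X$ (identified by its reflexive polytope in Lemma~\ref{gh_limit}). The main obstacle I foresee is the boundary analysis: ruling out spurious K-polystable degenerations in family $2.25$ not accounted for by $\Psi$. This is exactly what the triad of results Theorem~\ref{polystable_P3}, Lemma~\ref{gh_limit}, and Lemma~\ref{strictly k-ss for 2.25} is designed to resolve; once these are invoked, bijectivity on closed points, the stabilizer matching, and hence the stack isomorphism all follow cleanly.
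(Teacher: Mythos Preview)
Your overall strategy---constructing $\Psi\colon \mathcal{M}^{GIT}_{3,2,2}\to \mathcal{M}^K_{2.25}$ via the blow-up using Corollary~\ref{GIT implies K-stability p3}, then descending to good moduli spaces---is exactly the reverse moduli continuity method the paper uses. The difference is in how the isomorphism is concluded. The paper identifies both stacks as $\operatorname{PGL}(N_m+1)$-quotients of loci in a single Hilbert scheme, so that $\phi$ is visibly an \emph{open immersion}; it then invokes \cite[Prop.~6.4]{Alper} to upgrade to a finite morphism, and finishes with Zariski's Main Theorem (open immersion which is also finite, hence closed, onto a connected target is an isomorphism). Your route---bijection on closed points plus stabilizer matching---avoids the Hilbert scheme bookkeeping but requires more direct verification.

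There are genuine gaps in your argument as written. First, the surjectivity of $\psi$ is not established: the ``triad'' Theorem~\ref{polystable_P3}, Lemma~\ref{gh_limit}, Lemma~\ref{strictly k-ss for 2.25} only proves the implication GIT-(poly)stable $\Rightarrow$ K-(poly)stable, so it cannot by itself rule out K-polystable points of $M^K_{2.25}$ outside the image of $\Psi$. What you actually need is that $\psi$ is proper (immediate since $M^{GIT}_{3,2,2}$ is projective and $M^K_{2.25}$ is separated) and that $M^K_{2.25}$ is irreducible; then the image, being closed and containing the dense smooth locus, is everything. You should say this rather than appeal to the triad. Second, Theorem~\ref{dimension_of_mvgit} is stated for $d>2$ and does not apply here; compute $\dim\operatorname{Gr}(2,10)-\dim\operatorname{SL}(4)=16-15=1$ directly instead. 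Third, your stabilizer claims are incorrect: a generic smooth pencil $\{\sum x_i^2=\sum\lambda_i x_i^2=0\}$ has a non-trivial finite stabilizer (the sign changes $(\mathbb{Z}/2)^3$ in $\operatorname{PGL}(4)$), not a trivial one; and the stabilizer of $\tilde C=\{x_0x_1=x_2x_3=0\}$ in $\operatorname{SL}(4)$ has identity component the full diagonal $3$-torus, not a $2$-torus. These do match the automorphism groups of the corresponding blow-ups (in particular the $3$-torus of the toric threefold $\tilde X$), so your stabilizer-matching step can be salvaged, but the statements as written are wrong.
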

\begin{proof}

Let $\mathcal X$ be the Hilbert polynomial of smooth elements of the family of Fano threefolds $2.25$ pluri-anticanonically embedded by $-mK_{\mathcal{X}}$ in $\mathbb P^N$, and let $\mathbb H^{\mathcal X; N}\coloneqq \mathrm{Hilb}_\mathcal X(\mathbb P^N)$. Given a closed subscheme $X\subset \mathbb P^N$ with Hilbert polynomial $\mathcal X(X, \mathcal O_{\mathbb P^N}(k)|_X)=\mathcal X(k)$, let $\mathrm{Hilb}(X)\in \mathbb H^{\mathcal X; N}$ denote its Hilbert point. Let
$$\hat Z_m\coloneqq\left\{ \mathrm{Hilb}(X)\in \mathbb H^{\mathcal X; N} \;\middle|\; 
\begin{aligned}
  & X \text{ is a Fano manifold of family } 2.25,\\
  &\mathcal O_{P^N}(1)|_X\sim \mathcal O_X(-mK_X),\\
  &\text{ and }H^0(\mathbb P^N, \mathcal O_{\mathbb P^N}(1))\xrightarrow{\cong} H^0(X, \mathcal O_X(-mK_X)).
  \end{aligned}
\right\}$$
which is a locally closed subscheme of $\mathbb H^{\mathcal X; N}$. Let $\overline Z_m$ be its Zariski closure in  $\mathbb H^{\mathcal X; N}$ and $Z_m$ be the subset of $\hat Z_m$ consisting of K-semistable varieties.

Since by Corollary \ref{GIT implies K-stability p3} the blow-up of a smooth complete intersection of two quadrics in $\mathbb{P}^3$ is K-stable and by \cite{odaka-moduli}, the smooth K-stable loci is a Zariski open set of $ M^K_{2.25}$, we have that the connected component $\mathcal M^K_{2.25}$, is defined as $\mathcal M^K_{2.25}=[Z_m/\operatorname{PGL}(N_m+1)]$ for appropriate $m>0$. In fact, we also have $\mathcal M^{GIT}_{3,2,2}\cong[\overline Z_m/\operatorname{PGL}(N_m+1)]$.

Thus, by Lemmas \ref{gh_limit} and \ref{strictly k-ss for 2.25} and Corollary \ref{GIT implies K-stability p3} we can define an open immersion of representable morphism of stacks:
\begin{center}
    \begin{tikzcd}
     \mathcal{M}^{GIT}_{3,2,2}\arrow[r, "\phi"] & \mathcal{M}^K_{2.25}\\
    \left[C_1\cap C_2\right] \arrow[r, mapsto, "\phi"] & \left[\operatorname{Bl}_{C_1\cap C_2} \mathbb{P}^3\right].
    \end{tikzcd}
\end{center}

Note that representability follows once we prove that the base-change of a scheme mapping to the K-moduli stack is itself a scheme. Such a scheme mapping to the K-moduli stack is the same as a $\mathrm{PGL}$-torsor over $Z_m$, which produces a $\mathrm{PGL}$-torsor over $\overline Z_m$ after a $\mathrm{PGL}$-equivariant base change. This $\mathrm{PGL}$-torsor over $\overline Z_m$ shows the desired pullback is a scheme. By \cite[Lemma 06MY]{stacks-project}, since $\phi$ is an open immersion of stacks, $\phi$ is separated and, since it is injective, it is also quasi-finite.

% \begin{equation*}
%   \begin{split}
%     \phi \colon \mathcal{M}^{GIT}_{3,2,2} &\rightarrow \mathcal{M}^K_{2.25}\\
%     [C_1\cap C_2] &\rightarrow [\operatorname{Bl}_{C_1\cap C_2} \mathbb{P}^3]
%   \end{split}
% \end{equation*}
We now need to check that $\phi$ is an isomorphism that descends (as isomorphism of schemes) to the moduli spaces
\begin{center}
    \begin{tikzcd}
     {M}^{GIT}_{3,2,2}\arrow[r, "\overline{\phi}"] & M^K_{2.25}\\
    \left[C_1\cap C_2\right] \arrow[r, mapsto, "\overline{\phi}"] & \left[\operatorname{Bl}_{C_1\cap C_2} \mathbb{P}^3\right]
    \end{tikzcd}
\end{center}
since we have a morphism $\phi$ of stacks, both of which admit moduli spaces. Thus $\overline \phi$ is injective. 
% \begin{equation*}
%   \begin{split}
%     \overline{\phi} \colon M^{GIT}_{3,2,2} &\rightarrow M^{K}_{\operatorname{Bl}_{C_1\cap C_2} \mathbb{P}^3}\\
%     [C_1\cap C_2] &\rightarrow [\operatorname{Bl}_{C_1\cap C_2} \mathbb{P}^3].
%   \end{split}
% \end{equation*}

Now, by \cite[Prop 6.4]{Alper}, since $\phi$ is representable, quasi-finite and separated, $\overline \phi$ is finite and $\phi$ maps closed points to closed points, we obtain that $ \phi$ is finite. Thus, by Zariski's Main Theorem, as $\overline \phi$ is a birational morphism with finite fibers to a normal variety, $\phi$ is an isomorphism to an open subset, but it is also an open immersion, thus it is an isomorphism.

\end{proof}

\begin{corollary}\label{2.25 explicit descr}
The variety $X = \operatorname{Bl}_{C_1\cap C_2}\mathbb{P}^3$ in family $2.25$ is:
    \begin{enumerate}
        \item K-semistable if and only if $C_1\cap C_2$ has at worse $\mathbf{A}_1$ singularities;
        \item strictly K-polystable if and only if $C_1\cap C_2$ is the unique curve with $4$ $\mathbf{A}_1$ singularities;
        \item K-stable if and only if $C_1\cap C_2$ is smooth.
    \end{enumerate}
\end{corollary}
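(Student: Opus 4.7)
The plan is straightforward given the heavy lifting already performed in Theorem \ref{2.25 compactification}. The isomorphism of stacks $\mathcal{M}^K_{2.25} \cong \mathcal{M}^{GIT}_{3,2,2}$ established there transfers the three K-stability properties (stable, polystable, semistable) of $X$ into the corresponding GIT notions for the complete intersection $C_1\cap C_2$. Thus the entire corollary reduces to reading off the explicit GIT classification established in Section \ref{sec: GIT classification P3}.

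For part (3), I would invoke Corollary \ref{p3_stable}: a pencil of two quadrics in $\mathbb{P}^3$ is GIT stable if and only if it is smooth. Applying the stack isomorphism transports this to the statement that $X$ is K-stable if and only if $C_1\cap C_2$ is smooth. For part (2), I would apply Theorem \ref{polystable_P3}, which identifies, up to projective equivalence, a unique strictly GIT polystable pencil, namely the quadrangle generated by $q_2(x_0,x_1)$ and $q_3(x_2,x_3)$, whose base locus carries exactly $4$ $\mathbf{A}_1$ singularities. Under the isomorphism this orbit corresponds to the unique strictly K-polystable point of $\mathcal{M}^K_{2.25}$.

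For part (1), the plan is to combine Theorem \ref{p3 theorem 1- nonstable- nominus} with Corollary \ref{p3_stable}. The former enumerates all strictly GIT semistable orbits of singular pencils (Families $1$--$4$ and their degenerations) and verifies, via Table \ref{tab:p3segtable_with_equations} and the Segre symbol analysis, that the base locus has only $\mathbf{A}_1$ singularities in every such family. Together with the smooth (GIT stable) case, this yields the characterisation of GIT semistable pencils as precisely those with at worst $\mathbf{A}_1$ singularities, which then translates via the isomorphism to the K-semistability criterion for $X$.

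The main conceptual obstacle is already past us, namely the construction of the isomorphism in Theorem \ref{2.25 compactification}, which required both the K-polystability of the toric limit (Lemma \ref{gh_limit}) and the promotion of GIT (semi/poly)stability to K-(semi/poly)stability on the blow-up side (Corollary \ref{GIT implies K-stability p3}). With that theorem in hand, the present corollary is essentially a translation under the stack isomorphism, so the proof itself should be brief and require no further non-trivial input.
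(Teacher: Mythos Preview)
Your proposal is correct and takes essentially the same approach as the paper, which simply states that the corollary follows directly from Theorem \ref{2.25 compactification} and the results of Section \ref{sec: GIT classification P3}. One small imprecision: Theorem \ref{p3 theorem 1- nonstable- nominus} characterises the \emph{non-stable} (not the strictly semistable) locus, so for the ``only if'' direction of part (1) you implicitly also need that pencils with worse than $\mathbf{A}_1$ singularities are unstable, which follows since the unique strictly polystable orbit (Theorem \ref{polystable_P3}) has only $\mathbf{A}_1$ singularities and cannot lie in the orbit closure of a pencil with a worse singularity.
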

\begin{proof}
    Follows directly from Theorem \ref{2.25 compactification} and the results of Section \ref{sec: GIT classification P3}.
\end{proof}

\begin{remark}
We call the above method of proof, i.e. the construction of the open immersion from the GIT stack to the K-moduli stack, the \emph{reverse moduli continuity method}. In particular, we refer to the reverse moduli continuity method, as the practice of proving that GIT polystable orbits are K-polystable.

A keen reader will notice that this method adapts the moduli continuity method, which has appeared in different forms in \cite{odaka_spotti_sun_2016}, \cite{spotti_sun_2017},\cite{Gallardo_2020}. In that method, one defines a map 
\begin{equation*}
    \phi \colon \overline{M}^{K} \rightarrow \overline{M}^{GIT}
\end{equation*}
to prove the existence of a homeomorphism using the properties of the moduli spaces and the continuity of $\phi$. In this instance, the definition of the map is reversed, due to the existence of Lemma \ref{gh_limit} and Corollary \ref{GIT implies K-stability p3}. In more general cases, one can initiate the method of proof by showing that GIT polystable orbits are K-polystable. This would be an analogue of Lemma \ref{gh_limit} in different K-moduli examples. An analogue of Lemma \ref{strictly k-ss for 2.25}, where GIT semistable orbits are shown to be K-semistable, then follows directly from the methods of proof of Lemma \ref{strictly k-ss for 2.25}. These results, allow us to define a map 
\begin{equation*}
    \phi \colon \overline{M}^{GIT} \rightarrow \overline{M}^{K}
\end{equation*}
which is the reverse direction of the map defined in the moduli continuity method. The method of proof of Theorem \ref{2.25 compactification} can then prove isomorphisms between the K-moduli and GIT moduli stacks in the particular K-moduli problems studied.

In particular, we expect that more situations where GIT implies K-stability will arise in the near future due to new methods in detecting K-stability, such as the computational Abban-Zhuang theory \cite{abban-zhuang}, which has already been applied to Fano threefolds \cite{fano_book, abban2023onedimensional}. The reverse moduli continuity method is then expected to be used, as the proofs of Lemma \ref{gh_limit} and Theorem \ref{2.25 compactification} are general and should work in a number of different deformation families of Fano varieties. In fact, this method of proof has already appeared in \cite{cheltsov2023kstability} implicitly to prove their main results.
\end{remark}

\section{VGIT of complete intersections of quadrics in \texorpdfstring{$\mathbb{P}^4$}{TEXT} and a hyperplane}\label{p4 VGIT section}
In this Section, we will study VGIT quotients of complete intersection of quadrics in $\mathbb{P}^4$ and a hyperplane, using the computational methods presented in Section \ref{VGITsection}. We will first provide some general results on the singularities of such complete intersections based on \cite{mabuchi_mukai_1990}, and then we will provide a full GIT classification. We will then proceed to classify all possible singularities of pairs $(S, D = S\cap H)$ and provide a full VGIT classification using our computational method. This in turn will be of use, in later Sections, when we will study the K-stability of such pairs.

\subsection{General results}\label{general_results for P4}
Throughout this section, we will use the geometric classification of pencils of quadrics in $\mathbb{P}^4$ on their singularities based on their Segre symbols found in \cite[\S 8.6 and Table 8.6]{dolgachev_2012}. We summarise the results in the following table. We generate the polynomials $f$, $g$ via the computational study of each Segre symbol.

\begin{longtable}{| p{2.6cm} | p{8.45cm} |p{1.9cm} |}
\hline
 Segre symbol & Generating polynomials & Singularities \\
 \hline
 $[(4,1)]$&  \begin{center}
     $\begin{aligned}[c]
     f&= q_1(x_2,x_3,x_4)+x_1l_1(x_3,x_4)+x_0x_4\\
     g&= q_2(x_3,x_4)+x_4l_2(x_1,x_2)
   \end{aligned}$
  \end{center} & $\mathbf{D}_5$\\
 \hline
 $[4,1]$ &\begin{center}
     $\begin{aligned}[c]
     f&= q_1(x_2,x_3,x_4)+x_1l_1(x_2,x_3,x_4) +x_0x_4\\
     g&= q_2(x_2,x_3,x_4)+x_1x_4
   \end{aligned}$
  \end{center} & $\mathbf{A}_3$ \\
 \hline
 $[(3,1),1]$ & \begin{center}
     $\begin{aligned}[c]
     f&= q_1(x_1,x_2,x_3,x_4)+x_0x_4\\
     g&= q_2(x_3,x_4)+x_4l_1(x_1,x_2)
   \end{aligned}$
  \end{center} & $\mathbf{D}_4$ \\
 \hline
 $[3,2]$&
 \begin{center}
     $\begin{aligned}[c]
     f&= q_1(x_2,x_3,x_4)+x_4l_1(x_0,x_1)+ x_3l_2(x_0,x_1)\\
     g&= q_2(x_3,x_4)+x_4l_3(x_0,x_1,x_2)+ x_2x_3
   \end{aligned}$
  \end{center} & $2\mathbf{A}_1+\mathbf{A}_2$ \\
 \hline
 $[3,1,1]$&
 \begin{center}
     $\begin{aligned}[c]
     f&= q_1(x_1,x_2,x_3,x_4)+x_0x_4\\
     g&= q_2(x_2,x_3,x_4)+x_1x_4
   \end{aligned}$
  \end{center} & $\mathbf{A}_2$ \\
 \hline
 $[3,(1,1)]$&
\begin{center}
     $\begin{aligned}[c]
     f&= q_1(x_1,x_2,x_3,x_4)+x_0x_1\\
     g&=x_4^2+x_3^2+x_2x_1
   \end{aligned}$
  \end{center} & $\mathbf{A}_2+\mathbf{A}_1$ \\
 \hline
 $[(2,1),2]$&
\begin{center}
     $\begin{aligned}[c]
     f&= x_1^2+x_0l_1(x_2,x_3,x_4)\\
     g&= q_1(x_2,x_3,x_4)
   \end{aligned}$
  \end{center} & $\mathbf{A}_1+\mathbf{A}_3$ \\
 \hline
 $[(2,1),1,1]$&
\begin{center}
     $\begin{aligned}[c]
     f&= q_1(x_1,x_2,x_3,x_4)+x_0l_1(x_2,x_3,x_4)\\
     g&= q_2(x_2,x_3,x_4)
   \end{aligned}$
  \end{center} & $\mathbf{A}_3$ \\
 \hline
 $[(2,1),(1,1)]$&
\begin{center}
     $\begin{aligned}[c]
     f&= x_0x_4+x_1x_3+x_2^2\\
     g&= q_1(x_3,x_4)+x_3l_1(x_1,x_2)+ x_4l_2(x_1,x_2)
   \end{aligned}$
  \end{center} & $2\mathbf{A}_1+\mathbf{A}_3$ \\
 \hline
 $[2,1,1,1]$&\begin{center}
     $\begin{aligned}[c]
     f&= q_1(x_1,x_2,x_3,x_4)+x_0x_4\\
     g&= q_2(x_1,x_2,x_3,x_4)+x_0x_4
   \end{aligned}$
  \end{center} & $\mathbf{A}_1$ \\
 \hline
 $[2,2,1]$&\begin{center}
     $\begin{aligned}[c]
     f&= q_1(x_2,x_3,x_4)+x_4l_1(x_0,x_1) + x_3l_2(x_0,x_1)\\
     g&= q_2(x_2,x_3,x_4)+x_4l_3(x_0,x_1) + x_3l_4(x_0,x_1)
   \end{aligned}$
  \end{center} & $2\mathbf{A}_1$ \\
 \hline
 $[(1,1),2,1]$ &\begin{center}
     $\begin{aligned}[c]
    f(x_0,x_1,x_2,x_3,x_4) =& q_5(x_1,x_2,x_3,x_4)+x_0l_7(x_3,x_4)\\
    g(x_0,x_1,x_2,x_3,x_4) =& q_6(x_3,x_4)+x_4l_8(x_0,x_1,x_2)
    \end{aligned}$
  \end{center} & $3\mathbf{A}_1$\\
 \hline
 $[(1,1),1,1,1]$&\begin{center}
     $\begin{aligned}[c]
     f&= q_1(x_1,x_2,x_3)+x_0x_4\\
     g&= q_2(x_1,x_2,x_3)+x_0x_4
   \end{aligned}$
  \end{center} & $2\mathbf{A}_1$ \\
 \hline
 $[(1,1),(1,1),1]$&\begin{center}
     $\begin{aligned}[c]
     f&= x_4l_1(x_0,x_1)+x_3l_2(x_0,x_1)+x_2^2\\
     g&= x_4l_3(x_0,x_1)+x_3l_4(x_0,x_1)+x_2^2
  \end{aligned}$
  \end{center} & $4\mathbf{A}_1$ \\
 \hline
 $[1,1,1,1,1]$ & \begin{center}
     $\begin{aligned}[c]
     f&= q_1(x_0,x_1,x_2,x_3,x_4)\\
     g&= q_2(x_0,x_1,x_2,x_3,x_4)
   \end{aligned}$
  \end{center} & None\\ 
 \hline
%\end{tabular}
\caption{Segre symbols, equations up to projective equivalence, and singularities of complete intersections of quadrics in $\mathbb{P}^4$.}
  \label{tab:p4segtable_with_equations}
%\end{table}
\end{longtable}

\begin{remark}
Notice, that due to Theorem \ref{segre thm}, all singular pencils that have the same Segre symbol (except for $[2,1,1,1]$ and $[(1,1),1,1,1]$) are projectively equivalent, and the above table gives their full classification.
\end{remark}

\begin{remark}\label{magma code to check sings p4}
In order to check the type of (isolated) hypersurface singularities, one can employ the MAGMA script of Section \ref{general_results for P3}, adjusted accordingly for each different case.
\end{remark}

% \begin{verbatim}
% Q:=RationalField();
% PP<x0,x1,x2,x3,x4>:=ProjectiveSpace(Q,4);
% f1:=x1*x3-x1*x4+2*x2^2+3*x3^2-x4^2-7*x2*x4+3*x3*x4-6*x2*x3+x0*x4;
% f2:=x4^2-5*x3^2 - 2*x3*x4+x4*(5*x1-6*x2);
% X:=Scheme(PP,[f1,f2]);
% IsNonsingular(X);
% p := X![1,0,0,0,0];
% _,f,_,fdat := IsHypersurfaceSingularity(p,3);
% R<a,b,c> := Parent(f);
% f;
% NormalFormOfHypersurfaceSingularity(f);
% boo,f0,typ := 
% NormalFormOfHypersurfaceSingularity(f : fData := [*fdat,3*]);
% boo; f0; typ;
% \end{verbatim}
% Here, $f1$ and $f2$ are the generating polynomials, and \begin{verbatim}
%   IsNonsingular(X);
% \end{verbatim} 
% verifies that $X = f1\cap f2$ is singular. The point $p = [1,0,0,0,0]$ refers to a specific singular point, whose type of singularity we want to check, which is given by the last command. 

% If one is unsure about the exact singular points of the complete intersection, the following MAGMA code can check the type of singularity for each singular point in the complete intersection:

% \begin{verbatim}
% Q:=RationalField();
% PP<x0,x1,x2,x3,x4>:=ProjectiveSpace(Q,4);
% f1:=x1*x3-x1*x4+3*x3^2-x4^2+3*x3*x4+x0*x4-2*x0*x3;
% f2:=2*x1*x3+6*x1*x4+x3^2+2*x4^2-3*x3*x4-5*x0*x4+2*x0*x3;
% X:=Scheme(PP,[f1,f2]);
% IsNonsingular(X);
% sngs := SingularSubscheme(X);
% Support(sngs);
% pts := PointsOverSplittingField(sngs);
% pts;
% pts[1];
% pt := pts[1];
% k := Ring(Parent(pt));
% k;
% p := X(k)!Eltseq(pt);
% _,f,_,fdat := IsHypersurfaceSingularity(p,3);
% R<a,b,c> := Parent(f);
% f;
% NormalFormOfHypersurfaceSingularity(f);
% boo,f0,typ := 
% NormalFormOfHypersurfaceSingularity(f : fData := [*fdat,3*]);
% boo; f0; typ;
% \end{verbatim}
% \end{remark}

%the walls are $[0,\frac{1}{6}, \frac{2}{7}, \frac{3}{8}, \frac{6}{11}, \frac{2}{3}, 1]$ and the chambers are at $[\frac{37}{228}, \frac{327}{1162}, \frac{113}{304},\frac{1039}{1914}, \frac{355}{534}, \frac{37}{38}]$.% these walls %(walls not yet produced) (FILL IN WHEN RESULTS ARE IN)

\subsection{Classifying the singularities of pairs \texorpdfstring{$(S,D = S\cap H)$}{TEXT}}\label{sings of pairs p4}
Following the discussion of Section \ref{VGITsection}, we take $S = C_1\cap C_2$, and $D = S\cap H$, where the $C_i$ are hyperquadrics in $\mathbb{P}^4$ and $H$ is a hyperplane. The lemmas below serve as to help with the geometric classification of such pairs.

\begin{lemma}\label{S_smooth_D_whatever}
Let $S$ be a smooth complete intersection of two quadrics and $H$ a general hyperplane. Then $D$ has at worse $\mathbf{D}_4$ singularities. 
\end{lemma}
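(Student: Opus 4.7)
The plan is to view $D$ as a complete intersection of two quadrics inside $H\cong\mathbb{P}^3$, cut out by the restrictions $f|_H,g|_H$ of the two quadrics defining $S$, and then invoke the classification in Table \ref{tab:p3segtable_with_equations}. That table shows that a complete intersection of two quadrics in $\mathbb{P}^3$ either has only isolated ADE singularities of type at worst $\mathbf{D}_4$, or has non-isolated singularities with Segre symbol $[(2,2)]$ or $[(1,1,1),1]$. The task thus reduces to excluding the latter two for a general $H$, and both are characterised by the pencil $\Phi(f|_H,g|_H)$ containing a quadric of rank at most $2$ in $\mathbb{P}^3$ (a rank-$1$ double plane for $[(1,1,1),1]$, and two rank-$2$ members for $[(2,2)]$).

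Since $S$ is smooth, Corollary \ref{smooth_pn} implies that the determinant polynomial $\det(\lambda F+\mu G)$ of $\Phi(f,g)$ has only simple roots, so every member of $\Phi(f,g)$ has rank at least $4$ in $\mathbb{P}^4$; exactly five of them are singular, each of rank $4$, with vertices $p_1,\dots,p_5\in\mathbb{P}^4$. The main linear-algebraic input is then the following: restricting a symmetric bilinear form $B$ of rank $r$ on a $5$-dimensional space $V$ to a $4$-dimensional hyperplane $W$ produces a form of rank at least $r-2$, since $\ker(B|_W)=W\cap W^{\perp_B}$ is contained in $W^{\perp_B}$, whose dimension is bounded by $6-r$. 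Applied to every member of $\Phi(f,g)$, this shows every quadric in $\Phi(f|_H,g|_H)$ has rank at least $2$, which already rules out Segre symbol $[(1,1,1),1]$.

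To exclude Segre $[(2,2)]$, I observe that a rank-$2$ restriction can only arise from one of the five rank-$4$ members $Q_i$ of $\Phi(f,g)$, and only if $H$ both passes through the vertex $p_i$ and satisfies an additional tangency condition there---specifically, the $3$-dimensional subspace $H\cap V'$, for a fixed complement $V'$ of $\ker(B_i)$ in $V$, must be tangent to the smooth quadric induced by $B_i$ on $\mathbb{P}(V')\cong\mathbb{P}^3$. Each of these is a codimension-one condition on $H$, so producing Segre $[(2,2)]$ requires two such simultaneous degenerations and is excluded for a general $H$. The main obstacle I anticipate is making this codimension count precise via the orthogonal decomposition $V=\ker(B_i)\oplus V'$; once this is done, every member of $\Phi(f|_H,g|_H)$ has rank at least $3$, so $D$ has only isolated singularities, and the classification in Table \ref{tab:p3segtable_with_equations} yields at worst $\mathbf{D}_4$.
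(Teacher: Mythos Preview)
Your approach is correct and genuinely different from the paper's. The paper's proof does not argue conceptually at all: it simply writes down four explicit tuples $(f,g,H)$ with $S$ smooth (Segre symbol $[1,1,1,1,1]$ in $\mathbb P^4$) for which the restricted pencil in $H\cong\mathbb P^3$ realises the Segre symbols $[(3,1)]$, $[4]$, $[3,1]$, $[2,1,1]$ respectively---i.e.\ it exhibits $\mathbf D_4,\mathbf A_3,\mathbf A_2,\mathbf A_1$ on $D$---and leaves the upper bound implicit via Table~\ref{tab:p3segtable_with_equations}. Your rank argument, by contrast, gives an intrinsic reason why the non-isolated Segre types cannot appear: restriction to a hyperplane drops rank by at most two, and since every member of the original pencil has rank $\ge 4$ (Corollary~\ref{smooth_pn}), the restricted pencil has no rank-$1$ members for \emph{any} $H$, and no rank-$2$ members for general $H$. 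This is cleaner and more informative than a case list.

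Two minor corrections that do not affect your conclusion. First, Segre symbol $[(2,2)]$ corresponds to a \emph{single} root of multiplicity four with corank two, hence a single rank-$2$ member, not two; your exclusion still stands because producing even one rank-$2$ restriction is already a codimension-two condition on $H$ (vertex incidence plus the tangency you describe). Second, you omit $[(2,1,1)]$ from the list of non-isolated symbols in $\mathbb P^3$; it also requires a rank-$1$ member and is therefore covered by your rank-$\ge 2$ bound. As a side remark, your argument in fact shows that for general $H$ every restricted member has rank $\ge 3$, which yields the sharper bound ``at worst $\mathbf A_3$''---but of course ``at worst $\mathbf D_4$'' is what was asked.
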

\begin{proof}
%Consider $4$ unique (up to linear transformation) points $P_1,P_2,P_3,P_4$ in $\mathbb{P}^3$. Then, there exists a pencil of conics through these four points, where we pick one such conic $C$. Let $L$ be the tangent line to $C$ at a point $R$ which is not one of the $4$ points. Then $L+C\sim-K_{\mathbb{P}^2}$ and the singularity at $R$ is an $(\mathbf{A}_3)$ singularity. Consider a general point $Q$ on the line. We blow up the $5$ points $P_1,P_2,P_3,P_4,Q$ to obtain a del Pezzo surface of degree $4$, $S$ i.e. a complete intersection of $2$ quadrics in $\mathbb{P}^4$. Notice that a hyperplane section of $S$ is $-K_S$. The blow-up $\pi:S\rightarrow \mathbb{P}^2 $ is an isomorphism around R, i.e. so the proper transform $\tilde L+ \tilde C$ still has an $\mathbf{A}_3$ singularity at $\pi^{-1}(R)$. Then, since $-K_S = -\pi^*(K_{\mathbb{P^2}}) -\sum E_i$, where $E_i$ are the exceptional divisors, and $\pi^*(L+C)\sim \tilde L + \tilde C +\sum E_i$ we have 
%\begin{equation*}
%  \begin{split}
%    -K_S &= -\pi^*(K_{\mathbb{P^2}}) -\sum E_i\\
%    &= \pi^*(L+C)-\sum E_i\\
%    & = \tilde L+\tilde C.
%  \end{split}
%\end{equation*}
%Hence the pair $(S, D = \tilde L+ \tilde C)$ has $S$ smooth, and $D$ with $\mathbf{A}_3$ singularities.
%Since \pi^*(L+C)\sim \tilde L + \tilde C +\sum E_i, we have that -K_S\sim \tilde L + \tilde C. So there you go, we have (S, D=\tilde C + \tilde L) with an A_3 singularity on D and S smooth.

Let $S$ be given by $f$, $g$ and $H$ be a hyperplane where
\begin{equation*}
  \begin{split}
    f(x_0,x_1,x_2,x_3,x_4)=&q_1(x_1,x_2,x_3,x_4)+x_0l_1(x_3,x_4)\\
    g(x_0,x_1,x_2,x_3,x_4)=&q_2(x_3,x_4) + x_4l_2(x_0,x_1,x_2)+x_3l_3(x_1,x_2)\\
    H(x_0,x_1,x_2,x_3,x_4) =& x_4.
  \end{split}
\end{equation*}
Then $S$ has Segre symbol $[1,1,1,1,1]$ and is smooth and $D$ is given by 
\begin{equation*}
  \begin{split}
    f(x_0,x_1,x_2,x_3)=&q_1(x_1,x_2,x_3)+x_0lx_3      \\
    g(x_0,x_1,x_2,x_3)=&x_3^2+x_3l_1(x_1,x_2)
  \end{split}
\end{equation*}
which is an intersection of two quadrics in $\mathbb{P}^3$ with Segre symbol $[(3,1)]$ and by Sommerville \cite[\S XIII]{sommerville} and Lemma \ref{Sings of all remaining varieties P3} it has $\mathbf{D}_4$ singularities.

Similarly let $S$ be given by $f$, $g$ and $H$ be a hyperplane where
\begin{equation*}
  \begin{split}
    f(x_0,x_1,x_2,x_3,x_4) =& q_1(x_2,x_3,x_4)+x_1l_1(x_2,x_3,x_4)+x_0l_2(x_3,x_4)\\
    g(x_0,x_1,x_2,x_3,x_4) =& q_2(x_2,x_3,x_4)+x_1l_3(x_3,x_4)+x_0x_4\\
    H(x_0,x_1,x_2,x_3,x_4) =& x_4.
  \end{split}
\end{equation*}
Then $S$ has Segre symbol $[1,1,1,1,1]$ and is smooth and $D$ is given by 
\begin{equation*}
  \begin{split}
    f(x_0,x_1,x_2,x_3)=&q_1(x_2,x_3)+x_1l_1(x_2,x_3)+x_0x_3      \\
    g(x_0,x_1,x_2,x_3,x_4)=&q_2(x_2,x_3)+x_1x_3
  \end{split}
\end{equation*}
which is an intersection of two quadrics in $\mathbb{P}^3$ with Segre symbol $[4]$ and by Table \ref{tab:p3segtable_with_equations} it is a twisted cubic with a tangent line, and it has $\mathbf{A}_3$ singularities. 

Now let $S$ be given by $f$, $g$ and $H$ be a hyperplane where
\begin{equation*}
  \begin{split}
    f(x_0,x_1,x_2,x_3,x_4) =& q_1(x_1,x_2,x_3,x_4)+x_0l_1(x_2,x_3,x_4)\\
    g(x_0,x_1,x_2,x_3,x_4) =& q_2(x_2,x_3,x_4)+x_4l_1(x_0,x_1)\\
    H(x_0,x_1,x_2,x_3,x_4) =& x_4.
  \end{split}
\end{equation*}
Then $S$ has Segre symbol $[1,1,1,1,1]$ and is smooth and $D$ is given by 
\begin{equation*}
  \begin{split}
    f(x_0,x_1,x_2,x_3)=&q_1(x_1,x_2,x_3)+x_0l_1(x_2,x_3)\\
    g(x_0,x_1,x_2,x_3,x_4)=&q_2(x_2,x_3)
  \end{split}
\end{equation*}
which is an intersection of two quadrics in $\mathbb{P}^3$ with Segre symbol $[3,1]$ and by Table \ref{tab:p3segtable_with_equations} it is a cuspidal curve with $\mathbf{A}_2$ singularities.

To conclude, let $S$ be given by $f$, $g$ and $H$ be a hyperplane where
\begin{equation*}
  \begin{split}
    f(x_0,x_1,x_2,x_3,x_4) =& q_1(x_1,x_2,x_3,x_4)+x_0l_1(x_3,x_4)\\
    g(x_0,x_1,x_2,x_3,x_4) =& q_2(x_2,x_3,x_4)+x_1l_2(x_3,x_4)+x_0x_4\\
    H(x_0,x_1,x_2,x_3,x_4) =& x_4.
  \end{split}
\end{equation*}
Then $S$ has Segre symbol $[1,1,1,1,1]$ and is smooth and $D$ is given by 
\begin{equation*}
  \begin{split}
    f(x_0,x_1,x_2,x_3)=&q_1(x_1,x_2,x_3)+x_0x_3\\
    g(x_0,x_1,x_2,x_3,x_4)=&q_2(x_2,x_3)+x_1x_3
  \end{split}
\end{equation*}
which is an intersection of two quadrics in $\mathbb{P}^3$ with Segre symbol $[2,1,1]$ and by Table \ref{tab:p3segtable_with_equations} it is a nodal curve with $\mathbf{A}_1$ singularities. 
%Note as well,that to cook this guy we made no special choices. You could have started with 5 general points in P^2, take a line, find a conic through 4 other points and since there is a pencil of them it should be tangent to the line at some point. So the point of this observation is the following: given any line in S you can always find a tangent conic to it. In fact, you can find two of them as you have two choices for q given any line in S.
\end{proof}

\begin{lemma}\label{S 4A1s [(1,1),(1,1),1],P4}
Let $S$ be the complete intersection of two quadrics $f,g$ with Segre symbol $[(1,1),(1,1),1]$. Then $S$ has $4$ $\mathbf{A}_1$ singularities. Let $H$ be a hyperplane. Then, the hyperplane section $D = S\cap H$ has up to an $\operatorname{SL}(5)$-action:
\begin{enumerate}
%  \item an $\mathbf{A}_3$ singualarity if $S$ is given by $f = q_1(x_2,x_3,x_4)+l_1(x_2,x_3,x_4)x_1+l_2(x_2,x_3,x_4)x_0 +x_1x_0$, $g = x_2^2+x_3^2+x_4^2$, $S = \{f= g= \}$ and $H = \{l_3(x_2,x_3,x_4) = 0\}$; (this seems to be$\mathbf{A}_1$and not \mathbf{A}_3)
  
  \item no singularities if and only if  $H =\{l(x_1,x_2,x_3,x_4)=0\}$;
  \item an $\mathbf{A}_1$ singularity at one of the singularities of $S$ if and only if $H =\{l(x_2,x_3,x_4)=0\}$;
  \item $4$ $\mathbf{A}_1$ singularities at the singularities of $S$  if and only if $H = \{x_2 = 0\}$.
\end{enumerate}
\end{lemma}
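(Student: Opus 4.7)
The plan is to work directly with the explicit normal form of $S$ from Table~\ref{tab:p4segtable_with_equations} for Segre symbol $[(1,1),(1,1),1]$, namely $f = x_4 l_1(x_0,x_1) + x_3 l_2(x_0,x_1) + x_2^2$ and $g = x_4 l_3(x_0,x_1) + x_3 l_4(x_0,x_1) + x_2^2$, and to verify first by the Jacobian rank criterion that the four $\mathbf{A}_1$ nodes of $S$ are distributed on the two skew lines $L_1 = \{x_2 = x_3 = x_4 = 0\}$ and $L_2 = \{x_0 = x_1 = x_2 = 0\}$, both lying in the hyperplane $\{x_2 = 0\}$: each line carries two nodes, determined by the roots of the binary quadratic $\det\bigl(\begin{smallmatrix} l_1 & l_2 \\ l_3 & l_4 \end{smallmatrix}\bigr)$. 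A key structural observation is that $\{x_2 = 0\}$ is the unique hyperplane through all four singular points, which explains the distinguished role of the coordinate $x_2$ in the three cases.

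For each case I would restrict the pencil $(f,g)$ to $H \cong \mathbb{P}^3$, compute the Segre symbol of the induced pencil of quadrics in $\mathbb{P}^3$, and read off the singularities of $D = S \cap H$ using Table~\ref{tab:p3segtable_with_equations} together with the analysis in Section~\ref{general_results for P3}. For Case (iii), $H = \{x_2 = 0\}$ restricts $f$ and $g$ to rank-three quadrics whose determinant polynomial factors as a square $(\det P_\lambda)^2$ via the natural block decomposition of the restriction; this yields Segre symbol $[(1,1),(1,1)]$ and hence the quadrangle with four $\mathbf{A}_1$-nodes precisely at the four singular points of $S$. For Case (i), a generic hyperplane of the form $\{l(x_1,x_2,x_3,x_4) = 0\}$ misses the singular locus of $S$ and restricts to a generic pencil with Segre symbol $[1,1,1,1]$, producing a smooth elliptic curve. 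For Case (ii), the hyperplane $\{l(x_2, x_3, x_4) = 0\}$ meets a singular point of $S$ lying on $L_1$ generically; the restricted determinant polynomial degenerates to Segre symbol $[2,1,1]$, i.e.\ a nodal curve, and a short local analysis at the contained singular point identifies the $\mathbf{A}_1$ node of $D$ with that singularity of $S$.

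The main obstacle is the reverse implication: establishing that every hyperplane $H$ falls, up to $\operatorname{SL}(5)$, into exactly one of the three listed normal forms. For this one studies the $\operatorname{SL}(5)$-orbits of hyperplanes in $(\mathbb{P}^4)^*$ under the action of $\operatorname{Stab}(S) \subset \operatorname{SL}(5)$, which acts on the configuration of four singular points through a dihedral symmetry (swapping within each pair of nodes on $L_1$ and on $L_2$, and swapping $L_1 \leftrightarrow L_2$), so that orbits are indexed by the incidence pattern of $H$ with the singular locus. One then checks that any intermediate configuration where $H$ meets two or three singularities either reduces under $\operatorname{Stab}(S)$ to one of the three displayed normal forms, or produces a non-reduced hyperplane section whose reduction $D_{\mathrm{red}}$ still fits the stated classification. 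Independent computational verification of each singularity type is available via the MAGMA routine in Remark~\ref{magma code to check sings p4}.
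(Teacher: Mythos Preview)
Your approach is essentially the same as the paper's: start from the normal form for Segre symbol $[(1,1),(1,1),1]$ in Table~\ref{tab:p4segtable_with_equations}, restrict the pencil to $H$ after a coordinate change placing $H$ in a standard position, identify the Segre symbol of the resulting pencil of quadrics in $\mathbb{P}^3$, and read off the singularities of $D$ from Table~\ref{tab:p3segtable_with_equations}. The paper carries out exactly these three restrictions and does not treat the converse (``only if'') direction explicitly; your discussion of $\operatorname{Stab}(S)$-orbits of hyperplanes is additional structure that the paper leaves implicit.

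There is one computational slip in your case~(ii). The hyperplane $\{l(x_2,x_3,x_4)=0\}$ contains the entire line $L_1 = \{x_2=x_3=x_4=0\}$, and therefore passes through \emph{both} of the singular points of $S$ lying on $L_1$, not just one. Accordingly the restricted pencil in $\mathbb{P}^3$ has Segre symbol $[2,2]$ (two $\mathbf{A}_1$ points, matching Table~\ref{tab:p3segtable_with_equations}), which is what the paper obtains, rather than $[2,1,1]$ as you predict. This does not affect the method, only the outcome of that particular restriction.
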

\begin{proof}
From Table \ref{tab:p4segtable_with_equations} we know that $S$ is given, up to projective equivalence, by 
\begin{equation*}
   \begin{split}
     f&= x_4l_1(x_0,x_1)+x_3l_2(x_0,x_1)+x_2^2\\
     g&= x_4l_3(x_0,x_1)+x_3l_4(x_0,x_1)+x_2^2
   \end{split}
 \end{equation*}
 
Let $H =\{(x_1,x_2,x_3,x_4) = 0\} $; using a suitable change of coordinates $\tilde x_4 = l(x_1,x_2,x_3,x_4)$, $\tilde x_i = x_i$ for $i\neq 4$ we have (by abuse of notation):
\begin{equation*}
  \begin{split}
    f&= ll_1(x_0,x_1)+x_3l_2(x_0,x_1)+x_2^2\\
     g&= ll_3(x_0,x_1)+x_3l_4(x_0,x_1)+x_2^2\\
    H &= x_4 
  \end{split}
\end{equation*}
and hence $D$ will be given by:

\begin{equation*}
  \begin{split}
    f' &= q_1(x_1,x_2,x_3)+x_0l_1(x_1,x_2,x_3)\\
    g' & = q_2(x_1,x_2,x_3)+x_0l_2(x_1,x_2,x_3)
  \end{split}
\end{equation*}
which is a smooth complete intersection of two quadrics in $\mathbb{P}^3$.

Let $H =\{l(x_2,x_3,x_4) = 0\} $; using a similar suitable change of coordinates we have (by abuse of notation):
\begin{equation*}
  \begin{split}
    f&= ll_1(x_0,x_1)+x_3l_2(x_0,x_1)+x_2^2\\
     g&= ll_3(x_0,x_1)+x_3l_4(x_0,x_1)+x_2^2\\
    H &= x_4 
  \end{split}
\end{equation*}
and hence $D$ will be given by:

\begin{equation*}
  \begin{split}
    f' &= q_1(x_2,x_3)+x_0l_1(x_2,x_3)+x_1l_2(x_2,x_3)\\
    g' & = q_2(x_2,x_3)+x_0l_3(x_2,x_3)+x_1l_4(x_2,x_3)
  \end{split}
\end{equation*}
which is a singular complete intersection of quadrics in $\mathbb{P}^3$ with Segre symbol $[2,2]$ by Table \ref{tab:p3segtable_with_equations} and an $\mathbf{A}_1$ singularity at $(1:0:0:0)$.

Similarly, let $H =\{x_2 = 0\} $; here, $D$ will be given by:

\begin{equation*}
  \begin{split}
    f'&= x_4l_1(x_0,x_1)+x_3l_2(x_0,x_1)\\
     g'&= x_4l_3(x_0,x_1)+x_3l_4(x_0,x_1)
  \end{split}
\end{equation*}
which is a singular complete intersection of quadrics in $\mathbb{P}^3$ with Segre symbol $[(1,1),(1,1)]$ by Table \ref{tab:p3segtable_with_equations} and $4$ $\mathbf{A}_1$ singularities at $(1:0:0:0)$.

\end{proof}

%%%%%%%%%%%%%%%%%%%%%%%%%%%%%%%%%%%%%%%%

The method of proof for the remaining lemmas of this section are identical in method of proof of Lemma \ref{S 4A1s [(1,1),(1,1),1],P4}, and hence are omitted.

\begin{lemma}\label{S 2A1s [(1,1),1,1,1],P4}
Let $S$ be the complete intersection of two quadrics $f,g$ with Segre symbol $[(1,1),1,1,1]$. Then $S$ has $2$ $\mathbf{A}_1$ singularities. Let $H$ be a hyperplane. Then, the hyperplane section $D = S\cap H$ has/is up to an $\operatorname{SL}(5)$-action:
\begin{enumerate}
%  \item an $\mathbf{A}_3$ singualarity if $S$ is given by $f = q_1(x_2,x_3,x_4)+l_1(x_2,x_3,x_4)x_1+l_2(x_2,x_3,x_4)x_0 +x_1x_0$, $g = x_2^2+x_3^2+x_4^2$, $S = \{f= g= \}$ and $H = \{l_3(x_2,x_3,x_4) = 0\}$; (this seems to be$\mathbf{A}_1$and not \mathbf{A}_3)
  
  \item $1$ $\mathbf{A}_1$ singularity at one of the singularities of $S$ if and only if $H =\{l(x_1,x_2,x_3,x_4)=0\}$ or $H =\{l(x_2,x_3,x_4)=0\}$ or $H =\{l(x_3,x_4)=0\}$ ;
  \item $2$ $\mathbf{A}_1$ singularity at the singularities of $S$ if and only if $H =\{l(x_1,x_2,x_3)=0\}$;
  \item non-isolated singularities if and only if $H = \{x_4 = 0\}$.
\end{enumerate}
\end{lemma}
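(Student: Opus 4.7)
The plan is to follow the strategy of Lemma \ref{S 4A1s [(1,1),(1,1),1],P4}, which the paper explicitly invokes as the model for the remaining classification lemmas of this section: fix the normal form of $S$ from Table \ref{tab:p4segtable_with_equations}, substitute each listed hyperplane to reduce to a pencil of quadrics in $\mathbb{P}^3$, and then read off the type of $D$ from its Segre symbol via Table \ref{tab:p3segtable_with_equations}.

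First I would take $f = q_1(x_1,x_2,x_3)+x_0 x_4$ and $g = q_2(x_1,x_2,x_3)+x_0 x_4$ with $q_1,q_2$ a generic diagonal pair. A direct Jacobian computation then shows that $\operatorname{Sing}(S)=\{p_1,p_2\}$ with $p_1=(1:0:0:0:0)$ and $p_2=(0:0:0:0:1)$, and that at each $p_i$ the tangent cone is the rank-$3$ quadric $q_1-q_2$, so both singularities are $\mathbf{A}_1$. Hyperplanes of $\mathbb{P}^4$ are then classified, up to the $\operatorname{SL}(5)$-stabiliser of $(S,\operatorname{Sing}(S))$, by which of $p_1,p_2$ they contain together with the position of $H$ relative to the flag in $x_1,x_2,x_3$ encoded by the distinct eigenvalues of the pencil; this yields precisely the five listed normal forms.

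For each case I would eliminate one variable using the equation of $H$ and compute the characteristic polynomial $\det(\lambda F'+G')$ of the resulting pencil in $\mathbb{P}^3$. In the three sub-cases of (i), where $p_1\in H$ but $p_2\notin H$, the $(x_0,x_4)$-cross-block produces a factor $(\lambda+1)^2$ in the characteristic polynomial while the rank drop at $\lambda=-1$ is only $1$ (the surviving $x_4$-entry keeps one diagonal term non-zero), giving Segre symbol $[2,1,1]$ and hence, by Table \ref{tab:p3segtable_with_equations}, a single $\mathbf{A}_1$ at the image of $p_1$. In case (ii), both $p_1,p_2\in H$ because $l$ involves neither $x_0$ nor $x_4$; after eliminating a variable among $x_1,x_2,x_3$ the full $(x_0,x_4)$-cross-block survives and collapses entirely at $\lambda=-1$, giving rank drop $2$, Segre symbol $[(1,1),1,1]$, and the two claimed $\mathbf{A}_1$ singularities at $p_1$ and $p_2$.

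The main obstacle is case (iii): for $H=\{x_4=0\}$ the substituted pencil $(q_1(x_1,x_2,x_3),q_2(x_1,x_2,x_3))$ in $\mathbb{P}^3$ is totally degenerate, $\det(\lambda F'+G')\equiv 0$, and therefore does not appear in Table \ref{tab:p3segtable_with_equations}. I would handle this directly by observing that both members are cones with common vertex $p_1$, so $D$ is the cone over the four base points of $(q_1,q_2)\subset\mathbb{P}^2$ with vertex $p_1$, i.e.\ a union of four concurrent lines through $p_1$; this is the configuration the lemma describes as having non-isolated singularities. The converse in each case follows from the Segre-symbol classification together with the observation that any pencil in $\mathbb{P}^3$ realising symbol $[2,1,1]$, $[(1,1),1,1]$, or an identically vanishing determinant must arise by substitution of a hyperplane in one of the five listed normal forms.
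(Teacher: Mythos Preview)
Your proposal is correct and follows exactly the method the paper prescribes (and then omits): fix the normal form $f=q_1(x_1,x_2,x_3)+x_0x_4$, $g=q_2(x_1,x_2,x_3)+x_0x_4$ from Table~\ref{tab:p4segtable_with_equations}, restrict along each listed hyperplane, and read off the Segre symbol of the resulting pencil in $\mathbb{P}^3$ from Table~\ref{tab:p3segtable_with_equations}, getting $[2,1,1]$ in (i) and $[(1,1),1,1]$ in (ii). Your direct treatment of case~(iii), where $\det(\lambda F'+G')\equiv 0$ and $D$ becomes the cone over four points (four concurrent lines through $p_1$), is also the right move; note only that this singularity is technically isolated but not a plane-curve ADE type, and the paper's label ``non-isolated'' is a mild abuse used consistently elsewhere (e.g.\ Lemma~\ref{S A1 [2,1,1,1],P4}) for configurations falling outside the $\mathbf{A}_n/\mathbf{D}_n$ list.
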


\begin{lemma}\label{S 3A1, [(1,1),2,1],P4}
Let $S$ be the complete intersection of two quadrics $f,g$ with Segre symbol $[(1,1),2,1]]$. Then $S$ has $3$ $\mathbf{A}_1$ singularities. Let $H$ be a hyperplane. Then, the hyperplane section $D = S\cap H$ has/is up to an $\operatorname{SL}(5)$-action:
\begin{enumerate}

  \item no singularities if and only if $H =\{l(x_1,x_2,x_3,x_4)=0\}$or $H =\{l(x_2,x_3,x_4)=0\}$;
  \item $2$ $\mathbf{A}_1$ singularities at two of the singularities of $S$  if and only if $H =\{l(x_3,x_4)=0\}$;
  \item non-isolated singularities if and only if $H = \{x_4=0\}$.
\end{enumerate}
\end{lemma}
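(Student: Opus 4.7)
The plan is to imitate the case-by-case analysis carried out in the proof of Lemma \ref{S 4A1s [(1,1),(1,1),1],P4}. The strategy has three clear steps: (i) put $S$ in the normal form prescribed by Table \ref{tab:p4segtable_with_equations}, (ii) for each of the three families of hyperplanes $H$ listed in the statement, use a linear change of coordinates to bring $H$ into the canonical form $\{x_4=0\}$ while preserving the block structure of $(f,g)$, and (iii) read off the Segre symbol of the induced pencil on $H\cong\mathbb{P}^3$ and hence the singularities of $D$ directly from Table \ref{tab:p3segtable_with_equations}.

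First I would fix
\begin{align*}
f &= q_5(x_1,x_2,x_3,x_4)+x_0 l_7(x_3,x_4),\\
g &= q_6(x_3,x_4)+x_4 l_8(x_0,x_1,x_2),
\end{align*}
and verify, either by a direct Jacobian computation or by the \texttt{MAGMA} routine sketched in Remark \ref{magma code to check sings p4}, that $S$ carries exactly three $\mathbf{A}_1$ singularities. Because $g$ is supported only on $\{x_3,x_4\}$ up to the cross term $x_4 l_8$, the three nodes lie in the subspace $\{x_3=x_4=0\}$; this geometric fact is what governs the case analysis below.

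For the first case, $H=\{l(x_1,\dots,x_4)=0\}$ or $H=\{l(x_2,x_3,x_4)=0\}$, a linear change of variables replacing $l$ by $x_4$ preserves the block structure and yields a pencil in $\mathbb{P}^3$ whose restriction still involves $x_0$ (and, in the first sub-case, $x_1$) non-degenerately; its determinant polynomial then has four simple roots, so $D$ is smooth by Corollary \ref{smooth_pn}. For the second case, $H=\{l(x_3,x_4)=0\}$, the analogous substitution produces a pencil whose equations match the normal form for Segre symbol $[2,2]$ in Table \ref{tab:p3segtable_with_equations}, so $D$ has $2\mathbf{A}_1$ singularities. For the third case, $H=\{x_4=0\}$, the restriction $g|_H$ becomes a multiple of $x_3^2$ — a double plane — so the matrix of $g|_H$ drops to rank one; computing the successive minimum multiplicities of $\lambda=0$ in the $3{\times}3$ and $2{\times}2$ minors of $\lambda F+G$ then yields Segre symbol $[(2,1,1)]$ (or a further degeneration), so by Table \ref{tab:p3segtable_with_equations} $D$ has non-isolated singularities.

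The main obstacle I expect is the third case: once $g|_H$ collapses to a square, one has to justify that the induced pencil acquires a fixed linear component (so that $D$ contains a whole line of singularities) rather than merely a high-multiplicity isolated singularity. This is handled exactly as in Lemmas \ref{S 4A1s [(1,1),(1,1),1],P4} and \ref{S 2A1s [(1,1),1,1,1],P4}, by tracking the elementary divisors of $\lambda F+G$ through the determinant and its $3{\times}3$ and $2{\times}2$ minors; no new conceptual difficulty arises beyond bookkeeping. Once the three Segre symbols in $\mathbb{P}^3$ are determined, the classification of singularities is immediate from Table \ref{tab:p3segtable_with_equations}, completing the proof.
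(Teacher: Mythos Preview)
Your approach is correct and matches the paper's, which explicitly omits this proof as ``identical in method of proof of Lemma \ref{S 4A1s [(1,1),(1,1),1],P4}''. One minor bookkeeping correction: in case (iii) the restriction gives $g|_H=c\,x_3^2$ against a smooth $f|_H$, so the induced Segre symbol in $\mathbb{P}^3$ is $[(1,1,1),1]$ (the double conic) rather than $[(2,1,1)]$---but both entries of Table~\ref{tab:p3segtable_with_equations} carry non-isolated singularities, so your conclusion is unaffected.
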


\begin{lemma}\label{S 2A1, [2,21],P4}
Let $S$ be the complete intersection of two quadrics $f,g$ with Segre symbol $[2,2,1]$. Then $S$ has $2$ $\mathbf{A}_1$ singularities. Let $H$ be a hyperplane. Then, the hyperplane section $D = S\cap H$ has/is up to an $\operatorname{SL}(5)$-action:
\begin{enumerate}
%  \item an $\mathbf{A}_3$ singualarity if $S$ is given by $f = q_1(x_2,x_3,x_4)+l_1(x_2,x_3,x_4)x_1+l_2(x_2,x_3,x_4)x_0 +x_1x_0$, $g = x_2^2+x_3^2+x_4^2$, $S = \{f= g= \}$ and $H = \{l_3(x_2,x_3,x_4) = 0\}$; (this seems to be$\mathbf{A}_1$and not \mathbf{A}_3)
  
  \item no singularities if and only if $H =\{l(x_1,x_2,x_3,x_4)=0\}$; 
  \item $1$ $\mathbf{A}_1$ singularity at one of the singularities of $S$  if and only if $H =\{l(x_2,x_3,x_4)=0\}$ or $H =\{l(x_3,x_4)=0\}$ or $H = \{x_4 = 0\}$.
\end{enumerate}
\end{lemma}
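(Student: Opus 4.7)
The plan is to mirror the case-by-case strategy used for Lemma \ref{S 4A1s [(1,1),(1,1),1],P4}: start from the normal form of $S$ given in Table \ref{tab:p4segtable_with_equations}, reduce each family of hyperplanes to a coordinate hyperplane by an $\operatorname{SL}(5)$-change of variables, compute the defining equations of $D=S\cap H$ in $\mathbb{P}^3$, and identify the resulting pencil via its Segre symbol using Table \ref{tab:p3segtable_with_equations}. The assertion that $S$ has exactly $2$ $\mathbf{A}_1$ singularities is already recorded in Table \ref{tab:p4segtable_with_equations}, so only the statement on $D$ requires work.

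Concretely, I would fix
\begin{equation*}
\begin{aligned}
f &= q_1(x_2,x_3,x_4)+x_4l_1(x_0,x_1)+x_3l_2(x_0,x_1),\\
g &= q_2(x_2,x_3,x_4)+x_4l_3(x_0,x_1)+x_3l_4(x_0,x_1),
\end{aligned}
\end{equation*}
and locate the two singular points of $S$ by a direct Jacobian computation, noting they lie on the line $\{x_2=x_3=x_4=0\}$. In case (1), for $H=\{l(x_1,x_2,x_3,x_4)=0\}$, I perform a coordinate change $\tilde x_1=l(x_1,x_2,x_3,x_4)$, $\tilde x_i=x_i$ otherwise, so that $H$ becomes $\{\tilde x_1=0\}$. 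Restricting $f,g$ to this hyperplane kills the variable $\tilde x_1$ from the defining polynomials; after reordering, the pair becomes a generic complete intersection in $\mathbb{P}^3$ in the variables $x_0,x_2,x_3,x_4$ with Segre symbol $[1,1,1,1]$, hence smooth.

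For case (2), I treat the three sub-cases in sequence. For $H=\{l(x_2,x_3,x_4)=0\}$, the change $\tilde x_2 = l(x_2,x_3,x_4)$ reduces to $H=\{\tilde x_2=0\}$; the restriction then produces a pencil in $\mathbb{P}^3$ with two variables $(x_0,x_1)$ appearing linearly and two remaining variables appearing quadratically with crossed terms, matching the normal form for Segre symbol $[2,1,1]$ in Table \ref{tab:p3segtable_with_equations}, which is a nodal curve with a single $\mathbf{A}_1$. For $H=\{l(x_3,x_4)=0\}$, I set $\tilde x_4=l(x_3,x_4)$ so that $H=\{\tilde x_4=0\}$, and the restriction eliminates the $l_1,l_3$ contributions while retaining $x_3l_2,x_3l_4$; again the resulting pencil has Segre symbol $[2,1,1]$. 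For $H=\{x_4=0\}$, the restriction is immediate and one reads off Segre symbol $[2,1,1]$ directly. In each sub-case I verify that the surviving $\mathbf{A}_1$ of $D$ sits at one of the two singular points of $S$ that lies on $H$ (the other singular point is removed by passing to $H$).

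The only conceptual step requiring care is verifying that all other hyperplane orbits under the stabiliser of $S$ in $\operatorname{SL}(5)$ are accounted for by these two families; I would argue this by observing that the stabiliser acts transitively on $\mathbb{P}(V)$ stratified by the flag $\{x_4=0\}\subset\{x_3=x_4=0\}\subset\{x_2=x_3=x_4=0\}\subset\{x_1=x_2=x_3=x_4=0\}$, and that cases such as $H=\{l(x_1,x_2,x_3)=0\}$ (which would give $2\mathbf{A}_1$) or $H$ containing the full singular line (which would give non-isolated singularities) cannot occur because the geometry of the Segre symbol $[2,2,1]$ places the singular points in generic position relative to the coordinate flag induced by the normal form, unlike the $[(1,1),1,1,1]$ or $[(1,1),2,1]$ cases. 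The main obstacle is therefore the careful bookkeeping of the orbit structure of hyperplanes under $\operatorname{Stab}(S)\subset\operatorname{SL}(5)$, to ensure the two cases listed exhaust all $\operatorname{SL}(5)$-classes of pairs $(S,H)$.
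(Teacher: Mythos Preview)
Your proposal is correct and follows precisely the approach the paper indicates: immediately after the proof of Lemma~\ref{S 4A1s [(1,1),(1,1),1],P4} the paper states that the proofs of the remaining lemmas in that section, including this one, are identical in method and therefore omitted. Your plan---take the normal form from Table~\ref{tab:p4segtable_with_equations}, reduce each hyperplane family to a coordinate hyperplane via an $\operatorname{SL}(5)$ change of coordinates, then identify the resulting pencil in $\mathbb{P}^3$ through its Segre symbol using Table~\ref{tab:p3segtable_with_equations}---is exactly that method, so there is nothing further to compare.
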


\begin{lemma}\label{S A1 [2,1,1,1],P4}
Let $S$ be the complete intersection of two quadrics $f,g$ with Segre symbol $[2,1,1,1]$. Then $S$ has $1$ $\mathbf{A}_1$ singularities. Let $H$ be a hyperplane. Then, the hyperplane section $D = S\cap H$ has/is up to an $\operatorname{SL}(5)$-action:
\begin{enumerate}
%  \item an $\mathbf{A}_3$ singualarity if $S$ is given by $f = q_1(x_2,x_3,x_4)+l_1(x_2,x_3,x_4)x_1+l_2(x_2,x_3,x_4)x_0 +x_1x_0$, $g = x_2^2+x_3^2+x_4^2$, $S = \{f= g= \}$ and $H = \{l_3(x_2,x_3,x_4) = 0\}$; (this seems to be$\mathbf{A}_1$and not \mathbf{A}_3)
  \item $1$ $\mathbf{A}_1$ singularity at the singularity of $S$ if and only if $H =\{l(x_1,x_2,x_3,x_4)=0\}$ or $H =\{l(x_2,x_3,x_4)=0\}$ or $H =\{l(x_3,x_4)=0\}$;
  \item non-isolated singularities if and only if $H = \{x_4 =0\}$.
\end{enumerate}
\end{lemma}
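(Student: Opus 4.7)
The plan is to mirror the strategy of the proof of Lemma \ref{S 4A1s [(1,1),(1,1),1],P4}: fix $S$ in the canonical form provided by Table \ref{tab:p4segtable_with_equations}, namely
\begin{equation*}
    f = q_1(x_1,x_2,x_3,x_4)+x_0x_4, \qquad g = q_2(x_1,x_2,x_3,x_4)+x_0x_4,
\end{equation*}
observe via Lemma \ref{singular intersection in pn} that the unique $\mathbf{A}_1$ singularity of $S$ is located at $P=(1{:}0{:}0{:}0{:}0)$, and in each listed case apply a suitable linear change of coordinates to bring $H$ to the form $\{\tilde{x}_4=0\}$. After the substitution, we identify the Segre symbol of the resulting complete intersection of two quadrics in $\mathbb{P}^3$ via Table \ref{tab:p3segtable_with_equations} to read off the singularity type of $D$.

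For Case (1), in each subcase $H=\{l(x_i,\ldots,x_4)=0\}$, the change of coordinates $\tilde{x}_4=l$, $\tilde{x}_j=x_j$ for $j\neq 4$ rewrites both $f$ and $g$ so that their $x_0x_4$ terms become $x_0\cdot M(\tilde{x}_1,\ldots,\tilde{x}_4)$ for the same linear form $M$. Restricting to $\{\tilde{x}_4=0\}$ thus produces a pencil in $\mathbb{P}^3$ in which both generators carry the same linear-in-$x_0$ contribution, so replacing $g|_H$ by $g|_H - f|_H$ eliminates $x_0$ entirely from one member of the pencil. The resulting pair is projectively equivalent to
\begin{equation*}
    \tilde f = \tilde q_1(\tilde x_1,\tilde x_2,\tilde x_3) + x_0\, L(\tilde x_1,\tilde x_2,\tilde x_3), \qquad \tilde g = \tilde q_2(\tilde x_1,\tilde x_2,\tilde x_3),
\end{equation*}
which matches the canonical form for Segre symbol $[2,1,1]$ in Table \ref{tab:p3segtable_with_equations}, yielding a single $\mathbf{A}_1$ singularity at $P$. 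The three subcases of (1) differ only by the support of $l$, which affects $L$ but not the Segre symbol, as long as $L\neq 0$; the vanishing $L=0$ corresponds precisely to $l$ being a scalar multiple of $x_4$, and so separates Case (1) from Case (2).

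For Case (2), $H=\{x_4=0\}$, no change of coordinates is needed. The restrictions $f|_H=q_1(x_1,x_2,x_3,0)$ and $g|_H=q_2(x_1,x_2,x_3,0)$ depend only on $x_1,x_2,x_3$, so the determinant polynomial of the restricted pencil vanishes identically. Hence $D$ is a cone with apex $P$ over the base locus $\{q_1(x_1,x_2,x_3,0)=q_2(x_1,x_2,x_3,0)=0\}\subset \mathbb{P}^2$, which by B\'ezout is a scheme of length four; generically this gives four lines meeting at $P$, producing the non-ADE singularity identified as \textit{non-isolated} in the statement, with further degeneracy of the base yielding positive-dimensional singular loci. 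The main technical obstacle is tracking coefficients through the change of coordinates carefully enough to confirm that both members of the pencil always share the same linear-in-$x_0$ contribution after restriction; this relies on the equality of the $x_0x_4$ terms in the canonical form of $S$, which is a special feature of Segre symbol $[2,1,1,1]$ and is exactly what makes every subcase of (1) reduce uniformly to Segre symbol $[2,1,1]$ in $\mathbb{P}^3$. The converse direction then follows from the observation that any hyperplane not containing $P$ yields a generically smooth $D$, which falls outside the two enumerated singular cases.
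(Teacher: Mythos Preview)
Your proposal is correct and follows exactly the method the paper intends: the paper states that the proof is ``identical in method of proof of Lemma \ref{S 4A1s [(1,1),(1,1),1],P4}, and hence [is] omitted,'' and you have faithfully carried out that template, using the canonical form from Table \ref{tab:p4segtable_with_equations}, the change of variables $\tilde x_4 = l$, and identification of the restricted pencil's Segre symbol via Table \ref{tab:p3segtable_with_equations}. Your observation that the shared $x_0x_4$ term in both $f$ and $g$ is what forces the restricted pencil to collapse uniformly to Segre type $[2,1,1]$ in all three subcases of (1) is exactly the right structural point, and your separation of Cases (1) and (2) by the vanishing of $L$ is clean.

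One small remark on Case (2): the cone of four lines through $P$ that you obtain has, strictly speaking, an \emph{isolated} but non-planar (hence non-ADE) singular point at the apex when the four base points in $\mathbb P^2$ are distinct; the paper's label ``non-isolated'' should be read as ``not among the Segre-classified ADE types of Table \ref{tab:p3segtable_with_equations}'' (the restricted pencil has identically vanishing determinant, so it lies outside that classification). You already hedge appropriately on this point, so no change to the argument is needed.
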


\begin{lemma}\label{S 2A1+A3 [(2,1),(1,1)],P4}
Let $S$ be the complete intersection of two quadrics $f,g$ with Segre symbol $[(2,1),(1,1)]$. Then $S$ has $1$ $\mathbf{A}_1$ and $2$ $\mathbf{A}_3$ singularities. Let $H$ be a hyperplane. Then, the hyperplane section $D = S\cap H$ has/is up to an $\operatorname{SL}(5)$-action:
\begin{enumerate}
  \item $1$ $\mathbf{A}_1$ singularity at the $\mathbf{A}_3$ singularity if and only if $H =\{l(x_1,x_2,x_3,x_4)=0\}$ or $H =\{l(x_2,x_3,x_4)=0\}$;
  \item $1$ $\mathbf{A}_3$ singularity at the $\mathbf{A}_3$ singularity if and only if $H = \{x_2 =0\}$;
  \item non-isolated singularities if and only if $H =\{l(x_3,x_4)=0\}$ or $H = \{x_4 =0\}$.
\end{enumerate}
\end{lemma}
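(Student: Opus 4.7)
The plan is to follow exactly the same template as the proofs of Lemmas \ref{S 4A1s [(1,1),(1,1),1],P4} through \ref{S A1 [2,1,1,1],P4}. From Table \ref{tab:p4segtable_with_equations}, I may fix the explicit normal form
\[
f = x_0x_4+x_1x_3+x_2^2,\qquad g = q_1(x_3,x_4)+x_3\ell_1(x_1,x_2)+x_4\ell_2(x_1,x_2).
\]
The first step is to locate the singular points of $S$ and their types: a direct Jacobian computation (or the MAGMA script from Remark \ref{magma code to check sings p4}) identifies the singular locus and confirms that up to the stabiliser of $S$ in $\operatorname{SL}(5)$ there is a distinguished $\mathbf{A}_3$ singularity lying along the coordinate axis corresponding to $x_0$, which is the locus where the $[(2,1)]$ block of the Segre symbol is concentrated.

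Next, for each case of the lemma I will classify hyperplanes up to the projective stabiliser of $(f,g)$. Because the stabiliser acts transitively on linear forms having the same support type (``$x_0$-free'', ``$x_0,x_1$-free'', ``$x_0,x_1,x_2$-free'', etc.), it suffices to test one representative from each class: a general $\ell(x_1,x_2,x_3,x_4)$, a general $\ell(x_2,x_3,x_4)$, the specific $H=\{x_2=0\}$, a general $\ell(x_3,x_4)$, and $H=\{x_4=0\}$. In each case I substitute $H=0$ into $f,g$ to obtain a pencil of quadrics in $\mathbb{P}^3$ and read off its Segre symbol either by direct computation of $\det(\lambda F'+G')$ and its minors, or by matching the resulting normal form against Table \ref{tab:p3segtable_with_equations}.

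For cases (i) and (ii) the resulting pencil in $\mathbb{P}^3$ will be a pencil with symbol $[2,1,1]$ (a nodal quartic curve), giving a single $\mathbf{A}_1$; I must then verify that the surviving singular point of $D$ is precisely the one that was an $\mathbf{A}_3$ on $S$ (this uses the fact that the $[(2,1)]$ block of the Segre symbol of $S$ localises at the $\mathbf{A}_3$ point). For case (iii), setting $x_2=0$ kills the pure-square term $x_2^2$, and the restricted pencil degenerates to Segre symbol $[(2,1),1]$, which by Lemma \ref{Segre symbol [(2,1),1]} gives an $\mathbf{A}_3$ curve singularity; again I check it sits over the $\mathbf{A}_3$ of $S$. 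For case (iv), restricting to $x_4=0$ or to a generic $\ell(x_3,x_4)=0$ causes both $f|_H$ and $g|_H$ to become supported on a proper subset of variables in such a way that the corresponding Segre symbol has a block of the form $[(2,2)]$ or $[(2,1,1)]$; either of these corresponds by Table \ref{tab:p3segtable_with_equations} to a non-isolated singular complete intersection.

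The genuinely routine part is the substitution and identification of Segre symbols; the only subtlety, and the step that needs the most care, is arguing that my list of five representative hyperplanes exhausts the $\operatorname{SL}(5)$-orbits acting on pairs $(S,H)$ with $S$ fixed. This will be handled by noting that the stabiliser of $(f,g)$ preserves the filtration of coordinates $\{x_4\}\subset\{x_3,x_4\}\subset\{x_2,x_3,x_4\}\subset\{x_1,x_2,x_3,x_4\}\subset\{x_0,\dots,x_4\}$ reflecting the block structure of the Segre symbol $[(2,1),(1,1)]$, so that the orbit of $H$ is determined by the smallest such subspace containing $\operatorname{Supp}(H)$, exactly as in the preceding lemmas of this subsection.
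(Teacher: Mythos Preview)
Your proposal is correct and follows essentially the same approach as the paper. In fact, the paper omits this proof entirely, stating that the method is identical to that of Lemma~\ref{S 4A1s [(1,1),(1,1),1],P4}, which is precisely the substitute-the-hyperplane-and-read-off-the-$\mathbb{P}^3$-Segre-symbol template you outline; your additional remarks on the stabiliser filtration and exhaustion of orbit types add a layer of justification that the paper leaves implicit.
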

\begin{lemma}\label{S A3 [(2,1),1,1],P4}
Let $S$ be the complete intersection of two quadrics $f,g$ with Segre symbol $[(2,1),1,1]$. Then $S$ has $1$ $\mathbf{A}_3$ singularity. Let $H$ be a hyperplane. Then, the hyperplane section $D = S\cap H$ has/is up to an $\operatorname{SL}(5)$-action:
\begin{enumerate}
  \item $1$ $\mathbf{A}_1$ singularity at the $\mathbf{A}_3$ singularity if and only if $H =\{l(x_1,x_2,x_3,x_4)=0\}$;
  \item $1$ $\mathbf{A}_3$ singularity at the $\mathbf{A}_3$ singularity if and only if $H =\{l(x_2,x_3,x_4)=0\}$ or $H =\{l(x_3,x_4)=0\}$ or $H = \{x_4 =0\}$.
\end{enumerate}
\end{lemma}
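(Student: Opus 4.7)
The approach will mirror that of Lemma \ref{S 4A1s [(1,1),(1,1),1],P4}, which the preceding paragraph identifies as the template for all the omitted lemmas in this subsection. Starting from the normal form
\begin{equation*}
f = q_1(x_1,x_2,x_3,x_4) + x_0\, l_1(x_2,x_3,x_4),\qquad g = q_2(x_2,x_3,x_4)
\end{equation*}
of Table \ref{tab:p4segtable_with_equations}, I observe that the unique singular point of $S$ is $p=(1:0:0:0:0)$ and that every hyperplane listed in the statement vanishes at $p$, so $D$ will be singular at $p$ in every case. For each of the four hyperplane shapes I will perform an explicit $\operatorname{SL}(5)$-change of coordinates that sends $H$ to a coordinate hyperplane, substitute to express $D$ as a pencil of quadrics in the residual $\mathbb{P}^3$, and then match the resulting pencil against the classification of Table \ref{tab:p3segtable_with_equations}, invoking Lemma \ref{Segre symbol [(2,1),1]} or \ref{Sings of all remaining varieties P3} to extract the singularity type.

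I expect the four cases to split as follows. In the first case, after replacing the relevant variable by $l(x_1,\ldots,x_4)$ and substituting, one eliminates $x_1$; the surviving pencil takes the shape $q_1'(x_2,x_3,x_4)+x_0 l_1(x_2,x_3,x_4)$ together with $q_2(x_2,x_3,x_4)$ in coordinates $(x_0,x_2,x_3,x_4)$, which matches the Segre symbol $[2,1,1]$ model of Table \ref{tab:p3segtable_with_equations} and yields a single $\mathbf{A}_1$ at $p$. In the three remaining cases the substitution instead eliminates one of the variables appearing in $l_1$ or $q_2$; because $g$ involves only $x_2,x_3,x_4$, this preserves the common tangency of the two quadrics along the line through $p$ in the direction of $x_1$, and the residual pencils each match the Segre symbol $[(2,1),1]$ model of two tangent conics, giving a single $\mathbf{A}_3$ at $p$ via Lemma \ref{Segre symbol [(2,1),1]}. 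For the converse direction, any hyperplane whose defining linear form has a nonzero $x_0$-coefficient does not pass through $p$, and substituting $x_0$ in the normal form makes $D$ smooth at every point lying over $p$; since $p$ is the only singularity of $S$, this would make $D$ smooth globally, and such $H$ is thus correctly excluded from the statement.

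The main obstacle will be distinguishing the $\mathbf{A}_3$ case from a milder $\mathbf{A}_1$ or $\mathbf{A}_2$ in cases two through four, since all three restrictions land on a pencil in $\mathbb{P}^3$ whose determinantal polynomial acquires a multiple root at the relevant $\lambda$-value but whose singularity type depends on the vanishing order of the $3\times 3$ subdeterminants of $\lambda F|_D + G|_D$ at that root. Tracking these minors through the change of coordinates is the delicate step: the block structure of the normal form (in particular the independence of $g$ on $x_0$ and $x_1$, and the appearance of $x_0$ only through the linear $l_1$-factor in $f$) will have to be used to show that precisely the right subdeterminants vanish to the correct order, and no higher, ruling out degenerations to $[(3,1)]$ or $[(2,2)]$. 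Once this bookkeeping is carried out the remainder of the argument is mechanical and follows the template of Lemma \ref{S 4A1s [(1,1),(1,1),1],P4} verbatim.
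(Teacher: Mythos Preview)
Your proposal is correct and follows essentially the same approach as the paper: the paper explicitly states that the proof of this lemma is ``identical in method of proof'' to Lemma~\ref{S 4A1s [(1,1),(1,1),1],P4} and is therefore omitted, and you have faithfully reconstructed that template—normal form from Table~\ref{tab:p4segtable_with_equations}, coordinate change putting $H$ into a coordinate hyperplane, restriction, and identification of the resulting $\mathbb{P}^3$ pencil against Table~\ref{tab:p3segtable_with_equations}. Your identification of $[2,1,1]$ in case~(1) and $[(2,1),1]$ in cases~(2)--(4), together with the anticipated minor computation to rule out $[(3,1)]$ and $[(2,2)]$, is exactly the bookkeeping the paper's method requires. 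One small imprecision: your claim that a hyperplane with nonzero $x_0$-coefficient forces $D$ to be \emph{globally} smooth is not literally true (a tangent hyperplane away from $p$ could still produce a singular $D$), but the lemma only concerns the singularity of $D$ at the $\mathbf{A}_3$ point $p$, so such $H$ are correctly outside the scope of the statement regardless.
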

\begin{lemma}\label{S A3 [(2,1),2],P4}
Let $S$ be the complete intersection of two quadrics $f,g$ with Segre symbol $[(2,1),2]$. Then $S$ has $1$ $\mathbf{A}_1$ and $1$ $\mathbf{A}_3$ singularity. Let $H$ be a hyperplane. Then, the hyperplane section $D = S\cap H$ has/is up to an $\operatorname{SL}(5)$-action:
\begin{enumerate}
  \item  $1$ $\mathbf{A}_1$ singularity at the $\mathbf{A}_3$ singularity if and only if $H =\{l(x_1,x_2,x_3,x_4)=0\}$;
  \item $1$ $\mathbf{A}_3$ singularity at the $\mathbf{A}_3$ singularity if and only if $H =\{l(x_2,x_3,x_4)=0\}$ or $H =\{l(x_3,x_4)=0\}$;
  \item $1$ $\mathbf{D}_4$ singularity at the $\mathbf{A}_3$ singularity if and only if $H = \{x_4 =0\}$.
\end{enumerate}
\end{lemma}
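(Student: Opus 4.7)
The plan is to follow exactly the template of Lemmas \ref{S 4A1s [(1,1),(1,1),1],P4} through \ref{S A3 [(2,1),2],P4}. Start from the explicit equations of $S$ for Segre symbol $[(2,1),2]$ taken from Table \ref{tab:p4segtable_with_equations}, namely
\begin{equation*}
  \begin{split}
    f(x_0,x_1,x_2,x_3,x_4)&= x_1^2+x_0l_1(x_2,x_3,x_4),\\
    g(x_0,x_1,x_2,x_3,x_4)&= q_1(x_2,x_3,x_4).
  \end{split}
\end{equation*}
A direct Jacobian computation (or the MAGMA script of Remark \ref{magma code to check sings p4}) verifies that $S$ has one $\mathbf{A}_3$ singularity at $(1:0:0:0:0)$ and one $\mathbf{A}_1$ singularity obtained from the vertex direction of $q_1$ along $x_0=x_1=0$, confirming the Segre-symbol prediction of Dolgachev.

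Next, I would enumerate hyperplanes $H$ up to the action of the stabilizer $G_S \subset \operatorname{SL}(5)$. Observe that the action preserves the filtration of coordinates $\{x_4\} \subset \{x_3,x_4\} \subset \{x_2,x_3,x_4\} \subset \{x_1,x_2,x_3,x_4\}$ inherent to the equations of $f$ and $g$. Consequently, every hyperplane can, after an element of $G_S$, be written as $\{h=0\}$ where $h$ belongs to one of four strata: a generic $l(x_1,x_2,x_3,x_4)$, a form $l(x_2,x_3,x_4)$, a form $l(x_3,x_4)$, or $x_4$ itself. This reduction is standard and mirrors the argument given in the proof of Lemma \ref{S 4A1s [(1,1),(1,1),1],P4}.

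For each of these four normal forms I would compute the restrictions $f|_H$ and $g|_H$ after an appropriate linear change of coordinates that sends $h$ to a single variable, obtaining a pencil of quadrics in $\mathbb{P}^3$. I would then compute the determinant polynomial and its subdeterminant multiplicities as in Section \ref{sec: segre symb def}, determine the Segre symbol of $D$, and read off the singularity type from Table \ref{tab:p3segtable_with_equations}. The expected outputs are: (i) the generic case $l(x_1,x_2,x_3,x_4)$ produces Segre symbol $[2,1,1]$, giving a single $\mathbf{A}_1$ at the image of the $\mathbf{A}_3$ point of $S$; (ii) the cases $l(x_2,x_3,x_4)$ and $l(x_3,x_4)$ produce $[4]$, giving an $\mathbf{A}_3$; (iii) the case $x_4$ produces $[(3,1)]$, giving a $\mathbf{D}_4$.

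The main obstacle will be the last case: when $H=\{x_4=0\}$, the equations become $f' = x_1^2 + x_0(ax_2+bx_3)$ and $g'=q_1(x_2,x_3)$, and one needs to verify carefully that the resulting pencil has Segre symbol $[(3,1)]$ (not, e.g., $[(4)]$ or a non-isolated degeneration), and that the local analytic type at the singular point is genuinely $\mathbf{D}_4$ rather than a more degenerate singularity. I would handle this either by a direct subdeterminant computation of $\det(\lambda F' + G')$ to pin down the Segre symbol and then invoke Table \ref{tab:p3segtable_with_equations} and Lemma \ref{Sings of all remaining varieties P3}(1), or by running the MAGMA routine of Remark \ref{magma code to check sings p4} at the singular point to confirm the normal form. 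The rest of the argument is a routine compilation of the data.
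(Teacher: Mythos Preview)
Your proposal is correct and follows exactly the same approach as the paper: the paper omits the proof of this lemma entirely, stating that ``the method of proof for the remaining lemmas of this section are identical in method of proof of Lemma \ref{S 4A1s [(1,1),(1,1),1],P4}, and hence are omitted,'' which is precisely the template you describe. One small caution: in case (iii) you may want to first normalize $l_1$ (e.g.\ to $x_4$) before restricting, since this is what produces the genuine degeneration $f' = x_1^2$ that distinguishes the $\mathbf{D}_4$ case from the $\mathbf{A}_3$ cases in (ii).
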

\begin{lemma}\label{S A3 [3,(1,1)],P4}
Let $S$ be the complete intersection of two quadrics $f,g$ with Segre symbol $[3,(1,1)]$. Then $S$ has $1$ $\mathbf{A}_1$ and $1$ $\mathbf{A}_2$ singularity. Let $H$ be a hyperplane. Then, the hyperplane section $D = S\cap H$ has/is up to an $\operatorname{SL}(5)$-action:
\begin{enumerate}
  \item $2$ $\mathbf{A}_1$ singularities at the singularities of $S$ if and only if $H =\{l(x_1,x_2,x_3,x_4)=0\}$;
  \item $1$ $\mathbf{A}_1$ singularity at the $\mathbf{A}_2$ singularity if and only if $H =\{l(x_2,x_3,x_4)=0\}$;
  \item $1$ $\mathbf{A}_2$ singularity at the $\mathbf{A}_2$ singularity if and only if or $H =\{l(x_3,x_4)=0\}$ or $H = \{x_4 =0\}$.
\end{enumerate}
\end{lemma}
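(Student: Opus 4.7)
The plan is to follow the strategy of Lemma \ref{S 4A1s [(1,1),(1,1),1],P4}. By Table \ref{tab:p4segtable_with_equations}, up to projective equivalence we may write
\begin{align*}
    f &= q_1(x_1,x_2,x_3,x_4)+x_0x_1, \\
    g &= x_4^2+x_3^2+x_2x_1.
\end{align*}
A Jacobian computation (or the MAGMA routine of Remark \ref{magma code to check sings p4}) locates the two singular points of $S$: an $\mathbf{A}_2$ at $P_1 = (1:0:0:0:0)$, corresponding to the size-$3$ Jordan block, and an $\mathbf{A}_1$ at a second point $P_2$ lying on $\{x_0 = x_1 = 0\}$ coming from the $(1,1)$ block.

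For each hyperplane class I would first use the $\operatorname{SL}(5)$-stabilizer of the normal form $(f,g)$ to put $H$ into the canonical representative $\{x_i = 0\}$ for the smallest index $i$ appearing in its defining linear form, absorbing the leftover linear terms into $q_1$ exactly as in the proof of Lemma \ref{S 4A1s [(1,1),(1,1),1],P4}. I would then restrict $(f,g)$ to $H \cong \mathbb{P}^3$, factor the determinant polynomial of the resulting pencil to compute its Segre symbol, and read the singularity type of $D$ off Table \ref{tab:p3segtable_with_equations}.

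Carrying this out case by case: in $(i)$ both $P_1$ and $P_2$ lie on $H = \{x_1=0\}$, and the restricted pencil has Segre symbol $[(1,1),1,1]$, yielding $2\mathbf{A}_1$ singularities located precisely at the traces of $P_1$ and $P_2$. In $(ii)$ only $P_1 \in H = \{x_2 = 0\}$ and the restricted pencil has Segre symbol $[2,1,1]$, so $D$ acquires a single $\mathbf{A}_1$ at $P_1$ (the $\mathbf{A}_2$ of $S$ has been cut transversally to its cuspidal direction). In $(iii)$ the hyperplane $H = \{x_3 = 0\}$ or $H = \{x_4 = 0\}$ still contains $P_1$, but in a manner that forces a triple root in the restricted determinant polynomial, so the Segre symbol is $[3,1]$ and $D$ has a single $\mathbf{A}_2$ at $P_1$. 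The main obstacle will be verifying, in each case, that the $\operatorname{SL}(5)$-stabilizer of $(f,g)$ is large enough to reduce every hyperplane of the given combinatorial type to the single canonical representative $\{x_i = 0\}$; this ultimately relies on the stabilizer containing a positive-dimensional torus (scaling $x_1,x_2$ inversely, compatibly with the term $x_2 x_1$ of $g$) together with the Jordan-block unipotents of the $[3,(1,1)]$ normal form, which together act transitively on the set of hyperplanes of each combinatorial class.
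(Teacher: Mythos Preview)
Your proposal is correct and follows essentially the same approach as the paper, which explicitly states that the proofs of this and the neighbouring lemmas are identical in method to that of Lemma~\ref{S 4A1s [(1,1),(1,1),1],P4} and are therefore omitted.

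One clarification on your final paragraph: framing the reduction of $H$ as a question about the $\operatorname{SL}(5)$-stabiliser of the fixed pair $(f,g)$ is more restrictive than what is actually needed, and for a fully generic $q_1$ that stabiliser may well be too small to act transitively on each hyperplane class. The paper's argument (as in Lemma~\ref{S 4A1s [(1,1),(1,1),1],P4}) instead applies an arbitrary element of $\operatorname{SL}(5)$ to move $H$ to $\{x_i=0\}$ and then simply re-expresses $f,g$ in the new coordinates; the resulting pair still has Segre symbol $[3,(1,1)]$ (a projective invariant) and hence still matches the tabulated normal form, just with a different generic $q_1$. This is exactly the ``absorbing leftover terms into $q_1$'' step you already mention, so you can drop the stabiliser discussion entirely.
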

\begin{lemma}\label{S A2 [3,1,1],P4}
Let $S$ be the complete intersection of two quadrics $f,g$ with Segre symbol $[3,1,1]$. Then $S$ has $1$ $\mathbf{A}_2$ singularity. Let $H$ be a hyperplane. Then, the hyperplane section $D = S\cap H$ has/is up to an $\operatorname{SL}(5)$-action:
\begin{enumerate}
  \item  $1$ $\mathbf{A}_1$ singularity at the $\mathbf{A}_2$ singularity if and only if $H =\{l(x_1,x_2,x_3,x_4)=0\}$;
  \item $1$ $\mathbf{A}_2$ singularity at the $\mathbf{A}_2$ singularity if and only if $H =\{l(x_2,x_3,x_4)=0\}$;
  \item $1$ $\mathbf{A}_3$ singularity at the $\mathbf{A}_2$ singularity if and only if or $H =\{l(x_3,x_4)=0\}$;
  \item non-isolated singularities if $H = \{x_4 =0\}$.
\end{enumerate}
\end{lemma}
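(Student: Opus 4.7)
The plan is to mirror the method used in Lemma \ref{S 4A1s [(1,1),(1,1),1],P4}. First, I would take the canonical form of $S$ from Table \ref{tab:p4segtable_with_equations},
\begin{equation*}
    f = q_1(x_1,x_2,x_3,x_4) + x_0 x_4, \qquad g = q_2(x_2,x_3,x_4) + x_1 x_4,
\end{equation*}
and verify by a direct Jacobian computation that $P = (1:0:0:0:0)$ is the unique singular point. A local expansion in the affine chart $\{x_0 \neq 0\}$, solving $f=0$ for $x_4$ as a power series in $x_1, x_2, x_3$ and substituting into $g$, yields a local model of the form $\alpha x_2^2 - a x_1^3 + O(4)$, where $a$ is the coefficient of $x_1^2$ in $q_1$ and $\alpha$ the coefficient of $x_2^2$ in $q_2$; since both are nonzero under the generic conditions imposed by the Segre symbol $[3,1,1]$, the singularity is of type $\mathbf{A}_2$.

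For each of the four cases, I would exploit the stabilizer of $S$ inside $\operatorname{SL}(5)$ to reduce the linear form defining $H$ to a single coordinate variable without altering the shape of $(f,g)$: a change of variables of the form $\tilde{x}_i = l$ with the other coordinates unchanged absorbs any residual monomials into redefinitions of $q_1$ and $q_2$, so we may assume $H = \{x_1 = 0\}$ in Case 1, $H = \{x_2 = 0\}$ in Case 2, $H = \{x_3 = 0\}$ in Case 3, and $H = \{x_4 = 0\}$ in Case 4. Restricting $f$ and $g$ to each such hyperplane and relabelling the remaining coordinates, I would then identify the resulting pencil in $\mathbb{P}^3$ with an entry of Table \ref{tab:p3segtable_with_equations} and read off the Segre symbol and singularities of $D$.

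Concretely, Case 1 produces the pair $\tilde q_1(x_2,x_3,x_4) + x_0 x_4$ and $q_2(x_2,x_3,x_4)$, matching the Segre $[2,1,1]$ row (nodal curve) with a single $\mathbf{A}_1$ at $P$. Cases 2 and 3 produce pencils of the general form $\tilde q_1 + x_0 x_4$ with $\tilde q_1$ a quadratic in three variables and $\tilde q_2 + x_1 x_4$ with $\tilde q_2$ a quadratic in two variables; these would be matched against the Segre $[3,1]$ and $[(2,1),1]$ entries of the table respectively by computing the determinant polynomial $\det(\lambda F' + G')$ of the restricted pencil and analysing the Jordan-block structure at the triple root via the ranks of the $3 \times 3$ subdeterminants. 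Case 4, with $x_4 = 0$, annihilates the distinguished terms $x_0 x_4$ and $x_1 x_4$, leaving $f|_H = q_1(x_1,x_2,x_3,0)$ and $g|_H = q_2(x_2,x_3,0)$, neither of which depends on $x_0$; hence $D \subset \mathbb{P}^3_{x_0,\dots,x_3}$ is the cone with apex $P$ over the finite subscheme of $\mathbb{P}^2_{x_1,x_2,x_3}$ cut out by the two restricted quadrics, consisting generically of four concurrent lines through $P$, and so has non-isolated singularities.

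The main technical obstacle will be the Segre-symbol identification in Cases 2 and 3, where the restricted pencils do not coincide on the nose with the canonical forms of Table \ref{tab:p3segtable_with_equations}; here one either performs further linear substitutions in $\mathbb{P}^3$ (completing squares or absorbing cross terms into redefinitions of $x_0$ and $x_4$) to match the table forms, or verifies the Segre symbol directly from the Jordan structure of $F'^{-1}G'$ at the triple eigenvalue. The reverse implications in each case follow because, up to the stabilizer of $S$, the $\operatorname{SL}(5)$-orbit of a hyperplane is classified by its position in the flag of distinguished subspaces $\{x_0 = \dots = x_{j-1} = 0\}$ singled out by the canonical equations of $S$, and each such orbit produces the singularity type listed above.
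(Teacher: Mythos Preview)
Your proposal is essentially correct and follows exactly the paper's approach: the paper omits the proof of this lemma, stating explicitly that the method is identical to that of Lemma~\ref{S 4A1s [(1,1),(1,1),1],P4}, which is precisely what you outline---take the table form of $S$, absorb the linear form of $H$ into a coordinate via a substitution preserving the shape of $(f,g)$, restrict, and match the resulting pencil in $\mathbb{P}^3$ against Table~\ref{tab:p3segtable_with_equations}.

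One point worth sharpening: your reduction in Case~3 to $H=\{x_3=0\}$ is valid, but note that after this reduction the restricted pair has exactly the same shape as in Case~2 (namely $f'=q(x_1,x_2,x_3)+x_0x_3$, $g'=q'(x_2,x_3)+x_1x_3$, matching the $[3,1]$ row of Table~\ref{tab:p3segtable_with_equations}), because $x_2$ and $x_3$ appear symmetrically in the table form for $[3,1,1]$. So the table-matching alone does not distinguish Cases~2 and~3; the asserted difference ($\mathbf{A}_2$ versus $\mathbf{A}_3$) must come from non-generic constraints on the restricted $q_i$ inherited from the Segre-$[3,1,1]$ condition on $S$, not visible at the level of ``general $q_i$''. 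Your proposed fallback of computing the rank drop of $\lambda F'+G'$ at the triple root directly is the right way to resolve this---and is exactly the care you already anticipated.
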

\begin{lemma}\label{S A1+A2 [3,2],P3}
Let $S$ be the complete intersection of two quadrics $f,g$ with Segre symbol $[3,2]$. Then $S$ has $1$  $\mathbf{A}_1$ and $1$ $\mathbf{A}_2$ singularity. Let $H$ be a hyperplane. Then, the hyperplane section $D = S\cap H$ has/is up to an $\operatorname{SL}(5)$-action:
\begin{enumerate}
  \item no singularities if and only if $H =\{l(x_1,x_2,x_3,x_4)=0\}$;
  \item $2$ $\mathbf{A}_1$ singularities at the singularities of $S$ if and only if $H =\{l(x_2,x_3,x_4)=0\}$;
  \item non-isolated singularities if $H =\{l(x_3,x_4)=0\}$ or $H = \{x_4 =0\}$.
\end{enumerate}
\end{lemma}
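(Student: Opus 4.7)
My plan is to mirror the proof strategy used for Lemmas \ref{S 4A1s [(1,1),(1,1),1],P4} through \ref{S A2 [3,1,1],P4}: take the explicit projective-equivalence normal form of $S$ from Table \ref{tab:p4segtable_with_equations} for Segre symbol $[3,2]$, reduce each of the three hyperplane cases to a standard form by a linear change of coordinates fixing the equations of $S$ (up to abuse of notation, as in the earlier lemmas), and then read off the Segre symbol of the residual pencil of quadrics in $\mathbb{P}^3$ from Table \ref{tab:p3segtable_with_equations}. The singularity count on $S$ itself is already recorded in Table \ref{tab:p4segtable_with_equations}.

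First, I would write
\begin{equation*}
\begin{aligned}
f(x_0,\dots,x_4)&= q_1(x_2,x_3,x_4)+x_4 l_1(x_0,x_1)+ x_3 l_2(x_0,x_1),\\
g(x_0,\dots,x_4)&= q_2(x_3,x_4)+x_4 l_3(x_0,x_1,x_2)+ x_2x_3.
\end{aligned}
\end{equation*}
For case (i), $H=\{l(x_1,x_2,x_3,x_4)=0\}$, a change of variable $\tilde x_1=l(x_1,x_2,x_3,x_4)$ (the other coordinates unchanged) reduces to $H=\{x_1=0\}$, and the resulting intersection in $\mathbb{P}^3=\{x_1=0\}$ becomes a pencil of the form $q_1'(x_2,x_3,x_4)+x_0 l_1'(x_3,x_4)$ and $q_2'(x_3,x_4)+x_4 l_3'(x_0,x_2)+x_2x_3$. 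This has Segre symbol $[1,1,1,1]$, hence $D$ is smooth by Table \ref{tab:p3segtable_with_equations}.

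For case (ii), $H=\{l(x_2,x_3,x_4)=0\}$, a change of variables sending $l$ to $x_2$ (and fixing the shape of $S$) produces $D\subset\mathbb{P}^3=\{x_2=0\}$ with equations of the form $q_1(x_3,x_4)+x_4l_1(x_0,x_1)+x_3l_2(x_0,x_1)$ and $q_2(x_3,x_4)+x_4 l_3(x_0,x_1)$; this matches the normal form of Segre symbol $[2,2]$ in Table \ref{tab:p3segtable_with_equations}, so $D$ has exactly two $\mathbf{A}_1$ points, which one checks coincide with the two singular points of $S$ by the MAGMA routine of Remark \ref{magma code to check sings}. For case (iii), $H=\{l(x_3,x_4)=0\}$ (and similarly $H=\{x_4=0\}$), the analogous change of variables reduces $g$ modulo $x_4$ to $q_2(x_3)+x_2x_3=x_3(cx_3+x_2)$, which contains a hyperplane in its support. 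By Theorem \ref{theorem-H not in supp S} applied in $\mathbb{P}^3$ (or equivalently by noting the resulting Segre symbol is $[(2,2)]$ or $[(2,1,1)]$ in Table \ref{tab:p3segtable_with_equations}), $D$ has non-isolated singularities.

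The only nontrivial step is verifying that the normalizing change of variables in cases (i) and (ii) can be performed while preserving the general form of $f$ and $g$ without accidentally forcing a more special degeneration; this is handled exactly as in the earlier lemmas by absorbing the extra cross terms into the already-free linear and quadratic coefficients of $q_1, q_2, l_1, l_2, l_3$, together with a genericity assumption on the chosen $l$. The conclusion then follows by reading off the Segre symbols in $\mathbb{P}^3$ from Table \ref{tab:p3segtable_with_equations}, exactly as in Lemmas \ref{S 4A1s [(1,1),(1,1),1],P4}--\ref{S A2 [3,1,1],P4}.
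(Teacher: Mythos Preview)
Your proposal is correct and follows precisely the approach the paper intends: the paper explicitly states that the proofs of the lemmas following Lemma~\ref{S 4A1s [(1,1),(1,1),1],P4} (including this one) are identical in method to that lemma and are therefore omitted. Your reduction via a linear change of coordinates and subsequent identification of the Segre symbol of the residual pencil in $\mathbb{P}^3$ via Table~\ref{tab:p3segtable_with_equations} is exactly this method.
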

\begin{lemma}\label{S D4 [(3,1),1],P4}
Let $S$ be the complete intersection of two quadrics $f,g$ with Segre symbol $[(3,1),1]$. Then $S$ has $1$ $\mathbf{D}_4$ singularity. Let $H$ be a hyperplane. Then, the hyperplane section $D = S\cap H$ has/is up to an $\operatorname{SL}(5)$-action:
\begin{enumerate}
  \item no singularities if and only if $H =\{l(x_1,x_2,x_3,x_4)=0\}$;
  \item $2$ $\mathbf{A}_1$ singularities, where one singularity is the $\mathbf{D}_4$ singularity if and only if $H =\{l(x_2,x_3,x_4)=0\}$;
  \item a double conic if and only if $H =\{l(x_3,x_4)=0\}$;
  \item $1$ $\mathbf{A}_3$ singularity away from the $\mathbf{D}_4$ singularity if and only if $H = \{x_4 =0\}$.
\end{enumerate}
\end{lemma}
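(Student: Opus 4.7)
The plan is to follow the pattern established in the proof of Lemma \ref{S 4A1s [(1,1),(1,1),1],P4}: start from the canonical form for $S$ given in Table \ref{tab:p4segtable_with_equations}, reduce each hyperplane type to a coordinate hyperplane via a suitable linear change of variables, and then read off the Segre symbol of $D\subset \mathbb{P}^3$ from Table \ref{tab:p3segtable_with_equations}.

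More precisely, I would write $S$ in the normal form
\begin{equation*}
f = q_1(x_1,x_2,x_3,x_4) + x_0 x_4, \qquad g = q_2(x_3,x_4) + x_4 l_1(x_1,x_2),
\end{equation*}
note that the unique $\mathbf{D}_4$ singularity of $S$ is the point $p_0 = (1:0:0:0:0)$ (which is immediate by inspecting the Jacobian of $(f,g)$ at $p_0$ and comparing to Table \ref{tab:p4segtable_with_equations}), and handle the four cases in turn. In case (1), $H = \{l(x_1,x_2,x_3,x_4) = 0\}$ does not pass through $p_0$, so after the substitution $\tilde{x}_i = l(x_1,\ldots,x_4)$ for an appropriate index $i$, the restriction of $f,g$ to $H$ becomes a generic pencil of two quadrics in $\mathbb{P}^3$; its determinant polynomial has simple roots generically, so by Corollary \ref{smooth_pn} $D$ is smooth. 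In case (2), $H = \{l(x_2,x_3,x_4) = 0\}$ passes through $p_0$, and after a change of coordinates the restriction takes the shape in Table \ref{tab:p3segtable_with_equations} corresponding to Segre symbol $[2,2]$ or $[(1,1),1,1]$ — both yielding exactly $2\mathbf{A}_1$ singularities, one of which is the image of $p_0$.

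In case (3), $H = \{l(x_3,x_4) = 0\}$ forces $q_2(x_3,x_4)|_H$ to collapse and the restriction of $g$ becomes of the form $x_4 l_1(x_1,x_2)$ (up to rescaling), so after normalising $H = \{x_3 = 0\}$ the pencil has Segre symbol $[(2,2)]$, which by Table \ref{tab:p3segtable_with_equations} and Lemma \ref{Sings of all remaining varieties P3}(2) gives non-isolated singularities — in fact a double conic. Finally in case (4), $H = \{x_4 = 0\}$ kills the mixed term $x_0 x_4$ so $x_0$ disappears from both equations, meaning $D$ no longer passes through $p_0$; the restriction becomes
\begin{equation*}
f|_H = q_1(x_1,x_2,x_3), \qquad g|_H = 0 + \text{terms in } x_1,x_2,x_3,
\end{equation*}
which by Table \ref{tab:p3segtable_with_equations} has Segre symbol $[4]$ (twisted cubic with tangent line), and therefore exactly one $\mathbf{A}_3$ singularity away from the original $\mathbf{D}_4$ point.

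The only genuine obstacle is case (3), where one must be careful that the degeneration of $q_2(x_3,x_4)|_H$ truly forces a double factor and hence a double conic rather than an isolated singularity. I would verify this by writing out $g|_H$ explicitly after the substitution and showing it factors as a square times a line, so that $D$ is not a reduced curve but a double conic supported on the smooth conic cut out by the remaining equation. The other three cases are essentially bookkeeping reductions to the $\mathbb{P}^3$ classification, combined with a one-step Jacobian computation to confirm where the singularities sit relative to $p_0$.
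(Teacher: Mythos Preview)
Your overall approach—taking the normal form from Table~\ref{tab:p4segtable_with_equations}, changing coordinates so that $H$ becomes a coordinate hyperplane, and reading off the Segre symbol of $D\subset\mathbb{P}^3$ from Table~\ref{tab:p3segtable_with_equations}—is exactly the method the paper intends (it says the proof is identical in method to Lemma~\ref{S 4A1s [(1,1),(1,1),1],P4}). However, your execution of cases (3) and (4) contains real mistakes.

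In case (4) you write that $H=\{x_4=0\}$ kills the term $x_0x_4$, ``so $x_0$ disappears from both equations, meaning $D$ no longer passes through $p_0$.'' This inference is backwards. If neither $f|_H$ nor $g|_H$ involves $x_0$, then the point $p_0=(1{:}0{:}0{:}0)$ in $H\cong\mathbb{P}^3$ trivially satisfies both equations, so $p_0\in D$ and indeed $D$ is a cone with vertex $p_0$. Furthermore, because both restricted quadrics are singular at $p_0$, the determinant of the pencil vanishes identically and the Segre-symbol classification of Table~\ref{tab:p3segtable_with_equations} does not apply at all; in particular your claim that $D$ has Segre symbol $[4]$ cannot be correct. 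To obtain the configuration asserted in the statement (and confirmed in Table~\ref{tab:G_m invariant pairs} and Theorem~\ref{main thm in vgit p4_polystable}(4), where $D_4$ is two tangent conics \emph{not containing} the $\mathbf{D}_4$ point), one must work with coordinates in which $H$ avoids $p_0$, or otherwise argue more carefully about which $\operatorname{SL}(5)$-orbit of hyperplanes is meant.

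In case (3) you claim the restricted pencil has Segre symbol $[(2,2)]$ and conclude ``in fact a double conic.'' But by Table~\ref{tab:p3segtable} the symbol $[(2,2)]$ is \emph{a double line and two lines in general position}, not a double conic; the double conic is $[(1,1,1),1]$. You also assert that $q_2(x_3,x_4)|_H$ collapses to zero, which is false for a generic linear form $l(x_3,x_4)$: the restriction is a nonzero square $c\tilde{x}^2$, so $g|_H$ is a rank-$2$ quadric rather than the rank-$1$ quadric that $[(1,1,1),1]$ would require. So both the Segre-symbol identification and the claimed factorisation need to be reworked.
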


\begin{lemma}\label{S A3 [4,1],P4}
Let $S$ be the complete intersection of two quadrics $f,g$ with Segre symbol $[4,1]$. Then $S$ has $1$ $\mathbf{A}_3$ singularity. Let $H$ be a hyperplane. Then, the hyperplane section $D = S\cap H$ has/is up to an $\operatorname{SL}(5)$-action:
\begin{enumerate}
  \item $1$ $\mathbf{A}_1$ singularity if and only if $H =\{l(x_1,x_2,x_3,x_4)=0\}$;
  \item $2$ $\mathbf{A}_1$ singularities if and only if $H =\{l(x_2,x_3,x_4)=0\}$;
 \item $1$ $\mathbf{A}_3$ singularity if and only if $H =\{l(x_3,x_4)=0\}$;
  \item non-isolated singularities if and only if $H = \{x_4 =0\}$.
\end{enumerate}
\end{lemma}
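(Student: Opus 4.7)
The plan is to follow the same approach as in the proof of Lemma~\ref{S 4A1s [(1,1),(1,1),1],P4}. By Table~\ref{tab:p4segtable_with_equations}, up to projective equivalence the complete intersection $S$ with Segre symbol $[4,1]$ is given by
\begin{equation*}
  \begin{split}
    f &= q_1(x_2,x_3,x_4) + x_1 l_1(x_2,x_3,x_4) + x_0 x_4, \\
    g &= q_2(x_2,x_3,x_4) + x_1 x_4,
  \end{split}
\end{equation*}
and a direct Jacobian computation locates the unique $\mathbf{A}_3$ singularity at $P = (1:0:0:0:0)$. Each of the four families of hyperplanes in the statement passes through $P$, so $P \in D$ in all four cases; the distinctions arise from how generically $H$ meets the tangent cone at $P$.

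For each case I first put the hyperplane in the normal form $H = \{x_i = 0\}$ by a linear substitution expressing the leading coordinate in terms of the others, then absorb the new terms into the quadratic and linear parts of $f$ and $g$, and finally read off the Segre symbol of $D \subset \mathbb{P}^3$ from Table~\ref{tab:p3segtable_with_equations} to identify its singularities. In case (i), solving $x_1 = \tilde l(x_2,x_3,x_4)$ absorbs the terms $x_1 l_1$ and $x_1 x_4$ into the quadratic parts, producing a pair of the form $q_1'(x_2,x_3,x_4) + x_0 x_4$ and $q_2'(x_2,x_3,x_4)$, which matches Segre symbol $[2,1,1]$ and yields one $\mathbf{A}_1$. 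In case (ii), the substitution $x_2 = \tilde l(x_3,x_4)$ yields a pair matching Segre symbol $[2,2]$ and thus $2\mathbf{A}_1$. In case (iii), setting $x_3 = c x_4$ collapses the quadratic parts to forms in $x_2, x_4$ alone while preserving the $x_0 x_4$ and $x_1 x_4$ structure; after relabelling the result matches Segre symbol $[4]$ and yields one $\mathbf{A}_3$. In case (iv), $H = \{x_4 = 0\}$ eliminates $x_0$ entirely from both defining equations, so $D$ contains the line $\{x_2 = x_3 = x_4 = 0\}$ as a component, giving non-isolated singularities.

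The main technical point throughout will be to verify that the generic linear substitutions produce quadratic forms of the expected rank, so that the resulting $\mathbb{P}^3$-pencil has the claimed Segre symbol; non-generic choices within each family of hyperplanes yield strict degenerations, which are captured either by the same Segre stratum or by a deeper stratum already addressed in one of the earlier lemmas of this section. The converse implications are immediate from Theorem~\ref{segre thm}: since the Segre symbol of the base locus in $\mathbb{P}^3$ determines $(S,D)$ up to projective equivalence, and the four singularity configurations are pairwise distinct, each of the four hyperplane shapes is characterised uniquely by the singularity type of $D$.
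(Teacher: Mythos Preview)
Your approach is exactly the one the paper intends: the proof is explicitly stated to be identical in method to that of Lemma~\ref{S 4A1s [(1,1),(1,1),1],P4}, and you carry out precisely that template.

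There is, however, a genuine computational gap in case~(ii). After the substitution $x_2=\tilde l(x_3,x_4)$ the restricted pencil in $\mathbb{P}^3_{x_0,x_1,x_3,x_4}$ does \emph{not} have Segre symbol $[2,2]$. Write the restricted matrices in the order $(x_0,x_1,x_3,x_4)$: the first row of $\lambda F'+G'$ is $(0,0,0,\lambda/2)$ (coming from the surviving $x_0x_4$ term), and after one cofactor expansion the first column of the resulting $3\times 3$ minor is $(0,0,\lambda/2)^T$; a second expansion leaves a $2\times 2$ determinant equal to $-\tfrac{\lambda^2 m^2}{4}$, where $m$ is the $x_3$--coefficient of the restricted $l_1$. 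Hence $\det(\lambda F'+G')$ is proportional to $\lambda^4$, a single root of multiplicity~$4$, and since $\operatorname{rank} G'=3$ generically, the Segre symbol is $[4]$, giving one $\mathbf{A}_3$ singularity rather than $2\mathbf{A}_1$. So your asserted match with $[2,2]$ fails. You should rework this case carefully; note that the paper itself shows some internal tension here (compare the $[4,1]$ row of Table~\ref{tab:G_m invariant pairs}, which records $\operatorname{Sing}(D)=\mathbf{D}_4$ for $H=\{x_4=0\}$, against item~(iv) of the present lemma), so the precise normal forms and the variable labelling deserve closer scrutiny before you commit to the Segre identifications in (ii)--(iv).
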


\begin{lemma}\label{S D5 [(4,1)],P4}
Let $S$ be the complete intersection of two quadrics $f,g$ with Segre symbol $[(4,1)]$. Then $S$ has $1$ $\mathbf{D}_5$ singularity. Let $H$ be a hyperplane. Then, the hyperplane section $D = S\cap H$ has/is up to an $\operatorname{SL}(5)$-action:
\begin{enumerate}
  \item $1$ $\mathbf{A}_1$ singularity at the $\mathbf{D}_5$ singularity if and only if $H =\{l(x_1,x_2,x_3,x_4)=0\}$;
  \item $2$ $\mathbf{A}_1$ singularities where one lies is the $\mathbf{D}_5$ singularity at $S$ if and only if $H =\{l(x_2,x_3,x_4)=0\}$;
  \item non-isolated singularities if and only if $H =\{l(x_3,x_4)=0\}$ or $H = \{x_4 =0\}$;
  \item $1$ $\mathbf{A}_2$ singularity away from the singular point $P$ if and only if $S$ is given by 
  \begin{equation*}
  \begin{split}
    f(x_0,x_1,x_2,x_3,x_4) =& x_2^2+ax_1x_3+bx_0x_4\\
    g(x_0,x_1,x_2,x_3,x_4) =& x_3^2+cx_1x_4
  \end{split}
\end{equation*}
and $H = \{x_4 =0\}$.
\end{enumerate}
\end{lemma}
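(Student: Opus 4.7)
The plan is to follow the same strategy used in the proof of Lemma \ref{S 4A1s [(1,1),(1,1),1],P4} and all the subsequent lemmas in Section \ref{sings of pairs p4}: start from the canonical form of $S$ given in Table \ref{tab:p4segtable_with_equations}, for each class of hyperplane $H$ perform a linear change of coordinates that puts $H$ into a standard form $\{x_i=0\}$, restrict $f,g$ to $H$, and then recognise the resulting pencil in $\mathbb{P}^3$ by matching it against Table \ref{tab:p3segtable_with_equations}. Once the Segre symbol of the restricted pencil in $\mathbb{P}^3$ is identified, the singularities of $D$ follow from the classification in Table \ref{tab:p3segtable_with_equations} (and from Sommerville's description or Lemma \ref{Sings of all remaining varieties P3}). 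The location of the singularities on $D$ (in particular whether they coincide with the $\mathbf{D}_5$ point $P=(1:0:0:0:0)$ of $S$) is then checked directly from the Jacobian of the restricted pair at $P$, exactly as in Lemma \ref{S 4A1s [(1,1),(1,1),1],P4}.

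More concretely, I would take
\[
 f = q_1(x_2,x_3,x_4)+x_1 l_1(x_3,x_4)+x_0 x_4, \qquad g = q_2(x_3,x_4)+x_4 l_2(x_1,x_2),
\]
and work through the four cases in turn. For $H=\{l(x_1,x_2,x_3,x_4)=0\}$ a generic hyperplane, after the coordinate change eliminating one of $x_1,\dots,x_4$, the restriction will have the shape $q'(x_2,x_3,x_4)+x_0 x_4$, $q''(x_2,x_3,x_4)$, which is the form of Segre symbol $[2,1,1]$ of Table \ref{tab:p3segtable_with_equations}, giving the single $\mathbf{A}_1$ at $P$. For $H=\{l(x_2,x_3,x_4)=0\}$ the restriction becomes a pencil of the $[2,2]$ shape, producing two $\mathbf{A}_1$ nodes one of which remains at $P$. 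For $H=\{l(x_3,x_4)=0\}$ or $H=\{x_4=0\}$, one of the two restricted quadrics splits off a linear factor (since $g$ is a quadratic form in $x_3,x_4$ only, modulo $x_4$), so the restricted pencil has the form $[(2,1,1)]$, $[(1,1,1),1]$ or $[(2,2)]$ from Table \ref{tab:p3segtable_with_equations}; in all of these cases the base locus has non-isolated singularities.

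The genuinely delicate case is (4), where the statement singles out a special subfamily of $[(4,1)]$ intersected with $H=\{x_4=0\}$: my plan is to show that generically case (3) produces non-isolated singularities, but that precisely when $q_1,q_2,l_1,l_2$ degenerate to the exhibited form $f=x_2^2+ax_1x_3+bx_0x_4$, $g=x_3^2+cx_1x_4$, the restriction $f|_H=x_2^2+ax_1 x_3$, $g|_H=x_3^2$ defines a pencil of $[3,1]$-type once one interprets $D$ via the flat family over the parameters of $H$; equivalently, one verifies that the reduced base locus has a unique isolated singularity at $(0:1:0:0)$, and by localising there and applying the classification in Figure \ref{fig:dynkin} one identifies it as $\mathbf{A}_2$. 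This can be checked by computing the normal form of the hypersurface singularity of $f|_H$ restricted to $g|_H=0$ at that point using the MAGMA procedure recalled in Remark \ref{magma code to check sings}.

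The main obstacle will be case (4). The generic restriction of an $[(4,1)]$-pencil by $\{x_4=0\}$ is degenerate (indeed $\det(\lambda F|_H+G|_H)\equiv 0$), so Table \ref{tab:p3segtable_with_equations} does not apply verbatim, and one must instead argue directly from the equations: one identifies the reduced curve, computes the local analytic type at each candidate point, and uses the adjacency diagram of Figure \ref{fig:dynkin} together with the global Milnor number bound $\sum\mu(T_r)\le 7$ (recalled in Section \ref{general_results for P3}) to rule out worse degenerations and conclude that the singularity is exactly $\mathbf{A}_2$. Once this isolated case is handled, the rest of the proof is a mechanical restriction-and-table-lookup of the kind already carried out for Lemmas \ref{S 4A1s [(1,1),(1,1),1],P4}--\ref{S A3 [4,1],P4}.
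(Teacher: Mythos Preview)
Your approach is exactly the one the paper prescribes (the proof is omitted with a pointer to Lemma~\ref{S 4A1s [(1,1),(1,1),1],P4}), and cases (1)--(3) go through as you describe.

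For case (4) there is a genuine snag. With $H=\{x_4=0\}$ your restriction $f|_H=x_2^2+ax_1x_3$, $g|_H=x_3^2$ is a \emph{degenerate} pencil (determinant identically zero) whose base locus is the non-reduced scheme supported on the line $x_2=x_3=0$; it has non-isolated singularities, not an isolated $\mathbf A_2$, and the point $(0{:}1{:}0{:}0)$ you single out is a smooth point of the reduced support. No flat-family reinterpretation rescues this. The hyperplane that actually produces a cuspidal section with $\mathbf A_2$ away from $P$ is $H=\{x_0=0\}$ (this is what is recorded in Table~\ref{tab:G_m invariant pairs}): then $P=(1{:}0{:}0{:}0{:}0)\notin H$, and $D=\{x_2^2+ax_1x_3=x_3^2+cx_1x_4=0\}\subset\mathbb P^3_{x_1,x_2,x_3,x_4}$ has its unique singularity at $(0{:}0{:}0{:}0{:}1)$; eliminating $x_1=-x_3^2/c$ from the second equation and substituting into the first yields the cusp $x_2^2=(a/c)\,x_3^3$. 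With this correction to the hyperplane, your strategy (direct local computation, confirmed if desired by the MAGMA routine of Remark~\ref{magma code to check sings}) completes the proof.
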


As a direct result from the above theorems, we have the following Lemma:

\begin{lemma}\label{C star action invariance}
Let $(S,D)$ be a pair that is invariant under a non-trivial $\mathbb{G}_m$-action. Suppose the singularities of $S$ and $D$ are given as in the first and second entries in one of the rows of Table \ref{tab:G_m invariant pairs}, respectively. Then $(S,D)$ is projectively equivalent to $({f =g= 0},{f = g = H = 0})$ for $f$, $g$ as in Table \ref{tab:p4segtable_with_equations} corresponding to the Segre symbol of row $3$ of Table \ref{tab:G_m invariant pairs}, and $H$ as in the fourth entries in the same row of Table \ref{tab:G_m invariant pairs}, respectively. In particular, any such pair $(S,D)$ is unique up to projective equivalence. Conversely, if $(S,D)$ is given by equations as in the third and fourth entries in a given row of Table \ref{tab:G_m invariant pairs}, then $(S,D)$ has singularities as in the first and second entries in the same row of Table and $(S,D)$ is $\mathbb{G}_m$-invariant. Furthermore the one-parameter subgroup $\lambda(s) \in \operatorname{SL}(5)$, given in the entry of the corresponding row of Table \ref{tab:G_m invariant pairs} is a generator of the $\mathbb{G}_m$-action.
\end{lemma}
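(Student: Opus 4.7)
The plan is to deduce Lemma \ref{C star action invariance} as a direct compilation of the case-by-case analysis performed in Lemmas \ref{S_smooth_D_whatever}--\ref{S D5 [(4,1)],P4}, together with the Weierstrass--Segre classification in Theorem \ref{segre thm}. In each row of Table \ref{tab:G_m invariant pairs}, the data of the singularities of $S$ and $D$ already appears (perhaps implicitly, through the shape of the hyperplane) in one of these lemmas, and the rest of the entries are read off directly.

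First, I would fix a row of Table \ref{tab:G_m invariant pairs} and argue that the singularity type of $S$ uniquely determines its Segre symbol. This is essentially the content of Table \ref{tab:p4segtable_with_equations}, which records the bijection between Segre symbols and singularity profiles on $S$. By Theorem \ref{segre thm}, any two pencils with the same Segre symbol are projectively equivalent via an element of $\operatorname{SL}(5)$ (after mapping roots of the determinant polynomial to roots in $\mathbb{P}^1$), so $S$ may be put into the normal form given in Table \ref{tab:p4segtable_with_equations} for the corresponding Segre symbol. This accounts for the third entry of the table.

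Next, with $S$ in its Segre normal form, I would invoke the appropriate lemma among \ref{S_smooth_D_whatever}--\ref{S D5 [(4,1)],P4}, which classifies, up to the stabilizer $\operatorname{Stab}(S) \subset \operatorname{SL}(5)$, all hyperplanes $H$ yielding a given singularity type on $D = S \cap H$. In each case the normal form of $H$ is unique up to $\operatorname{Stab}(S)$, so the pair $(S,D)$ is unique up to projective equivalence, producing the fourth entry and establishing both directions of the correspondence between the singularity types in columns one and two and the equations in columns three and four.

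The $\mathbb{G}_m$-invariance claim is then a direct verification: for each row, I would check that the diagonal one-parameter subgroup $\lambda(s) \in \operatorname{SL}(5)$ listed in the table acts with a single weight on each monomial of $f$, $g$ and on the linear form defining $H$, so that $\lambda(s)\cdot f = s^{w_f}f$, $\lambda(s)\cdot g = s^{w_g}g$ and $\lambda(s)\cdot H = s^{w_H}H$. Scheme-theoretically this fixes both $S$ and $D$, and conversely the shape of the Segre normal forms in Table \ref{tab:p4segtable_with_equations} (together with the shape of the allowed $H$) is precisely what permits such a diagonal subgroup to exist. The main obstacle here is simply the bookkeeping required to run through every row of the table; no single step is conceptually difficult, because the heavy lifting has already been carried out in the preceding lemmas and in the Weierstrass--Segre theorem.
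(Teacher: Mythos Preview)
Your overall strategy matches the paper's, which states the lemma ``as a direct result from the above theorems'' without further proof. However, your attempt to fill in the details has a genuine gap in the forward direction.

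First, the claim that Table~\ref{tab:p4segtable_with_equations} ``records the bijection between Segre symbols and singularity profiles on $S$'' is false. Inspecting the table, the profile $2\mathbf{A}_1$ is shared by $[2,2,1]$ and $[(1,1),1,1,1]$, and the profile $\mathbf{A}_3$ is shared by $[4,1]$ and $[(2,1),1,1]$. So your two-step plan (first read off the Segre symbol from $\operatorname{Sing}(S)$, then pin down $H$ from $\operatorname{Sing}(D)$) does not go through as stated for several rows of Table~\ref{tab:G_m invariant pairs}.

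Second, even once the Segre symbol is fixed, your appeal to Theorem~\ref{segre thm} does not yield projective uniqueness of $S$ in general: Segre's theorem requires not only the same symbol but also a $\operatorname{PGL}(2)$-isomorphism matching the roots of the determinant polynomial. For a symbol such as $[(1,1),1,1,1]$ (row~2) there are four distinct root values, hence a one-dimensional modulus that your argument does not eliminate.

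Both issues are resolved precisely by the hypothesis you set aside until the end: the non-trivial $\mathbb{G}_m$-invariance of the pair $(S,D)$. This forces the defining quadrics and the hyperplane to be simultaneous eigenvectors for a diagonal torus, which (i) singles out the correct Segre symbol among those sharing the same singularity profile and (ii) kills the residual moduli within that symbol. Your proof uses $\mathbb{G}_m$-invariance only as a verification for the converse; it must also be used as an input in the forward direction.
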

\begin{table}[H] 
  \centering
  \begin{tabular}{|c|c|c|c|c|}
  \hline
    $\operatorname{Sing}(S)$ &$\operatorname{Sing}(D)$ & Segre Symbol& $H$ & $\lambda(s)$ \\
    \hline
    $4\mathbf{A}_1$ & $4\mathbf{A}_1$ at points & $[(1,1),(1,1),1]$ &$x_2$ & $\operatorname{Diag}(s,s,1,s^{-1},s^{-1})$\\
    \hline
    \makecell{$2\mathbf{A}_1$ at \\$P$, $Q$ }& \makecell{$2\mathbf{A}_1$ at points} & $[(1,1),1,1,1]$ &$l(x_1,x_2,x_3)$ & $\operatorname{Diag}(s,1,1,1,s^{-1})$\\
    \hline
    \makecell{$2\mathbf{A}_1+\mathbf{A}_2$ at \\$P$, $Q$, $R$ }& \makecell{non-isolated,\\ $D=2L+L_1+L_2$} & $[3,2]$ &$x_4$ & $\operatorname{Diag}(s,1,1,1,s^{-1})$\\
    \hline
    \makecell{$\mathbf{A}_1+\mathbf{A}_2$ at \\$P$, $Q$, }& \makecell{non-isolated,\\ $D=2L+L_1+L_2$} & $[3,(1,1)]$ &$x_4$ & $\operatorname{Diag}(s,1,1,1,s^{-1})$\\
    \hline
    \makecell{$2\mathbf{A}_1+\mathbf{A}_3$ at\\ $P$, $Q$, $R$ }& double conic & $[(2,1),(1,1)]$ &$x_4$ & $\operatorname{Diag}(s^7,s^2,s^{-3},s^{-3},s^{-3})$\\
    \hline
    \makecell{$\mathbf{A}_1+\mathbf{A}_3$ at\\ $P$, $Q$ }&  $\mathbf{A}_3$ at $P$& $[(2,1),(1,1)]$ &$x_4$ & $\operatorname{Diag}(s^7,s^2,s^{-3},s^{-3},s^{-3})$\\
    \hline
    $\mathbf{A}_3$ at $P$ & $\mathbf{D}_4$ at $P$ & $[4,1]$ &$x_4$ & $\operatorname{Diag}(s^7,s^2,s^2,s^{-3},s^{-8})$\\
    \hline
     \makecell{$\mathbf{A}_3+\mathbf{A}_1$\\ at $P$, $R$} & $\mathbf{A}_1$ at $R$ & $[(2,1),2]$ &$x_4$ & $\operatorname{Diag}(s^7,s^2,s^2,s^{-3},s^{-8})$\\
    \hline
    $\mathbf{D}_4$ at $P$ & $\mathbf{A}_3$ not $P$ & $[(3,1),1]$ &$x_4$ & $\operatorname{Diag}(s^9,s^4,s^{-1},s^{-1},s^{-11})$\\
    \hline
    $\mathbf{D}_5$ at $P$ & $\mathbf{A}_2$ not $P$ & $[(4,1)]$ &$x_0$ & $\operatorname{Diag}(s^9,s^4,s^{-1},s^{-1},s^{-11})$\\
    \hline
  \end{tabular}
  \caption{Some pairs $(S,D)$ invariant under a $\mathbb{G}_m$-action.}
  \label{tab:G_m invariant pairs}
\end{table}

\subsection{VGIT classification}
From the algorithm described in Section \ref{VGITsection} and the computational package \cite{theodoros_stylianos_papazachariou_2022} we obtain the following walls and chambers: 

\begin{center}
\begin{tabular}{ c |c c c c c c c c c c c c c c c}
 & $t_0$ & &$t_1$ & & $t_2$ & &$t_3$ & &$t_4$ & & $t_5$ & & $t_6$\\ 
 walls & 0 & &$\frac{1}{6}$ & & $\frac{2}{7}$ & &$\frac{3}{8}$ & &$\frac{6}{11}$ & & $\frac{2}{3}$ & & $1$\\ 
 chambers & &$\frac{37}{228}$ & & $\frac{327}{1162}$& & $\frac{113}{304}$ & &$\frac{1039}{1914}$ & &$\frac{355}{534}$ & & $\frac{37}{38}$ 
\end{tabular}
\end{center}

We thus obtain $11$ non-isomorphic quotients $M^{GIT}_{4,2,2}(t_i)$, which are characterised by the following two Theorems.

\begin{theorem}\label{main thm in p4 vgit}
Let $(S,D)$ be a pair where $S$ is a complete intersection of two quadrics in $\mathbb{P}^4$ and $D = S\cap H$ is a hyperplane section. 
\begin{enumerate}
  \item $t\in (0, \frac{1}{6})$: The pair $(S,D)$ is $t$-stable if and only $S$ has at worse finitely many $\mathbf{A}_1$ singularities, where $D$ may be non-reduced, or if $S$ is smooth and $D$ has at worse a $\mathbf{D}_4$ singularity.
  \item $t = \frac{1}{6}$: The pair $(S,D)$ is $t$-stable if and only $S$ has at worse finitely many $\mathbf{A}_1$ singularities and $D$ is reduced and has at worst $\mathbf{A}_1$ singularities, or if $S$ is smooth and $D$ has at worse a $\mathbf{D}_4$ singularity.
  \item $t\in (\frac{1}{6}, \frac{2}{7})$: The pair $(S,D)$ is $t$-stable if and only if $S$ has at worse finitely many $\mathbf{A}_2$ singularities and $D$ is reduced and smooth, or if $S$ is smooth and $D$ has at worse a $\mathbf{D}_4$ singularity.
  \item $t= \frac{2}{7}$: The pair $(S,D)$ is $t$-stable if and only $S$ has at worse finitely many $\mathbf{A}_3$ singularities and $D$ can have at worse singularities of type $\mathbf{A}_1$, or if $S$ is smooth and $D$ has at $D$ has at worse a $\mathbf{D}_4$ singularity.
  \item $t\in (\frac{2}{7}, \frac{3}{8})$: The pair $(S,D)$ is $t$-stable if and only $S$ has at worse finitely many $\mathbf{A}_3$ singularities and $D$ is smooth, or if $S$ is smooth and $D$ has at worse a $\mathbf{D}_4$ singularity.
  \item $t= \frac{3}{8}$: The pair $(S,D)$ is $t$-stable if and only $S$ has at worse finitely many $\mathbf{A}_3$ singularities and $D$ is smooth, or if $S$ is smooth and $D$ has at worse an $\mathbf{A}_3$ singularity.
  \item $t \in (\frac{3}{8}, \frac{6}{11})$: The pair $(S,D)$ is $t$-stable if and only $S$ has at worse finitely many $\mathbf{A}_3$ singularities and $D$ is smooth, or if $S$ is smooth and $D$ has at worse an $\mathbf{A}_2$ singularity.
  \item $t = \frac{6}{11}$: The pair $(S,D)$ is $t$-stable if and only $S$ has at worse finitely many $\mathbf{A}_3$ singularities and $D$ is smooth, or if $S$ is smooth and $D$ has at worse a $\mathbf{A}_1$ singularities.
  \item $t \in (\frac{6}{11},\frac{2}{3})$: The pair $(S,D)$ is $t$-stable if and only $S$ has at worse finitely many $\mathbf{A}_3$ singularities and $D$ is smooth, or if $S$ is smooth and $D$ has at worse a $\mathbf{A}_1$ singularities.
  \item $t =\frac{2}{3}$: The pair $(S,D)$ is $t$-stable if and only $S$ has at worse finitely many $\mathbf{A}_3$ singularities and $D$ is smooth, or if $S$ is smooth and $D$ is smooth.
  \item $t \in (\frac{2}{3},1)$: The pair $(S,D)$ is $t$-stable if and only $S$ has at worse finitely many $\mathbf{D}_5$ singularities and $D$ is smooth, or if $S$ is smooth and $D$ is smooth.
\end{enumerate}
\end{theorem}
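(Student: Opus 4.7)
The plan is to prove the theorem case by case for each of the eleven values of $t$ listed (the six walls $t_i$ and the five intervening chambers) by running the VGIT machinery of Section \ref{VGITsection} and then translating the combinatorial output into singularity data via the classifications of Section \ref{general_results for P4} and Section \ref{sings of pairs p4}. Concretely, for each such $t$ I would first invoke Theorem \ref{git walls} (already used above to produce the wall table) and then feed $(n,d,k,m)=(4,2,2,1)$ together with $t$ into the algorithm of Section \ref{sec:how to study VGIT}, implemented in \cite{theodoros_stylianos_papazachariou_2022}. By Theorem \ref{unstable families vgit}, the output is a finite list of maximal Cartesian products $N^{\ominus}_t(\lambda,x^J,x_{j})$ (respectively $N^{-}_t$) giving, up to an $\operatorname{SL}(5)$-action, the support of every non $t$-stable (respectively $t$-unstable) pair.

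Next, for each maximal set in this list I would write down the most general pair $(S,H)$ whose monomials lie in it, using Proposition \ref{annihilator_k-case-vgit} to extract defining equations. Comparing these equations with the normal forms in Table \ref{tab:p4segtable_with_equations}, I can read off the Segre symbol of $S$, and then applying the appropriate lemma from the list \ref{S_smooth_D_whatever}--\ref{S D5 [(4,1)],P4} (selected by the shape of $H$ relative to the support of $S$), I can read off the singularities of $D=S\cap H$. Since these sets are maximal, a pair is $t$-stable if and only if its support, in every coordinate system, fails to be contained in any such $N^{\ominus}_t$. Converting this negation into a positive statement about allowed singularities yields each of the eleven items. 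The centroid criterion (Theorem \ref{t-centroid_criterion}) is then used to distinguish strictly $t$-semistable sets from $t$-unstable ones, which is needed for the walls since at those values some $N^{\ominus}$ will have non-empty annihilator while the corresponding $N^-$ will not.

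A useful a priori reduction is Lemma \ref{maxb}: the walls $t_0=0$ and $t_6=1$ already correspond to the absolute GIT of complete intersections $S$ and of hyperplane sections $D$ respectively, so the endpoint cases $t\in(0,\frac{1}{6})$ and $t\in(\frac{2}{3},1)$ are controlled by Theorems \ref{p3 theorem 1- nonstable- nominus}--\ref{polystable_P3} (for $S$) and by a GIT analysis of hyperplane sections done implicitly by the same computational package. Moreover, Theorem \ref{theorem-H not in supp S} applies because $kd=4\leq n=4$ is still in the Fano regime, so $\operatorname{Supp}(H)\not\subset\operatorname{Supp}(f_i)$ for any $t$-semistable pair and $D$ is always a genuine complete intersection divisor. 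This lets me discard a whole class of otherwise combinatorially admissible $N^\ominus_t$ at every value of $t$.

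The main obstacle will be the translation step: several of the maximal sets $N^{\ominus}_t$ define families that, while combinatorially distinct, are projectively equivalent, so I will have to check that the various equations produced reduce, after an $\operatorname{SL}(5)$-change of coordinates, to a Segre normal form from Table \ref{tab:p4segtable_with_equations}. In particular, at the walls $t=\frac{2}{7},\frac{3}{8},\frac{6}{11}$ the annihilator computations can produce pairs whose Segre symbol is of the degenerate type $[(2,1),\ldots]$ or $[(3,1),1]$, and I must use the dedicated lemmas (\ref{S 2A1+A3 [(2,1),(1,1)],P4}--\ref{S D4 [(3,1),1],P4}) together with Lemma \ref{C star action invariance} to certify that the pairs sitting on the closed strictly semistable orbits predicted by Theorem \ref{strictly_semistable_k-case-vgit} really do have the $\mathbb{G}_m$-stabilisers they should, and hence realise the expected singularity types. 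Once this identification is in place for each of the finitely many families, the eleven assertions follow mechanically.
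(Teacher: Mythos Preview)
Your proposal is correct and follows essentially the same approach as the paper's own proof: run the computational package \cite{theodoros_stylianos_papazachariou_2022} to produce the maximal sets $N^{\ominus}_t(\lambda,x^J,x_p)$ for each wall and chamber via Theorem \ref{unstable families vgit}, then translate each such family into singularity data using Table \ref{tab:p4segtable_with_equations} and the lemmas of Section \ref{sings of pairs p4}. One small slip: the endpoint $t\to 0$ is governed by the absolute GIT of complete intersections in $\mathbb{P}^4$ (the Mabuchi--Mukai result referenced at the start of the VGIT classification), not by Theorems \ref{p3 theorem 1- nonstable- nominus}--\ref{polystable_P3}, which are the $\mathbb{P}^3$ results; the latter are what control the $t\to 1$ endpoint via Lemma \ref{maxb}(iii), since $D=S\cap H$ is then a complete intersection of two quadrics in $H\cong\mathbb{P}^3$.
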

\begin{proof}
As before, let $S = \{f_1=0\}\cap \{f_2 = 0\}$, $H = \{h=0\}$, $D = S\cap H$. By Theorem \ref{unstable families vgit}, the pair $(S, D)$ is $t$-stable if and only if for any $g \in \operatorname{SL}(5, \mathbb{C})$ the monomials with non-zero coefficients of $(g\cdot f_1, g\cdot f_2, g \cdot H)$ are not contained in a tuple of sets $N^{\ominus}_t(\lambda, x^J, x_p)$-characterized geometrically in Section \ref{sings of pairs p4}- which is maximal for every given $t$, as stated in Theorem \ref{unstable families vgit}. These maximal sets can be found algorithmically using the computational methods of Section \ref{VGITsection} and the computational package \cite{theodoros_stylianos_papazachariou_2022}. This is equivalent to the conditions in the statement. We verify the conditions for each $t \in (0, 1)$. We will refer to the singularities of $S$ and $D$ in terms of the ADE classification as in Sections \ref{p3 VGIT section} and \ref{general_results for P4}. These will be equivalent to the global description used in the statement of Theorem \ref{main thm in p4 vgit}.

Let $t\in (0,\frac{1}{6})$, and let $\lambda, x^J, x_p = (12,2,-3,-3,-8), x_0x_4, x_0$. Then $S$ cannot have an $\mathbf{A}_2$ singularity or a degeneration of one. Similarly, if we consider $\lambda, x^J, x_p = (1,1,0,-1,-1)$, $x_0x_3$, $x_2$ or $\lambda, x^J, x_p = (1,0,0,0,-1)$, $x_0x_4$, $x_0$, we deduce that if $P \in D$ then $P$ is a singular point of $S$ of type at worst $\mathbf{A}_1$. This completes the proof when $t\in (0.\frac{1}{6})$.

For $t = \frac{1}{6}$, the maximal sets $N^{\ominus}_{\frac{1}{6}}(\lambda, x^J, x_p)$ are the same as those for $t\in (0.\frac{1}{6})$ with the addition of sets $N^\ominus_t\big((4,4,-1,-1,-6), x_0x_4,x_4 \big)$, $N^\ominus_t\big((9,4,-1,-6,-6), x_0x_3,x_3 \big)$ and $N^\ominus_t\big((8,3,3,-2,-12), x_0x_4,x_4 \big)$, which represent the monomials of the equations of any pair $(S', D')$ such that $D'$ is not reduced. Therefore, $(S, D)$ is $\frac{1}{6}$-stable if and only if in addition to the conditions for $t$-stability when $t\in (0,\frac{1}{6})$, $D$ is not reduced. Hence, (ii) follows.

Let $t\in (\frac{1}{6}, \frac{2}{7})$. The maximal $t$-non-stable sets $N^{\ominus}_t(\lambda, x^J, x_p)$ are the same as for $t = \frac{1}{6}$ but replacing the sets $N^\ominus_t\big((12,2,-3,-3,-8), x_0x_4, x_0 \big)$ and $N^\ominus_t\big((6,6,1,-4,-9), x_0x_4, x_1 \big)$, with the 
sets $N^\ominus_t\big((2,0,0,-1,-1), x_1x_3,x_1 \big)$, $N^\ominus_t\big((8,3,-2,-2,-7), x_1x_4,x_0 \big)$ and \\ $N^\ominus_t\big((7,7,2,-3,-13), x_0x_4,x_0 \big)$. A pair $(S', D')$ whose defining equations have coefficients in the set $N^\ominus_t\big((8,3,-2,-2,-7), x_1x_4,x_0 \big)$ require that $S'$ has (a degeneration of) an $\mathbf{A}_2$ singularity. Hence, a $t$-stable pair $(S, D)$ may now have $\mathbf{A}_2$ singularities but not $\mathbf{A}_3$ singularities. Therefore,
$(S, D)$ is $t$-stable if and only if $S$ has at worst $\mathbf{A}_2$ singularities, $D$ is reduced and smooth, hence (iii) follows.

Let $t = \frac{2}{7}$. $t$-non-stable sets $N^{\ominus}_t(\lambda, x^J, x_p)$ are the same as for $t \in (\frac{1}{6}, \frac{2}{7})$ but replacing the sets $N^\ominus_t\big((9,4,-1,-6,-6), x_0x_3,x_3 \big)$ and $N^\ominus_t\big((8,3,3,-2,-12), x_0x_4,x_4 \big)$ with the sets $N^\ominus_t\big((3,3,3,-2,-7), x_0x_4,x_4 \big)$, $N^\ominus_t\big((8,3,-2,-2,-7), x_1x_4,x_4 \big)$ and \\ $N^\ominus_t\big((13,3,-2,-7,-7), x_1x_3,x_3 \big)$, which represent the monomials of the equations of any pair $(S', D')$ such that $D'$ has at worse (a degeneration of) an $\mathbf{A}_2$ singularity at a singular point $P$ of $S$. Therefore, $(S, D)$ is $\frac{2}{7}$-stable if and only if in addition to the conditions for $t$-stability when $t\in (\frac{1}{6}, \frac{2}{7})$, $D$ is not reduced but does not have (a degeneration of) an $\mathbf{A}_3$ singularity at a singular point of $P$. Hence, (iv) follows.

Let $t\in (\frac{2}{7}, \frac{3}{8})$. The maximal $t$-non-stable sets $N^{\ominus}_t(\lambda, x^J, x_p)$ are the same as for $t = \frac{2}{7}$,  replacing sets $N^\ominus_t\big((6,1,1,-4,-4), x_1x_3,x_0 \big)$, $N^\ominus_t\big((7,2,-3,-3,-13), x_2^2,x_0 \big)$, \\$N^\ominus_t\big((8,3,3,-2,-12), x_0x_4,x_4 \big)$ and $N^\ominus_t\big((8,3,-2,-2,-7), x_1x_4,x_4 \big)$, with the sets \\ $N^\ominus_t\big((49,4,-1,-16,-36), x_1x_4,x_0 \big)$, $N^\ominus_t\big((2,0,0,-1,-1), x_1x_3,x_1 \big)$, \\$N^\ominus_t\big((8,3,-2,-2,-7), x_1x_4,x_4 \big)$, $N^\ominus_t\big((24,14,-11,-11,-16), x_2^2,x_0 \big)$ and \\$N^\ominus_t\big((11,1,-4,-4,-4), x_2^2,x_1 \big)$. The set $N^\ominus_t\big((8,3,-2,-2,-7), x_1x_4,x_4 \big)$ represents the monomials of the equations of any pair $(S', D')$ such that $D'$ is a general hyperplane section and $S'$ has at worse an $\mathbf{A}_3$ singularity. Therefore, $(S, D)$ is $t$-stable if and only if in addition to the conditions for $t$-stability when $t= \frac{2}{7}$, $D$ is a general hyperplane section and $S$ has at worse a $\mathbf{A}_3$ singular point. Hence, (v) follows.

Let $t = \frac{3}{8}$. The maximal $t$-non-stable sets $N^{\ominus}_t(\lambda, x^J, x_p)$ are the same as for $t \in (\frac{2}{7}, \frac{3}{8})$, with the addition of one new set $N^\ominus_t\big((7,2,2,-3,-8), x_0x_4,x_4 \big)$ which represents the monomials of the equations of any pair $(S', D')$ such that $D'$ has at worse (a degeneration of) a $\mathbf{D}_4$ singularity at a  point $P$ of $S$ (which is smooth). Furthermore, the restrictions for $t \in (\frac{2}{7}, \frac{3}{8})$ regarding D still apply. Therefore, a pair $(S, D)$ is $t$-stable if and only if satisfies the conditions in (vi).

Let $t\in (\frac{3}{8}, \frac{6}{11})$. The maximal $t$-non-stable sets $N^{\ominus}_t(\lambda, x^J, x_p)$ are the same as for $t = \frac{3}{8}$,  but replacing set $N^\ominus_t\big((8,3,-2,-2,-7), x_1x_4,x_0 \big)$ with the set $N^\ominus_t\big((1,1,0,0,-2), x_2^2,x_4 \big)$, which represents the monomials of the equations of any pair $(S', D')$ such that $D'$ has at worse (a degeneration of) an $\mathbf{A}_3$ singularity at a  point $P$ of $S'$ (which is smooth). Hence, a pair $(S, D)$ is $t$-stable if and only if satisfies the conditions in (vii).

Let $t=\frac{6}{11}$. The maximal $t$-non-stable sets $N^{\ominus}_t(\lambda, x^J, x_p)$ are the same as for $t \in (\frac{3}{8}, \frac{6}{11})$,  replacing the sets $N^\ominus_t\big((4,4,-1,-1,-6), x_0x_4,x_4 \big)$ and $N^\ominus_t\big((8,3,-2,-2,-7), x_1x_4,x_4 \big)$ with the set $N^\ominus_t\big((9,4,-1,-1,-11), x_0x_4,x_4 \big)$, which represents the monomials of the equations of any pair $(S', D')$ such that $D'$ has at worse (a degeneration of) an $\mathbf{A}_2$ singularity at a  point $P$ of $S'$ (which is smooth). Hence, a pair $(S, D)$ is $t$-stable if and only if satisfies the conditions in (viii).

Let $t\in (\frac{6}{11}, \frac{2}{3})$. The maximal $t$-non-stable sets $N^{\ominus}_t(\lambda, x^J, x_p)$ are the same as for $t=\frac{6}{11}$, replacing set $N^\ominus_t\big((8,3,3,-2,-12), x_0x_4,x_4 \big)$ with the set $N^\ominus_t\big((3,1,0,-1,-3), x_1x_4,x_0 \big)$, which represents the monomials of the equations of any pair $(S', D')$ such that $S'$ has at worse (a degeneration of) a $\mathbf{D}_5$ singularity and  $D'$ is a general hyperplane section. Therefore, $(S, D)$ is $t$-stable if and only if in addition to the conditions for $t$-stability when $t= \frac{6}{11}$, $D$ is a general hyperplane section and $S$ has at worse a $\mathbf{A}_4$ singular point. Hence, a pair $(S, D)$ is $t$-stable if and only if satisfies the conditions in (ix).

Let $t = \frac{2}{3}$. The maximal $t$-non-stable sets $N^{\ominus}_t(\lambda, x^J, x_p)$ are the same as for $t\in (\frac{6}{11}, \frac{2}{3})$, replacing the set $N^\ominus_t\big((1,1,0,0,-2), x_2^2,x_4 \big)$ with the set $N^\ominus_t\big((3,1,0,-1,-3), x_0x_4,x_4 \big)$, which represents the monomials of the equations of any pair $(S', D')$ such that $D'$ and $S'$ are both smooth. Hence, a pair $(S, D)$ is $t$-stable if and only if satisfies the conditions in (x).

Let $t\in (\frac{2}{3},1 )$. The maximal $t$-non-stable sets $N^{\ominus}_t(\lambda, x^J, x_p)$ are the same as for $t = \frac{2}{3} $, removing the set $N^\ominus_t\big((3,1,0,-1,-3), x_1x_4,x_0 \big)$. Hence, $S$ can have at worse $\mathbf{D}_5$ singularities, with $D$ general, and a pair $(S, D)$ is $t$-stable if and only if it satisfies the conditions in (xi).

For a more detailed description of the above maximally destabilizing tuples, written via generating polynomials, we prompt the reader to the thesis \cite[\S 7.4]{my_phd_thesis}.
\end{proof}

\begin{theorem}\label{main thm in vgit p4_polystable}
Let $t \in (0,1)$. If $t$ is a chamber, or $t = t_5$, then $\overline{M(t)}$ is the compactification of the stable loci $M(t)$ by the closed $\operatorname{SL}(5)$-orbit in $\overline{M(t)}\setminus M(t)$ represented by the pair $(\tilde{S},\tilde{D})$, where $\tilde{S}$ is the unique $\mathbb{G}_m$-invariant complete intersection of two quadrics with Segre symbol $[(1,1),(1,1),1]$ and $4$ $\mathbf{A}_1$ singularities, and $\tilde{D}$ is the union of the unique four lines in $\tilde{S}$, each of them passing through two of those singularities, or the pair $(S', D')$, where $S'$ is the complete intersection with Segre symbol $[(1,1),1,1,1]$ and $2$ $\mathbf{A}_1$ singularities, and $D'$ is two conics in general position with $2$ $\mathbf{A}_1$ singularities at the singular points of $S'$. If $t = t_i$, for $i = 1,2,3,4,5$, then $\overline{M(t_i)}$ is the compactification of the stable loci $M(t_i)$ by the three closed $\operatorname{SL}(5)$-orbits in $\overline{M(t)}\setminus M(t)$ represented by the uniquely defined pairs $(\tilde{S},\tilde{D})$, $(S',D')$ described above, and the $\mathbb{G}_m$-invariant pairs $(S_i, D_i)$, $(S'_i, D'_i)$ uniquely defined as follows:
\begin{enumerate}
  \item the complete intersection $S_1$ of two quadrics with Segre symbol $[3,2]$ with $2$ $\mathbf{A}_1$ and $1$ $\mathbf{A}_2$ singularities, and the divisor $D_1\in |-K_{S_1}|$, where $D_1 = 2L+L_1+L_2$ (a double line and two lines meeting at two points), with non-isolated singularities, and the complete intersection $S'_1$ of two quadrics with Segre symbol $[3,(1,1)]$ with $1$ $\mathbf{A}_1$ and $1$ $\mathbf{A}_2$ singularities, and the divisor $D'_1\in |-K_{S'_1}|$, where $D'_1 = 2L+L_1+L_2$ (a double line and two lines meeting at two points), with non-isolated singularities;
  \item the complete intersection $S_2$ of two quadrics with Segre symbol $[(2,1),(1,1)]$ with $2$ $\mathbf{A}_1$ and $1$ $\mathbf{A}_3$ singularities, and the divisor $D_2\in |-K_{S_2}|$, where $D_2$ is a double conic, and the complete intersection $S'_2$ of two quadrics with Segre symbol $[(2,1),2]$ with $1$ $\mathbf{A}_1$ and $1$ $\mathbf{A}_3$ singularities, and the divisor $D'_2\in |-K_{S'_2}|$, where $D'_2$ has an $\mathbf{A}_3$ singularity at the $\mathbf{A}_1$ singularity of $S'_2$;
  \item the complete intersection $S_3$ of two quadrics with Segre symbol $[4,1]$ with $1$ $\mathbf{A}_3$ singularity, and the divisor $D_3\in |-K_{S_3}|$, where $D_3$ is a conic and two lines intersecting in one point, with a $\mathbf{D}_4$ singularity at the $\mathbf{A}_3$ singularity of $S_2$;
  \item the complete intersection $S_4$ of two quadrics with Segre symbol $[(3,1),1]$ with $1$ $\mathbf{D}_4$ singularity, and the divisor $D_4\in |-K_{S_4}|$, where $D_4$ is two tangent conics with a $\mathbf{A}_3$ singularity, and $D_4$ does not contain the singular point of $S_4$;
  \item the complete intersection $S_4$ of two quadrics with Segre symbol $[(4,1)]$ with $1$ $\mathbf{D}_5$ singularity, and the divisor $D_5\in |-K_{S_5}|$, where $D_5$ is a cuspidal curve with a $\mathbf{A}_2$ singularity, and $D_5$ does not contain the singular point of $S_5$.
\end{enumerate}
\end{theorem}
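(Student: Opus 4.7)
The plan is to apply Theorem \ref{strictly_semistable_k-case-vgit} to determine all closed strictly $t$-semistable orbits at each wall and chamber, combining the algorithmic computation with the explicit singularity analysis of Section \ref{sings of pairs p4}. By Theorem \ref{strictly_semistable_k-case-vgit} together with Proposition \ref{annihilator_k-case-vgit}, a pair $(S,D)$ represents a closed strictly $t$-semistable orbit only if, up to the $\operatorname{SL}(5)$-action, its defining monomials lie in some annihilator set
$V^0_t(\lambda,x^J,x_p)\times B^0(\lambda,x^J)\times B^0(\lambda,x_p)$
for some $\lambda\in P_{4,2,2,1}$ and support monomials $x^J\in\Xi_2$, $x_p\in\Xi_1$. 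Using the computational package \cite{theodoros_stylianos_papazachariou_2022}, I would enumerate all such annihilators for each wall $t_i$ and each chamber, by restricting to the maximal semi-destabilising tuples $N^{\ominus}_t(\lambda,x^J,x_p)$ already computed in the proof of Theorem \ref{main thm in p4 vgit}.

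Next, for each annihilator produced, I would construct the associated pair $(S,D)$ whose defining equations are supported on $V^0_t\times B^0$; the form of $V^0_t$ automatically forces this pair to be $\mathbb{G}_m$-invariant under the one-parameter subgroup $\lambda$ generating the annihilator. I would then match these $\mathbb{G}_m$-invariant pairs against the entries of Table \ref{tab:G_m invariant pairs} via Lemma \ref{C star action invariance}, reading off the Segre symbol of $S$ and the singularity profile of $D$ from the list in Section \ref{sings of pairs p4} (Lemmas \ref{S_smooth_D_whatever}--\ref{S D5 [(4,1)],P4}). This identifies each closed strictly semistable orbit with one of the pairs $(\tilde{S},\tilde{D})$, $(S',D')$, $(S_i,D_i)$, $(S'_i,D'_i)$ in the statement. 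Closedness of each orbit follows because $\lambda\in\operatorname{Stab}((S,D))$ is nontrivial, so $\dim\operatorname{Stab}((S,D))\geq 1$, hence by \cite[Remark 8.1(5)]{dolgachev_2003} the orbit is closed in the semistable locus.

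For uniqueness up to projective equivalence, I would invoke the Weierstrass--Segre Theorem \ref{segre thm}: any two complete intersections of two quadrics with identical Segre symbols (and matching root configuration, automatic when all distinct roots are $\leq 3$) are $\operatorname{SL}(5)$-equivalent. Within each stabiliser subgroup of such an $S$, the choice of $H$ is further constrained by the highly restrictive condition $\operatorname{Supp}(h)\subseteq B^0(\lambda,x_p)$, which pins down $H$ up to an additional finite ambiguity absorbed by the $\operatorname{SL}(5)$-action. Finally, to determine at which walls each $(S_i,D_i)$ appears, I would compare annihilators across adjacent chambers: the new polystable orbit at $t_i$ appears precisely when a new singularity type (determined in Theorem \ref{main thm in p4 vgit}) becomes permissible, and I would check on a case-by-case basis that the Segre symbol and divisor type listed in (i)--(v) match the destabilising type that first appears at $t_i$.

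The main technical obstacle is ensuring the list of closed orbits is \emph{complete and non-redundant}: the computational algorithm produces a possibly large collection of annihilators at each wall, and two different annihilators (for different choices of $\lambda$, $x^J$, $x_p$) can yield projectively equivalent pairs. Ruling out duplicate orbits requires a careful comparison of Segre symbols and divisor singularities via Section \ref{sings of pairs p4}, and ruling out additional spurious orbits requires checking that any pair supported in an annihilator but strictly containing a smaller annihilator in fact degenerates — under the $\mathbb{G}_m$-flow of $\lambda$ — to one already in the list, so that its closure contains the listed pair and it is therefore not itself closed. Once this bookkeeping is carried out wall-by-wall, the description in (i)--(v) follows directly.
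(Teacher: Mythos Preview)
Your overall strategy matches the paper's: invoke Theorem \ref{strictly_semistable_k-case-vgit} to reduce to annihilator sets $N^0_t(\lambda,x^J,x_p)$, enumerate these via the computational package, and then identify the resulting $\mathbb{G}_m$-invariant pairs through Lemma \ref{C star action invariance} and the Segre-symbol classification. The paper proceeds in exactly this way, including the explicit degeneration step you describe in your final paragraph (it takes specific one-parameter subgroups $\gamma_1,\gamma_2$ and computes $\lim_{t\to 0}\gamma_i(t)\cdot(S_i,D_i)$ to show that certain candidate annihilator pairs flow into $(\tilde S,\tilde D)$ or $(S',D')$, hence are not themselves closed).

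There is, however, a genuine logical slip in your closedness argument. You write that closedness of the orbit follows because $\lambda\in\operatorname{Stab}((S,D))$ is nontrivial, citing \cite[Remark 8.1(5)]{dolgachev_2003}. That implication runs the wrong way: Dolgachev's remark (and the paper's use of it) says that a \emph{closed} strictly semistable orbit has infinite stabiliser, not that an infinite stabiliser forces the orbit to be closed. A strictly semistable point can perfectly well have a positive-dimensional stabiliser and still lie in a non-closed orbit whose closure contains a smaller polystable orbit. You implicitly acknowledge this in your last paragraph when you discuss degenerating spurious candidates, which is inconsistent with the earlier claim that closedness is automatic from $\dim\operatorname{Stab}\geq 1$. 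The correct logic, as in the paper, is: start from the assumption that $(S,D)$ represents a closed strictly $t$-semistable orbit, deduce the $\mathbb{G}_m$-action from that, land in an annihilator, and then prune the list of annihilator candidates by exhibiting explicit degenerations among them until only the genuinely polystable representatives remain.
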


\begin{proof}
    As before, let $S = \{f=0\}\cap \{g = 0\}$, $H = \{h=0\}$, $D = S\cap H$, and let the pair $(S,D)$ belong to a closed
strictly $t$-semistable orbit. By Theorem \ref{strictly_semistable_k-case-vgit}, the polynomials $f$, $g$, $h$ are generated by monomials in $N^{0}_t(\lambda, x^J, x_p)$ for some $(\lambda, x^J, x_p)$ such that $N^{\ominus}_t(\lambda, x^J, x_p)$ is maximal with respect to the containment of order of sets. Since there is a finite number of $\lambda \in P_{n,d,k}$ to consider (see, \ref{unstablelemma-vgit}), this is a finite computation which can be carried out by the software package \cite{theodoros_stylianos_papazachariou_2022}.

We have two possible choices for each $f_i$ and $h$ if $t \neq t_1,\dots, t_5$, four choices if $t = t_1, t_2$ and three choices if $t = t_3, t_4, t_5$. By Theorem \ref{t-centroid_criterion} we can check that the pairs $(S', D')$ and $(\Bar{S}, \Bar{D})$ given by 
\begin{equation*}
  \begin{split}
    f'(x_0,x_1,x_2,x_3,x_4) =& x_2^2+ x_4l_{1}(x_0,x_1)+x_3l_{2}(x_0,x_1)\\
    g'(x_0,x_1,x_2,x_3,x_4) =& x_2^2+ x_4l_{3}(x_0,x_1)+x_3l_{4}(x_0,x_1)\\
    H'(x_0,x_1,x_2,x_3,x_4) =& x_2
  \end{split}
\end{equation*}
and
\begin{equation*}
   \begin{split}
     \bar{f}(x_0,x_1,x_2,x_3,x_4)&= x_4l_1(x_0,x_1)+x_3l_2(x_0,x_1)+q_1(x_2,x_3,x_4)\\
     \bar{g}(x_0,x_1,x_2,x_3,x_4)&= x_4l_3(x_0,x_1)+x_3l_4(x_0,x_1)+q_2(x_2,x_3,x_4)\\
     \bar{H}(x_0,x_1,x_2,x_3,x_4) &= l(x_1,x_2,x_3)
   \end{split}
 \end{equation*}
are strictly $t$-semistable. Suppose that $\lambda, x^J, x_p  = (1,1,0,-1,-1)$, $x_0x_3$, $x_2$ or $(1,0,0,0,-1)$, $x_0x_4$, $x_1$. Then we have 
\begin{equation*}
  \begin{split}
    f_1 =& q_{1}(x_2,x_3,x_4)+ x_4l_{1}(x_0,x_1)+x_3l_{2}(x_0,x_1)\\
    g_1 =& q_{2}(x_2,x_3,x_4)+ x_4l_{3}(x_0,x_1)+x_3l_{4}(x_0,x_1)\\
    H_1 =& l(x_2,x_3,x_4)
  \end{split}
\end{equation*}
and 
\begin{equation*}
  \begin{split}
    f_2 =& q_{17}(x_1,x_2,x_3,x_4)+ x_4x_0\\
    g_2 =& q_{18}(x_1,x_2,x_3,x_4)+ x_4x_0\\
    H_2 =& l(x_1,x_2,x_3,x_4).
  \end{split}
\end{equation*}
Let $\gamma_1 = (1,1,0,-1,-1)$ and $\gamma_2 = (1,0,0,0,-1)$. Then we have, 
$$\lim_{t\to 0}\gamma_1(t)\cdot (S_1, D_1) = (S', D') \quad \lim_{t\to 0}\gamma_2(t)\cdot (S_2, D_2) = (\bar{S}, \bar{D}).$$
Hence, the closure of the orbit of $(S_1, D_1)$ contains
$(S', D')$, and the closure of the orbit of $(S_2, D_2)$ contains
$(\bar{S}, \bar{D})$, which we tackle next.

Let, for instance, $\lambda, x^J, x_p = (12, 2, -3, -3, -8)$, $x_0x_4$, $x_1$. Then
\begin{equation*}
  \begin{split}
    f(x_0,x_1,x_2,x_3,x_4) =& q_2(x_2,x_3)+ x_1x_4\\
    g(x_0,x_1,x_2,x_3,x_4) =& x_1^2+x_0x_4\\
    H(x_0,x_1,x_2,x_3,x_4) =& x_0
  \end{split}
\end{equation*}
and after a change of variables we may assume that 
\begin{equation*}
  \begin{split}
    f(x_0,x_1,x_2,x_3,x_4) =& x_0x_4+ x_1x_3\\
    g(x_0,x_1,x_2,x_3,x_4) =& x_1^2+x_2x_1\\
    H(x_0,x_1,x_2,x_3,x_4) =& x_2.
  \end{split}
\end{equation*}
We can do similar changes of variables in the rest of the cases and end up with $f$, $g$ and $g$ not depending on any parameters. Observe that since $(S, D)$ is strictly $t$-semistable,
the stabilizer subgroup of $(S, D)$, namely $G_{(S,D)} \subset \operatorname{SL}(5, \mathbb{C})$ is infinite (see \cite[Remark 8.1 (5)]{dolgachev_2003}). In particular, there is a $\mathbb{C}^*$-action on $(S, D)$. Lemma \ref{C star action invariance} classifies the singularities of $(S, D)$ uniquely according to their equations. For each $t \in (0, 1)$, the proof of the Theorem follows once we recall the classification of complete intersections of two quadrics in $\mathbb{P}^4$ according to their isolated singularities \ref{tab:p4segtable_with_equations}.
\end{proof}

The families presented below have been produced via the algorithm described in Section \ref{VGITsection}, via the computational package \cite{theodoros_stylianos_papazachariou_2022} and are all $t$-unstable with respect to the respective $t$ via the centroid criterion (Lemma \ref{t-centroid_criterion}). In addition, they are maximal $t$-destabilising families with respect to each wall/chamber $t$, in the sense of Definition \ref{destabilised sets def-vgit}. Notice, that we omit the characterisation of the GIT quotient for wall $t=0$. This is because for wall $t=0$, this is the GIT quotient $\mathcal{R}_{4,2,2}\sslash \operatorname{SL}(5)$ which has been classified in \cite{mabuchi_mukai_1990}. For more information on how one can obtain the same classification using the computational GIT package \cite{theodoros_stylianos_papazachariou_2022} we prompt the reader to the thesis \cite{my_phd_thesis} for a detailed analysis.

\section{CM line bundle for complete intersections and a hyperplane section}\label{cm_linebundle section}
Following the discussion in Section \ref{VGITsection} we aim to calculate the log CM line bundle for complete intersections with a hyperplane section, introduced in Section $2$. Let $S = \{f_1 = \dots=f_k=0\}$ be the complete intersection of $k$ hypersurfaces of degree $d$, with $f_i = \sum f_{I_{i,j}}x^{I_{i,j}}$, and $H$ a hyperplane in $\mathbb{P}^n$. We define the sets

\begin{equation}
  \begin{split}
  M_1 =& \Big\{(\bigwedge f_{I_{i,j}}x^{I_{i,j}}, l)\in \mathcal{R}| S = \{f_1 = \dots=f_k \}, H=\{ l = 0\}, \operatorname{Supp}(H)\subseteq \operatorname{Supp}(f_i)\text{ for some } i \Big\} \\
  M_2 =& \Big\{(\bigwedge f_{I_{i,j}}x^{I_{i,j}}, l)\in \mathcal{R}| S = \{f_1 = \dots=f_k \}, H=\{ l = 0\}, \exists H'\neq H, S\cap H' = S\cap H \Big\}.
  \end{split}
\end{equation}
Notice that by Theorem \ref{theorem-H not in supp S} the set $M_1$ contains only $t-$unstable elements for all $0\leq t\leq t_{n,d,k}$ for $kd \leq n$. 

\begin{lemma}\label{all in m2 unstable}
The elements of $M_2$ are $t-$unstable for all $0\leq t\leq t_{n,d,k}$, for $kd \leq n$. 
\end{lemma}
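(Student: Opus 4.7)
The plan is to translate the geometric coincidence defining $M_2$ into a concrete algebraic constraint on the defining polynomials, and then to either reduce to the case already handled by Theorem \ref{theorem-H not in supp S} or to exhibit an explicit destabilizing one-parameter subgroup. First I would choose coordinates on $\mathbb{P}^n$ so that $H=\{x_n=0\}$ and $H'=\{x_{n-1}=0\}$, and expand
$$f_i = x_n\, a_i(x_0,\dots,x_n) + x_{n-1}\, b_i(x_0,\dots,x_{n-2}) + g_i(x_0,\dots,x_{n-2}),\qquad i=1,\dots,k,$$
so that $S\cap H$ is cut out by the ideal $(x_n, x_{n-1}b_1+g_1,\dots,x_{n-1}b_k+g_k)$ and $S\cap H'$ by $(x_{n-1}, x_n a_1'+g_1,\dots,x_n a_k'+g_k)$, where $a_i' = a_i|_{x_{n-1}=0}$.

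The scheme-theoretic identity $S\cap H = S\cap H'$ forces these two ideals to coincide. Because each $f_i$ has degree $d\geq 2$ while $x_n,x_{n-1}$ are linear, a standard degree argument on the syzygies rules out the generic non-degenerate configuration, and one is left with the structural conclusion that, after a further change of coordinates in the $\operatorname{SL}(n+1)$-orbit of $(S,H)$, at least one $f_i$ must be divisible by the linear form $h=x_n$. This places the pair $(g\cdot S, g\cdot H)$ in $M_1$ for some $g\in\operatorname{SL}(n+1)$. Since the set of $t$-unstable pairs is $G$-invariant, and Theorem \ref{theorem-H not in supp S} already shows that pairs in $M_1$ are $t$-unstable for all $t\in[0,t_{n,d,k}]$ under the Fano/Calabi--Yau hypothesis $kd\leq n$, the claim follows for $(S,H)$.

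If the reduction to $M_1$ turns out to be too delicate to carry out in full generality, the fallback is a direct Hilbert--Mumford computation. Taking the same normalised one-parameter subgroup $\lambda(s)=\operatorname{Diag}(s,s,\dots,s,s^{-n})$ used in the proof of Theorem \ref{theorem-H not in supp S}, together with the constraint that every monomial of each $f_i$ is forced to lie in $(x_n,x_{n-1})$ by the coincidence of the two hyperplane sections, one obtains
$$\mu_t(S,H,\lambda)\leq \bigl(kd-n\bigr)+t(-n)\leq 0$$
for all $t\in[0,t_{n,d,k}]=[0,kd/n]$, with strict inequality whenever either $k\geq 2$ or $t<t_{n,d,k}$; a small perturbation of $\lambda$ on the $x_{n-1}$-slot of the diagonal promotes this to strict negativity on the boundary wall as well. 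The main obstacle is extracting from $S\cap H=S\cap H'$ the precise monomial constraint on the $f_i$ -- once this is in hand the numerical estimate is an immediate variant of the argument in Theorem \ref{theorem-H not in supp S}.
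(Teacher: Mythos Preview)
Your first approach --- reducing to $M_1$ --- does not work. The coincidence $S\cap H=S\cap H'$ forces each $f_i$ to lie in the ideal $(x_n,x_{n-1})$ (as you yourself note later), but it does \emph{not} force any $f_i$ to be divisible by the single linear form $x_n$. A generic element of $M_2$ has $f_i=a_ix_n^d+b_ix_{n-1}^d+x_nx_{n-1}g_i$ with $a_i,b_i\neq 0$, and no change of coordinates will make such an $f_i$ divisible by a linear form, so the pair is not in the $G$-orbit of any point of $M_1$.

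In your fallback Hilbert--Mumford computation you have the right monomial constraint but the wrong one-parameter subgroup. With $\lambda=\operatorname{Diag}(s,\dots,s,s^{-n})$ the variable $x_{n-1}$ has weight $+1$, so the monomial $x_0^{d-1}x_{n-1}$ (which is allowed by the constraint) has weight $d$; hence $\mu(S,\lambda)$ can be as large as $kd$, giving only $\mu_t(S,H,\lambda)\leq kd-tn$, which is non-negative on the whole interval $[0,t_{n,d,k}]=[0,kd/n]$. Your claimed bound $(kd-n)-tn$ is therefore incorrect, and a ``small perturbation'' at the $x_{n-1}$-slot cannot fix this: you need the weight on $x_{n-1}$ to be genuinely negative, not just slightly less than $1$. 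This is exactly what the paper does --- it takes a one-parameter subgroup with large negative weights on \emph{both} $x_{n-1}$ and $x_n$ (essentially $\operatorname{Diag}(s^{2},\dots,s^{2},s^{-(n-1)},s^{-(n-1)})$ up to normalisation), so that every degree-$d$ monomial in $(x_n,x_{n-1})$ has weight at most $2(d-1)-(n-1)=2d-n-1$, and then $\mu_t\leq k(2d-n-1)-t(n-1)<0$ follows from $kd\leq n$.
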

\begin{proof}
Let $(S,H)$ be a pair parametrised by $(\bigwedge f_i,l)\in M_2$, such that $S\cap H = S\cap H'$ where without loss of generality, we may assume that $H = \{x_n = 0\}$, $H' = \{x_{n-1}=0\}$. Without loss of generality, we can choose a coordinate system such that $S = \{f_1 = \dots = f_k\}$ is given by $f_i = x_n^d+x_{n-1}^d+x_nx_{n-1}f^i_{d-2}(x_0, \dots,x_n)$, $H = \{l(x_n,x_{n-1})$. Then for one-parameter subgroup $\lambda(s) = \operatorname{Diag}(s^2,\dots,s^2,0,s^{-n}, s^{-n})$, and for $0\leq t \leq t_{n,d,k}$ we have:
\begin{equation*}
  \begin{split}
    \mu(S,H,\lambda) <& k[(d-2)2-n]-tn\\
    \leq& 2kd - 4k - kn - kd\\
    <&0
  \end{split}
\end{equation*}
so the pair is $t$-unstable for all $0\leq t\leq t_{n,d,k}$.
\end{proof}

Let $\mathcal{T} \vcentcolon= \mathcal{R}\setminus (M_1 \cup M_2)$, with natural embedding $j \colon \mathcal{T}\rightarrow \mathcal{R}$. The above discussion shows:
\begin{lemma}
Let $kd \leq n$, $0\leq t \leq t_{n,d,k}$. Then $(\mathcal{T})_t^{ss} \vcentcolon= (\mathcal{R}\setminus (M_1 \cup M_1))_t^{ss} = (\mathcal{R})_t^{ss}$.
\end{lemma}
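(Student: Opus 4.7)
The plan is nearly immediate, since the two technical inputs have already been established. The only real content is a set-theoretic combination of those inputs together with a brief verification that $\mathcal{T}$ is a $G$-invariant open subset of $\mathcal{R}$ containing every $t$-semistable point.

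First, I would note that the two defining conditions for $M_1$ and $M_2$ are projectively invariant, so both $M_1$ and $M_2$ (and hence their union) are $G$-invariant subsets of $\mathcal{R}$. In particular $\mathcal{T} = \mathcal{R}\setminus (M_1\cup M_2)$ is a $G$-invariant locally closed subscheme, and the embedding $j\colon \mathcal{T}\hookrightarrow \mathcal{R}$ is $G$-equivariant. Stability being a local condition verified via the Hilbert--Mumford function $\mu_t(-,\lambda)$ on individual points, one has the pointwise identification $(\mathcal{T})^{ss}_t = \mathcal{T}\cap (\mathcal{R})^{ss}_t$.

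Next, the hypothesis $kd\leq n$ combined with $0\leq t\leq t_{n,d,k}$ is exactly the range covered by Theorem \ref{theorem-H not in supp S} and Lemma \ref{all in m2 unstable}. Theorem \ref{theorem-H not in supp S} shows that every pair $(S,H)\in M_1$ is $t$-unstable, while Lemma \ref{all in m2 unstable} gives the same conclusion for every pair parametrised by $M_2$. Therefore $M_1\cup M_2$ is contained in the $t$-unstable locus $(\mathcal{R})^{us}_t = \mathcal{R}\setminus (\mathcal{R})^{ss}_t$, equivalently $(\mathcal{R})^{ss}_t\subseteq \mathcal{T}$.

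Combining these two observations yields
\[
(\mathcal{T})^{ss}_t \;=\; \mathcal{T}\cap (\mathcal{R})^{ss}_t \;=\; (\mathcal{R})^{ss}_t,
\]
which is the desired equality. There is no real obstacle here: the substantive work is entirely contained in Theorem \ref{theorem-H not in supp S} and Lemma \ref{all in m2 unstable}, and the present lemma is simply the packaging statement saying that we may, without loss of generality, restrict our subsequent analysis (in particular the explicit construction of a family and the computation of the CM line bundle in the remainder of Section \ref{cm_linebundle section}) to the open locus $\mathcal{T}$ parametrising pairs $(S,H)$ in which $H$ is not contained in the support of any defining hypersurface of $S$ and in which the divisor $D=S\cap H$ remembers $H$.
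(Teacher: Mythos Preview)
Your proposal is correct and follows exactly the approach the paper takes: the paper gives no separate proof but simply writes ``The above discussion shows:'' before stating the lemma, where ``the above discussion'' refers precisely to Theorem~\ref{theorem-H not in supp S} (for $M_1$) and Lemma~\ref{all in m2 unstable} (for $M_2$). Your write-up merely makes explicit the $G$-invariance of $M_1\cup M_2$ and the pointwise identification $(\mathcal{T})^{ss}_t = \mathcal{T}\cap(\mathcal{R})^{ss}_t$, which the paper leaves implicit.
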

Thus $\mathcal{T}$ parametrises pairs $(\bigwedge a_{I_{i,j}},l)$ which correspond to complete intersections of $k$ homogeneous polynomials of degree $d$ and polynomials of degree $1$ respectively.

\begin{lemma}\label{codimension}
For $kd \leq n$, $d \geq 2$ we have:
\begin{enumerate}
  \item $\operatorname{codim}(M_1) = \binom{n+d-1} {d} -(k-1)^2 \geq 2$;
  \item $\operatorname{codim}(M_2)= k\Big(\binom{n+d}{d} - \binom{n+d-2} {d-2}\Big) -k^2 +n\geq 2$.
\end{enumerate}
\end{lemma}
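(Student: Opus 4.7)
The plan is to view both $M_1$ and $M_2$ as incidence subvarieties of $\mathcal{R} = \operatorname{Gr}(k,W) \times \mathbb{P}^n$, where $W = H^0(\mathbb{P}^n, \mathcal{O}(d))$ has dimension $N = \binom{n+d}{d}$, and to compute their codimensions by Schubert-type dimension counts. The inequality $\geq 2$ is what is really needed downstream (to ensure that $\mathcal{T} = \mathcal{R}\setminus(M_1\cup M_2)$ has the same divisor class group as $\mathcal{R}$ for the CM line bundle computation), but the explicit formulas drop out naturally from the counts.

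For part (i), I would fix a hyperplane $H = \{l=0\}$ and introduce the subspace $U_l := l\cdot H^0(\mathbb{P}^n, \mathcal{O}(d-1)) \subset W$, which has dimension $\binom{n+d-1}{d-1}$. Over this $H$, the fibre of $M_1$ in $\operatorname{Gr}(k,W)$ is exactly the special Schubert variety $\{V \in \operatorname{Gr}(k,W) : V \cap U_l \neq 0\}$, since the condition $\operatorname{Supp}(H)\subseteq \operatorname{Supp}(f_i)$ for some $i$ is equivalent to the pencil $V=\langle f_1,\ldots,f_k\rangle$ containing a nonzero element divisible by $l$. A standard Schubert calculus computation, combined with Pascal's identity $\binom{n+d}{d} - \binom{n+d-1}{d-1} = \binom{n+d-1}{d}$, yields the claimed codimension formula. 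Since this codimension is constant as $H$ varies over $\mathbb{P}^n$, it coincides with $\operatorname{codim}_{\mathcal{R}}(M_1)$, and the bound $\geq 2$ becomes a direct combinatorial verification in the extremal case $kd = n$, $d = 2$.

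For part (ii), I would introduce the incidence correspondence $\widetilde{M}_2 = \{(S,H,H') \in \mathcal{R}\times\mathbb{P}^n : H \neq H',\ S \cap H = S \cap H'\}$ and compute via the projection to the space of ordered pairs of distinct hyperplanes. For fixed $(H, H')$ the condition $S \cap H = S \cap H'$ forces $S \cap H \subseteq L := H \cap H'$; working scheme-theoretically via the short exact sequence $0 \to l_H l_{H'} W_{d-2} \to W \to W/(l_H l_{H'} W_{d-2}) \to 0$, one finds that each generator of the pencil is determined modulo the subspace of polynomials divisible by $l_H l_{H'}$, which has codimension $\binom{n+d}{d} - \binom{n+d-2}{d-2}$ in $W$. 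A Grassmannian dimension count (producing the factor $k\bigl(\binom{n+d}{d} - \binom{n+d-2}{d-2}\bigr) - k^2$), combined with the $n$ parameters for $H$ and the generic finiteness of $\widetilde{M}_2 \to M_2$ (with $H'$ recovered from the space of linear forms vanishing on $S \cap H$), produces the claimed formula.

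The main obstacle is part (ii): making the scheme-theoretic identity $S \cap H = S \cap H'$ into a clean linear condition on the pencil requires a careful Koszul or Eagon--Northcott analysis of the restrictions $f_i|_L$ and a genericity argument to avoid pathological fibres of $\widetilde{M}_2 \to M_2$. Once the formula is established, the bound $\operatorname{codim}(M_2) \geq 2$ follows by substituting the extremal case $kd = n$, $d = 2$ into the formula and invoking monotonicity of $\binom{n+d}{d} - \binom{n+d-2}{d-2}$ in both $n$ and $d$.
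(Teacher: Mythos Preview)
Your Schubert-cycle approach to part~(i) is cleaner than the paper's, which directly counts parameters for a tuple $(l, f^1_{d-1}, f_2, \ldots, f_k)$ with $f_1 = l\cdot f^1_{d-1}$ and then subtracts a somewhat ad hoc ``$2k$'' for redundancy. But be aware that it does \emph{not} reproduce the stated formula when $k\geq 3$: the special Schubert cycle $\{V\in\operatorname{Gr}(k,W):V\cap U_l\neq 0\}$ has codimension $N-m-k+1=\binom{n+d-1}{d}-(k-1)$, not $\binom{n+d-1}{d}-(k-1)^2$. These agree only for $k\le 2$, which is the sole case the paper uses downstream; for $k\ge 3$ it is the paper's formula that is off, since the redundancy in the basis parametrisation is the full $\operatorname{GL}_k$ (dimension $k^2$), not $2k$. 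Either expression gives $\operatorname{codim}(M_1)\ge 2$ under the hypotheses, so the application to $\operatorname{Pic}(\mathcal{T})$ is unaffected.

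For part~(ii) your plan coincides with the paper's: it too fixes coordinates with $H=\{x_n=0\}$, $H'=\{x_{n-1}=0\}$, argues that $S\cap H=S\cap H'$ forces each $f_i$ into $\langle x_n^d,\,x_{n-1}^d\rangle + x_nx_{n-1}W_{d-2}$, and counts parameters from there. Your hesitation about justifying this normal form scheme-theoretically is warranted; the paper's own reduction (``$S\cap H$ is $kdL$'') is brief and treats $S\cap H$ as if it were a Cartier divisor on $H$, which is only literally correct for $k=1$. For the final inequality the paper does not isolate an extremal case but simply bounds $\operatorname{codim}(M_2)\ge\operatorname{codim}(M_1)\ge 2$ directly.
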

\begin{proof}
$1$. Since $\operatorname{Supp}(H)\subseteq \operatorname{Supp}(f_i)$ for some $i$, without loss of generality we may assume $i=1$, and assuming $H = \{l(x_0,\dots,x_n)=0\}$ we can write $f_1 = l(x_0,\dots,x_n) f^i_{d-1}(x_0,\dots,x_n)$ $f_i = f^i_{d}(x_0,\dots,x_n)$. We first find the dimension of $M_1$: there are $n+1$ coefficients in the equation of $l$ and $\binom{n+d-1}{d-1}$ coefficients in $f^1_{d-1}$. Similarly, each $f_i$ has $\binom{n+d}{d}$ coefficients. Hence 
$\dim(M_1) = (k-1) \binom{n+d}{d} + \binom{n+d-1}{d-1} +n+1 -2k$, where we subtract $2$ degrees of freedom for each $f_i$. Then:
\begin{equation*}
  \begin{split}
    \operatorname{codim}(M_1) =& \dim(\mathcal{R}) - \dim(M_1)\\
    =& k\Big( \binom{n+d}{d}-k \Big)+n - (k-1) \binom{n+d}{d} - \binom{n+d-1} {d-1} - n -1 + 2k\\
    =& \binom{n+d-1}{d} -(k-1)^2 \\
    \geq& \Big( \frac{n+d-1}{d}\Big)^d - \big(\frac{n}{d}-1\big)(k-1)
    %\geq& k\big(k(\frac{n-1}{n})+1 \big)^2-k(n-1)-k^2\\
    %=& k^2\Big(k\big(\frac{n-1}{n}\big)^2 - 1\Big) +k(n-1)\Big(\frac{2k}{n}-1\Big)+1\\
    %\geq& 2.
  \end{split}
\end{equation*}

Now, since 

\begin{equation*}
  \begin{split}
    2 +\frac{n}{d}(k-1) \leq& 2+ \Big(\frac{n}{d}-1 \Big)^2\\
    =& \frac{3d^2+n^2-2nd}{d^2}\\
    \leq& \frac{(n+d-1)^2}{d^2}\\
    \leq& \frac{(n+d-1)^d}{d^d}
  \end{split}
\end{equation*}
we get $\operatorname{codim}(M_1) \geq 2$

$2$. Let $(\bigwedge a_{I_{i,j}}, l)\in H_1$, $S = \{f_1 = \dots=f_k=0\}$, $H = \{l=0\}$ and $H'\neq H$ such that $S\cap H = S\cap H'$. Without loss of generality, we assume that $H = \{x_n=0\}$, $H' = \{x_{n-1} = 0\}$. Since the Cartier divisor $S\cap H = S\cap H'$ is supported on $L = \{x_n = x_{n-1}=0\}\simeq \mathbb{P}^{n-2}$ and is a complete intersection of $k$ hypersurfaces of degree $d$ in $\{x_n=0\}\simeq \mathbb{P}^{n-1}$, we have that $S\cap H $ is $kdL$, hence we can write $f_i = a_ix_{n-1}^d+x_nf_{d-1}^i(x_0,\dots,x_n)$, and similarly for $H'$, $f_i = b_ix^d_{n}+x_{n-1}(f^i)'_{d-1}(x_0,\dots,x_n)$, which implies that we can write each $f_i = a_ix_n^d+b_ix^d_{n-1}+x_nx_{n-1}f_{d-2}^i$, and $l$ as $l(x_n,x_{n-1})$. Similar to the proof of $1$, there are $\binom{n+d-2}{d-2 }$ coefficients in each $f_{d-2}^i$, and $2$ coefficients in $l$. Hence, $\dim(M_2) = k\Big(\binom{n+d-2} {d-2 }+2\Big)-2k$. Thus:
\begin{equation*}
  \begin{split}
    \operatorname{codim}(M_2) =& \dim(\mathcal{R}) - \dim(M_2)\\
    =& k\Big( \binom{n+d}{d}-k \Big)+n - k\Big(\binom{n+d-2}{d-2} \Big) \\
    =& k\Big(\binom{n+d}{d} - \binom{n+d-2}{d-2}\Big) -k^2 +n \\
    \geq& k\Big(\binom{n+d}{d} - \binom{n+d-1}{d-1}\Big) -k^2 +n\\
    =& k\binom{n+d-1}{d} - k^2+n\\
    \geq& k\binom{n+d-1}{d} - k^2+kd\\
    \geq& k\binom{n+d-1}{d} - k^2+2k-1\\
    \geq& \binom{n+d-1}{d} - k^2+2k-1\\
    =& \operatorname{codim}(M_1)\\
    \geq& 2.
  \end{split}
\end{equation*}

\end{proof}
A direct consequence of the above lemma, is the result below, via  \cite[Prop. II.6.5.b]{hartshorne_2010}.
\begin{lemma}\label{picard_group_fam}
$$\operatorname{Pic}(\mathcal{T}) \simeq \operatorname{Pic}(\mathcal{R}) \simeq \mathbb{Z}^2,$$
and for $\mathcal{L} \in \operatorname{Pic}(\mathcal{U})$,
$$\mathcal{L}\simeq \mathcal{O}_{\mathcal{U}}(a,b) \vcentcolon= j^*(\mathcal{O}_{\mathcal{R}_{n,d,k}}(a) \boxtimes \mathcal{O}_{\mathcal{R}_{n,1}}(b))$$
\end{lemma}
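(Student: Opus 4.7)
The plan is to reduce the computation to the Picard group of the ambient parameter space $\mathcal{R} = \mathcal{R}_{n,d,k}\times \mathcal{R}_{n,1,1} = \operatorname{Gr}(k,W)\times \mathbb{P}^n$, and then use that $\mathcal{T}$ is obtained from $\mathcal{R}$ by removing a closed subset of codimension at least two. First I would record that the Grassmannian $\operatorname{Gr}(k,W)$ is a smooth projective variety with $\operatorname{Pic}(\operatorname{Gr}(k,W))\simeq \mathbb{Z}$, generated by the Plücker line bundle $\mathcal{O}_{\operatorname{Gr}}(1)$, and $\operatorname{Pic}(\mathbb{P}^n)\simeq \mathbb{Z}$, generated by $\mathcal{O}_{\mathbb{P}^n}(1)$. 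Since both factors are smooth projective varieties with finitely generated Picard groups, the Künneth-type formula for Picard groups of products (valid because both factors have $H^1(-,\mathcal{O})=0$) gives
\[
\operatorname{Pic}(\mathcal{R}) \;\simeq\; \operatorname{Pic}(\operatorname{Gr}(k,W))\oplus \operatorname{Pic}(\mathbb{P}^n)\;\simeq\; \mathbb{Z}^2,
\]
with an explicit basis given by $\pi_1^*\mathcal{O}_{\operatorname{Gr}}(1)$ and $\pi_2^*\mathcal{O}_{\mathbb{P}^n}(1)$. Hence every line bundle on $\mathcal{R}$ is of the form $\mathcal{O}_{\mathcal{R}_{n,d,k}}(a)\boxtimes \mathcal{O}_{\mathcal{R}_{n,1}}(b)$ for some $(a,b)\in \mathbb{Z}^2$.

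Next I would invoke Lemma \ref{codimension}, which shows that each of the closed subsets $M_1, M_2\subset \mathcal{R}$ has codimension at least two provided $kd\leq n$ and $d\geq 2$; in particular, $Z\coloneqq M_1\cup M_2$ is a closed subset of codimension at least two in the smooth variety $\mathcal{R}$. By the cited result \cite[Prop.~II.6.5.b]{hartshorne_2010}, the restriction morphism
\[
j^*\colon \operatorname{Pic}(\mathcal{R}) \longrightarrow \operatorname{Pic}(\mathcal{R}\setminus Z) = \operatorname{Pic}(\mathcal{T})
\]
is an isomorphism, since on a locally factorial Noetherian scheme removing a closed subset of codimension $\geq 2$ does not change the divisor class group, hence does not change the Picard group. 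Combining the two isomorphisms yields $\operatorname{Pic}(\mathcal{T})\simeq \operatorname{Pic}(\mathcal{R})\simeq \mathbb{Z}^2$, and any $\mathcal{L}\in \operatorname{Pic}(\mathcal{T})$ can be written as $\mathcal{L}\simeq j^*\big(\mathcal{O}_{\mathcal{R}_{n,d,k}}(a)\boxtimes \mathcal{O}_{\mathcal{R}_{n,1}}(b)\big) = \mathcal{O}_{\mathcal{T}}(a,b)$, which is the second assertion.

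There is essentially no obstacle beyond checking the hypotheses of the codimension-two purity statement: smoothness (hence local factoriality) of $\mathcal{R}$, which follows from smoothness of both factors, and the codimension bounds, which are precisely the content of Lemma \ref{codimension}. The only point that warrants care is verifying that the identification $\operatorname{Pic}(\mathcal{R})\simeq \operatorname{Pic}(\operatorname{Gr}(k,W))\oplus\operatorname{Pic}(\mathbb{P}^n)$ is legitimate; this is ensured by $H^1(\operatorname{Gr}(k,W),\mathcal{O})=H^1(\mathbb{P}^n,\mathcal{O})=0$, so the seesaw principle applies and every line bundle on the product splits as an external tensor product. Once these ingredients are in place the result is immediate.
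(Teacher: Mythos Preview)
Your proposal is correct and follows essentially the same approach as the paper: the paper's proof is a one-line remark that the statement is a direct consequence of Lemma~\ref{codimension} via \cite[Prop.~II.6.5.b]{hartshorne_2010}, and you have simply spelled out the details of this argument (computing $\operatorname{Pic}(\mathcal{R})$ via the product decomposition and then invoking the codimension-two excision for Picard groups on a smooth variety).
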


We consider now the universal family of the complete intersection of $k$ hypersurfaces of degree $d$:

$$\pi_{n,d,k}\colon\mathcal{X}_{n,d,k}\rightarrow \mathcal{R}_{n,d,k}$$
where:
$$\mathcal{X}_{n,d,k} = \bigg\{(x_0,\dots,x_n)\times \bigwedge a_{I_{i,j}} \in \mathbb{P}^n \times \mathcal{R}_{n,d,k}| \sum a_{I_{1,j}}x^{I_{1,j}}= \dots = \sum a_{I_{k,j}}x^{I_{k,j}}=0  \bigg\}.$$

(Here, by abuse of notation, we denote the class $[\bigwedge a_{I_{i,j}}]$ by $\bigwedge a_{I_{i,j}}$). We then have a commutative diagram
\begin{center}
\begin{tikzcd}
\mathcal{X} \arrow[r] \arrow[d, "\pi"]
& \mathcal{X}_{n,d,k}\times \mathcal{R}_{n,1} \arrow[d, "\pi_{n,d,k}\times \operatorname{Id}_{\mathcal{R}_{n,1}}" ] \arrow[r]& \mathcal{X}_{n,d,k} \arrow[d, "\pi_{n,d,k}"] \\
\mathcal{T} \arrow[r, "j" ]
& \mathcal{R}\arrow[r, "p_1"] \arrow[d, "p_2"]& \mathcal{R}_{n,d,k}\\
& \mathcal{R}_{n,1} & 
\end{tikzcd}
\end{center}
with 
$$\mathcal{X} = \bigg\{ (x_0,\dots,x_n)\times \bigwedge a_{I_i}\times (b_0,\dots,b_n) \in \mathbb{P}^n \times \mathcal{U}|\sum a_{I_{1,j}}x^{I_{1,j}}= \dots = \sum a_{I_{k,j}}x^{I_{k,j}}=0 \bigg\}$$ the fiber product in the first diagram. Here, $j$ is the natural embedding and $p_i$ the projections. Since $\pi_{n,d,k}\colon\mathcal{X}_{n,d,k}\rightarrow \mathcal{R}_{n,d,k}$ is a universal family, it is flat and proper, and thus, by commutativity, $\pi$ is also flat and proper. 

Defining 

$$\mathcal{D}\vcentcolon=\bigg\{ (x_0,\dots,x_n)\times \bigwedge a_{I_i}\times (b_0,\dots,b_n) \in \mathcal{X}|\sum b_ix_i=0 \bigg\}$$
$\mathcal{D}$ is a Cartier divisor of $\mathcal{X}$, and the restriction $\pi_{\mathcal{D}}\colon \mathcal{D} \rightarrow \mathcal{T}$ is also flat and proper. This implies that $\pi\colon(\mathcal{X},\mathcal{D})\rightarrow \mathcal{T}$ is a $\mathbb{Q}$-Gorenstein flat family.

Notice, that in the Fano case, i.e. $kd \leq n$, $-K_{\mathcal{X}/\mathcal{T}}$ is relatively ample and by Theorem \ref{theorem-H not in supp S} and Lemma \ref{picard_group_fam} $\Lambda_{CM,\beta}(-K_{\mathcal{X}/\mathcal{T}}) \simeq \mathcal{O}(a,b)$. Hence, we can extend the CM-line bundle to $\mathcal{R}$: for $\beta \in (0,1) \cap \mathbb{Q}$
$$\Lambda_{CM,\beta}\vcentcolon= \Lambda_{CM,\beta}(-K_{\mathcal{X}/\mathcal{T}})\vcentcolon= \Lambda_{CM,\beta}(\mathcal{X},\mathcal{D},-K_{\mathcal{X}/\mathcal{T}}).$$
\begin{lemma}\label{calculating the line bundle}
Let $kd \leq n$, $\beta \in (0,1] \cap \mathbb{Q}$. Then $\Lambda_{CM,\beta} \simeq \mathcal{O}(a(\beta),b(\beta))$ 

where: 

\begin{equation*}
  \begin{split}
    %a(\beta) =& \big(n+1-kd \big)^{n-k-1}d^{k-1}\cdot\\
    % &\bigg(-\big(1+(n-k) (1-\beta)\big)\big(n+1-kd\big)\Big(-d(n-k+1)+(n+1-kd)\Big)\\ 
    % &+(n-k+1)(1-\beta)\big(-d(n-k)+(n+1-kd) \big) \bigg)>0\\
    a(\beta) =& \big(n+1-kd \big)^{n-k-1}d^{k-1}\cdot \bigg((n-k+1)(1-\beta)\big((1-d)n+1\big)\\
    &+\big(1+(n-k) (1-\beta)\big)\big(n+1-kd\big)(d-1)(n+1)\bigg)>0\\
    b(\beta) =& (n+1-kd)^{n-k}d^k(n-k+1)(1-\beta)>0,
  \end{split}
\end{equation*}
i.e. $\Lambda_{CM,\beta}$ is ample. In particular, if $kd = n$,
\begin{equation*}
  \begin{split}
    a(\beta) =& d^{k-1}\big(d(n-k+1)-\beta\big)\\
    b(\beta) =& d^k(n-k+1)(1-\beta)>0\\
    t(\beta) =& \frac{d(n-k+1)(1-\beta)}{d(n-k+1)-\beta}
  \end{split}
\end{equation*}
\end{lemma}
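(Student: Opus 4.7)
The plan is to use Lemma \ref{picard_group_fam} to reduce the computation of $\Lambda_{CM,\beta}\cong\mathcal{O}(a(\beta),b(\beta))$ to evaluating two degrees along two test curves $C_1,C_2\subset\mathcal{T}$. Take $C_1\cong\mathbb{P}^1$ to be a pencil of complete intersections (fix $k-1$ of the degree-$d$ defining hypersurfaces and vary the $k$-th in a linear pencil) paired with a constant hyperplane, so that $\mathcal{O}(a,b)|_{C_1}\cong\mathcal{O}_{\mathbb{P}^1}(a)$; dually, take $C_2\cong\mathbb{P}^1$ to be a pencil of hyperplanes over a fixed complete intersection $S$, so that $\mathcal{O}(a,b)|_{C_2}\cong\mathcal{O}_{\mathbb{P}^1}(b)$. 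Generic choices of the two pencils ensure both curves avoid $M_1\cup M_2$ and thus land inside $\mathcal{T}$.

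On each test curve, Theorem \ref{gmgs thm 2.7} (applied with relative dimension $n-k$, since $\dim S = n-k$) yields
\begin{equation*}
\deg\Lambda_{CM,\beta}\big|_{C_i} = -\bigl(1+(n-k)(1-\beta)\bigr)(-K_{\mathcal{X}_{C_i}/C_i})^{n-k+1} + (1-\beta)(n-k+1)(-K_{\mathcal{X}_{C_i}/C_i})^{n-k}\cdot\mathcal{D}_{C_i}.
\end{equation*}
I would then compute these intersection numbers by embedding $\mathcal{X}_{C_i}\hookrightarrow\mathbb{P}^n\times\mathbb{P}^1$ with hyperplane class $h$ and fibre class $\xi$, so that $h^{n+1}=\xi^2=0$ and $\int h^n\xi=1$. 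For $C_1$, the universal family has class $[\mathcal{X}_{C_1}] = d^kh^k + d^{k-1}h^{k-1}\xi$, adjunction gives $-K_{\mathcal{X}_{C_1}/C_1} = \bigl((n+1-kd)h-\xi\bigr)|_{\mathcal{X}_{C_1}}$, and $\mathcal{D}_{C_1}$ restricts to $h|_{\mathcal{X}_{C_1}}$; binomial expansion and bookkeeping of top powers produce the stated formula for $a(\beta)$. For $C_2$, $\mathcal{X}_{C_2}\cong S\times\mathbb{P}^1$ with $-K_{\mathcal{X}_{C_2}/C_2} = (n+1-kd)h$ and $\mathcal{D}_{C_2}\sim h+\xi$; the top self-intersection vanishes for dimension reasons (no $\xi$ factor can appear), while the remaining term evaluates to $(n+1-kd)^{n-k}d^k$, yielding $b(\beta)$ directly.

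Positivity of $b(\beta)$ is immediate from the explicit expression and the assumptions $kd\leq n$, $\beta\in(0,1)\cap\mathbb{Q}$. For $a(\beta)$, when $d=1$ the second summand inside the braces vanishes and the first reduces to $(n-k+1)(1-\beta)>0$; when $d\geq 2$ the first summand turns negative but is dominated by the strictly positive contribution $\bigl(1+(n-k)(1-\beta)\bigr)(n+1-kd)(d-1)(n+1)$, as a direct algebraic inequality shows. The simplified formulas in the anticanonical case $kd=n$ follow by substituting $n+1-kd=1$ and collecting terms. The main obstacle is handling the case $kd<n$, in which $\mathcal{D}_t\notin|-K_{\mathcal{X}_t}|$ and Theorem \ref{gmgs thm 2.7} does not apply verbatim: here the degree formula still follows by a parallel computation starting directly from Definition \ref{log cm line bundle} and Theorem \ref{gmgs thm 2.6}, since the Knudsen-Mumford expansion and the same intersection-theoretic bookkeeping on $\mathbb{P}^n\times\mathbb{P}^1$ carry through uniformly.
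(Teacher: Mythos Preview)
Your proposal is correct and follows essentially the same route as the paper: both compute $a(\beta)$ and $b(\beta)$ by restricting to two test curves in $\mathcal{T}$ (a pencil of complete intersections with fixed hyperplane, and a pencil of hyperplanes over a fixed $S$), realise the resulting families as complete intersections in $\mathbb{P}^1\times\mathbb{P}^n$ with the same bidegrees and relative canonical class, and then apply Theorem~\ref{gmgs thm 2.7}. Your observation that Theorem~\ref{gmgs thm 2.7} requires $\mathcal{D}_t\in|-K_{\mathcal{X}_t}|$, hence strictly speaking only covers the case $kd=n$, is well taken---the paper's proof invokes it without flagging this restriction, so your remark that the general case needs to go back to Definition~\ref{log cm line bundle} is more careful than the paper on this point.
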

\begin{proof}

For $a$: Consider $k+1$ hypersurfaces of degree $d$ $f_1,\dots,f_{k+1}$. Then blowing up $\mathbb{P}^n$ along $C = f_1\cap f_2$ we obtain a map $\operatorname{Bl}_C\mathbb{P}^n\rightarrow \mathbb{P}^{1}$. Then we have a pencil of hypersurfaces of degree $d$ $\{a_1f_1+a_2f_2|(a_1,a_2)\in \mathbb{P}^{1}\}$ and we define $f_{1:2} \vcentcolon= \{a_1f_1 +a_2f_2 = 0\}$. Notice that the proper transform of $f_{1:2}$ is isomorphic to $f_{1:2}$. Intersecting $\tilde f_{3}, \dots, \tilde f_{k+1}$ with $\tilde f_{1:k}$ gives a well-defined family for complete intersections of $k$ hypersurfaces of degree $d$ over $\mathbb{P}^{1}$. Letting $\mathcal{S} = \tilde f_{3}\cap \dots\cap \tilde f_{k+1} \cap \tilde f_{1:2}$ we have a commutative diagram

\begin{center}
\begin{tikzcd}

\mathcal{S} \arrow[hookrightarrow]{r}{i} \arrow[d, "\pi"] & \mathcal{X}\arrow[d, "\pi"] \arrow[r] 
& \mathcal{X}_{n,d,k}\times \mathcal{R}_{n,1} \arrow[d, "\pi_{n,d,k}\times \operatorname{Id}_{\mathcal{R}_{n,1}}" ] \arrow[r]& \mathcal{X}_{n,d,k} \arrow[d, "\pi_{n,d,k}"] \\
\mathbb{P}^{1} \arrow[hookrightarrow]{r}{i} &
\mathcal{T} \arrow[r, "j" ]
& \mathcal{R}\arrow[r, "p_1"] \arrow[d, "p_2"]& \mathcal{R}_{n,d,k}\\
& & \mathcal{R}_{n,1} & 
\end{tikzcd}
\end{center}

For a general hyperplane $H$, let $p_H \in \mathcal{R}_{n,1}$ be the point that parametrises $H$. Then, we have $\mathcal{S}\times p_H \subset \mathbb{P}^1\times \mathbb{P}^n\times \mathcal{R}_{n,1}$. Notice that $\mathcal{S}$ is the one dimensional family of complete intersection of a hypersurface of bidegrees $(1,d)$ and $k-1$ hypersurfaces of bidegrees $(0,d)$ in $\mathbb{P}^{1}\times \mathbb{P}^n$.

Then, for a hyperplane $H$ parametrised by $P_H \in \mathcal{R}_{n,1}$ we have that
$$\mathcal{D} = \{p \in \mathbb{P}^{1}\times \mathbb{P}^n|p \in \mathcal{S}, p|_{\mathbb{P}^n}\in H \}$$
is a divisor obtained as the complete intersection of a hypersurface of bidegree $(1,d)$, and $k-1$ hypersurfaces of bidegrees $(0,d)$ and a hypersurface of bidegree $(0,1)$ in $\mathbb{P}^{1}\times \mathbb{P}^n$. For the corresponding projections $p_{\mathbb{P}^n},p'_{\mathbb{P}^1}$ let $H_{\mathbb{P}^n} = p^*_{\mathbb{P}^n}(\mathcal{O}_{\mathbb{P}^n}(1))$, $H_{\mathbb{P}^{1}} = (p')^*_{\mathbb{P}^{1}}(\mathcal{O}_{\mathbb{P}^{1}}(1))$. Then we have by adjunction:
\begin{equation*}
  \begin{split}
    K_{\mathcal{S}} =& (K_{\mathbb{P}^{1}\times \mathbb{P}^n}) +\mathcal{S})|_{\mathcal{S}}\\
    =& (-2H_{\mathbb{P}^{1}} -(n+1-d)H_{\mathbb{P}^{n}} + H_{\mathbb{P}^{1}} +(k-1)dH_{\mathbb{P}^{n}})|_{\mathcal{S}}\\
    =& \big(-H_{\mathbb{P}^{1}} +(kd-n-1)H_{\mathbb{P}^{n}}\big)|_{\mathcal{S}}
  \end{split}
\end{equation*}
Hence

\begin{equation*}
  \begin{split}
    K_{\mathcal{S}/\mathbb{P}^1} =& K_\mathcal{S} - \pi^*K_{\mathbb{P}^1}\\
    =& \big(-H_{\mathbb{P}^{1}} +(kd-n-1)H_{\mathbb{P}^{n}}\big)|_{\mathcal{S}} +(2 H_{\mathbb{P}^{k-1}})|_\mathcal{S}\\
    =& (H_{\mathbb{P}^{1}} +(kd-n-1)H_{\mathbb{P}^{n}})|_{\mathcal{S}}\\
    -K_{\mathcal{S}/\mathbb{P}^1} =& (n+1-kd)H_{\mathbb{P}^{n}}|_{\mathcal{S}}-H_{\mathbb{P}^{1}}|_{\mathcal{S}}
  \end{split}
\end{equation*}
Therefore, 
\begin{equation*}
  \begin{split}
    (-K_{\mathcal{S}/\mathbb{P}^1})^{n-k} &= (n+1-kd)^{n-k}H_{\mathbb{P}^{n}}^{n-k}|_{\mathcal{S}}-(n-k)(n+1-kd)^{n-k-1}H_{\mathbb{P}^{n}}^{n-k-1}|_{\mathcal{S}}\cdot H_{\mathbb{P}^{1}}|_{\mathcal{S}}\\
    (-K_{\mathcal{S}/\mathbb{P}^1})^{n-k+1} &= (n+1-kd)^{n-k+1}H_{\mathbb{P}^{n}}^{n-k+1}|_{\mathcal{S}}-(n-k+1)(n+1-kd)^{n-k}H_{\mathbb{P}^{n}}^{n-k}|_{\mathcal{S}}\cdot H_{\mathbb{P}^{1}}|_{\mathcal{S}}
  \end{split}
\end{equation*}
and since $\mathcal{S} = (H_{\mathbb{P}^{1}}+dH_{\mathbb{P}^{n}})\cdot (dH_{\mathbb{P}^{n}})^{k-1}$, $\mathcal{D}|_{\mathcal{S}} = H_{\mathbb{P}^{n}}$ we obtain 
\begin{equation*}
  \begin{split}
    c_1(-K_{\mathcal{S}/\mathbb{P}^1})^{n-k+1} =& \Big((n+1-kd)^{n-k+1}H_{\mathbb{P}^{n}}^{n-k+1}-(n-k+1)(n+1-kd)^{n-k}H_{\mathbb{P}^{n}}^{n-k}\cdot H_{\mathbb{P}^{1}} \Big)\\
    &\cdot\Big(H_{\mathbb{P}^{1}}+ dH_{\mathbb{P}^{n}}\Big)\cdot d^{k-1}H_{\mathbb{P}^{n}}^{k-1}\\
    =&-d^{k}(n-k+1)(n+1-kd)^{n-k}H_{\mathbb{P}^{n}}^{n}\cdot H_{\mathbb{P}^{1}}\\
    &+ d^{k-1}(n+1-kd)^{n-k+1}H_{\mathbb{P}^{n}}^{n}\cdot H_{\mathbb{P}^{1}}\\
    c_1(-K_{\mathcal{S}/\mathbb{P}^1})^{n-k}\cdot \mathcal{D}=&-d^{k}(n-k)(n+1-kd)^{n-k-1}H_{\mathbb{P}^{n}}^{n}\cdot H_{\mathbb{P}^{1}}\\
    &+d^{k-1}(n+1-kd)^{n-k}H_{\mathbb{P}^{n}}^{n}\cdot H_{\mathbb{P}^{1}}
    \end{split}
\end{equation*}
Using \cite[Theorem 2.7]{Gallardo_2020}, since $\mathcal{L} = -K_{\mathcal{S}/\mathbb{P}^1}$ and $\mathcal{D}|_{\mathcal{S}_i}\in |-K_{\mathcal{S}_i} |$, we have 
\begin{equation*}
  \begin{split}
    \deg((j\circ i)^*(\Lambda_{CM,\beta})) &= -\big(1+(n-k)(1-\beta)\big)\pi_* \Big(c_1(-K_{\mathcal{S}/\mathbb{P}^1})^{n-k+1} \Big) \\
    &+ (1-\beta)(n-k+1)\pi_* \Big(c_1(-K_{\mathcal{S}/\mathbb{P}^1})^{n-k}\cdot \mathcal{D} \Big) 
  \end{split}
\end{equation*}
%$$\deg((j\circ i)^*(\Lambda_{CM,\beta})) = -\big(1+(n-k)(1-\beta)\big)\pi_* \Big(c_1(-K_{\mathcal{S}/\mathbb{P}^1})^{n-k+1} \Big) + (1-\beta)(n-k+1)\pi_* \Big(c_1(-K_{\mathcal{S}/\mathbb{P}^1})^{n-k}\cdot \mathcal{D} \Big) $$
and since $\deg((j\circ i)^* (\Lambda_{CM,\beta})) =\deg((j\circ i)^* \circ p_1^*\mathcal{O}_{\mathcal{R}_{n,d,k}}(a) = a$ the result follows.

For $b$: Consider a hypersurface $S$ which is the complete intersection of $k$ hypersurfaces of degree $d$, $S = \{f_1, \dots, f_k\}$, represented by $p_S \in \mathcal{R}_{n,d,k}$, and pencil of hyperplanes $H(t)$, $t\in \mathbb{P}^1$. Then:

$$\mathcal{D}|_{p_S\times \mathbb{P}^1} = \{f_1=\dots=f_k = f_{H(t)}\} \subset p_S \times \mathbb{P}^n\times \mathbb{P}^1.$$ 
This implies that $\mathcal{D}|_{p_S\times \mathbb{P}^1} $ is the complete intersection of $k$ hypersurfaces of bidegree $(d,0)$ and one of bidegree $(1,1)$. Notice that $S\times \mathbb{P}^1/\mathbb{P}^1$ is a trivial fibration, so $c_1(-K_{S\times \mathbb{P}^1/\mathbb{P}^1})^{n-k+1} = 0$. We have:
$$K_{(S\times \mathbb{P}^1)/\mathbb{P}^1} = K_{S\times \mathbb{P}^1}-\pi^*K_{\mathbb{P}^1} = K_S\otimes \mathcal{O}_{\mathbb{P}^1} = (kd-n-1)H_{\mathbb{P}^n}|_S\otimes \mathcal{O}_{\mathbb{P}^1},$$
where $H_{\mathbb{P}^n} = \pi^*_{\mathbb{P}^n}(\mathcal{O}_{\mathbb{P}^n}(1))$. Hence, from \cite[Theorem 2.7]{Gallardo_2020}:

\begin{equation*}
  \begin{split}
  \deg(\Lambda_{CM,\beta}) =& (1-\beta)(n-k+1)c_1(-K_{(S\times \mathbb{P}^1)/\mathbb{P}^1})^{n-k}\cdot \mathcal{D}\\
  =& (1-\beta)(n-k+1)(n+1-kd)^{n-k}H_{\mathbb{P}^n}^{n-k}\cdot d^k H_{\mathbb{P}^n}^{k}\cdot(H_{\mathbb{P}^n}+H_{\mathbb{P}^1})\\
  =&(1-\beta)(n-k+1)(n+1-kd)^{n-k}d^kH_{\mathbb{P}^n}^{n}\cdot H_{\mathbb{P}^1}
\end{split}
\end{equation*}
i.e. $b=(n+1-dk)^{n-k}d^k(n-k+1)(1-\beta)>0$.

\end{proof}

\begin{corollary}\label{cm of ci in p4}
If $n=4$, $d=k=2$, $\beta \in (0,1] \cap \mathbb{Q}$ then $\Lambda_{CM,\beta} \simeq \mathcal{O}(a(\beta),b(\beta))$

where: 

\begin{equation*}
  \begin{split}
    a(\beta) =& 2(6-\beta)>0\\ 
    b(\beta) =& 12(1-\beta)>0,
  \end{split}
\end{equation*}
and $\Lambda_{CM,\beta}$ is ample. In particular
$$t(\beta) = \frac{b(\beta)}{a(\beta)} = \frac{6(1-\beta)}{6-\beta}.$$
\end{corollary}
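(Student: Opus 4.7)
The plan is to derive Corollary \ref{cm of ci in p4} as an immediate specialization of Lemma \ref{calculating the line bundle} with the parameter choice $(n,d,k) = (4,2,2)$. First I would check that the hypotheses of Lemma \ref{calculating the line bundle} are satisfied: since $kd = 2\cdot 2 = 4 = n$, we are precisely in the Fano boundary case where $kd \leq n$, with $n+1-kd = 1 > 0$. This ensures that $-K_{\mathcal{X}/\mathcal{T}}$ is relatively ample and the log CM line bundle $\Lambda_{\operatorname{CM},\beta}$ is well defined by the construction preceding Lemma \ref{calculating the line bundle}. Moreover, because $kd = n$ exactly, we may use the simplified ``in particular'' formulas at the end of that lemma rather than the general ones.

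Next I would substitute the numerical values. With $d = 2$, $k = 2$, $n = 4$, the simplified formulas give
\[
a(\beta) = d^{k-1}\bigl(d(n-k+1) - \beta\bigr) = 2\bigl(2\cdot 3 - \beta\bigr) = 2(6-\beta),
\]
\[
b(\beta) = d^k(n-k+1)(1-\beta) = 4 \cdot 3 \cdot (1-\beta) = 12(1-\beta).
\]
For $\beta \in (0,1) \cap \mathbb{Q}$, both $a(\beta)$ and $b(\beta)$ are strictly positive, so by Lemma \ref{picard_group_fam} the $\mathbb{Q}$-line bundle $\Lambda_{\operatorname{CM},\beta} \simeq \mathcal{O}(2(6-\beta), 12(1-\beta))$ is ample on $\mathcal{T}$. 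Finally, the slope of the polarisation is
\[
t(\beta) = \frac{b(\beta)}{a(\beta)} = \frac{12(1-\beta)}{2(6-\beta)} = \frac{6(1-\beta)}{6-\beta},
\]
which is the formula used to match K-stability data to VGIT data in later sections. No genuine obstacle arises here: the corollary is a direct numerical consequence of the general computation already carried out in Lemma \ref{calculating the line bundle}, so the only care needed is in verifying that the simplified $kd = n$ branch is the correct one to apply.
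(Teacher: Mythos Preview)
Your proposal is correct and matches the paper's intent: the corollary is stated immediately after Lemma \ref{calculating the line bundle} with no separate proof, precisely because it is the direct substitution $(n,d,k)=(4,2,2)$ into the simplified $kd=n$ formulas that you carry out. The only quibble is that ampleness for $a,b>0$ comes from Lemma \ref{picard rank 2} rather than Lemma \ref{picard_group_fam}, though the conclusion is already recorded in Lemma \ref{calculating the line bundle} itself.
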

\begin{remark}
A similar theorem is shown in \cite[Theorem 3.8]{Gallardo_2020} for the case $k=1$.
\end{remark}

We restrict ourselves to the case where $n=4$, $d=2$ and $k=2$, i.e. $S$ in the smooth case is a del Pezzo surface of degree $4$, and $D = S\cap H\in |-K_X|$ is an anticanonical divisor. Let $\Sigma$ be the $\operatorname{PGL}(5)$ orbits of the points of $\operatorname{Gr}(2, 15)$ corresponding to the subschemes $\{x_4x_{3}=x_4x_{2}=0\}$ and $\{x_4^2=x_4x_{3}=0\}$, which by \cite[Appendix]{spotti_sun_2017} is the non-equidimensional locus of the family. Then let
$$M_3 \coloneqq \Big\{(f_1\wedge f_2, l)\in \mathcal{R}| S = \{f_1 =f_2=0 \}\in \Sigma, H=\{ l = 0\}\Big\}. $$

% $$M_3\coloneqq \bigg\{ (S,H)| S\in \Sigma,\text{ } H \text{ a hyperplane section} \bigg\}\subset \mathcal{R}_{4,2,2}. $$

\begin{lemma}\label{all in M3 unstable}
    The elements of $M_3$ are $t$-unstable for all $0\leq t\leq t_{4,2,2} =1$.
\end{lemma}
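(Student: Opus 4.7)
The approach is to exhibit, for each pair in $M_3$, a single normalised one-parameter subgroup witnessing $t$-instability throughout $[0,1]$. Since $M_3$ is $\operatorname{PGL}(5)$-invariant and $\Sigma$ consists of two orbits with representatives $S_1 \vcentcolon= \{x_3 x_4 = x_2 x_4 = 0\}$ and $S_2 \vcentcolon= \{x_4^2 = x_3 x_4 = 0\}$, up to a $\operatorname{PGL}(5)$-action we may reduce to the case $(S, H) = (S_i, H)$ for $i \in \{1,2\}$ and $H = \{h = 0\}$ an arbitrary hyperplane. The claim is then that $\mu_t(S_i, H, \lambda) < 0$ for all $t \in [0,1]$ for one fixed $\lambda$, which by Definition~\ref{HM for VGIT} yields $t$-instability.

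The natural candidate is $\lambda(s) = \operatorname{Diag}(s, s, s, s, s^{-4})$, which is normalised and reflects the fact that each subscheme in $\Sigma$ is supported on $\{x_4 = 0\}$ with additional non-reduced structure. Taking the evident basis of each pencil, a direct calculation gives
\begin{equation*}
\mu(S_1, \lambda) = \langle x_3 x_4, \lambda\rangle + \langle x_2 x_4, \lambda \rangle = -3 + (-3) = -6,
\end{equation*}
\begin{equation*}
\mu(S_2, \lambda) = \langle x_4^2, \lambda \rangle + \langle x_3 x_4, \lambda\rangle = -8 + (-3) = -11.
\end{equation*}

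For the hyperplane contribution, one has the trivial bound
\begin{equation*}
\mu(H, \lambda) \;=\; \max_{x_i \in \operatorname{Supp}(h)} \langle x_i, \lambda\rangle \;\leq\; \max_{0 \leq i \leq 4} \langle x_i, \lambda\rangle \;=\; 1,
\end{equation*}
and since $\mu_t(S_i, H, \lambda) = \mu(S_i, \lambda) + t \, \mu(H, \lambda)$ by Lemma~\ref{decomposition of H-M function}, we conclude
\begin{equation*}
\mu_t(S_1, H, \lambda) \;\leq\; -6 + t \;\leq\; -5 \;<\; 0, \qquad \mu_t(S_2, H, \lambda) \;\leq\; -11 + t \;\leq\; -10 \;<\; 0
\end{equation*}
for all $t \in [0, 1]$. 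Hence every element of $M_3$ is $t$-unstable throughout the full range. There is essentially no obstacle: the structural degeneracy encoded in $\Sigma$ (the base scheme collapsing onto a hyperplane with nilpotents) provides a one-parameter subgroup with such a large negative contribution from $S$ that no hyperplane $H$ can compensate for it, since the $t$-weighted contribution of $H$ is bounded by $t \leq 1$.
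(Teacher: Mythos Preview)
Your proof is correct and follows essentially the same approach as the paper: reduce to the two orbit representatives in $\Sigma$, exhibit a single normalised one-parameter subgroup, and bound the hyperplane contribution by its maximal possible weight. The only difference is the choice of subgroup---the paper uses $\lambda(s)=\operatorname{Diag}(s,1,1,1,s^{-1})$, obtaining $\mu_t(S_1,H,\lambda)\le -2+t$ and $\mu_t(S_2,H,\lambda)\le -3+t$, whereas your $\lambda(s)=\operatorname{Diag}(s,s,s,s,s^{-4})$ gives the stronger bounds $-6+t$ and $-11+t$---but this is a cosmetic variation within the same strategy.
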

\begin{proof}
    Let $(S,H)$ be a pair parametrised by $( f_i\wedge f_2,l)\in M_3$, such that $S\in \Sigma$. Let $\lambda$ be the diagonal one-parameter subgroup given by $\lambda(s) = \operatorname{Diag}(s,1,1,1,s^{-1})$. Since $H = \{l(x_0,x_1,x_2,x_3,x_4) = 0\}$ is a general hyperplane we have:
    $$\mu_t(S,H,\lambda) \leq \begin{cases} 
    -2 +t<0 & \text{if } S = \{x_4x_{3}=x_4x_{2}=0\} \\
    -3+t <0 & \text{if } S = \{x_4^2=x_4x_{3}=0\}
\end{cases}$$
for all $0\leq t\leq t_{4,2,2}=1$, so the pair is $t$-unstable.
\end{proof}
% We claim that all pairs $(S,H)\in M_3$ are unstable. Indeed, taking the diagonal one-parameter subgroup $\lambda(s) = \operatorname{Diag}(s,1,1,1,s^{-1})$, and a general hyperplane $H = \{l(x_0,x_1,x_2,x_3,x_4) = 0\}$ we have  
% $$\mu_t(S,H,\lambda) = \begin{cases} 
%     -2 +t<0 & \text{if } S = \{x_4x_{3}=x_4x_{2}=0\} \\
%     -3+t <0 & \text{if } S = \{x_4^2=x_4x_{3}=0\}
% \end{cases}$$
% for all $0\leq t\leq t_{4,2,2}=1$.
% \begin{definition}
%     Let 
% \end{definition}

\begin{proposition}\label{one-ps induces t.c.}
    Let $\mathcal{T'}\coloneqq \mathcal{T}\setminus M_3$, $t\in [0,1)$. Then:
    \begin{enumerate}
        \item If an element $P = (S,H) \in \mathcal{T}'$ is GIT $t$-semistable but not GIT $t$-polystable, then there exists a one-parameter subgroup $\lambda(s)$ such that the corresponding limit $P_0 = \lim_{s\to 0} \lambda(s)\cdot P$ is GIT $t$-polystable and is contained in $\mathcal{T}'$.
        \item Suppose an element $P = (S,H) \in \mathcal{T}'$ is GIT $t$-unstable, then there exists a one-parameter subgroup $\lambda(s)$ such that the corresponding limit $P_0 = \lim_{s\to 0} \lambda(s)\cdot P$ has negative GIT weight and is contained in $\mathcal{T}'$.
    \end{enumerate}
\end{proposition}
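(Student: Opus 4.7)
The plan for part (1) is to combine standard GIT orbit-closure results with the explicit classification of strictly $t$-polystable orbits obtained in Theorem \ref{main thm in vgit p4_polystable}. Since $P = (S,H) \in \mathcal{T}'$ is strictly $t$-semistable, its orbit closure in the GIT-semistable locus contains a unique $t$-polystable orbit, which can be reached via a one-parameter subgroup $\lambda$ (this is a standard consequence of the Kempf-Ness theorem, together with the fact that in the GIT semistable locus, orbit closures intersect the unique polystable orbit). Hence $P_0 = \lim_{s\to 0} \lambda(s) \cdot P$ is $t$-polystable. It remains to verify that $P_0 \in \mathcal{T}'$. This is checked by inspecting each of the finitely many $\mathbb{G}_m$-invariant pairs listed in Theorem \ref{main thm in vgit p4_polystable}: for each explicit equation of $(S,D)$ given there, one directly confirms that (i) $\operatorname{Supp}(H) \not\subset \operatorname{Supp}(f_i)$ for $i=1,2$, so $P_0 \notin M_1$; (ii) $H$ is uniquely determined by the condition $D = S \cap H$ (equivalently, $D$ is not a Cartier divisor supported on $S \cap L$ for some codimension-two linear subspace $L$), so $P_0 \notin M_2$; and (iii) the defining pencil is not in $\Sigma$, so $P_0 \notin M_3$.

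For part (2), by Definition \ref{HM for VGIT} and Lemma \ref{unstablelemma-vgit} there is a one-parameter subgroup $\lambda \in P_{4,2,2,1}$ with $\mu_t(P,\lambda) < 0$, and the limit $P_0 = \lim_{s\to 0} \lambda(s) \cdot P$ automatically has negative GIT weight. The only possible obstruction is that $P_0$ might lie in $M_1 \cup M_2 \cup M_3$. However, the sets $M_1$, $M_2$, $M_3$ themselves consist entirely of $t$-unstable points by Theorem \ref{theorem-H not in supp S}, Lemma \ref{all in m2 unstable}, and Lemma \ref{all in M3 unstable}; so if $P_0$ lands there, we can apply a further one-parameter subgroup $\mu$ destabilising $P_0$ and, after rescaling exponents, compose with $\lambda$ to produce a single one-parameter subgroup whose limit is further degenerated. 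Since at each iteration the support of the defining polynomials strictly decreases (being contained in a proper subset of $\Xi_2 \times \Xi_1$), this process terminates in finitely many steps at a fixed point of $\lambda$ whose support corresponds to a maximal $t$-destabilising tuple as classified by Theorem \ref{unstable families vgit}. A direct inspection of the maximal unstable families generated via the algorithm of Section \ref{sec:how to study VGIT} (and the computational package \cite{theodoros_stylianos_papazachariou_2022}) shows that in each such class we may choose a representative lying in $\mathcal{T}'$.

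The main obstacle is the verification step: for part (1), carefully checking that none of the explicit $\mathbb{G}_m$-invariant polystable models of Theorem \ref{main thm in vgit p4_polystable} and Lemma \ref{C star action invariance} lie in the closed loci $M_1$, $M_2$, or $M_3$; for part (2), the inductive argument requires a uniform bound on the number of successive degenerations, which is here guaranteed by the finiteness of the fundamental set of one-parameter subgroups $P_{4,2,2,1}$ and of the maximal destabilising tuples. The role of this proposition is to ensure that the family $\pi \colon (\mathcal{X},\mathcal{D}) \to \mathcal{T}$ is rich enough to produce, via one-parameter subgroups, both polystable reductions of strictly semistable pairs and explicit degenerations certifying instability entirely within $\mathcal{T}'$, which is what allows the CM line bundle computations of Lemma \ref{calculating the line bundle} to pass through to the GIT quotient.
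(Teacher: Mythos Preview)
Your argument for part (1) is correct but more laborious than necessary. You propose checking each of the $\mathbb{G}_m$-invariant polystable models from Theorem \ref{main thm in vgit p4_polystable} individually against $M_1$, $M_2$, $M_3$. The paper instead uses a one-line observation that you yourself invoke later in part (2): since every point of $M_1\cup M_2\cup M_3$ is $t$-unstable (by Theorem \ref{theorem-H not in supp S} and Lemmas \ref{all in m2 unstable}, \ref{all in M3 unstable}), no $t$-polystable point can lie there. This immediately gives $P_0\in\mathcal{T}'$ without any case analysis.

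Your argument for part (2) has a genuine gap, and the paper takes a substantially different route. The problematic step is ``after rescaling exponents, compose with $\lambda$ to produce a single one-parameter subgroup whose limit is further degenerated.'' Given $P_0=\lim_{s\to 0}\lambda(s)\cdot P$ and a second destabilising $\mu$ for $P_0$, it is not automatic that a single one-parameter subgroup $\nu$ exists with $\lim_{s\to 0}\nu(s)\cdot P=\lim_{s\to 0}\mu(s)\cdot P_0$; even when $\lambda,\mu$ lie in the same torus, taking $\nu(s)=\lambda(s^N)\mu(s)$ for large $N$ requires a careful argument (and $\mu$ need not be diagonal in the torus normalising $\lambda$). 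Your termination claim via decreasing support also needs more care, and the final ``direct inspection'' of maximal unstable families is asserted rather than carried out.

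The paper avoids all of this by exploiting the wall structure. For $P=(S,H)\in\mathcal{T}'$ with $P$ $t$-unstable, the paper first observes that $S$ must have at worst du Val singularities (the non-isolated and $\Delta(S)\equiv 0$ cases already lie in $M_1\cup M_2$ and are excluded from $\mathcal{T}'$). Then, from the classification in Theorem \ref{main thm in p4 vgit}, there is a wall $t_0$ at which $P$ is strictly $t_0$-semistable. By part (1), a one-parameter subgroup $\lambda$ takes $P$ to a $t_0$-polystable limit $P_0\in\mathcal{T}'$. The key point is that the \emph{limit} $P_0=\lim_{s\to 0}\lambda(s)\cdot P$ does not depend on $t$, only the Hilbert--Mumford weight does; so the same $\lambda$ and the same $P_0\in\mathcal{T}'$ work at the original value of $t$, where the weight is now negative. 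This reduces (2) to (1) and sidesteps iterated degenerations entirely.
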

\begin{proof}
    For (1), we know by standard GIT that for such $P$, there is a one-parameter subgroup $\lambda(s)$, such that $P_0$ is GIT polystable. By Theorem \ref{theorem-H not in supp S} and Lemmas \ref{all in m2 unstable} and \ref{all in M3 unstable}, we know that all elements of $M_1\cup M_2\cup M_3$ are $t$-unstable for all t, hence $P_0$ must be contained in $\mathcal{T}'$.

    For (2), notice that if $S$ has non-isolated singularities, or if the discriminant polynomial $\Delta(S)$ is identically zero, by \cite[\S 8.6 and Table 8.6]{dolgachev_2012} these are given by \cite[Lemma 7.24 Family 1, Family 2, Family 7, Family 10]{my_phd_thesis}. Notice, that after a change of coordinates \cite[Lemma 7.24 Family 7 and Family 10]{my_phd_thesis} are contained in $M_1$, and \cite[Lemma 7.24 Family 1 and Family 2]{my_phd_thesis} belong to the set $M_2$. Hence, by Theorem \ref{theorem-H not in supp S} and Lemma \ref{all in m2 unstable} all such pairs $(S,H)$ are $t$-unstable for all $t$, for a general hyperplane. But, all the above pairs do not belong in $\mathcal{T}'$, and are thus excluded from this analysis. 
    
    We are only left to study $t$-unstable pairs $(S,H)$, where $\Delta(S)$ is not identically zero and $S$ has isolated singularities, i.e. $S$ has du Val singularities or is smooth. Notice, from Theorem \ref{main thm in p4 vgit} and \cite[\S 7.4]{my_phd_thesis} that if $S$ has du Val singularities, or is smooth, that there exists a value $t_0$ corresponding to each singularity, such that a pair $P = (S,H)$, where $S$ has at worse that type of singularity, is $t$-unstable for all values of $t<t_0$, then becomes $t$-strictly semistable for $t=t_0$ and is $t$-stable for all $t>t_0$. This means that for a prescribed one-parameter  subgroup $\lambda$, the weight function is $\mu_t(S,H,\lambda)<0$ for all $t<t_0$, $\mu_t(S,H,\lambda)=0$ for $t=t_0$ and $\mu_t(S,H,\lambda)>0$ for all $t>t_0$. In particular, at the value $t=t_0$, by (1), the corresponding limit $P_0 = \lim_{s\to 0} \lambda(s)\cdot P$ is GIT $t$-polystable and is contained in $\mathcal{T}'$. Thus, for each value $t<t_0$, the limit $P_0 = \lim_{s\to 0} \lambda(s)\cdot P$ exists, as taking limits is independent of the value of $t$, and the GIT weight is negative, since $\mu_t(S,H,\lambda)<0$ for all $t<t_0$. The same analysis as above follows for pairs that are $t$-stable for all values of $t<t_0$, then become $t$-strictly semistable for $t=t_0$ and are $t$-unstable for all $t>t_0$. Hence, (2) is satisfied for all GIT $t$-unstable elements $P = (S,H)\in \mathcal{T}'$, completing the proof.

\end{proof}

Since $\operatorname{codim}(\Sigma) \geq 2$, we have $\operatorname{codim}(M_3) \geq 2$, and hence $\Lambda_{CM, \mathcal{T}}(\beta)\cong \Lambda_{CM, \mathcal{T'}}(\beta)\cong \mathcal{O}(a,b)$, i.e. the CM line bundle on $\mathcal{T}'$ extends to the CM line bundle on the universal family $\operatorname{Gr}(2,15)\times \mathbb{P}^4$ naturally. Then, noting that our construction for $\mathcal{X}$ above is the same for $\mathcal{T}'$ we obtain the following Theorem.

\begin{theorem}\label{k-stab implies git stab}
Let $(S,D)$ be a log Fano pair, where $S$ is a complete intersection of two quadrics in $\mathbb{P}^4$ and $D$ an anticanonical section. Let $\pi \colon \mathcal{X}\rightarrow \mathcal{T}'$ the family introduced before, with ample log CM line bundle $\Lambda_{CM,\beta} \simeq \mathcal{O}(a(\beta),b(\beta))$. Suppose $(S, (1-\beta)D)$ is log $K$- (semi/poly)stable. Then, $(S,D)$ is GIT$_{t(\beta)}$-(semi/poly)stable, with slope $t(\beta) = \frac{b(\beta)}{a(\beta)} = \frac{6(1-\beta)}{6-\beta}$.
\end{theorem}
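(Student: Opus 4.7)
The plan is to argue by contrapositive: assuming $(S,D)$ is not GIT$_{t(\beta)}$-semistable (resp.\ not polystable, not stable), I will produce a test configuration of $(S,(1-\beta)D)$ with non-positive (resp.\ vanishing with non-product, negative) $\beta$-Donaldson--Futaki invariant. The bridge between the two worlds is the log CM line bundle $\Lambda_{\operatorname{CM},\beta}$: by Corollary \ref{cm of ci in p4} on $\mathcal{T}'$ we have $\Lambda_{\operatorname{CM},\beta}\simeq \mathcal{O}(a(\beta),b(\beta))$ with $a(\beta),b(\beta)>0$ and $t(\beta)=b(\beta)/a(\beta)$, so the Hilbert--Mumford weight with respect to $\Lambda_{\operatorname{CM},\beta}$ of a one-parameter subgroup $\lambda$ acting on a point $(S,H)$ equals $a(\beta)\,\mu_{t(\beta)}(S,H,\lambda)$ by Lemma \ref{decomposition of H-M function}. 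Since $a(\beta)>0$, the sign of the CM-weight and of the VGIT weight coincide.

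First, I would take a pair $(S,D)$ which, for contradiction, is not GIT$_{t(\beta)}$-semistable. Since elements of $M_1\cup M_2\cup M_3$ are $t$-unstable for all admissible $t$ by Theorem \ref{theorem-H not in supp S} and Lemmas \ref{all in m2 unstable} and \ref{all in M3 unstable}, I may assume $(S,H)$ represents a point of $\mathcal{T}'$. By Proposition \ref{one-ps induces t.c.}(2) there exists a one-parameter subgroup $\lambda$ such that the limit $(S_0,H_0)=\lim_{s\to 0}\lambda(s)\cdot(S,H)$ lies in $\mathcal{T}'$ and has negative CM-weight. Pulling back the universal family $\pi\colon(\mathcal{X},\mathcal{D})\to\mathcal{T}'$ along the orbit closure $\mathbb{A}^1\to\mathcal{T}'$ induced by $\lambda$ produces a $\mathbb{Q}$-Gorenstein flat family $(\mathcal{X}_\lambda,\mathcal{D}_\lambda)/\mathbb{A}^1$ which, equipped with $\mathcal{L}=-K_{\mathcal{X}_\lambda/\mathbb{A}^1}$, is a test configuration for $(S,(1-\beta)D)$; crucially, the fact that the limit stays in $\mathcal{T}'$ ensures the central fibre is a genuine log Fano pair in the family and $\mathcal{L}$ is relatively ample.

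Next I would apply Theorem \ref{gmgs thm 2.6} (Paul--Ross--Sturm, Gallardo--Martinez-Garcia--Spotti), which yields
\[
w\bigl(\Lambda_{\operatorname{CM},\beta}(\mathcal{X}_\lambda,\mathcal{D}_\lambda,\mathcal{L}^r)\bigr)=(n+1)!\,\operatorname{DF}_{\beta}(\mathcal{X}_\lambda,\mathcal{D}_\lambda,\mathcal{L}).
\]
Combining with the identification $w_\lambda(\Lambda_{\operatorname{CM},\beta})=a(\beta)\,\mu_{t(\beta)}(S,H,\lambda)$ from the ampleness computation of Lemma \ref{calculating the line bundle}, I obtain a direct proportionality, with positive constant, between $\operatorname{DF}_{\beta}(\mathcal{X}_\lambda,\mathcal{D}_\lambda,\mathcal{L})$ and $\mu_{t(\beta)}(S,H,\lambda)$. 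Hence $\mu_{t(\beta)}(S,H,\lambda)<0$ forces $\operatorname{DF}_\beta<0$, contradicting K-semistability and proving the semistable case. For the stable case, the same argument with strict inequalities shows that a strictly $t(\beta)$-semistable destabilisation (which exists if $(S,D)$ is not $t(\beta)$-stable) forces $\operatorname{DF}_\beta\le 0$ for a non-product configuration, violating K-stability.

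The main obstacle is the polystable case: given K-polystability, I need to show that $t(\beta)$-semistable but non-polystable is impossible. Here one takes a pair $(S,D)$ that is GIT$_{t(\beta)}$-semistable but not polystable and invokes Proposition \ref{one-ps induces t.c.}(1) to find $\lambda$ degenerating $(S,H)$ to a GIT$_{t(\beta)}$-polystable pair $(S_0,H_0)\in\mathcal{T}'$ not in the $\operatorname{SL}(5)$-orbit of $(S,H)$; the associated test configuration has $\mu_{t(\beta)}=0$, hence $\operatorname{DF}_\beta=0$, but is not a product, contradicting K-polystability. The subtle point throughout is to verify that the test configurations obtained are actually induced by the explicit family $\pi\colon\mathcal{X}\to\mathcal{T}'$ and that the central fibre carries the correct divisor $\mathcal{D}|_{\mathcal{X}_0}=S_0\cap H_0$ as a $\mathbb{Q}$-Cartier anticanonical section; this is ensured by excising $M_1,M_2,M_3$ from $\mathcal{R}$ and by the universal property of the family, making the equality of weights meaningful at the central fibre.
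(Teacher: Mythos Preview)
Your proof is correct and follows essentially the same approach as the paper's: both use Proposition~\ref{one-ps induces t.c.} to guarantee that the relevant one-parameter subgroups have limits remaining in $\mathcal{T}'$ (hence induce genuine test configurations), then invoke Theorem~\ref{gmgs thm 2.6} together with the identification $\Lambda_{\operatorname{CM},\beta}\simeq\mathcal{O}(a(\beta),b(\beta))$ to equate the sign of $\operatorname{DF}_\beta$ with that of $\mu_{t(\beta)}$, handling the polystable case via a non-product configuration with vanishing invariant. Your contrapositive phrasing makes the role of Proposition~\ref{one-ps induces t.c.}(2) more transparent than the paper's slightly compressed ``since the one-parameter subgroup is arbitrary'', but the content is the same.
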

\begin{proof}

The proof follows the idea of proof in \cite[Theorem 3.4]{odaka_spotti_sun_2016} and \cite[Theorem 3.10]{Gallardo_2020}. Consider a one-parameter subgroup $\lambda$ acting on $p \in \mathcal{T}'$, representing a log pair $(S,H)$ with $H \not \subset S$, $D=S\cap H$. We have a natural projection $\pi \colon \overline{\mathcal{Y}}\vcentcolon= \overline{\lambda \cdot p}\subset \mathcal{T}'\times \mathbb{P}^1\rightarrow\mathbb{P}^1$. By abuse of notation, we extend $\pi$ to $\pi \colon \overline{\mathcal{Y}}\setminus\{\pi^{-1}(\infty)\}\rightarrow\mathbb{C}$, with $q \vcentcolon=\pi^{-1}(0)\in \mathcal{T}'$ the central fiber of $\pi$. Here $q$ is a pair $(\overline{S},\overline{H})$, where $\overline{S}$ is a complete intersection of two quadrics in $\mathbb{P}^4$, $\overline{H} \not \subset S$ a hyperplane and $\overline{D}=\overline{S}\cap \overline{H}\in | -K_{\overline{S}}|$ a hyperplane section. Since from Proposition \ref{one-ps induces t.c.} $\lambda$ induces a test configuration, we know from \cite[Theorem 2.6]{Gallardo_2020} that 
$$w(\Lambda_{CM,\beta}(\mathcal{X},\mathcal{D},\mathcal{L}^r)) = (n+1)!\operatorname{DF}_{\beta}(\mathcal{X},\mathcal{D},\mathcal{L})$$
hence if $(S, (1-\beta)D)$ is K-semistable then, since the one-parameter subgroup is arbitrary we obtain the result from Corollary %Corollaries \ref{corollary ch1 p4} and 
\ref{cm of ci in p4}, since 

$$w(\Lambda_{CM,\beta}(\mathcal{X},\mathcal{D},\mathcal{L}^r)) = \mu^{\Lambda_{CM,\beta}}(S,H,\lambda) = \mu_{t(\beta)}(S,H,\lambda).$$ 

Suppose now that $(S, (1-\beta)D)$ is K-polystable. Then in particular it is K-semistable, and the point $p \in \mathcal{T}$ is GIT$_{t(\beta)}$ semistable from the above discussion. Suppose that $p$ which parametrises $(S,H)$ is not GIT$_{t(\beta)}$ polystable. Then, by Proposition \ref{one-ps induces t.c.}.1 there exists a one-parameter subgroup $\lambda$ such that $\overline{p}= \lim_{t\to 0}\lambda(t)\cdot p$ is GIT$_{t(\beta)}$ polystable but not GIT$_{t(\beta)}$ stable. $\lambda$ induces a test configuration $(\mathcal{X},\mathcal{D},\mathcal{L})$ with $\operatorname{DF}_{\beta}(\mathcal{X},\mathcal{D},\mathcal{L})=0$ by \cite[Theorem 2.6]{Gallardo_2020}. Since $(S, (1-\beta)D)$ is K-polystable, we know that $(\mathcal{X}_p,\mathcal{D}_p)\simeq (S\times \mathbb{C}, D\times \mathbb{C})$. But then for the central fiber $(\overline{S},\overline{D})$ of the test configuration corresponding to $\overline{p}$, we have $(\overline{S},\overline{D})= (S,D)$, i.e. $\overline{p} = p$, and hence $p$ is GIT$_{t(\beta)}$ polystable.
\end{proof}

\begin{remark}
    We should point out that in the Thesis \cite{my_phd_thesis}, where these results first appeared, there is an error in the proof of Theorem \ref{k-stab implies git stab} (c.f. \cite[Theorem 8.6]{my_phd_thesis}). This is due to the fact that we try to apply \cite[Theorem 2.22]{ascher2019wall}, and the methods of \cite[Theorem 3.4]{odaka_spotti_sun_2016} and \cite[Theorem 3.10]{Gallardo_2020} directly, which is impossible, as the family $\mathcal{T}$ is not flat. We overcome this by following the Appendix of \cite{spotti_sun_2017} which had to deal with a similar issue in the non-pairs case, and proving Proposition \ref{one-ps induces t.c.}, which ensures that one-parameter subgroups induce test configurations for the same pairs. We thank both Junyan Zhao and Yuji Odaka for pointing this error out.
\end{remark}

\section{Proof of Main Theorem and the first wall crossing}\label{main theorem section}

Consider now a log Fano pair $(S,(1-\beta)D)$ where $S$ is a complete intersection of $2$ quadrics (degree $2$ hypersurfaces) in $\mathbb{P}^4$, $D$ is a hyperplane section and $\beta \in (0,1)\cap \mathbb{Q}$. 
%Then for $S, D$ smooth there exist conical K{\"a}hler-Einstein metrics on the log pair with positive Einstein constant if $\beta$ is greater than the log Calabi-Yau threshold. For the case where $(S,(1-\beta)D)$ is log-smooth, log-Fano by Chen-Donaldson-Sun \cite{chen_donaldson_sun_2013}, a sequence of singular KE metrics $(S_i,(1-\beta)D_i)$ with conical singularities of angle $2\pi\beta$ have convergent subsequences to $(S_{\infty}, (1-\beta)D_{\infty})$, which is klt and weak K{\"a}hler-Einstein Fano pair, where $S_{\infty}$ is $\mathbb{Q}$-Gorenstein smoothable Fano with $\deg(S_{\infty}) = \deg(S_i)$. %We first present the following result by Li-Liu \cite{li2017kahlereinstein} (Proposition $4.6$) which gives an algebro-geometric bound on volume.
We will consider $\mathbb{Q}$-Gorenstein smoothable K-semistable log Fano pairs $(S_{\infty},(1-\beta)D_{\infty})$ such that their smoothing is a log Fano pair $(S,(1-\beta)D)$ as above. We can think of these as  K-polystable limits of a degeneration family $\mathcal{X}$ of smooth K-polystable log Fano pairs, that have to be "added" to the boundary in order to compactify the K-moduli spaces.

%\begin{theorem}[{\cite[Proposition 4.6]{li2017kahlereinstein}}]\label{li-liu}
%For any K-semistable log Fano pair $(X,(1-\beta)D)$,
%$$\big(-K_X-(1-\beta)D\big)^n\leq \Big(1+\frac{1}{n}\Big)^n\hat{vol}_{(X,(1-\beta)D),p}$$
%for any point $p$ in $X$.
%\end{theorem}
%Here, in particular, $\hat{vol}_{(X,(1-\beta)D),p}$ is the normalised volume at the singularity $p \in X$, as seen in \cite{Li-Wang-Xu}. In particular, \cite{Gallardo_2020}, for any $\beta \in (1-\operatorname{lct}(S,D),1)$,
%$$\hat{vol}_{(X,(1-\beta)D),p} \leq \hat{vol}_{X,p}.$$

\begin{lemma}\label{gh convergence}
Let $(S_{\infty}, (1-\beta)D_{\infty})$ be a $\mathbb{Q}$-Gorenstein smoothable K-semistable log Fano pair such that its smoothing is a log Fano pair $(S_i,(1-\beta)D_i)$, where $S_i$ is a del Pezzo surface of degree $4$ (i.e. a smooth intersection of two quadrics in $\mathbb{P}^4$) and $D_i$ a smooth hyperplane section. For any $\beta > \frac{3}{4}$, $S_{\infty}$ is also an intersection of two quadrics in $\mathbb{P}^4$ whose singular locus consists of $\mathbf{A}_1$ or $\mathbf{A}_2$ singularities, and $D_{\infty}$ is also a hyperplane section.
\end{lemma}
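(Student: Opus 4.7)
The plan is to leverage the $\mathbb{Q}$-Gorenstein smoothability to produce an anticanonical embedding $S_\infty \hookrightarrow \mathbb{P}^4$ realising $S_\infty$ as a flat limit of the family of smooth degree-$4$ del Pezzo surfaces $S_i$, to identify $D_\infty$ as the corresponding limit hyperplane section, and finally to use Theorem \ref{k-stab implies git stab} together with the VGIT classification of Theorem \ref{main thm in p4 vgit} to constrain the singularities.

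Concretely, I would start from a $\mathbb{Q}$-Gorenstein smoothing $\pi\colon(\mathcal{S},(1-\beta)\mathcal{D})\to T$ over a smooth pointed curve $(T,0)$ with central fibre $(S_\infty,(1-\beta)D_\infty)$ and general fibre isomorphic to $(S_i,(1-\beta)D_i)$. The relative anticanonical bundle $-K_{\mathcal{S}/T}$ is $\pi$-ample, and the Hilbert polynomial $\chi(S_t,-mK_{S_t})$ is locally constant in $t$; since $h^0(S_i,-K_{S_i})=5$ and $(-K_{S_i})^2=4$, this provides an embedding $\mathcal{S}\hookrightarrow\mathbb{P}^4_T$ in which $S_\infty$ is the flat limit (in the Hilbert scheme of $\mathbb{P}^4$) of the family of smooth complete intersections of two quadrics. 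In parallel, the hyperplanes $H_i$ cutting out the $D_i$ form a $1$-parameter family in $(\mathbb{P}^4)^{\vee}$ whose limit $H_\infty$ is again a hyperplane, and by flatness $D_\infty=S_\infty\cap H_\infty\in|{-}K_{S_\infty}|$.

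By Theorem \ref{k-stab implies git stab} the pair $(S_\infty,D_\infty)$ is GIT$_{t(\beta)}$-semistable with slope $t(\beta)=\tfrac{6(1-\beta)}{6-\beta}$; since $t(\beta)$ is decreasing in $\beta$ and $t(3/4)=2/7$, the hypothesis $\beta>3/4$ forces $t(\beta)\in[0,2/7)$. The pathological loci $M_1\cup M_2\cup M_3$ of Section \ref{cm_linebundle section} are $t$-unstable for every $t\in[0,1]$ by Theorem \ref{theorem-H not in supp S} together with Lemmas \ref{all in m2 unstable} and \ref{all in M3 unstable}, so the pair $(S_\infty,H_\infty)$ must already lie in $\mathcal{T}'$; in particular, $S_\infty$ is a genuine complete intersection of two quadrics in $\mathbb{P}^4$ and $D_\infty$ is a genuine hyperplane section. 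Consulting parts (i)--(iii) of Theorem \ref{main thm in p4 vgit} and the description of the strictly polystable orbits in Theorem \ref{main thm in vgit p4_polystable} for $t\in[0,2/7)$, every such GIT-semistable pair has $S_\infty$ with at worst $\mathbf{A}_1$ or $\mathbf{A}_2$ singularities, which is exactly the conclusion.

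The main obstacle I foresee is the flat embedding step in the first half of the argument, namely ensuring that the anticanonical morphism persists in the correct ambient $\mathbb{P}^4$ under degeneration. This requires controlling $\dim H^0(S_t,-K_{S_t})$ along the family, which in turn relies on $\mathbb{Q}$-Gorenstein smoothability forcing $S_\infty$ to be Gorenstein of index $1$ with at worst du Val singularities, so that Kawamata--Viehweg vanishing and upper semicontinuity pin down the relevant cohomology and prevent the flat limit from collapsing to a lower-dimensional ambient space.
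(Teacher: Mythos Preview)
Your argument has a genuine gap at the step you yourself flag as the main obstacle. You assert that ``$\mathbb{Q}$-Gorenstein smoothability forc[es] $S_\infty$ to be Gorenstein of index $1$ with at worst du Val singularities,'' but this is false: the Koll\'ar--Shepherd-Barron classification says that the $\mathbb{Q}$-Gorenstein smoothable klt surface singularities are exactly the $T$-singularities $\tfrac{1}{dn^2}(1,\,dna-1)$, and for $n\ge 2$ these have Gorenstein index $n>1$ and are not du Val (e.g.\ $\tfrac{1}{4}(1,1)$). Nothing rules these out a priori, so you cannot conclude that $-K_{S_\infty}$ is Cartier, that $h^0(S_\infty,-K_{S_\infty})=5$, or that the anticanonical map embeds $S_\infty$ in $\mathbb{P}^4$. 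Without that embedding you are not in the parameter space $\mathcal{T}'$, and Theorem~\ref{k-stab implies git stab} does not apply; the whole GIT step is then unavailable.

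The paper fills exactly this gap by using K-semistability \emph{before} any embedding: Liu's global volume inequality combined with the local bound $\widehat{\operatorname{vol}}_{(S_\infty,(1-\beta)D_\infty),p}\le \widehat{\operatorname{vol}}_{S_\infty,p}=4/|G|$ gives $|G|\le 9/(4\beta^2)<4$ for $\beta>\tfrac34$, hence $|G|\le 3$. Together with the Koll\'ar--Shepherd-Barron classification this forces the singularities to be $\mathbf{A}_1$ or $\mathbf{A}_2$, and then the classification of canonical del Pezzo surfaces yields the $(2,2)$ embedding directly. Note that once you insert this volume argument, your detour through Theorem~\ref{k-stab implies git stab} and the VGIT tables becomes redundant: the bound $|G|\le 3$ already delivers the singularity restriction that you were hoping to extract from Theorem~\ref{main thm in p4 vgit}.
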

\begin{proof}
Since each $D_i$ is a hyperplane section, $D_i \sim -K_{S_i}$ and hence, by continuity of volumes, the degree of the limit pair is 
$$(-K_{S_{\infty}}-(1-\beta)D_{\infty})^2 = (-K_{S_{i}}-(1-\beta)D_{i})^2 = (\beta K_{S_i})^2 = 4\beta^2.$$
By \cite[Theorem 3]{Liu-volume}, since $S_{\infty}$ is at worse klt, it must have only isolated quotient singularities isomorphic to $\mathbb{C}^2/G$, where $G$ is a finite subgroup of $U(2)$ acting freely on $S^3$. This is implied by the fact that klt surface singularities are precisely quotient singularities \cite[, Proposition 6.11]{clemens-collar-mori}, and that normal surfaces have only isolated singularities.

Moreover, the normal localised volume for quotient singularities is given by $\widehat{\operatorname{vol}}_{\mathbb{C}^2/G,0} = \frac{4}{|G|}$. Then, by \cite[Proposition 4.6]{li2017kahlereinstein}, and by \cite[Theorem 4.1]{Gallardo_2020} 
$$\widehat{\operatorname{vol}}_{(X,(1-\beta)D),p} \leq \widehat{\operatorname{vol}}_{X,p},$$
and hence, we have:

\begin{equation*}
  \begin{split}
    4\beta^2 = (-K_{S_{\infty}}-(1-\beta)D_{\infty})^2\leq& \Big(1+\frac{1}{2}\Big)^2\widehat{\operatorname{vol}}_{S_{\infty},(1-\beta)D_{\infty},p}\\
    \leq&\frac{9}{4}\widehat{\operatorname{vol}}_{\mathbb{C}^2/G,0}\\
    =& \frac{9}{|G|}
  \end{split}
\end{equation*}
i.e. $|G| \leq \frac{9}{4\beta^2}$.

%Similarly, for a del Pezzo surface of degree $4$, using the corresponding bound with $\beta=1$ we have $|G|\leq \frac{9}{4}$. Then for all $\beta > \frac{3}{4}$, we have $|G| < 4$.

By the classification of $\mathbb{Q}$-Gorenstein smoothable surface singularities \cite[Proposition 3.10]{kollar-shepherd}, $G$ must be a cyclic group acting in $\operatorname{SU}(2)$. Hence, the singularities of $S_{\infty}$ are canonical and, by the classification of del Pezzo surfaces with canonical singularities, %W is a cubic surface with at worst \mathbf{A}_1 or \mathbf{A}_2 singularities.
%This implies that $G$ is a cyclic group and $S_{\infty}$ has only canonical singularities by \cite[Proposition 4.22, Theorem 4.23]{kollar-shepherd}. Hence,
$S_{\infty}$ is a complete intersection of two quadrics in $\mathbb{P}^4$ with at worse $\mathbf{A}_1$ or $\mathbf{A}_2$ singularities. In particular, $D_{\infty}$ is a hyperplane section, as $D_{\infty} \sim -K_{S_{\infty}}\sim \mathcal{O}_{S_{\infty}}(1)$ by the adjunction formula.
\end{proof}

%(Possible way to circumvent code (idea): By Theorem \ref{adl thm 2.22} since the log CM line bundle is ample K- (poly/semi)stability of $(S,(1-\beta)D)$ implies GIT$_{t}$ (poly/semi)stability with $t = \frac{b}{a} = \frac{6(1-\beta)}{6-\beta}$. We consider $\mathcal{KM}_{\chi, r, (1-\beta)}$ as in Definition \ref{k-moduli stack} where we set $(X,D)\cong (S,H\cap S)$ with $S$ a del Pezzo surface of degree $4$. By a result of Prokhorov \cite{Prokhorov_2019} log canonical degenerations of del Pezzo surfaces of degree $4$ have $0,2,4$$\mathbf{A}_1$singularities. We know by the volume bound that this happens to the GH limit for $\beta > \frac{3}{\sqrt{10}}$. Thus for every $\beta > \frac{3}{\sqrt{10}}$ we have that $\mathcal{KM}_{\chi, r, (1-\beta)}$ is either isomorphic to a smooth, or to a singular complete intersection with $\mathbf{A}_1$ singularities. Since we have that for such $\beta$ and $t$ K- (poly/semi)stability of $(S,(1-\beta)D)$ implies GIT$_{t}$ (poly/semi)stability I think we can define the map $\phi \colon \overline{M}^{GH}_{4,\beta}\rightarrow \overline{M}^{GIT}_{t(\beta)}$ and use moduli continuity method to prove the result.)

The above Lemma allows us to prove the following:

\begin{theorem}\label{iso of stacks}
Let $\beta > \frac{3}{4}$. Then there exists an isomorphism of moduli stacks between the K-moduli stack $\mathcal{M}^K_{4,2,2}(\beta)$ of K-semistable families of $\mathbb{Q}$-Gorenstein smoothable log Fano pairs $(S, (1-\beta)D)$, where $S$ is a complete intersection of two quadrics in $\mathbb{P}^4$ and $D$ is an anticanonical section, and the GIT$_t$-moduli stack $\mathcal{M}^{GIT}_{4,2,2}(t(\beta))$. In particular, for $\beta>\frac{3}{4}$ we also have an isomorphism $M_{4,2,2}(\beta)\cong M^{GIT}_{4,2,2}(t(\beta))$ on the restriction to moduli spaces.
\end{theorem}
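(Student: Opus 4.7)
The plan is to adapt the strategy used for Theorem \ref{2.25 compactification}, but now in the opposite direction, constructing the map from the K-moduli stack to the GIT stack rather than the reverse. The crucial structural input is Lemma \ref{gh convergence}: for $\beta > 3/4$, every $\mathbb{Q}$-Gorenstein smoothable K-semistable degeneration of a smooth log Fano pair in our family is again a pair $(S_\infty,(1-\beta)D_\infty)$ with $S_\infty$ a complete intersection of two quadrics in $\mathbb{P}^4$ having at worst $\mathbf{A}_2$ singularities, and $D_\infty$ a hyperplane section. Since the anticanonical polarization $-K_{S_\infty} \cong \mathcal{O}_{S_\infty}(1)$ gives the natural embedding $S_\infty \hookrightarrow \mathbb{P}^4$ and $h^0(-K_{S_\infty})=5$, both stacks are naturally realized as quotient stacks of $\operatorname{PGL}(5)$-invariant subschemes of the same parameter scheme $\mathcal{R}_{4,2,2}$. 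Explicitly, I write $\mathcal{M}^K_{4,2,2}(\beta) = [Z/\operatorname{PGL}(5)]$ where $Z \subset \mathcal{R}_{4,2,2}$ is the $\operatorname{PGL}(5)$-invariant locally closed subscheme parametrising K-semistable pairs, while $\mathcal{M}^{GIT}_{4,2,2}(t(\beta)) = [\mathcal{R}^{ss}_{4,2,2}(t(\beta))/\operatorname{PGL}(5)]$.

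Next, I would invoke Theorem \ref{k-stab implies git stab} to deduce $Z \subseteq \mathcal{R}^{ss}_{4,2,2}(t(\beta))$, together with the corresponding polystable-to-polystable statement. Combined with openness of K-(semi)stability in $\mathbb{Q}$-Gorenstein families (Blum--Liu--Xu), $Z$ is an open $\operatorname{PGL}(5)$-invariant subscheme of $\mathcal{R}^{ss}_{4,2,2}(t(\beta))$. The inclusion $Z \hookrightarrow \mathcal{R}^{ss}_{4,2,2}(t(\beta))$ therefore induces a representable open immersion of stacks
$$\phi\colon \mathcal{M}^K_{4,2,2}(\beta) \hookrightarrow \mathcal{M}^{GIT}_{4,2,2}(t(\beta)),$$
which, as in the proof of Theorem \ref{2.25 compactification}, is automatically separated and quasi-finite. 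The polystable-to-polystable part of Theorem \ref{k-stab implies git stab} guarantees that closed points are mapped to closed points.

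To finish, I would descend to the good moduli spaces using \cite[Prop.~6.4]{Alper} and the properness of both moduli spaces (the K-moduli side by \cite{alper_reductivity} and subsequent work, the GIT side being standard), obtaining a finite morphism $\overline{\phi}\colon M^K_{4,2,2}(\beta)\to M^{GIT}_{4,2,2}(t(\beta))$. Since $\mathcal{R}_{4,2,2}\cong \operatorname{Gr}(2,15)\times \mathbb{P}^4$ is irreducible, $M^{GIT}_{4,2,2}(t(\beta))$ is irreducible and normal, so the finite birational morphism $\overline{\phi}$ is an isomorphism by Zariski's main theorem, and this upgrades to $\phi$ being an isomorphism of stacks exactly as in Theorem \ref{2.25 compactification}. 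The main obstacle is to justify the two interlocking structural facts: that the K-semistable parameter space really does sit inside the GIT parameter space (which is Lemma \ref{gh convergence}, and depends critically on the bound $\beta>3/4$ to control the singularities of K-semistable limits via normalised volume estimates), and that this inclusion is open, which requires the modern openness-of-K-stability results. Without Lemma \ref{gh convergence} one could not even set up a natural map, while without openness one would only obtain a comparison at the level of coarse moduli spaces.
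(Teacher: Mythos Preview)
Your proposal is correct and follows essentially the same approach as the paper: both use Lemma \ref{gh convergence} to ensure all K-semistable limits remain in the parameter space, Theorem \ref{k-stab implies git stab} to obtain the inclusion of semistable loci (and the polystable-to-polystable statement), openness of K-semistability to get an open immersion of stacks, and then \cite[Prop.~6.4]{Alper} plus Zariski's Main Theorem to conclude. The only cosmetic difference is that the paper sets up the stacks via pluri-anticanonical Hilbert schemes with $\operatorname{PGL}(N_m+1)$, whereas you work directly in $\mathcal{R}_{4,2,2}$ with $\operatorname{PGL}(5)$; these are equivalent formulations of the same argument.
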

\begin{proof}

Let $\boldsymbol{\mathcal X} = (\mathcal{X}, \overline{\mathcal{X}})$ be the Hilbert polynomials of $(S, D)$, with $S$ a smooth complete intersection of two quadrics in $\mathbb{P}^4$, and $D$ an anticanonical divisor, pluri-anticanonically embedded by $-mK_{\mathbf{\mathcal{X}}}$ in $\mathbb P^N$, and let $\mathbb H^{\boldsymbol{\mathcal X}; N}\coloneqq\mathbb H^{\mathcal X; N}\times \mathbb H^{\overline{\mathcal X}; N}\coloneqq \mathrm{Hilb}_\mathcal X(\mathbb P^N)\times \mathrm{Hilb}_{\overline{\mathcal X}}(\mathbb P^N)$. 

Given a closed subscheme $X\subset \mathbb P^N$ with Hilbert polynomial $\mathcal X(X, \mathcal O_{\mathbb P^N}(k)|_X)=\mathcal X(k)$, let $\mathrm{Hilb}(X)\in \mathbb H^{\mathcal X; N}$ denote its Hilbert point. Let, as in \cite{ascher2019wall},
$$\hat Z_m\coloneqq\left\{ \mathrm{Hilb}(X,D)\in \mathbb H^{\boldsymbol{\mathcal X}; N} \;\middle|\; 
\begin{aligned}
  & X \text{ is a Fano manifold which is the complete intersection of}\\
  &\text{two quadrics in }\mathbb{P }^4, D \sim_{\mathbb{Q}}-K_X \text{ a smooth divisor,}\\
  &\mathcal O_{P^N}(1)|_X \sim \mathcal O_X(-mK_X),\\
  &\text{ and }H^0(\mathbb P^N, \mathcal O_{\mathbb P^N}(1)\xrightarrow{\cong} H^0(X, \mathcal O_X(-mK_X).
  \end{aligned}
\right\}$$
which is a locally closed subscheme of $\mathbb H^{\boldsymbol{\mathcal X}; N}$. Let $\overline Z_m$ be its Zariski closure in  $\mathbb H^{\boldsymbol{\mathcal X}; N}$ and $Z_m$ be the subset of $\hat Z_m$ consisting of K-semistable varieties.

By Theorem \ref{k-stab implies git stab} we know that if a pair $(S, (1-\beta)D)$ is K-(poly/semi)stable then $(S,D)$ is GIT$_{t(\beta)}$ (poly/semi)stable, where $t(\beta) = \frac{6(1-\beta)}{6-\beta}$. By \cite{odaka-moduli}, the smooth K-stable loci is a Zariski open set of $ M^K_{4,2,2}(\beta)$, in the definition of moduli stack of $\mathcal M^K_{4,2,2}(\beta)=[Z_m/\operatorname{PGL}(N_m+1)]$ for appropriate $m>0$ and in fact $\mathcal M^{GIT}_{4,2,2}(t(\beta))\cong[\overline Z_m/\operatorname{PGL}(N_m+1)]$. Hence, by Theorems \ref{main thm in p4 vgit}, \ref{main thm in vgit p4_polystable}, Corollary \ref{cm of ci in p4}, Lemma \ref{gh convergence}, for $\beta >\frac{3}{4}$ we have an open immersion of representable morphism of stacks:

\begin{center}
    \begin{tikzcd}
    \mathcal{M}^{K}_{4,2,2}(\beta)\arrow[r, "\phi"] & \mathcal{M}^{GIT}_{4,2,2}(t(\beta))\\
    \left[(S,(1-\beta)D)\right]\arrow[r, mapsto, "\phi"]& \left[(S,D)\right]
    \end{tikzcd}
\end{center}
% \begin{equation*}
%     \begin{split}
%         \mathcal{M}^{K}_{4,2,2}(\beta) &\xrightarrow{\phi} \mathcal{M}^{GIT}_{4,2,2}(t(\beta))\\
%         [(S,(1-\beta)D)] &\mapsto [(S,D)]
%     \end{split}
% \end{equation*}
with an injective descent $\overline{\phi}$ on the moduli spaces such that we have a commutative diagram

 \begin{center}
 \begin{tikzcd}
 \mathcal{M}^{K}_{4,2,2}(\beta) \arrow[r, "\phi"] \arrow[d] & \mathcal{M}^{GIT}_{4,2,2}(t(\beta)) \arrow[d] \\
 M^{K}_{4,2,2} \arrow[r, " \overline{\phi}"] & M^{GIT}_{4,2,2}(t(\beta)).
 \end{tikzcd}
 \end{center}

Note that representability follows once we prove that the base-change of a scheme mapping to the K-moduli stack is itself a scheme. Such a scheme mapping to the K-moduli stack is the same as a $\mathrm{PGL}$-torsor over $\overline Z_m$, which produces a $\mathrm{PGL}$-torsor over $ Z_m$ after a $\mathrm{PGL}$-equivariant base change. This $\mathrm{PGL}$-torsor over $Z_m$ shows the desired pullback is a scheme. By \cite[Lemma 06MY]{stacks-project}, since $\phi$ is an open immersion of stacks, $\phi$ is separated and, since it is injective, it is also quasi-finite.

% \begin{equation*}
%   \begin{split}
%     \phi \colon \mathcal{M}^{GIT}_{3,2,2} &\rightarrow \mathcal{M}^K_{2.25}\\
%     [C_1\cap C_2] &\rightarrow [\operatorname{Bl}_{C_1\cap C_2} \mathbb{P}^3]
%   \end{split}
% \end{equation*}

% \begin{equation*}
%   \begin{split}
%     \overline{\phi} \colon M^{GIT}_{3,2,2} &\rightarrow M^{K}_{\operatorname{Bl}_{C_1\cap C_2} \mathbb{P}^3}\\
%     [C_1\cap C_2] &\rightarrow [\operatorname{Bl}_{C_1\cap C_2} \mathbb{P}^3].
%   \end{split}
% \end{equation*}

We now need to check that $\phi$ is an isomorphism that descends (as isomorphism of schemes) to the moduli spaces. Now, by \cite[Prop 6.4]{Alper}, since $\phi$ is representable, quasi-finite and separated, $\overline \phi$ is finite and $\phi$ maps closed points to closed points, we obtain that $ \phi$ is finite. Thus, by Zariski's Main Theorem, as $\overline \phi$ is a birational morphism with finite fibers to a normal variety, $\phi$ is an isomorphism to an open subset, but it is also an open immersion, thus it is an isomorphism.

\end{proof}

\begin{corollary}[First Wall Crossing]\label{Wall crossing}
The first wall crossing occurs at $t(\beta)=\frac{1}{6}$, $\beta = \frac{6}{7}$. In particular there exists an isomorphism of moduli stacks between the K-moduli stack $\mathcal{M}^{K}_{4,2,2}(\frac{6}{7})$ parametrising K-semistable families of $\mathbb{Q}$-Gorenstein smoothable log Fano pairs $(S, \frac{1}{7}D)$, where $S$ is a complete intersection of two quadrics in $\mathbb{P}^4$ and $D$ is an anticanonical section, and the GIT$_t$-moduli stack $\mathcal{M}^{GIT}_{4,2,2}(\frac{1}{6})$. In particular, a log Fano pair $(S,\frac{1}{7}D)$ is log K-polystable if $S$ is a complete intersection of two quadrics with at $1$ $\mathbf{A}_2$ and $2$ or $1$ $\mathbf{A}_1$ singularities, and $D$ a singular hyperplane section which is a double line and two lines meeting at two points, or $S$ is a complete intersection of two quadrics with $2$ or $4$ $1$ $\mathbf{A}_1$ singularities, and $D$ a singular hyperplane section with $2$ or $4$ $1$ $\mathbf{A}_1$ singularities, respectively.
\end{corollary}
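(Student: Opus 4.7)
The plan is straightforward given the machinery already built in the paper. First, I would solve the equation $t(\beta) = \tfrac{1}{6}$ using Corollary \ref{cm of ci in p4}: from $\tfrac{6(1-\beta)}{6-\beta} = \tfrac{1}{6}$ we obtain $36(1-\beta) = 6-\beta$, i.e.\ $\beta = \tfrac{6}{7}$. Since $\tfrac{6}{7} > \tfrac{3}{4}$, Theorem \ref{iso of stacks} immediately supplies the desired isomorphism of moduli stacks $\mathcal{M}^K_{4,2,2}(\tfrac{6}{7}) \cong \mathcal{M}^{GIT}_{4,2,2}(\tfrac{1}{6})$ (and the corresponding isomorphism on good moduli spaces). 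So the bulk of the work is verifying that $\beta = \tfrac{6}{7}$ is genuinely the first K-moduli wall and translating the GIT classification into the stated list of K-polystable limits.

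Next, I would argue that $t = \tfrac{1}{6}$ is the first VGIT wall after $t = 0$. The walls $t_0 = 0, t_1 = \tfrac{1}{6}, t_2 = \tfrac{2}{7}, \dots$ were enumerated directly by the algorithm of Section \ref{VGITsection} (via \cite{theodoros_stylianos_papazachariou_2022}), and the map $\beta \mapsto t(\beta)$ from Corollary \ref{cm of ci in p4} is an orientation-reversing homeomorphism from $(0,1)\cap \mathbb{Q}$ onto $(0,1)\cap \mathbb{Q}$, sending $\beta = 1^-$ to $t = 0^+$. Hence under the stack isomorphism of Theorem \ref{iso of stacks}, the first K-moduli wall in decreasing $\beta$ corresponds to $t_1 = \tfrac{1}{6}$, giving $\beta = \tfrac{6}{7}$. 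The key point here is that on either side of $t = \tfrac{1}{6}$ the maximal non-stable sets $N^\ominus_t$ genuinely differ (cf.\ Theorem \ref{main thm in p4 vgit}(i)--(iii)): as $t$ crosses $\tfrac{1}{6}$ from below, pairs where $S$ acquires an $\mathbf{A}_2$ singularity become $t$-stable while pairs where $D$ fails to be reduced drop out of semistability. Via the isomorphism of stacks this precisely describes the change in which pairs are K-semistable, so both the varieties and divisors are deformed across this wall.

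Finally, to obtain the explicit list of K-polystable pairs at $\beta = \tfrac{6}{7}$, I would transport the GIT-polystable orbits at $t = \tfrac{1}{6}$ described in Theorem \ref{main thm in vgit p4_polystable} through the stack isomorphism. At the wall $t_1 = \tfrac{1}{6}$, the strictly polystable orbits are (a) the two generic $\mathbb{G}_m$-limits $(\tilde S, \tilde D)$ with Segre $[(1,1),(1,1),1]$ (four $\mathbf{A}_1$'s on $S$ and on $D$) and $(S',D')$ with Segre $[(1,1),1,1,1]$ (two $\mathbf{A}_1$'s on $S$ and on $D$), together with (b) the two wall-specific $\mathbb{G}_m$-invariant pairs $(S_1,D_1)$ with Segre $[3,2]$ and $(S'_1,D'_1)$ with Segre $[3,(1,1)]$, both having $D$ a non-reduced divisor of the form $2L + L_1 + L_2$. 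Each of these is then a K-polystable log Fano pair $(S,\tfrac{1}{7}D)$, which yields exactly the classification in the statement.

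The main obstacle I anticipate is essentially bookkeeping: one needs to be certain that the list produced by the algorithm \cite{theodoros_stylianos_papazachariou_2022} at $t = \tfrac{1}{6}$ really is the first wall (not a ``false wall'' in the sense discussed in Section \ref{sec:how to study VGIT}), and that every stratum in Theorem \ref{main thm in vgit p4_polystable} is correctly matched with the degeneration pictures in Section \ref{sings of pairs p4}. Once this is done, the wall crossing statement follows at once from Theorem \ref{iso of stacks} and Corollary \ref{cm of ci in p4}, with no further K-stability arguments required.
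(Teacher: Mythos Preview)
Your proposal is correct and follows essentially the same approach as the paper: the paper's proof simply cites Theorems \ref{main thm in p4 vgit}, \ref{main thm in vgit p4_polystable}, and \ref{iso of stacks}, and your write-up spells out exactly how these combine (solving $t(\beta)=\tfrac{1}{6}$ via Corollary \ref{cm of ci in p4}, checking $\tfrac{6}{7}>\tfrac{3}{4}$, and reading off the polystable orbits at $t_1$).
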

\begin{proof}
The proof follows directly from Theorems \ref{main thm in p4 vgit}, \ref{main thm in vgit p4_polystable} and \ref{iso of stacks}.
\end{proof}
\begin{remark}
In particular, there exists such an isomorphism up to the second chamber. After the second chamber, recent work of ours (joint with J. Martinez--Garcia and J. Zhao \cite{martinezgarcia2024kmodulilogdelpezzo}) extends the isomorphism to all walls and chambers using the methods of this paper.
\end{remark}

\printbibliography
\end{document}